\documentclass[12pt]{article}
\usepackage{amsthm,amsfonts,amssymb,epsfig,graphics,amsmath,amsbsy,color,enumerate,tikz}



\theoremstyle{plain}
\newtheorem{theorem}{Theorem}[section]
\newtheorem{lemma}[theorem]{Lemma}
\newtheorem{proposition}[theorem]{Proposition}
\newtheorem{remark}[theorem]{Remark}
\newtheorem{definition}[theorem]{Definition}

\newtheorem{claim}[theorem]{Claim}

\newcommand{\mas}{\operatorname{Mas}}

\newcommand{\dom}{\operatorname{dom}}
\newcommand{\ran}{\operatorname{ran}}

\newcommand{\loc}{\operatorname{loc}}

\newcommand{\AC}{\operatorname{AC}}
\newcommand{\Span}{\operatorname{Span}}
\newcommand{\rank}{\operatorname{rank}}
\newcommand{\nullity}{\operatorname{nullity}}
\newcommand{\ess}{\operatorname{ess}}
\newcommand{\pt}{\operatorname{pt}}
\newcommand{\im}{\operatorname{Im}}



 
\setlength{\textwidth}{6.5in}
\setlength{\topmargin}{0pt}
\setlength{\voffset}{-.65in}
\setlength{\evensidemargin}{0pt}
\setlength{\oddsidemargin}{0pt}
\setlength{\hoffset}{0pt}
\setlength{\textheight}{650pt}


\numberwithin{equation}{section}
\numberwithin{figure}{section}

\title{Renormalized Oscillation Theory for Singular Linear 
Hamiltonian Pencils}

\author{Peter Howard and Alim Sukhtayev}

\begin{document}

\maketitle

\begin{abstract} 
For many applications, critical information about system dynamics 
is encoded in associated eigenvalue problems that can be posed as 
linear Hamiltonian systems with suitable boundary conditions. Motivated
by examples from hydrodynamics, quantum mechanics, and magnetohydrodynamics (MHD),
we develop a general framework for analyzing a broad class of 
linear Hamiltonian systems  with at least one singular boundary condition 
and possible nonlinear dependence on the spectral parameter. We show that 
renormalized oscillation results can be obtained in a 
natural way through consideration of the Maslov index 
associated with appropriately chosen paths of Lagrangian 
subspaces of $\mathbb{C}^{2n}$. This extends previous work
by the authors for regular linear Hamiltonian systems
that depend nonlinearly on the spectral parameter 
and singular linear Hamiltonian systems that depend linearly 
on the spectral parameter. We conclude the study by using our 
framework to study the spectrum in the setting of each of our motivating 
examples. 
\end{abstract}

\section{Introduction} \label{new-introduction}

In this study, we employ renormalized oscillation theory 
(via the Maslov index) to analyze the spectrum associated 
with a general class of differential operators 
that depend nonlinearly on their spectral parameter 
$\lambda$ and have one or two singular endpoints. 
Such operators arise naturally in a wide
range of applications, including three that will 
be emphasized in this work, from hydrodynamics, 
quantum mechanics, and magnetohydrodynamics (MHD). For 
the first of these, it's shown in \cite{SYZ2020}
that when the Saint--Venant model for inclined shallow 
water flow is linearized about a hydraulic shock 
profile, the resulting eigenvalue problem takes the 
form 
\begin{equation} \label{saint-venant-evp}
    A v' = (E - \lambda I - A_x) v,
\end{equation}
where 
\begin{equation}
    A (x) = \begin{pmatrix}
    - c & 1 \\
    \frac{H (x)}{F^2} - \frac{Q(x)^2}{H(x)^2} & \frac{2 Q(x)}{H(x)} - c
    \end{pmatrix},
    \quad 
    E (x) = \begin{pmatrix}
        0 & 0 \\
        \frac{2 Q(x)^2}{H(x)^3} + 1 & - \frac{2 Q(x)}{H(x)^2}
    \end{pmatrix}.
\end{equation}
Here, $(H(x),Q(x))$ comprises the hydraulic shock profile, 
$c$ denotes the profile wave speed, and $F > 0$ is the {\it Froude}
number (a nondimensional constant; see Section \ref{hydraulic-shocks-section} 
below for details on 
the Saint--Venant system). In the case of a 
discontinuous profile, it's shown in \cite{YZ2020} that 
(\ref{saint-venant-evp}) can be understood on the half-line
$(- \infty, 0)$ with right boundary condition 
\begin{equation} \label{saint-venant-bc}
    \Big(\lambda Q(0) - H(0) + \frac{|Q(0)|Q(0)}{H(0)^2}, \,\, - \lambda H(0) \Big) A(0) v(0) = 0.
\end{equation}
We will observe in Section \ref{hydraulic-shocks-section} below that with suitable transformations, the 
system \eqref{saint-venant-evp}--\eqref{saint-venant-bc} can be expressed 
as 
\begin{equation} \label{main-form-intro}
    Jy' = \mathbb{B} (x; \lambda) y, \quad \alpha (\lambda) y(0) = 0,
    \quad y(x;\lambda) \in \mathbb{C}^2,
\end{equation}
where 
\begin{equation} \label{J-defined2D}
    J = \begin{pmatrix} 0 & -1 \\ 1 & 0 \end{pmatrix},
\end{equation}
$\mathbb{B} (x; \lambda)$ is self-adjoint for all $x \in (-\infty, 0)$,
$\lambda > 0$ (see \eqref{isl-B}), and $\alpha (\lambda)$ is a row vector depending 
on $\lambda$. Equations with form \eqref{main-form-intro} will be addressed
in Theorem \ref{regular-singular-theorem}. 
 
For the application to quantum mechanics, it's known that 
the potential function in Schr\"odinger's wave equation 
can depend on energy encoded in the equation's eigenvalues, 
giving rise to ODE of the form 
\begin{equation} \label{schrodinger-intro}
    - \psi'' + V (x; \lambda) \psi = \lambda \psi,
\end{equation}
for which a typical form of $V(x; \lambda)$ (taken from 
\cite{JJ1972}) is 
\begin{equation*}
    V^{\pm} (x; \lambda)
    = U(x) \pm 2 \sqrt{\lambda} Q(x),
\end{equation*}
where $U(x)$ and $Q(x)$ are appropriately specified functions 
(see Section \ref{quadratic-schrodinger-example} for particular
examples). Expressed as a first-order system with components
$y_1 = \phi$ and $y_2 = \phi'$, (\ref{schrodinger-intro}) 
becomes 
\begin{equation} \label{nonlinear-schrodinger-form}
    J y' = \mathbb{B} (x; \lambda) y,
    \quad 
    \mathbb{B} (x; \lambda)
    = \begin{pmatrix}
        \lambda - V(x; \lambda) & 0 \\
        0 & 1
    \end{pmatrix}, \quad x \in \mathbb{R}
\end{equation}
which is the same form as \eqref{main-form-intro}, except
with domain $\mathbb{R}$ rather than $(-\infty, 0)$. Equations
of form \eqref{nonlinear-schrodinger-form} will be addressed in 
Theorem \ref{singular-theorem}. 

In Section \ref{mhd-section}, we take from \cite{GP2004}
a model of incompressible ideal MHD flow, and show that when 
this system is linearized about certain stationary solutions 
the resulting eigenvalue problem can be expressed as 
\begin{equation} \label{mhd-intro} 
    - (P(x;\lambda) \phi')' + V(x; \lambda) \phi = 0,
\end{equation}
where the coefficient function $V (x; \lambda)$ is nonlinear
in the spectral parameter $\lambda$ (see equations (\ref{mhd-P})
and (\ref{mhd-V}) below for the specifications of $P (x; \lambda)$
and $V(x; \lambda)$ respectively). Similarly as with 
(\ref{schrodinger-intro}), (\ref{mhd-intro}) can be expressed 
as a first-order system with form \eqref{main-form-intro}, this 
time with natural domain $(0, \infty)$.

In order to develop a framework in which to study the applications
described above, along with numerous others, we consider systems of the general form 
\begin{equation} \label{hammy-intro}
    J y' = \mathbb{B} (x; \lambda) y, 
    \quad x \in (a, b), \quad y(x; \lambda) \in \mathbb{C}^n,
\end{equation}
where $J$ is as in \eqref{J-defined2D} with each 1 replaced by 
an $n \times n$ identity matrix, $\mathbb{B} (x; \lambda) \in \mathbb{C}^{n \times n}$ 
is self-adjoint for a.~e.~$x \in (a, b)$, $\lambda \in I \subset \mathbb{R}$, and  
\begin{equation*}
    - \infty \le a < b \le + \infty,
\end{equation*}
either with two singular endpoints or with one singular endpoint 
and one regular endpoint with boundary condition expressed as 
$\alpha (\lambda) y(a) = 0$ or $\alpha (\lambda) y(b) = 0$. (The 
case of two regular endpoints has been analyzed previously 
in \cite{HS2022}). 

For each of our motivating applications, it is important to 
understand the eigenvalues of \eqref{hammy-intro}, by which we 
mean the values $\lambda \in I$ for which \eqref{hammy-intro} 
has a solution in an appropriate function space, determined by 
the nature of the endpoints and the boundary conditions. However, 
the possibility of singular endpoints along with the possibly 
nonlinear dependence on $\lambda$ complicate the specification of
such spaces, and indeed such specification is a key point of the 
current analysis. 

Precisely, our goals for the current study are as follows: 

\vspace{.1in}
\noindent
1. Identify general conditions on the matrix function $\mathbb{B} (x; \lambda)$
under which the spectrum of \eqref{hammy-intro} can be analyzed by 
our geometric approach based on the Maslov index;

\vspace{.1in}
\noindent
2. For singular endpoints, determine suitable boundary conditions to 
impose on \eqref{hammy-intro}, and relate these to physically relevant 
behavior; 

\vspace{.1in}
\noindent
3. With $\mathbb{B} (x; \lambda)$ chosen, along with suitable boundary 
conditions, compute the number of eigenvalues $\mathcal{N} ([\lambda_1, \lambda_2))$
that the specified eigenvalue problem has on the interval $[\lambda_1, \lambda_2)$. 

\vspace{.1in}

Our approach to the problem will be through oscillation theory, and in particular 
the {\it renormalized oscillation} approach introduced in \cite{GST1996}. Developed
initially in the context of single Sturm-Liouville equations, this method has 
been extended in \cite{Teschl1996, Teschl1998} to the cases of Jacobi and 
Dirac operators, in \cite{GZ2017, HS2022} to \eqref{hammy-intro} when 
$\mathbb{B} (x; \lambda)$ depends linearly on $\lambda$, in \cite{HS2} to 
\eqref{hammy-intro} when $\mathbb{B} (x; \lambda)$ depends nonlinearly on $\lambda$
(but restricted to regular endpoints), and in \cite{Howard2022} when 
$\mathbb{B} (x; \lambda)$ is not self-adjoint (again restricted to the case of 
regular endpoints). The current analysis extends the approach of \cite{HS2,HS2022} 
to the setting in which $\mathbb{B} (x; \lambda)$ both depends nonlinearly on $\lambda$ and 
has one or two singular endpoints.  

{\it Plan of the paper}. In Section \ref{old-introduction}, we develop the 
notation and background needed to state our main results, and we state these 
results as Theorems \ref{regular-singular-theorem} and \ref{singular-theorem}. 
In Section \ref{operator-section}, we prove Lemma \ref{self-adjoint-operator-lemma}, 
establishing the existence and nature of the family of self-adjoint 
pencils $\mathcal{T} (\lambda)$ and $\mathcal{T}^{\alpha} (\lambda)$ 
that will be the principal objects of our study, and in Section 
\ref{extension-section}, we construct the frames $\mathbf{X}_a (x; \lambda)$ 
and $\mathbf{X}_b (x; \lambda)$, introduced respectively in (\ref{frame-a}) 
and (\ref{frame-b}). In Section \ref{maslov-section}, we 
provide additional background on the Maslov index, along with 
some results that will be needed for the subsequent analysis, 
and in Section \ref{theorems-section}, we prove Theorems 
\ref{regular-singular-theorem} and \ref{singular-theorem}. Finally, 
in Section \ref{applications-section}
we conclude with the three specific illustrative applications discussed
in this introduction.

{\it Notational conventions}. Throughout the analysis, we 
will use the notation $\|\cdot\|_{\mathbb{B}_{\lambda}}$ and 
$\langle \cdot, \cdot \rangle_{\mathbb{B}_{\lambda}}$ respectively for our 
weighted norm and inner product. In the case that 
(\ref{hammy}) is regular at $x = a$, we will denote 
the associated map of Lagrangian subspaces by $\ell_{\alpha}$,
and we will denote by $\mathbf{X}_{\alpha}$ a specific
corresponding map of frames. Likewise, if (\ref{hammy})
is singular at $x = a$, we will use $\ell_a$ and $\mathbf{X}_a$,
and for $x = b$ (always assumed singular), we will use 
$\ell_b$ and $\mathbf{X}_b$. In order to accommodate limits
associated with our bilinear form, we will adopt the notation 
\begin{equation*}
    (Jy,z)_a := \lim_{x \to a^+} (J y(x), z(x)); 
    \quad (Jy,z)_b := \lim_{x \to b^-} (J y(x), z(x)),
\end{equation*}
along with 
\begin{equation*}
 (Jy, z)_a^b := (Jy,z)_b - (Jy,z)_a.   
\end{equation*}
Here and throughout, we use 
$(\cdot, \cdot)$ to denote the usual inner product 
for $\mathbb{C}^{2n}$.

\section{Statements of the Main Results}\label{old-introduction}

For values $\lambda$ in some interval $I \subset \mathbb{R}$, 
we consider linear Hamiltonian systems  
\begin{equation} \label{hammy}
J y' = \mathbb{B} (x; \lambda) y; \quad y (x; \lambda) \in \mathbb{C}^{2n},
\quad n \in \{1, 2, \dots \},
\end{equation}
where $J$ denotes the standard symplectic matrix 
\begin{equation*}
J 
=
\begin{pmatrix}
0_n & - I_n \\
I_n & 0_n
\end{pmatrix}.
\end{equation*}
We specify (\ref{hammy}) on intervals $(a, b)$, with 
$-\infty \le a < b \le +\infty$, and we assume throughout that 
for each $\lambda \in I$, $\mathbb{B} (\cdot; \lambda)$ is a 
measurable function with $\mathbb{B} (x; \lambda)$ self-adjoint
for a.e. $x \in (a, b)$, and that there exists $b_0 \in L^1_{\loc} ((a,b), \mathbb{R})$
so that for each $\lambda \in I$, $|\mathbb{B} (x; \lambda)| \le b_0 (x)$ for  
a.e. $x \in (a, b)$. In addition, we assume that for each 
$\lambda \in I$, $\mathbb{B}_{\lambda} (x; \lambda)$ exists
for a.e. $x \in (a, b)$, with $\mathbb{B}_{\lambda} (x; \lambda)$
self-adjoint and non-negative, and that there exists $b_1 \in L^1_{\loc} ((a,b), \mathbb{R})$
so that for each $\lambda \in I$, $|\mathbb{B}_{\lambda} (x; \lambda)| \le b_1 (x)$ for  
a.e. $x \in (a, b)$. For convenient reference, we refer to these 
basic assumptions as Assumptions {\bf (A)}. In addition, we make the following 
Atkinson-type positivity assumption.

\medskip
{\bf (B)} For each $\lambda \in I$, if $y(\cdot; \lambda) \in \AC_{\loc} ((a, b), \mathbb{C}^{2n})$ 
is any non-trivial solution of (\ref{hammy}), then 
\begin{equation} \label{positive-definite}
\int_c^d (\mathbb{B}_{\lambda} (x; \lambda)  y(x; \lambda), y(x; \lambda)) dx > 0,
\end{equation}
for all $[c, d] \subset (a, b)$, $c < d$. (Here and throughout, $\AC_{\loc} (\cdot)$ denotes local absolute
continuity, and $(\cdot, \cdot)$ denotes the usual inner product on $\mathbb{C}^{2n}$.)

\medskip
Our goal is to associate (\ref{hammy}) with one or more
self-adjoint operator pencils $\mathcal{L} (\lambda)$ (see Lemma 
\ref{self-adjoint-operator-lemma} below), and to use renormalized 
oscillation theory to count the number of eigenvalues 
$\mathcal{N} ([\lambda_1, \lambda_2))$, $\lambda_1, \lambda_2 \in I$,
$\lambda_1 < \lambda_2$, that each 
such operator has on a given interval $[\lambda_1, \lambda_2)$ 
for which the closure $[\lambda_1, \lambda_2]$
has empty intersection with the essential spectrum of 
the pencil.
We will formulate our results for two cases: (1) when $x = a$ 
is a regular boundary point for (\ref{hammy}); and 
(2) when $x = a$ is a singular boundary point for 
(\ref{hammy}). (We take (\ref{hammy}) to 
be singular at $x = b$ in both cases; the case in which 
(\ref{hammy}) is regular at both endpoints has 
been analyzed in \cite{HS2}.) The case in which 
(\ref{hammy}) is regular at $x = a$ corresponds 
with the following additional assumption. 

\medskip
{\bf (A)$^\prime$} The value $a$ is finite, and for any 
$c \in (a, b)$, we have 
\begin{equation*}
B (\cdot; \lambda), B_{\lambda} (\cdot; \lambda) 
\in L^1 ((a, c), \mathbb{C}^{2n \times 2n}).    
\end{equation*}

For the case in which $\mathbb{B} (x; \lambda)$ has the standard form 
\begin{equation} \label{linear-in-lambda}
    \mathbb{B} (x; \lambda) = B_0 (x) + \lambda B_1 (x),
\end{equation}
results along these lines have been obtained in \cite{GZ2017} for 
the limit-point case and in \cite{HS2022} for all limit-point, 
limit-circle, and limit-intermediate cases. These and related results
will be discussed at the end of this introduction.

Our starting point will be to specify an appropriate Hilbert 
space to work in, and for this we begin by fixing any 
$\lambda \in I$ and denoting by $\tilde{L}^2_{\mathbb{B}_{\lambda}} ((a, b), \mathbb{C}^{2n})$
the set of all Lebesgue measureable functions $f$ defined on $(a, b)$
so that 
\begin{equation*}
\|f\|_{\mathbb{B}_{\lambda}} := \Big(\int_a^b (\mathbb{B}_{\lambda} (x; \lambda) f(x), f(x)) dx\Big)^{1/2} 
< \infty.
\end{equation*}
Correspondingly, we denote by $\mathcal{Z}_{\mathbb{B}_{\lambda}}$ the subset of 
$\tilde{L}^2_{\mathbb{B}_{\lambda}} ((a, b), \mathbb{C}^{2n})$ comprising 
elements $f \in \tilde{L}^2_{\mathbb{B}_{\lambda}} ((a, b), \mathbb{C}^{2n})$ so that 
$\|f\|_{\mathbb{B}_{\lambda}} = 0$. Our Hilbert space will be the quotient
space, 
\begin{equation*}
L^2_{\mathbb{B}_{\lambda}} ((a, b), \mathbb{C}^{2n})
:= \tilde{L}^2_{\mathbb{B}_{\lambda}} ((a, b), \mathbb{C}^{2n})/\mathcal{Z}_{\mathbb{B}_{\lambda}}.
\end{equation*}
I.e., two functions $f, g \in L^2_{\mathbb{B}_{\lambda}} ((a, b), \mathbb{C}^{2n})$ 
are equivalent if and only if $\|f - g\|_{\mathbb{B}_{\lambda}} = 0$. With this 
convention, it follows 
that $\| \cdot \|_{\mathbb{B}_{\lambda}}$ is a norm on 
$L^2_{\mathbb{B}_{\lambda}} ((a, b), \mathbb{C}^{2n})$, and we equip 
$L^2_{\mathbb{B}_{\lambda}} ((a, b), \mathbb{C}^{2n})$ with the inner product
\begin{equation*}
\langle f, g \rangle_{\mathbb{B}_{\lambda}} := \int_a^b (\mathbb{B}_{\lambda} (x; \lambda) f(x), g(x)) dx.
\end{equation*}
We emphasize that Assumptions {\bf (A)} and {\bf (B)} are sufficient for defining
$L^2_{\mathbb{B}_{\lambda}} ((a, b), \mathbb{C}^{2n})$ in this way, 
and in particular that $\mathbb{B}_{\lambda} (x; \lambda)$ need not be an 
invertible matrix. We next begin our specification of the operator 
pencils we'll work with by defining what we will mean by the maximal domain 
$\mathcal{D}_M (\lambda)$ and the maximal operator $\mathcal{T}_M (\lambda)$. 

\begin{definition} \label{maximal-operator}
(i) For each fixed $\lambda \in I$, we denote by $\mathcal{D}_M (\lambda)$ the collection of 
all 
\begin{equation} \label{y-specified}
    y \in \AC_{\loc} ((a, b), \mathbb{C}^{2n}) \cap L^2_{\mathbb{B}_{\lambda}} ((a, b), \mathbb{C}^{2n})
\end{equation} 
for which there exists some $f \in L^2_{\mathbb{B}_{\lambda}} ((a, b), \mathbb{C}^{2n})$ so that 
\begin{equation} \label{domain-relation1}
Jy' - \mathbb{B} (x; \lambda) y
= \mathbb{B}_{\lambda} (x; \lambda) f,
\end{equation}
for a.e. $x \in (a, b)$. We note that the function $f$ in this specification 
is uniquely determined 
in $L^2_{\mathbb{B}_{\lambda}} ((a, b), \mathbb{C}^{2n})$. (If $f$ and $g$ are two 
functions associated with the same $y \in \mathbf{\mathcal{D}}_M (\lambda)$, then 
$\mathbb{B}_{\lambda} (x;\lambda) (f - g) = 0$ for a.e. $x \in (a, b)$,
so that $f = g$ in $L^2_{\mathbb{B}_{\lambda}} ((a, b), \mathbb{C}^{2n})$.)
We will refer to $\mathcal{D}_M (\lambda)$ 
as the maximal domain associated with (\ref{hammy}).

(ii) For each fixed $\lambda \in I$, we define the maximal operator 
$\mathcal{T}_{M} (\lambda): L^2_{B_{\lambda}} ((a, b), \mathbb{C}^{2n}) 
\to L^2_{\mathbb{B}_{\lambda}} ((a, b), \mathbb{C}^{2n})$ to be the operator 
with domain $\mathcal{D}_M (\lambda)$ taking 
a given $y \in \mathcal{D}_M (\lambda)$ to the unique 
$f \in L^2_{\mathbb{B}_{\lambda}} ((a, b), \mathbb{C}^{2n})$ guaranteed by 
the definition of $\mathcal{D}_M (\lambda)$. We note 
particularly that $y (\cdot; \lambda) \in \mathcal{D}_M (\lambda)$
solves (\ref{hammy}) if and only if 
$\mathcal{T}_M (\lambda) y = 0$ a.e. in $(a, b)$. 
\end{definition}

\begin{remark} \label{maximal-domain-remark} 
In the definition of $\mathcal{D}_M (\lambda)$, relation 
(\ref{domain-relation1}) could be replaced by 
\begin{equation} \label{domain-relation2}
Jy' - (\mathbb{B} (x; \lambda) - \lambda \mathbb{B}_{\lambda} (x; \lambda)) y
= \mathbb{B}_{\lambda} (x; \lambda) f.
\end{equation}
This follows because for a given $y$ as specified in (\ref{y-specified}),
if there exists $f \in L^2_{\mathbb{B}_{\lambda}} ((a, b), \mathbb{C}^{2n})$
so that (\ref{domain-relation1}) holds then it must be the case that 
(\ref{domain-relation2}) holds with $f$ replaced by $\tilde{f} = f + \lambda y$,
and the other direction is similar. In \cite{HS2022} the authors analyze
the case (\ref{linear-in-lambda}) and (following \cite{Krall2002}) 
use (\ref{domain-relation2}) to work with $J y' - B_0 (x) y = B_1 (x) f$.

We emphasize that in Definition \ref{maximal-operator} and throughout,
the designation a.e. always means almost everywhere in the usual 
sense, and in particular when we write $\mathcal{T}_M (\lambda) y = 0$ 
a.e. in $(a, b)$, we mean that $y$ is not only in the kernel 
of $\mathcal{T}_M (\lambda)$, but additionally that the element 
$f \in L_{\mathbb{B}_{\lambda}}^2 ((a, b), \mathbb{C}^n)$ for 
which $\mathcal{T}_M (\lambda) y = f$ is 
equivalent in $L_{\mathbb{B}_{\lambda}}^2 ((a, b), \mathbb{C}^n)$
to a function that vanishes a.e. in $(a, b)$. 
\end{remark}

With Definition \ref{maximal-operator} in place, 
we make the following Assumptions {\bf (C)}. 

\medskip
{\bf (C)} For any values $\lambda, \tilde{\lambda} \in I$, the 
spaces $L^2_{\mathbb{B}_{\lambda}} ((a, b), \mathbb{C}^{2n})$ and 
$L^2_{\mathbb{B}_{\tilde{\lambda}}} ((a, b), \mathbb{C}^{2n})$ 
are equivalent. In addition, the maximal domains $\mathcal{D}_{M} (\lambda)$ 
and $\mathcal{D}_{M} (\tilde{\lambda})$ are identical as sets 
(henceforth, we will denote this set $\mathcal{D}_{M}$).
\medskip

\begin{remark} \label{assumptions-remark}
We will check in Section \ref{applications-section} that Assumptions 
{\bf (A)}, {\bf (B)}, and {\bf (C)}, along with Assumptions 
{\bf (D)}, {\bf (E)}, and {\bf (F)} below hold in a wide range of 
important cases. 
\end{remark}

We will often find it convenient to fix $\lambda \in I$ and consider 
the linear eigenvalue problem 
\begin{equation} \label{linear-ev-problem}
    \mathcal{T} (\lambda) y = \mu y,
\end{equation}
for which our primary interest will be whether $\mu = 0$ is an eigenvalue. 
Nonetheless, we observe that for any $\mu \in \mathbb{C}$ (\ref{linear-ev-problem})
corresponds with the Hamiltonian system 
\begin{equation} \label{linear-hammy}
    Jy' = \mathbb{B} (x; \lambda) y + \mu \mathbb{B}_{\lambda} (x; \lambda) y, 
\end{equation}
which has precisely the structure of the Hamiltonian systems considered 
in \cite{HS2022} (i.e., is a Hamiltonian system linear in the 
spectral parameter $\mu$). 

The following terminology will be convenient for the discussion.

\begin{definition} \label{left-right-definition}
For each fixed pair $(\mu,\lambda) \in \mathbb{C} \times I$, we will say 
that a solution $y (\cdot;\mu, \lambda) \in \AC_{\loc} ((a,b),\mathbb{C}^{2n})$
of (\ref{linear-hammy}) {\it lies left} in $(a, b)$ if for any $c \in (a,b)$, 
the restriction of $y(\cdot; \mu, \lambda)$ to $(a,c)$ is in 
$L^2_{\mathbb{B}_{\lambda}} ((a, c), \mathbb{C}^{2n})$. Likewise, 
we will say that a solution $y (\cdot;\mu, \lambda) \in \AC_{\loc} ((a,b),\mathbb{C}^{2n})$
of (\ref{linear-hammy}) {\it lies right} in $(a, b)$ if for any $c \in (a,b)$, 
the restriction of $y(\cdot; \mu, \lambda)$ to $(c,b)$ is in 
$L^2_{\mathbb{B}_{\lambda}} ((c,b), \mathbb{C}^{2n})$. We will denote by 
$m_a (\mu, \lambda)$ the dimension of the space of solutions to 
(\ref{linear-hammy}) that lie left in $(a, b)$, and 
we will denote by $m_b (\mu, \lambda)$ the dimension of the 
space of solutions to (\ref{linear-hammy}) that lie right 
in $(a, b)$.  
\end{definition}

We can take advantage of the observation that (\ref{linear-hammy})
has the form of the systems analyzed in \cite{HS2022} to draw the 
following conclusions. If Assumptions {\bf (A)} and 
{\bf (B)} hold and $\lambda \in I$ is fixed, then for 
any $\mu \in \mathbb{C} \backslash \mathbb{R}$, 
(\ref{linear-hammy}) admits at least $n$ linearly independent 
solutions that lie
left in $(a, b)$ and at least $n$ linearly independent solutions 
that lie right in 
$(a, b)$. According to Theorem V.2.2 in \cite{Krall2002}, 
$m_a (\mu, \lambda)$ and $m_b (\mu, \lambda)$ are both 
constant for all $\mu$ with $\textrm{Im}\,\mu > 0$, and the same statement is 
true for $\textrm{Im}\,\mu < 0$. In the event that 
$\mathbb{B} (x; \lambda)$ has real-valued entries for 
a.e. $x \in (a, b)$, it is furthermore the case that $m_a (\mu, \lambda)$
and $m_b (\mu, \lambda)$ are both constant for all 
$\mu \in \mathbb{C} \backslash \mathbb{R}$. For the 
current analysis we will further assume a type of 
uniformity for this behavior as $\lambda$ varies in 
$I$. 

\medskip
{\bf (D)} The values $m_a (\mu, \lambda)$ and $m_b (\mu, \lambda)$ are both 
constant for all $(\mu, \lambda) \in (\mathbb{C} \backslash \mathbb{R}) \times I$. 
We denote these common values $m_a$ and $m_b$.
\medskip

In the event that Assumption {\bf (A)$^\prime$} also holds, it's 
clear that $m_a (\mu, \lambda) = 2n$ for all $(\mu, \lambda) \in \mathbb{C} \times I$.
In the terminology of our next definition, this means that under
Assumption {\bf (A)$^\prime$}, (\ref{linear-hammy}) 
is in the limit circle case at $x = a$. In this case, Assumption 
{\bf (D)} holds immediately for $x = a$, with $m_a = 2n$. We note here
that whenever we state that a result holds under 
Assumptions {\bf (A)} through {\bf (D)}, we will mean that 
{\bf (A)$^\prime$} doesn't necessarily hold. If {\bf (A)$^\prime$}
is needed, it will always be explicitly included in the list of 
assumptions. 

\begin{remark} \label{change-of-J-remark}
    In the event that we have an equation of form (\ref{hammy}) 
    with an alternative non-singular skew-symmetric matrix, say 
    $\mathcal{J}$, we can always make a change of variables 
    to obtain our preferred form. Precisely, suppose the form we have 
    is $\mathcal{J} z' = \mathcal{B} (x; \lambda) z$, with 
    $\mathcal{B} (x; \lambda)$ satisfying our assumptions 
    {\bf (A)} through {\bf (D)}. As 
    noted in \cite{HJK2018}, there exists an invertible matrix 
    $M$ so that $J = M^T \mathcal{J} M$. If we set $z = M y$
    then $\mathcal{J} M y' = \mathcal{B} (x; \lambda) My$, so 
    that $M^T \mathcal{J} M y' = M^T \mathcal{B} (x; \lambda) My$,
    giving (\ref{hammy}) with $\mathbb{B} (x; \lambda) = M^T \mathcal{B} (x; \lambda) M$. 
    It's straightforward to see that $\mathbb{B} (x; \lambda)$ 
    also satisfies Assumptions {\bf (A)} through {\bf (D)}.
\end{remark}
 
\begin{definition} If $m_a = n$, we say that (\ref{linear-hammy}) 
is in the limit point case at $x = a$, and if $m_a = 2n$, we say 
that (\ref{linear-hammy}) is in the limit circle case at $x = a$.
If $m_a \in (n, 2n)$, we say that (\ref{linear-hammy}) 
is in the limit-$m_a$ case at $x = a$. Analogous specifications
are made at $x = b$.
\end{definition}

Under Assumptions {\bf (A)} through {\bf (D)}, and
for some fixed pair $(\mu_0, \lambda_0) \in (\mathbb{C} \backslash \mathbb{R}) \times I$
we will show that by taking an appropriate
selection of $n$ solutions to (\ref{linear-hammy}) that lie left in $(a, b)$, 
$\{u^a_j (x; \mu_0, \lambda_0)\}_{j=1}^n$, and an appropriate 
selection of $n$ solutions to (\ref{linear-hammy}) that lie right in $(a, b)$,
$\{u^b_j (x; \mu_0, \lambda_0)\}_{j=1}^n$, we can specify, for each 
$\lambda \in I$, the domain
of a self-adjoint restriction of $\mathcal{T}_M (\lambda)$, which we will 
denote $\mathcal{T} (\lambda)$. For the purposes of this introduction, we
will sum this development up in the following lemma, for which 
we denote by $U^a (x; \mu_0, \lambda_0)$ the matrix comprising the 
vector functions $\{u^a_j (x; \mu_0, \lambda_0)\}_{j=1}^n$ as its columns, 
and by $U^b (x; \mu_0, \lambda_0)$ the matrix comprising the 
vector functions $\{u^b_j (x; \mu_0, \lambda_0)\}_{j=1}^n$ as its columns.
(The selection process is taken from Section 2.1 of \cite{HS2022}, with 
an overview given in Section \ref{operator-section} below.) The proof of this lemma is 
the main content of Section \ref{operator-section}. 

\begin{lemma} \label{self-adjoint-operator-lemma}
(i) Let Assumptions {\bf (A)} through {\bf (D)} hold, and let 
$(\mu_0, \lambda_0) \in (\mathbb{C} \backslash \mathbb{R}) \times I$ be fixed. 
Then there exists a selection of $n$ solutions $\{u^a_j (x; \mu_0, \lambda_0)\}_{j=1}^n$
to (\ref{linear-hammy}) (with $(\mu, \lambda) = (\mu_0, \lambda_0)$) 
that lie left in $(a, b)$, along with 
a selection of $n$ solutions $\{u^b_j (x; \mu_0, \lambda_0)\}_{j=1}^n$
to (\ref{linear-hammy}) (with $(\mu, \lambda) = (\mu_0, \lambda_0)$) 
that lie right in $(a, b)$ so 
that for each $\lambda \in I$ the restriction of $\mathcal{T}_M (\lambda)$ to the 
($\lambda$-independent) domain 
\begin{equation*}
    \mathcal{D} := \{y \in \mathcal{D}_M: \lim_{x \to a^+} U^a (x; \mu_0, \lambda_0)^* J y(x) = 0,
    \quad \lim_{x \to b^-} U^b (x; \mu_0, \lambda_0)^* J y(x) = 0\}
\end{equation*}
is a self-adjoint operator. We will denote this operator $\mathcal{T} (\lambda)$.

\medskip
(ii) In addition to Assumptions {\bf (A)} through {\bf (D)}, suppose that Assumption
{\bf (A)$^\prime$} holds, and let 
$(\mu_0, \lambda_0) \in (\mathbb{C} \backslash \mathbb{R}) \times I$ be fixed. Let 
$\alpha \in C^1(I, \mathbb{C}^{n \times 2n})$ be a continuously differentiable 
matrix-valued function so that for each $\lambda \in I$, $\rank \alpha (\lambda) = n$ 
and $\alpha (\lambda) J \alpha (\lambda)^* = 0$. Then there
exists a selection of solutions $\{u^b_j (x; \mu_0, \lambda_0)\}_{j=1}^n$
to (\ref{linear-hammy}) (with $(\mu, \lambda) = (\mu_0, \lambda_0)$) that lie right in $(a, b)$ so 
that for each $\lambda \in I$ the restriction of $\mathcal{T}_M (\lambda)$ to the 
($\lambda$-dependent) domain 
\begin{equation*}
    \mathcal{D}^{\alpha} (\lambda) := \{y \in \mathcal{D}_M: \alpha (\lambda) y(a) = 0,
    \quad \lim_{x \to b^-} U^b (x; \mu_0, \lambda_0)^* J y(x) = 0\}
\end{equation*}
is a self-adjoint operator. We will denote this operator $\mathcal{T}^{\alpha} (\lambda)$.
\end{lemma}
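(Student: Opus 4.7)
The plan is to view \eqref{linear-ev-problem}, for each fixed $\lambda \in I$, as the linearly-in-spectral-parameter Hamiltonian system \eqref{linear-hammy} analyzed in \cite{HS2022}. First I would establish the Lagrange identity
\begin{equation*}
\langle \mathcal{T}_M(\lambda) y, z\rangle_{\mathbb{B}_{\lambda}} - \langle y, \mathcal{T}_M(\lambda) z\rangle_{\mathbb{B}_{\lambda}} = (Jy, z)_a^b, \qquad y, z \in \mathcal{D}_M,
\end{equation*}
by integration by parts and the self-adjointness of $\mathbb{B}(x;\lambda)$; existence of the two endpoint limits then follows from finiteness of the left-hand side together with the complete-derivative form of the boundary integrand at interior points.

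Next, I would construct the frames $U^a$ and $U^b$ by applying the \cite{HS2022} selection procedure (reviewed in Section \ref{operator-section}) to \eqref{linear-hammy} at the fixed pair $(\mu_0, \lambda_0)$. This produces $n$ left-lying solutions $\{u^a_j\}_{j=1}^n$ and $n$ right-lying solutions $\{u^b_j\}_{j=1}^n$ for which the Lagrangian property $\lim_{x\to a^+} U^a(x)^* J U^a(x) = 0$ holds (and analogously at $b$), and for which the restriction of the maximal operator associated with \eqref{linear-hammy} at $\lambda_0$ to the domain $\mathcal{D}$ is self-adjoint in $L^2_{\mathbb{B}_{\lambda_0}}((a,b),\mathbb{C}^{2n})$.

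The principal obstacle is to promote self-adjointness to every $\lambda \in I$ while keeping the single $\lambda$-independent domain $\mathcal{D}$. Assumption \textbf{(C)} ensures that $\mathcal{D}_M$ and the Hilbert space are independent of $\lambda$, so $\mathcal{D}$ is well-defined as a fixed set. To verify symmetry of $\mathcal{T}(\lambda)$ I would show that for $y, z \in \mathcal{D}$ the limit conditions in the definition of $\mathcal{D}$, combined with the fact that $\colspan U^a(\cdot;\mu_0, \lambda_0)$ traces to a Lagrangian subspace at $a$, force $(Jy,z)_a = 0$; the same argument applies at $b$, and the Lagrange identity then delivers symmetry. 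For self-adjointness I would reverse this direction: any $z \in \mathcal{D}_M$ satisfying $(Jy,z)_a^b = 0$ for every $y \in \mathcal{D}$ must itself lie in $\mathcal{D}$, which follows from a Glazman--Krein--Naimark argument using the $\lambda$-independent deficiency dimensions $m_a, m_b$ provided by Assumption \textbf{(D)}. The crucial observation is that the bilinear form $(Jy, z)$ depends only on the values of $y$ and $z$ near the endpoints, and not on which operator $\mathcal{T}(\lambda)$ is under consideration, so a single Lagrangian selection at each endpoint suffices uniformly in $\lambda$.

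For part (ii), under Assumption \textbf{(A)}$'$ the endpoint $a$ is regular and $y(a)$ is classical for every $y \in \mathcal{D}_M$, so $(Jy, z)_a = (Jy(a), z(a))$. The hypotheses $\rank \alpha(\lambda) = n$ and $\alpha(\lambda) J \alpha(\lambda)^* = 0$ exactly say that $\ker \alpha(\lambda) \subset \mathbb{C}^{2n}$ is a Lagrangian subspace of the symplectic form $(J\cdot, \cdot)$, so $\alpha(\lambda) y(a) = 0 = \alpha(\lambda) z(a)$ forces $(Jy,z)_a = 0$. At $b$ the argument of part (i) applies verbatim with the same selection $\{u^b_j\}_{j=1}^n$. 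Self-adjointness of $\mathcal{T}^{\alpha}(\lambda)$ then follows for each fixed $\lambda$ by the same GKN argument; the boundary condition is now $\lambda$-dependent through $\alpha(\lambda)$, but the $C^1$ regularity of $\alpha$ is needed only later for the Maslov-index analysis, not for self-adjointness at a single $\lambda$.
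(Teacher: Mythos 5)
Your route is genuinely different from the paper's.  The paper builds the operator from below: it starts with the pre-minimal restriction to $\mathcal{D}_{\mu_0,\lambda_0} = \mathcal{D}_c + \mathrm{Span}\{\tilde u^a_j, \tilde u^b_j\}$, proves essential self-adjointness by showing $\ran(\mathcal{T}_{\mu_0,\lambda_0}(\lambda) - \mu_0 I)^\perp = \{0\}$ and $\ran(\mathcal{T}_{\mu_0,\lambda_0}(\lambda) - \bar\mu_0 I)^\perp = \{0\}$, and only then characterizes the domain of the closure as $\mathcal{D}$.  You instead define the putative self-adjoint operator directly on $\mathcal{D}$, verify the boundary form vanishes, and appeal to a Glazman--Krein--Naimark count.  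Both strategies are legitimate, and your observation that the symplectic boundary form $(Jy,z)_a^b$ depends only on endpoint behavior --- and so is $\lambda$-independent once Assumption \textbf{(C)} fixes $\mathcal{D}_M$ --- is precisely the structural reason a single domain can serve every $\lambda$.

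The gap is in the maximality step.  You write that the reverse implication ``follows from a GKN argument using the $\lambda$-independent deficiency dimensions $m_a, m_b$ provided by Assumption \textbf{(D)},'' but this is exactly where the real work lives and you do not supply it.  To run GKN you must show that the $n + n$ conditions encoded by $U^a$ and $U^b$ cut out a subspace of $\mathcal{D}_M/\mathcal{D}_{\min}$ (with $\mathcal{D}_{\min}$ the closure of $\mathcal{D}_c$) of the correct dimension --- equivalently that every $z \in \mathcal{D}_M$ with $(Jy,z)_a^b = 0$ for all $y \in \mathcal{D}$ already satisfies the two limit conditions.  The paper establishes this through Lemma \ref{representation-lemma}, which decomposes an arbitrary $y \in \mathcal{D}_M$ as $y = y_p + \sum c_j u^b_j + \sum d_j v^b_j$ with $y_p = (\mathcal{T}(\lambda_0) - \mu_0)^{-1}(f(\cdot;\lambda_0) - \mu_0 y)$; note that this decomposition leans essentially on the already-known self-adjointness of $\mathcal{T}(\lambda_0)$ from \cite{HS2022}, which is how the $\lambda_0$-anchor enters the argument for general $\lambda$.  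Without that representation (or some equivalent mechanism for relating the boundary-value quotient at $\lambda$ to the one at $\lambda_0$), the assertion that the deficiency indices come out right for every $\lambda$ is precisely what needs proof, not a hypothesis.  A similar but milder issue appears in your symmetry step: passing from $\lim_{x\to a^+} U^a(x;\mu_0,\lambda_0)^* J y(x) = 0$ (for $y,z \in \mathcal{D}$) to $(Jy,z)_a = 0$ requires the Niessen orthogonality relations of Lemma \ref{krall-niessen-lemma}, together with a decomposition of $y$ and $z$ near $a$ into the Niessen basis; as stated the implication is not immediate.  Part (ii) of your argument is fine once part (i) is patched --- the Lagrangian identification of $\ker\alpha(\lambda)$ is correct and the $b$-endpoint analysis transfers.
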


\begin{remark} \label{lemma1-remark}
We emphasize that while the operator pencils $\mathcal{T} (\lambda)$ and $\mathcal{T}^{\alpha} (\lambda)$
generally depend on $\lambda$, the domain $\mathcal{D}$ depends only on the 
fixed value $\lambda_0 \in I$, and the domain $\mathcal{D}^{\alpha} (\lambda)$ depends on 
$\lambda$ only through the boundary matrix $\alpha (\lambda)$, and otherwise only depends on the 
fixed value $\lambda_0 \in I$. 
\end{remark}

In addition to Assumptions {\bf (A)} through {\bf (D)}, we will require two additional 
assumptions, the first of which is somewhat technical. To motivate this, 
we note that for the case of (\ref{linear-in-lambda}), we have the convenient 
relation 
\begin{equation*}
    \mathbb{B} (x; \lambda_2) - \mathbb{B} (x; \lambda_1)
    = (\lambda_2 - \lambda_1) B_1 (x),
\end{equation*}
and Assumption {\bf (E)} (just below) can be viewed as a generalization of this relation. As 
expected, in the case of (\ref{linear-in-lambda}), this assumption holds trivially.
To set some notation, for any $\lambda_* \in I$ and any $r > 0$ (typically to be 
taken small), we set 
\begin{equation} \label{interval-notation}
I_{\lambda_*, r} := (\lambda_* - r, \lambda_* + r) \cap I.    
\end{equation}

\medskip
{\bf (E)} For each $\lambda_* \in I$, 
there exists a positive constant $r > 0$ and a 
map $\mathcal{E} (\cdot; \cdot, \lambda_*): (a, b) \times I_{\lambda_*, r}
\to \mathbb{C}^{2n \times 2n}$ so that for each 
$\lambda \in I_{\lambda_*, r}$ we have 
\begin{equation} \label{E-B-diff}
    \mathbb{B} (x; \lambda) - \mathbb{B} (x; \lambda_*)
    = \mathbb{B}_{\lambda} (x; \lambda_*) \mathcal{E} (x; \lambda, \lambda_*)
\end{equation}
for a.e. $x \in (a, b)$. In addition, the following hold: (i) for each $\lambda \in I_{\lambda_*, r}$,
$\mathcal{E} (\cdot; \lambda; \lambda_*) \in \mathcal{B} (L^2_{\mathbb{B}_{\lambda}} ((a, b), \mathbb{C}^{2n}))$
(i.e., when viewed as a multiplication operator, the matrix function $\mathcal{E} (\cdot; \lambda; \lambda_*)$
is a bounded linear operator taking $L^2_{\mathbb{B}_{\lambda}} ((a, b), \mathbb{C}^{2n})$
to itself); (ii) $\|\mathcal{E} (\cdot; \lambda, \lambda_*)\| = \mathbf{o} (1)$, $\lambda \to \lambda_*$;
(iii) for a.e. $x \in (a, b)$, the matrix function $\mathcal{E} (x; \lambda, \lambda_*)$
is continuously differentiable in $\lambda$ on $I_{\lambda_*, r}$, 
and the map $\lambda \mapsto \mathcal{E} (\cdot; \lambda; \lambda_*)$ is continuously 
differentiable as a map from $I_{\lambda_*, r}$ to $\mathcal{B} (L^2_{\mathbb{B}_{\lambda}} ((a, b), \mathbb{C}^{2n}))$;
and (iv) given any 
$f, g \in L^2_{\mathbb{B}_{\lambda}} ((a, b), \mathbb{C}^{2n})$, there exists 
$h \in L^1 ((a, b), \mathbb{R})$, depending on 
$f$ and $g$, so that for all $\lambda \in I_{\lambda_*, r}$
\begin{equation*}
|f(x)^* \mathbb{B}_{\lambda} (x; \lambda) g(x) | \le h (x), 
\end{equation*}
for a.e. $x \in (a, b)$. By (\ref{E-B-diff}), this is equivalent to 
\begin{equation*}
|f(x)^* \mathbb{B}_{\lambda} (x; \lambda_*) \mathcal{E}_{\lambda} (x; \lambda, \lambda_*) g(x) | \le h (x).
\end{equation*}

In order to set some notation and terminology for this discussion, we make the following standard
definitions.

\begin{definition} \label{spectrum-definitions}
For each fixed $\lambda \in I$, we define the resolvent set $\rho (\mathcal{T} (\lambda))$,
the spectrum $\sigma (\mathcal{T} (\lambda))$, the point spectrum $\sigma_p (\mathcal{T} (\lambda))$,
and the essential spectrum $\sigma_{\ess} (\mathcal{T} (\lambda))$ in the usual way 
(see, e.g., Definition 1.4 in \cite{HS2022}). In all cases, we categorize $\lambda$
with respect to the operator pencil $\mathcal{T} (\cdot)$ 
according to the categorization of $\mu = 0$ with respect to the 
operator $\mathcal{T} (\lambda)$ (i.e., with respect to the eigenvalue problem
(\ref{linear-ev-problem})). 
Analogous specifications are taken to hold for $\mathcal{T}^{\alpha} (\lambda)$. 
\end{definition}

Our primary tool for this analysis will be the 
Maslov index, and as a starting point for a discussion of this object, we 
define what we will mean by a Lagrangian subspace of $\mathbb{C}^{2n}$. 

\begin{definition} \label{lagrangian_subspace}
We say $\ell \subset \mathbb{C}^{2n}$ is a Lagrangian subspace of $\mathbb{C}^{2n}$
if $\ell$ has dimension $n$ and
\begin{equation} 
(J u, v) = 0, 
\end{equation} 
for all $u, v \in \ell$. In addition, we denote by 
$\Lambda (n)$ the collection of all Lagrangian subspaces of $\mathbb{C}^{2n}$, 
and we will refer to this as the {\it Lagrangian Grassmannian}. 
\end{definition}

Any Lagrangian subspace of $\mathbb{C}^{2n}$ can be
spanned by a choice of $n$ linearly independent vectors in 
$\mathbb{C}^{2n}$. We will generally find it convenient to collect
these $n$ vectors as the columns of a $2n \times n$ matrix $\mathbf{X}$, 
which we will refer to as a {\it frame} for $\ell$. Moreover, we will 
often coordinatize our frames as $\mathbf{X} = \genfrac{(}{)}{0pt}{1}{X}{Y}$, 
where $X$ and $Y$ are 
$n \times n$ matrices. Following \cite{F} (p. 274), we specify 
a metric on $\Lambda (n)$ in terms of appropriate orthogonal projections. 
Precisely, let $\mathcal{P}_i$ 
denote the orthogonal projection matrix onto $\ell_i \in \Lambda (n)$
for $i = 1,2$. I.e., if $\mathbf{X}_i$ denotes a frame for $\ell_i$,
then $\mathcal{P}_i = \mathbf{X}_i (\mathbf{X}_i^* \mathbf{X}_i)^{-1} \mathbf{X}_i^*$.
We take our metric $d$ on $\Lambda (n)$ to be defined 
by 
\begin{equation*}
d (\ell_1, \ell_2) := \|\mathcal{P}_1 - \mathcal{P}_2 \|,
\end{equation*} 
where $\| \cdot \|$ can denote any matrix norm. We will say 
that a path of Lagrangian subspaces 
$\ell: \mathcal{I} \to \Lambda (n)$ is continuous provided it is 
continuous under the metric $d$. 

Suppose $\ell_1 (\cdot), \ell_2 (\cdot)$ denote continuous paths of Lagrangian 
subspaces $\ell_i: \mathcal{I} \to \Lambda (n)$, $i = 1,2$, for some parameter interval 
$\mathcal{I}$ (not necessarily closed and bounded). 
The Maslov index associated with these paths, which we will 
denote $\mas (\ell_1, \ell_2; \mathcal{I})$, is a count of the number of times
the subspaces $\ell_1 (t)$ and $\ell_2 (t)$ intersect as 
$t$ traverses $\mathcal{I}$, counted
with both multiplicity and direction. (In this setting, if we let 
$t_*$ denote the point of intersection (often referred to as a 
{\it crossing point}), then multiplicity corresponds with the dimension 
of the intersection $\ell_1 (t_*) \cap \ell_2 (t_*)$; a precise definition of what we 
mean in this context by {\it direction} will be
given in Section \ref{maslov-section}.) 

In order to relate our results to previous work on renormalized
oscillation theory, we observe that in some cases the Maslov
index can be expressed as a sum of nullities for certain 
evolving matrix Wronskians. To understand this, we first 
specify the following terminology: for two paths of Lagrangian subspaces 
$\ell_1, \ell_2: [a, b] \to \Lambda (n)$, we say 
that the evolution of the pair $\ell_1, \ell_2$
is {\it monotonic} provided all intersections (including full multiplicities) 
occur in the same direction. If the intersections all correspond with the 
positive direction, then we can compute 
\begin{equation*}
    \mas (\ell_1, \ell_2; [a,b])
    = \sum_{t \in (a,b]} \dim (\ell_1 (t) \cap \ell_2 (t)).
\end{equation*}
(Here, we exclude $a$ from the interval we sum over, because
there will be no contribution to the Maslov index
from an initial positive crossing; see Section \ref{maslov-section}
for details on our conventions with the endpoints.)
Suppose $\mathbf{X}_1 (t) = \genfrac{(}{)}{0pt}{1}{X_1 (t)}{Y_1 (t)}$ and 
$\mathbf{X}_2 (t) = \genfrac{(}{)}{0pt}{1}{X_2 (t)}{Y_2 (t)}$ respectively 
denote frames for Lagrangian subspaces of $\mathbb{C}^{2n}$,
$\ell_1 (t)$ and $\ell_2 (t)$. Then we can express
this last relation as 
\begin{equation*}
\mas (\ell_1, \ell_2; [a, b]) 
= \sum_{t \in (a,b]} \dim \ker (\mathbf{X}_1 (t)^* J \mathbf{X}_2 (t)).
\end{equation*}
(See Lemma 2.2 of \cite{HS2}.)

In preparation for formulating our results in the case that (\ref{hammy}) 
is regular at $x = a$, we introduce the $2n \times n$ matrix solution
$\mathbf{X}_{\alpha} (x; \lambda)$ to the initial value problem
\begin{equation} \label{frame-alpha}
    J \mathbf{X}_{\alpha}' = \mathbb{B} (x; \lambda) \mathbf{X}_{\alpha},
    \quad \mathbf{X}_{\alpha} (a; \lambda) = J \alpha (\lambda)^*.
\end{equation}
Under our assumptions {\bf (A)} and {\bf (A)$^\prime$}, 
we can conclude that for each $\lambda \in I$, 
$\mathbf{X}_{\alpha} (\cdot; \lambda) \in AC_{\loc} ([a,b), \mathbb{C}^{2n \times n})$. 
In addition, $\mathbf{X}_{\alpha} \in C([a,b) \times I, \mathbb{C}^{2n \times n})$,
and $\mathbf{X}_{\alpha} (x; \cdot)$ is differentiable in 
$\lambda$. (See, for example, \cite{Weidmann1987}.)
As shown in \cite{HJK2018}, for each pair $(x, \lambda) \in [a,b) \times I$,
$\mathbf{X}_{\alpha} (x; \lambda)$ is the frame for a Lagrangian subspace
of $\mathbb{C}^{2n}$, which we will denote $\ell_{\alpha} (x; \lambda)$. 
(In \cite{HJK2018}, the authors make slightly 
stronger assumptions on $\mathbb{B} (x; \lambda)$, but their proof
carries over immediately into our setting.)

For the frame associated with the right endpoint, 
we let $[\lambda_1, \lambda_2] \subset I$, $\lambda_1 < \lambda_2$,
be such that for all $\lambda \in [\lambda_1, \lambda_2]$,
$0 \notin \sigma_{\ess} (\mathcal{T}^{\alpha} (\lambda))$ 
(equivalently, in our notation, 
$\lambda \notin \sigma_{\ess} (\mathcal{T}^{\alpha} (\cdot))$).
In Section \ref{operator-section}, we will show that 
for each $\lambda \in [\lambda_1, \lambda_2]$, there exists a
$2n \times n$ matrix solution $\mathbf{X}_b (x; \lambda)$
to the ODE 
\begin{equation} \label{frame-b}
J \mathbf{X}_b' = \mathbb{B} (x; \lambda) \mathbf{X}_b,
\quad \lim_{x \to b^-} U^b (x; \mu_0, \lambda_0)^* J \mathbf{X}_b (x; \lambda) = 0,  
\end{equation}
where the matrix $U^b (x; \mu_0, \lambda_0)$ is described in 
Lemma \ref{self-adjoint-operator-lemma} (and the paragraph
leading into that lemma).
In addition, we will check that for each pair $(x, \lambda) \in [a,b) \times [\lambda_1, \lambda_2]$,
$\mathbf{X}_{b} (x; \lambda)$ is the frame for a Lagrangian subspace
of $\mathbb{C}^{2n}$, which we will denote $\ell_b (x; \lambda)$, 
and we will also check that 
$\ell_b \in C([a, b) \times [\lambda_1, \lambda_2], \Lambda (n))$.

In order to conclude monotonicity, we require one final assumption.

\medskip
{\bf (F)} For specified values $\lambda_1, \lambda_2 \in I$, 
$\lambda_1 < \lambda_2$, the matrix $(\mathbb{B} (x; \lambda_2) - \mathbb{B} (x; \lambda_1))$ 
is non-negative for a.e. $x \in (a,b)$, and moreover 
there is no interval $[c, d] \subset (a, b)$, $c < d$, so that 
\begin{equation*}
\dim (\ell_a (x; \lambda_1) \cap \ell_b (x; \lambda_2)) \ne 0
\end{equation*}
for all $x \in [c, d]$. In the case that Assumption {\bf (A)$^\prime$}
holds, the subscript $a$ is replaced by $\alpha$ in this statement. 
\medskip

\begin{remark} \label{assumption-F-remark}
In \cite{HS2} the authors verify that the moreover part of 
{\bf (F)} is implied by the following form of Atkinson 
positivity: for any $[c, d] \subset (a, b)$, $c < d$, and any non-trivial 
solution $y(\cdot; \lambda_1) \in \AC_{\loc} ((a, b), \mathbb{C}^{2n})$ 
of (\ref{hammy}), we must have 
\begin{equation} \label{difference-definite}
\int_c^d (\mathbb{B} (x; \lambda_2) - \mathbb{B} (x; \lambda_1)) y(x; \lambda_1), y(x; \lambda_1)) dx > 0. 
\end{equation}
In the case $\mathbb{B} (x; \lambda) = B_0 (x) + \lambda B_1 (x)$, 
non-negativity of $(\mathbb{B} (x; \lambda_2) - \mathbb{B} (x; \lambda_1))$ corresponds
with non-negativity of the matrix $B_1 (x)$ (since $\lambda_1 < \lambda_2$), 
and the integral conditions
(\ref{positive-definite}) and (\ref{difference-definite})
are both equivalent to Atkinson positivity (see, e.g., Section 4 in \cite{LS2012}
and Section IV.4 in \cite{Krall2002}). 
\end{remark}

In Section \ref{theorems-section}, we will establish the following 
theorem. 

\begin{theorem} \label{regular-singular-theorem}
Let Assumptions {\bf (A)} through {\bf (E)} hold, along with Assumption {\bf (A)$^\prime$}, 
and assume that for some pair $\lambda_1, \lambda_2 \in I$,
$\lambda_1 < \lambda_2$, we have 
$0 \notin \sigma_{\ess} (\mathcal{T}^{\alpha} (\lambda))$ for all 
$\lambda \in [\lambda_1, \lambda_2]$. In addition, let Assumption {\bf (F)} hold
for the values $\lambda_1$ and $\lambda_2$, and for the boundary matrix $\alpha (\lambda)$
specified in Lemma \ref{self-adjoint-operator-lemma}(ii), assume that 
$\alpha (\lambda) J \partial_{\lambda} \alpha^* (\lambda)$ (which is 
necessarily self-adjoint) is 
non-negative at each $\lambda \in [\lambda_1, \lambda_2]$. 
If $\ell_{\alpha} (\cdot; \lambda_1)$ and $\ell_b (\cdot; \lambda_2)$
denote the paths of Lagrangian subspaces of $\mathbb{C}^{2n}$ constructed just above, 
and $\mathcal{N}^{\alpha} ([\lambda_1, \lambda_2))$
denotes a count of the number of eigenvalues that $\mathcal{T}^{\alpha} (\cdot)$
has on the interval $[\lambda_1, \lambda_2)$, then 
\begin{equation} \label{regular-singular-theorem-inequality}
  \mathcal{N}^{\alpha} ([\lambda_1, \lambda_2))
  \ge \mas (\ell_{\alpha} (\cdot; \lambda_1), \ell_b (\cdot; \lambda_2); [a, b))
  - \mas (\ell_{\alpha} (a; \cdot), \ell_b (a; \lambda_2); [\lambda_1, \lambda_2]),
\end{equation}
where 
\begin{equation*}
    \mas (\ell_{\alpha} (\cdot; \lambda_1), \ell_b (\cdot; \lambda_2); [a, b))
    := \lim_{c \to b^-} \mas (\ell_{\alpha} (\cdot; \lambda_1), \ell_b (\cdot; \lambda_2); [a, c]),
\end{equation*}
and part of the assertion is that this limit exists.
If additionally there exists a value $c_b \in (a, b)$ so that for 
all $c \in (c_b, b)$
\begin{equation} \label{condition1}
    \ell_{\alpha} (c; \lambda_1) \cap \ell_b (c; \lambda)
    = \{0\}, \quad \forall \,\, \lambda \in [\lambda_1, \lambda_2), 
\end{equation}
then we have equality in (\ref{regular-singular-theorem-inequality}).
\end{theorem}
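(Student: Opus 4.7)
The plan is a Maslov-box argument based on homotopy invariance of the Maslov index, combined with a limiting argument to handle the singular endpoint $b$. For each $c \in (a,b)$, form the rectangle $R_c := [a,c] \times [\lambda_1, \lambda_2]$ in the $(x,\lambda)$-plane, and consider the two continuous maps $R_c \to \Lambda(n)$ given by $(x,\lambda) \mapsto \ell_\alpha(x; \lambda)$ and $(x,\lambda) \mapsto \ell_b(x; \lambda_2)$ (the second independent of $\lambda$). Both extend continuously to all of $R_c$, which is contractible, so the Maslov index of this pair of paths along $\partial R_c$ vanishes. Decomposing $\partial R_c$ (counterclockwise) into its four sides yields a four-term identity relating two spatial Maslov indices (at $\lambda = \lambda_1$ and $\lambda = \lambda_2$, over $[a,c]$) and two spectral Maslov indices (at $x = a$ and $x = c$, over $[\lambda_1, \lambda_2]$), each comparing $\ell_\alpha$ with $\ell_b(\cdot; \lambda_2)$.

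The key simplification is that the ``top'' spatial Maslov index, at $\lambda = \lambda_2$, vanishes: both $\mathbf{X}_\alpha(\cdot; \lambda_2)$ and $\mathbf{X}_b(\cdot; \lambda_2)$ satisfy the same Hamiltonian system $J\mathbf{X}' = \mathbb{B}(x;\lambda_2)\mathbf{X}$, so the symplectic flow preserves $\dim(\ell_\alpha(x; \lambda_2) \cap \ell_b(x; \lambda_2))$ in $x$ and no net crossing is accumulated. The resulting three-term identity expresses the ``bottom'' Maslov index (the first term on the right of (\ref{regular-singular-theorem-inequality})) minus the ``left'' Maslov index (the second term) as the negative of the ``right'' spectral Maslov index at $x = c$, namely $\mas(\ell_\alpha(c; \cdot), \ell_b(c; \lambda_2); [\lambda_1, \lambda_2])$.

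The main step is to control this right-hand Maslov index by $\mathcal{N}^\alpha([\lambda_1, \lambda_2))$. I would run a second Maslov-box argument on $[\lambda_1, \lambda_2] \times [0,1]$ using the interpolation $(\lambda, s) \mapsto \ell_b(c; \lambda_2 + s(\lambda - \lambda_2))$, which equals the constant $\ell_b(c; \lambda_2)$ at $s = 0$ and the true spectral curve $\ell_b(c; \cdot)$ at $s = 1$. The side at $\lambda = \lambda_2$ is trivial (both paths constant), which yields
\begin{equation*}
\mas(\ell_\alpha(c; \cdot), \ell_b(c; \lambda_2); [\lambda_1, \lambda_2]) = \mas(\ell_\alpha(c; \cdot), \ell_b(c; \cdot); [\lambda_1, \lambda_2]) - \mas(\ell_\alpha(c; \lambda_1), \ell_b(c; \cdot); [\lambda_1, \lambda_2]).
\end{equation*}
The first Maslov index on the right, by the spectral monotonicity derived from Assumptions {\bf (B)}, {\bf (F)}, and the hypothesis $\alpha(\lambda) J \partial_\lambda \alpha^*(\lambda) \ge 0$, counts eigenvalues of $\mathcal{T}^\alpha(\cdot)$ in $[\lambda_1,\lambda_2)$ once $c$ is sufficiently close to $b$; the second ``spurious'' term has a definite sign by monotonicity of $\ell_b(c; \cdot)$ in $\lambda$, and it is precisely this term that produces the inequality (as opposed to equality) in (\ref{regular-singular-theorem-inequality}). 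Condition (\ref{condition1}) asserts that $\ell_\alpha(c; \lambda_1) \cap \ell_b(c; \lambda) = \{0\}$ for all $\lambda \in [\lambda_1, \lambda_2)$ and $c$ near $b$, which forces this spurious term to vanish and upgrades the inequality to equality.

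The hard part is passing to the limit $c \to b^-$: one must show that $\mas(\ell_\alpha(\cdot; \lambda_1), \ell_b(\cdot; \lambda_2); [a,c])$ has a finite limit as $c \to b^-$ (part of the theorem's claim), and that the identities above survive the limit. This relies on the continuity of $\ell_b$ up to $b^-$ established in Section \ref{extension-section}, together with exclusion of infinitely many crossings accumulating at the singular endpoint, which I expect to follow from the uniform structural properties of the frames $\mathbf{X}_\alpha$ and $\mathbf{X}_b$ provided by Assumptions {\bf (A)} through {\bf (E)}.
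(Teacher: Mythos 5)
Your proof matches the paper's argument: the same Maslov box with the shelf at $\lambda=\lambda_2$ contributing zero, the same decomposition relating $\mas(\ell_\alpha(c;\cdot),\ell_b(c;\lambda_2))$ to $\mas(\ell_\alpha(c;\cdot),\ell_b(c;\cdot))$ and a spurious term, the same monotonicity considerations, and the same limiting argument in $c$; your rectangular linear-interpolation homotopy plays exactly the role of the paper's triangular-path Claim \ref{triangle-claim-alpha}. One imprecision is worth noting: the limit as $c \to b^-$ is not obtained from continuity of $\ell_b$ up to $b^-$ (Lemma \ref{lemma2-11prime} gives continuity only on $(c,b)\times[\lambda_1,\lambda_2]$); the paper instead uses spatial positivity of crossings from Assumption {\bf (F)} to show that $\mas(\ell_\alpha(\cdot;\lambda_1),\ell_b(\cdot;\lambda_2);[a,c])$ is nondecreasing in $c$ and bounded above, via the already-established finite-$c$ identity, by $\mathcal{N}^\alpha([\lambda_1,\lambda_2)) + \mas(\ell_\alpha(a;\cdot),\ell_b(a;\lambda_2);[\lambda_1,\lambda_2])$, so the limit exists and is attained for all $c$ sufficiently close to $b$.
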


\begin{remark} \label{regular-singular-remark}
In the case that $\alpha (\lambda)$ is constant in $\lambda$ (i.e., the 
boundary condition at $x = a$ is independent of $\lambda$), we necessarily
have $\mas (\ell_{\alpha} (a; \cdot), \ell_b (a; \lambda_2), [\lambda_1, \lambda_2]) = 0$,
because the maps $\ell_{\alpha} (a; \lambda)$ and $\ell_b (a; \lambda_2)$
are both independent of $\lambda$. As shown in \cite{HS2022}, in the case of (\ref{linear-in-lambda}), 
(\ref{condition1}) can be replaced by the simpler requirement
$0 \notin \sigma_p (\mathcal{T}^{\alpha} (\lambda_1)) \cup \sigma_p (\mathcal{T}^{\alpha} (\lambda_2))$. 
More generally, (\ref{condition1}) implies 
$0 \notin \sigma_p (\mathcal{T}^{\alpha} (\lambda_1))$, and in this 
case $\ell_{\alpha} (c; \lambda_1)$ is the space of solutions of 
(\ref{hammy}) that do not satisfy the specified boundary conditions
at $x = b$, while $\ell_b (c; \lambda)$ is the space of solutions of 
(\ref{hammy}) that do satisfy such conditions (though for varying 
values of $\lambda$). For many important cases, we can use a converse
argument to verify that (\ref{condition1}) holds. 
\end{remark}

In the case that {\bf (A)$^\prime$} doesn't hold, so that (\ref{hammy})
is singular at $x = a$, we let $[\lambda_1, \lambda_2] \subset I$, 
$\lambda_1 < \lambda_2$,
be such that $0 \notin \sigma_{\ess} (\mathcal{T} (\lambda))$ for all 
$\lambda \in [\lambda_1, \lambda_2]$. 
We will show in Section \ref{operator-section} 
that for each $\lambda \in [\lambda_1, \lambda_2]$ there exists a
$2n \times n$ matrix solution $\mathbf{X}_a (x; \lambda)$
to the ODE 
\begin{equation} \label{frame-a}
J \mathbf{X}_a' = \mathbb{B} (x; \lambda) \mathbf{X}_a,
\quad \lim_{x \to a^+} U^a (x; \mu_0, \lambda_0)^* J \mathbf{X}_a (x; \lambda) = 0,  
\end{equation}
where the matrix $U^a (x; \mu_0, \lambda_0)$ is described in 
Lemma \ref{self-adjoint-operator-lemma} (and the paragraph
leading into that lemma). In addition, we will check that for each 
pair $(x, \lambda) \in (a,b) \times [\lambda_1, \lambda_2]$,
$\mathbf{X}_a (x; \lambda)$ is the frame for a Lagrangian subspace
of $\mathbb{C}^{2n}$, which we will denote $\ell_a (x; \lambda)$, 
and that 
$\ell_a \in C((a, b) \times [\lambda_1, \lambda_2], \Lambda (n))$.

In Section \ref{theorems-section}, we will establish the following 
theorem. 

\begin{theorem} \label{singular-theorem}
Let Assumptions {\bf (A)} through {\bf (E)} hold, and
assume that for some pair $\lambda_1, \lambda_2 \in I$,
$\lambda_1 < \lambda_2$, we have $0 \notin \sigma_{\ess} (\mathcal{T} (\lambda))$ for all 
$\lambda \in [\lambda_1, \lambda_2]$. In addition, let Assumption {\bf (F)} hold
for the values $\lambda_1$ and $\lambda_2$. 
If $\ell_a (\cdot; \lambda_1)$ and $\ell_b (\cdot; \lambda_2)$
denote the paths of Lagrangian subspaces of $\mathbb{C}^{2n}$ constructed just above, 
and $\mathcal{N} ([\lambda_1, \lambda_2))$
denotes a count of the number of eigenvalues that $\mathcal{T} (\cdot)$
has on the interval $[\lambda_1, \lambda_2)$, then 
\begin{equation} \label{singular-theorem-inequality}
  \mathcal{N} ([\lambda_1, \lambda_2))
  \ge \mas (\ell_a (\cdot; \lambda_1), \ell_b (\cdot; \lambda_2); (a, b)),
\end{equation}
where the Maslov index $\mas (\ell_{\alpha} (\cdot; \lambda_1), \ell_b (\cdot; \lambda_2); (a, b))$
is computed by taking a limit of the values 
$\mas (\ell_{\alpha} (\cdot; \lambda_1), \ell_b (\cdot; \lambda_2); [c_1, c_2])$
as $c_1 \to a^+$ and $c_2 \to b^-$, and part of the assertion is that 
this double limit exists. 
If additionally there exists a value $c_a \in (a, b)$ so that for 
all $c \in (a, c_a)$
\begin{equation} \label{condition2}
    \ell_a (c; \lambda) \cap \ell_b (c; \lambda_2)
    = \{0\}, \quad \forall \,\, \lambda \in [\lambda_1, \lambda_2), 
\end{equation}
and also a value $c_b \in (a, b)$ so that for 
all $c \in (c_b, b)$
\begin{equation} \label{condition3}
    \ell_a (c; \lambda_1) \cap \ell_b (c; \lambda)
    = \{0\}, \quad \forall \,\, \lambda \in [\lambda_1, \lambda_2), 
\end{equation}
then we have equality in (\ref{singular-theorem-inequality}).
\end{theorem}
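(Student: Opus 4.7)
The plan is to deduce Theorem \ref{singular-theorem} from Theorem \ref{regular-singular-theorem} by regularizing the singular left endpoint and then passing to a limit. For each $c_1 \in (a,b)$ close to $a$, I introduce the $\lambda$-dependent boundary matrix
\begin{equation*}
\alpha^{(c_1)}(\lambda) := \mathbf{X}_a(c_1;\lambda)^{*} J.
\end{equation*}
Since $\mathbf{X}_a(c_1;\lambda)$ spans a Lagrangian subspace, $\alpha^{(c_1)}$ has rank $n$ and satisfies $\alpha^{(c_1)}(\lambda) J \alpha^{(c_1)}(\lambda)^{*} = 0$, so the truncated problem on $(c_1,b)$ has a regular left endpoint and Theorem \ref{regular-singular-theorem} applies. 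Because $J\alpha^{(c_1)}(\lambda)^{*} = \mathbf{X}_a(c_1;\lambda)$, uniqueness for the frame ODE in \eqref{frame-alpha} forces $\ell_{\alpha^{(c_1)}}(x;\lambda) = \ell_a(x;\lambda)$ for all $x \in [c_1,b)$. The monotonicity hypothesis $\alpha^{(c_1)}(\lambda) J \partial_\lambda \alpha^{(c_1)}(\lambda)^{*} \ge 0$ reduces to non-negativity of $\mathbf{X}_a^{*}(c_1;\lambda) J \partial_\lambda \mathbf{X}_a(c_1;\lambda)$, which follows from the standard identity
\begin{equation*}
\partial_x\bigl(\mathbf{X}_a^{*}(x;\lambda) J \partial_\lambda \mathbf{X}_a(x;\lambda)\bigr) = \mathbf{X}_a^{*}(x;\lambda)\, \mathbb{B}_\lambda(x;\lambda)\, \mathbf{X}_a(x;\lambda) \ge 0,
\end{equation*}
combined with the vanishing of $\mathbf{X}_a^{*} J \partial_\lambda \mathbf{X}_a$ as $x \to a^{+}$ implied by the defining singular condition on $\mathbf{X}_a$ in \eqref{frame-a}.

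Writing $A(c_1) := \mas(\ell_a(\cdot;\lambda_1),\ell_b(\cdot;\lambda_2);[c_1,b))$ and $B(c_1) := \mas(\ell_a(c_1;\cdot),\ell_b(c_1;\lambda_2);[\lambda_1,\lambda_2])$, Theorem \ref{regular-singular-theorem} then yields
\begin{equation*}
\mathcal{N}^{\alpha^{(c_1)}}([\lambda_1,\lambda_2)) \;\ge\; A(c_1) - B(c_1),
\end{equation*}
with equality when condition \eqref{condition3} of Theorem \ref{singular-theorem} holds (this is condition \eqref{condition1} of Theorem \ref{regular-singular-theorem} translated via $\ell_{\alpha^{(c_1)}} = \ell_a$). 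The rest of the argument consists in passing to $c_1 \to a^{+}$ and establishing: \emph{(i)} $\mathcal{N}^{\alpha^{(c_1)}}([\lambda_1,\lambda_2)) \to \mathcal{N}([\lambda_1,\lambda_2))$ via a spectral convergence argument for the pencils $\mathcal{T}^{\alpha^{(c_1)}}(\lambda)$ on the essential-spectrum gap; \emph{(ii)} $A(c_1) \to \mas(\ell_a(\cdot;\lambda_1),\ell_b(\cdot;\lambda_2);(a,b))$, with existence of the limit coming from monotonicity under Assumption \textbf{(F)} together with the uniform bound $A(c_1) \le \mathcal{N}^{\alpha^{(c_1)}}([\lambda_1,\lambda_2)) + B(c_1)$; and \emph{(iii)} $B(c_1) \to 0$, which is immediate under condition \eqref{condition2} (no $\lambda$-crossings occur for $c_1 < c_a$) and which in general follows from the vanishing of the crossing form $\mathbf{X}_a^{*} J \partial_\lambda \mathbf{X}_a|_{x=c_1} = \int_a^{c_1} \mathbf{X}_a^{*} \mathbb{B}_\lambda \mathbf{X}_a\,dx$, precluding transversal crossings of the path $\lambda \mapsto \ell_a(c_1;\lambda)$ for $c_1$ close enough to $a$.

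I expect claim \emph{(i)}, the spectral convergence, to be the main obstacle, because the pencils $\mathcal{T}^{\alpha^{(c_1)}}(\lambda)$ are a priori defined on different Hilbert spaces of functions over $(c_1,b)$ and carry $\lambda$-dependent boundary conditions through $\alpha^{(c_1)}(\lambda)$. I plan to handle it by identifying $L^2_{\mathbb{B}_\lambda}((c_1,b),\mathbb{C}^{2n})$ with its image under extension by zero in $L^2_{\mathbb{B}_\lambda}((a,b),\mathbb{C}^{2n})$ and establishing resolvent convergence on the gap $[\lambda_1,\lambda_2]$, using Assumption \textbf{(E)} to control the $\lambda$-dependence uniformly in $c_1$ and the fact that $\mathbf{X}_a(\cdot;\lambda)$ lies left in $(a,b)$ to show truncated eigenfunctions converge to genuine eigenfunctions of $\mathcal{T}(\lambda)$. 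Once all three limit claims are in hand, the inequality \eqref{singular-theorem-inequality} is obtained by dropping the non-negative term $B(c_1)$, and the equality case follows by combining the equality in Theorem \ref{regular-singular-theorem} (provided by \eqref{condition3}) with the vanishing $B(c_1) \equiv 0$ for small $c_1$ (provided by \eqref{condition2}), and then passing to the limit.
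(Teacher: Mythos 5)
Your plan is a genuine alternative to the paper's argument: the paper proves Theorem \ref{singular-theorem} by running a fresh Maslov-box computation on $[c_1,c_2]\times[\lambda_1,\lambda_2]$ and taking the exact box identity
\[
\mathcal{N}([\lambda_1,\lambda_2)) = \mas(\ell_a(\cdot;\lambda_1),\ell_b(\cdot;\lambda_2);[c_1,c_2]) - \mas(\ell_a(c_1;\cdot),\ell_b(c_1;\lambda_2);[\lambda_1,\lambda_2]) - \mas(\ell_a(c_2;\lambda_1),\ell_b(c_2;\cdot);[\lambda_1,\lambda_2])
\]
as the starting point, whereas you reduce to Theorem \ref{regular-singular-theorem} by regularizing the left endpoint with $\alpha^{(c_1)}(\lambda)=\mathbf{X}_a(c_1;\lambda)^*J$. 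The reduction is sound in principle and has the appeal of reusing the already-proved regular-singular theorem rather than redoing the box. However, you have misidentified where the difficulty lies, and the step you flag as the main obstacle is actually vacuous.

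Your claim \emph{(i)} is not a limit at all: for every $c_1\in(a,b)$, $\mathcal{N}^{\alpha^{(c_1)}}([\lambda_1,\lambda_2))=\mathcal{N}([\lambda_1,\lambda_2))$ exactly, with matching multiplicities. Since $\mu=0$ eigenfunctions in either case solve the homogeneous ODE $Jy'=\mathbb{B}(x;\lambda)y$, restriction to $(c_1,b)$ and extension by ODE continuation give a bijection: the boundary condition $\alpha^{(c_1)}(\lambda)y(c_1)=0$ says precisely $y(c_1)\in\ell_a(c_1;\lambda)$, which by uniqueness forces $y=\mathbf{X}_a(\cdot;\lambda)v$ on all of $(a,b)$, and the condition defining $\ell_b$ at $b$ is the same in both problems. (Equivalently, both counts equal $-\mas(\ell_a(c;\cdot),\ell_b(c;\cdot);[\lambda_1,\lambda_2])$ for any interior $c$, and these are the same quantity because $\ell_{\alpha^{(c_1)}}\equiv\ell_a$.) The resolvent-convergence program you sketch is not only unnecessary, it would create new headaches: extension by zero across $x=c_1$ does not preserve $\AC_{\loc}$, so it does not map $\mathcal{D}^{\alpha^{(c_1)}}(\lambda)$ into $\mathcal{D}$, and the Hilbert-space identification is therefore not a domain embedding. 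If you pursue that route you will be fighting a problem that does not exist.

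Two smaller gaps. First, Lemma \ref{self-adjoint-operator-lemma}(ii) and the statement of Theorem \ref{regular-singular-theorem} assume $\alpha\in C^1(I,\mathbb{C}^{n\times 2n})$, while $\mathbf{X}_a(c_1;\cdot)$ is only piecewise $C^1$ in $\lambda$ by Lemma \ref{lemma2-11prime}; you should either argue that the construction goes through with piecewise-$C^1$ boundary data (it does, via path additivity, but this needs saying) or reduce to the $C^1$ case on each patch interval $(\lambda_*^{k-1,k},\lambda_*^{k,k+1})$. Second, your monotonicity argument writes $\mathbf{X}_a^*(c_1;\lambda)J\partial_\lambda\mathbf{X}_a(c_1;\lambda)=\int_a^{c_1}\mathbf{X}_a^*\mathbb{B}_\lambda\mathbf{X}_a\,dx$ for all $\lambda$, but the endpoint-vanishing limit (\ref{diff2}) from Lemma \ref{continuation-lemma} is established only at the base points $\lambda_*^j$; the non-negativity for nearby $\lambda$ then requires the continuity-and-refinement argument that the paper uses in Section \ref{monotonicity-lambda-section}. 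Finally, the ``in general $B(c_1)\to 0$'' claim in \emph{(iii)} is both unnecessary (you only need $B(c_1)\le 0$ for the inequality, which monotonicity gives) and doubtful as stated: vanishing of the crossing form as $c_1\to a^+$ makes crossings degenerate but does not preclude them from contributing to the Maslov index.
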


\begin{remark} \label{singular-remark}
Similarly as with Theorem \ref{regular-singular-theorem}, in the case 
of (\ref{linear-in-lambda}), conditions (\ref{condition2}) and (\ref{condition3})
can be replaced by the simpler requirement
$0 \notin \sigma_p (\mathcal{T} (\lambda_1)) \cup \sigma_p (\mathcal{T} (\lambda_2))$. 
Conditions (\ref{condition2}) and (\ref{condition3}) can be interpreted similarly 
as in Remark \ref{regular-singular-remark}. 
\end{remark}

In the current setting, the necessary monotonicity follows 
from Claims 4.1 and 4.2 of \cite{HS2} (with $(0, 1)$ replaced
by $(a, b)$). With this observation, we obtain the following 
theorem.

\begin{theorem} \label{nullity-theorem}
Under the assumptions of Theorem \ref{regular-singular-theorem} (without 
condition (\ref{condition1})), we can write 
\begin{equation*}
    \mas (\ell_{\alpha} (\cdot; \lambda_1), \ell_b (\cdot; \lambda_2); [a, b))
    = \sum_{x \in (a, b)} \dim \ker \mathbf{X}_{\alpha} (x; \lambda_1)^* J \mathbf{X}_b (x; \lambda_2),
\end{equation*}
and under the assumptions of Theorem \ref{singular-theorem} (without 
conditions (\ref{condition2}) and (\ref{condition3})), 
we can write 
\begin{equation*}
    \mas (\ell_a (\cdot; \lambda_1), \ell_b (\cdot; \lambda_2); (a, b))
    = \sum_{x \in (a, b)} \dim \ker \mathbf{X}_a (x; \lambda_1)^* J \mathbf{X}_b (x; \lambda_2).
\end{equation*}
\end{theorem}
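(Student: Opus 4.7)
The plan is to reduce Theorem \ref{nullity-theorem} to the monotone-crossing formula already recorded just before the statement of Theorem \ref{regular-singular-theorem}, namely the fact (Lemma 2.2 of \cite{HS2}) that when the evolution of a pair $\ell_1(\cdot), \ell_2(\cdot)$ is monotonic in the positive direction on a compact interval $[c_1, c_2]$, one has
\begin{equation*}
\mas(\ell_1, \ell_2; [c_1, c_2]) = \sum_{x \in (c_1, c_2]} \dim\ker\bigl(\mathbf{X}_1(x)^* J \mathbf{X}_2(x)\bigr).
\end{equation*}
Thus the content of Theorem \ref{nullity-theorem} is just (i) monotonicity of the relevant $x$-crossings and (ii) a limiting argument to pass from compact subintervals to the full interval $[a,b)$ or $(a,b)$.

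For step (i), I would invoke Claims 4.1 and 4.2 of \cite{HS2} with the parameter interval $(0,1)$ replaced by $(a,b)$. Those claims assert that, under our Assumption \textbf{(F)} (in particular, non-negativity of $\mathbb{B}(x;\lambda_2) - \mathbb{B}(x;\lambda_1)$ together with the absence of a persistent intersection on any subinterval), the pair $\bigl(\ell_{\alpha}(\cdot;\lambda_1), \ell_b(\cdot;\lambda_2)\bigr)$ and the pair $\bigl(\ell_a(\cdot;\lambda_1), \ell_b(\cdot;\lambda_2)\bigr)$ each cross in the positive direction wherever they cross. The proofs in \cite{HS2} are pointwise in $x$: the crossing form is computed from the Hamiltonian identities $J\mathbf{X}_{\alpha}' = \mathbb{B}(x;\lambda_1)\mathbf{X}_{\alpha}$ and $J\mathbf{X}_b' = \mathbb{B}(x;\lambda_2)\mathbf{X}_b$ and turns out to be (up to similarity) the quadratic form associated with $\mathbb{B}(x;\lambda_2) - \mathbb{B}(x;\lambda_1)$ on the intersection, which is non-negative by \textbf{(F)}. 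Nothing in that local computation uses the specific endpoints of the underlying interval, so transferring the claims from $(0,1)$ to $(a,b)$ is routine.

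For step (ii), given any compact subinterval $[c_1, c_2] \subset (a,b)$ (or $[a, c_2] \subset [a,b)$ in the regular-at-$a$ case), monotonicity together with the formula above gives
\begin{equation*}
\mas(\ell_{\alpha}(\cdot;\lambda_1), \ell_b(\cdot;\lambda_2); [a, c_2]) = \sum_{x \in (a, c_2]} \dim\ker\bigl(\mathbf{X}_{\alpha}(x;\lambda_1)^* J \mathbf{X}_b(x;\lambda_2)\bigr),
\end{equation*}
and analogously for $[c_1, c_2]$ in the two-singular setting. By Theorems \ref{regular-singular-theorem} and \ref{singular-theorem}, the left-hand side has a well-defined limit as $c_2 \to b^-$ (resp.\ $c_1 \to a^+$ and $c_2 \to b^-$); since every summand on the right is a non-negative integer, monotone convergence of this nested limit yields a well-defined sum over $(a,b)$ equal to the corresponding Maslov index.

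The main obstacle is step (i): one must check that the monotonicity arguments of \cite{HS2}, which were written for the two-regular case on $(0,1)$, go through when one of the Lagrangian paths has been constructed as a singular limit (via $\mathbf{X}_b$) and/or when the left boundary condition is singular (via $\mathbf{X}_a$). The relevant point is that the crossing form is evaluated at an interior $x_* \in (a,b)$, where both frames are classical $\AC_{\loc}$ solutions of the corresponding Hamiltonian systems, and the singular construction only affects how the frames are selected, not their pointwise regularity. Consequently the computation of the crossing form, and hence monotonicity, is entirely insensitive to the nature of the endpoints, and Claims 4.1 and 4.2 of \cite{HS2} apply verbatim once the values of $\mathbf{X}_{\alpha}(x_*;\lambda_1)$, $\mathbf{X}_a(x_*;\lambda_1)$, and $\mathbf{X}_b(x_*;\lambda_2)$ are substituted for their regular analogues.
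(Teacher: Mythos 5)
Your plan matches what the paper does: the paper itself offers no expanded proof of this theorem, only the one-sentence observation immediately preceding the statement that the necessary monotonicity follows from Claims 4.1 and 4.2 of \cite{HS2} with $(0,1)$ replaced by $(a,b)$, combined with the nullity formula of Lemma 2.2 of \cite{HS2} (stated in Section~2) and the limit $c_2 \to b^-$ (resp.\ $c_1 \to a^+$) already established in the proofs of Theorems~\ref{regular-singular-theorem} and \ref{singular-theorem}. Your step (i) -- the remark that the crossing-form computation in \cite{HS2} is entirely local in $x$, that both frames are classical $\AC_{\loc}$ solutions at interior points, and that the singular construction only affects how the frames are \emph{selected}, not their pointwise regularity -- is exactly the unstated reason that "replace $(0,1)$ by $(a,b)$" is legitimate, and your step (ii) -- monotone convergence of a nested family of non-negative integer sums to a finite limit -- is the correct formalization of the paper's appeal to the already-established existence of the limiting Maslov index. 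You also correctly track the endpoint convention $\sum_{x \in (a,c_2]}$, consistent with Remark~\ref{nullity-theorem-remark}'s explanation for why $a$ is excluded.
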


\begin{remark} \label{nullity-theorem-remark}
For the first assertion in Theorem \ref{nullity-theorem}, the left endpoint
$a$ is not included on the right-hand side because the intersections 
counted by the Maslov index in this case are monotonically associated
with a direction (counterclockwise) that does not increment the Maslov 
index at departures (as discussed in Section \ref{maslov-section}).
\end{remark}

\section{The Self-Adjoint Operator Pencils $\mathcal{T} (\cdot)$ and $\mathcal{T}^{\alpha} (\cdot)$} 
\label{operator-section}

In this section, we construct the self-adjoint operator pencils 
$\mathcal{T} (\cdot)$ and $\mathcal{T}^{\alpha} (\cdot)$ 
described in Lemma \ref{self-adjoint-operator-lemma}. We begin 
by formulating a version of Green's identity appropriate for this 
setting.

\begin{lemma}[Green's Identity] \label{green-lemma}
Let Assumptions {\bf (A)} hold, and for any fixed $\lambda \in I$ 
let $\mathcal{T}_M (\lambda)$
be the maximal operator specified in Definition \ref{maximal-operator}.
Then for any $y, z \in \mathcal{D}_M (\lambda)$, 
\begin{equation} \label{green1}
    \langle \mathcal{T}_M (\lambda) y, z \rangle_{\mathbb{B}_{\lambda}}
    - \langle y, \mathcal{T}_M (\lambda) z \rangle_{\mathbb{B}_{\lambda}}
    = (Jy, z)_a^b,
\end{equation}
where 
\begin{equation*}
     (Jy,z)_a^b =  (Jy,z)_b -  (Jy,z)_a,
\end{equation*}
with 
\begin{equation*}
    \begin{aligned}
    (Jy,z)_a &:= \lim_{x \to a^+} (J y(x),z(x)), \\
    (Jy,z)_b &:= \lim_{x \to b^-} (J y(x),z(x))
    \end{aligned}
\end{equation*} 
(for which the limits are well-defined).
In particular, if $y$ and $z$ satisfy 
$\mathcal{T}_M (\lambda) y = \mu y$ and $\mathcal{T}_M (\lambda) z = \mu z$
for some $\mu \in \mathbb{C}$, then 
\begin{equation} \label{green2}
    2 i (\mathrm{Im }\mu) \langle y, z \rangle_{\mathbb{B}_{\lambda}} = (Jy,z)_a^b.
\end{equation}
\end{lemma}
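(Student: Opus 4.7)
The plan is to derive (\ref{green1}) from a pointwise computation of $\frac{d}{dx}(Jy(x), z(x))$, integrate over an interior subinterval $[c_1, c_2] \subset (a, b)$, and then pass to the limits $c_1 \to a^+$ and $c_2 \to b^-$. Setting $f := \mathcal{T}_M(\lambda) y$ and $g := \mathcal{T}_M(\lambda) z$, Definition \ref{maximal-operator} gives $Jy' = \mathbb{B}(x;\lambda) y + \mathbb{B}_\lambda(x;\lambda) f$ and $Jz' = \mathbb{B}(x;\lambda) z + \mathbb{B}_\lambda(x;\lambda) g$ for a.e.\ $x \in (a, b)$. Using $J^* = -J$, the self-adjointness of $\mathbb{B}(x;\lambda)$ and $\mathbb{B}_\lambda(x;\lambda)$, and the product rule (valid a.e.\ since $y, z \in \AC_{\loc}$), I would obtain
$$\frac{d}{dx}(Jy, z) = (Jy', z) - (y, Jz') = (\mathbb{B}_\lambda f, z) - (y, \mathbb{B}_\lambda g),$$
where the terms $(\mathbb{B} y, z)$ and $(y, \mathbb{B} z)$ cancel by self-adjointness of $\mathbb{B}(x;\lambda)$.

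Integration over any $[c_1, c_2] \subset (a, b)$ then yields
$$(Jy, z)(c_2) - (Jy, z)(c_1) = \int_{c_1}^{c_2} \bigl[(\mathbb{B}_\lambda f, z) - (y, \mathbb{B}_\lambda g)\bigr]\, dx,$$
and it remains to take limits. The main technical step, and the key place where the $L^2_{\mathbb{B}_\lambda}$ structure is essential, is to verify that the right-hand side is absolutely integrable globally on $(a,b)$. For this I would apply the pointwise Cauchy-Schwarz inequality associated with the nonnegative matrix $\mathbb{B}_\lambda(x;\lambda)$, giving $|(\mathbb{B}_\lambda f, z)(x)| \le (\mathbb{B}_\lambda f, f)^{1/2}(x) \cdot (\mathbb{B}_\lambda z, z)^{1/2}(x)$, followed by scalar Cauchy-Schwarz on $(a, b)$, which produces the bound $\|(\mathbb{B}_\lambda f, z)\|_{L^1(a,b)} \le \|f\|_{\mathbb{B}_\lambda} \|z\|_{\mathbb{B}_\lambda}$, and likewise for $(y, \mathbb{B}_\lambda g)$. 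This shows $\frac{d}{dx}(Jy, z) \in L^1((a, b), \mathbb{C})$, so fixing any $x_0 \in (a, b)$ and writing $(Jy, z)(x) = (Jy, z)(x_0) + \int_{x_0}^x \frac{d}{ds}(Jy, z)\, ds$, both one-sided limits $(Jy, z)_a$ and $(Jy, z)_b$ exist individually in $\mathbb{C}$; passing to the limit in the displayed identity then yields precisely (\ref{green1}).

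For the second assertion, if $\mathcal{T}_M(\lambda) y = \mu y$ and $\mathcal{T}_M(\lambda) z = \mu z$, then by sesquilinearity of $\langle \cdot, \cdot\rangle_{\mathbb{B}_\lambda}$,
$$\langle \mathcal{T}_M(\lambda) y, z\rangle_{\mathbb{B}_\lambda} - \langle y, \mathcal{T}_M(\lambda) z\rangle_{\mathbb{B}_\lambda} = \mu \langle y, z\rangle_{\mathbb{B}_\lambda} - \overline{\mu}\, \langle y, z\rangle_{\mathbb{B}_\lambda} = 2i (\mathrm{Im}\, \mu) \langle y, z\rangle_{\mathbb{B}_\lambda},$$
which, combined with (\ref{green1}), gives (\ref{green2}). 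The main obstacle throughout is the justification of the individual endpoint limits at possibly singular endpoints, which is handled entirely by the $L^1$-integrability of the derivative $\frac{d}{dx}(Jy, z)$ derived above; the rest of the argument is a routine application of the product rule and the self-adjointness of $\mathbb{B}$.
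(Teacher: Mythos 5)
Your proof is correct and follows essentially the same route as the paper's: differentiate $(Jy,z)$ pointwise using the product rule and self-adjointness of $\mathbb{B}$, integrate over a compact subinterval, and pass to the endpoint limits using $L^2_{\mathbb{B}_\lambda}$ membership of $y,z$ and of $f=\mathcal{T}_M(\lambda)y$, $g=\mathcal{T}_M(\lambda)z$. The only cosmetic difference is that you make the $L^1$-integrability of the derivative fully explicit via the two-stage Cauchy--Schwarz estimate before concluding that each one-sided limit exists, whereas the paper fixes $d$, lets $c\to a^+$ first, and then sends $d\to b^-$; the underlying justification is identical.
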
 

\begin{proof} 
To begin, we fix any $\lambda \in I$, and for 
any $y, z \in \mathcal{D}_M (\lambda)$, we let $f, g \in L^2_{\mathbb{B}_{\lambda}} ((a, b), \mathbb{C}^{2n})$
respectively denote the uniquely defined functions so that $\mathcal{T}_M (\lambda) y = f$ and 
$\mathcal{T}_M (\lambda) z = g$. By definition of $\mathcal{D}_M$, this means that we have
the relations
\begin{equation*}
    \begin{aligned}
    J y' - \mathbb{B} (x; \lambda)y &= \mathbb{B}_{\lambda} (x; \lambda) f \\
    J z' - \mathbb{B} (x; \lambda) z &= \mathbb{B}_{\lambda} (x; \lambda) g,
    \end{aligned}
\end{equation*}
for a.e. $x \in (a, b)$. We compute the $\mathbb{C}^{2n}$ inner
product
\begin{equation*}
    (\mathbb{B}_{\lambda} (x; \lambda) \mathcal{T}_M (\lambda) y, z) 
    = (\mathbb{B}_{\lambda} (x; \lambda) f, z) 
    = (J y' - \mathbb{B} (x; \lambda) y, z) 
    = (Jy', z) - (y, \mathbb{B} (x; \lambda) z),
\end{equation*}
where in obtaining the final equality we have used our assumption that 
$\mathbb{B} (x; \lambda)$ is self-adjoint for a.e. $x \in (a, b)$. Likewise, 
\begin{equation*}
\begin{aligned}
    (\mathbb{B}_{\lambda} (x; \lambda) y, \mathcal{T}_M (\lambda) z) 
    &= (\mathbb{B}_{\lambda} (x; \lambda) y, g) 
    = (y, \mathbb{B}_{\lambda} (x; \lambda) g) \\
    &= (y, J z' - \mathbb{B} (x; \lambda) z)
    = (y, Jz') - (y, \mathbb{B} (x; \lambda) z).
\end{aligned}
\end{equation*}
Subtracting the latter of these relations from the former, 
we see that 
\begin{equation*}
    \frac{d}{d x} (Jy,z)
    =  (\mathbb{B}_{\lambda} (x; \lambda) \mathcal{T}_M (\lambda) y, z)
    -  (\mathbb{B}_{\lambda} (x; \lambda) y, \mathcal{T}_M (\lambda) z). 
\end{equation*}
For any $c, d \in (a, b)$, $c < d$, we can integrate this 
last relation to see that 
\begin{equation*}
\begin{aligned}
    &(J y(d), z(d)) - (Jy (c),z (c)) \\
    &= \int_c^d (\mathbb{B}_{\lambda} (x; \lambda) \mathcal{T}_M (\lambda) y (x), z (x)) dx
    - \int_c^d (\mathbb{B}_{\lambda} (x; \lambda) y (x), \mathcal{T}_M (\lambda) z (x)) dx.
\end{aligned}
\end{equation*}
If we allow $d$ to remain fixed, then since $y, z \in L^2_{\mathbb{B}_{\lambda}} ((a, b), \mathbb{C}^{2n})$
we see that the limit 
\begin{equation*}
    (Jy, z)_a := \lim_{c \to a^+} (Jy (c),z (c))
\end{equation*}
is well-defined. In particular, we can write 
\begin{equation*}
    (J y(d), z(d)) - (Jy,z)_a
    = \int_a^d (\mathbb{B}_{\lambda} (x; \lambda) \mathcal{T}_M (\lambda) y (x), z (x)) dx
    - \int_a^d (\mathbb{B}_{\lambda} (x; \lambda) y (x), \mathcal{T}_M (\lambda) z (x)) dx.
\end{equation*}
If we now take $d \to b^-$, we obtain precisely 
(\ref{green1}). Relation (\ref{green2})
is an immediately consequence of (\ref{green1}).
\end{proof}

\begin{remark}
Throughout the proof of Lemma \ref{green-lemma}, $\lambda$ remains fixed, 
so there is no requirement that either the weighted space 
$L^2_{\mathbb{B}_{\lambda}} ((a, b), \mathbb{C}^{2n})$ or the 
maximal domain $D_M (\lambda)$ be independent of $\lambda$. 
\end{remark}

We turn next to the identification of appropriate domains 
$\mathcal{D}$ and $\mathcal{D}^{\alpha}$ on which the 
respective restrictions of $\mathcal{T}_M (\lambda)$ 
are self-adjoint. 
This development is adapted from Section 2 of \cite{HS2022}, 
which in turn follows Chapter 6 in \cite{Pearson1988}.
We begin by making some preliminary definitions. 
We set 
\begin{equation*}
\mathcal{D}_c := \{y \in \mathcal{D}_M: y \textrm{ has compact support in } (a,b)\},
\end{equation*}
and we denote by $\mathcal{T}_c (\lambda)$ the restriction of $\mathcal{T}_M (\lambda)$
to $\mathcal{D}_c$. We can show, as in Theorem 3.9 of \cite{Weidmann1987}
that for each $\lambda \in I$,
$\mathcal{T}_c (\lambda)^* = \mathcal{T}_M (\lambda)$, and from Theorem 3.7 of 
that same reference (adapted to the current setting) that 
$\mathcal{D}_c$ is dense in $L^2_{\mathbb{B}_{\lambda}} ((a, b), \mathbb{C}^{2n})$. 

At this point, we fix some $\lambda_0 \in I$, and in 
addition we fix some $\mu_0 \in \mathbb{C} \backslash \mathbb{R}$, and we emphasize
that these values will remain fixed throughout the analysis. Under our 
assumptions {\bf (A)} through {\bf (C)}, the linear Hamiltonian 
system 
\begin{equation} \label{fixed-hammy}
    J y' = (\mathbb{B} (x; \lambda_0) + \mu_0 \mathbb{B}_{\lambda} (x; \lambda_0))    
\end{equation}
satisfies all the assumptions of the corresponding systems analyzed 
in \cite{HS2022}. This allows us to adapt four useful lemmas from that 
reference, stated here as Lemmas \ref{subspace-dimensions-lemma}
through \ref{niessen-lemma2}. First, we summarize some notation and terminology from 
\cite{HS2022} associated with the {\it Niessen spaces} that will have a 
critical role in our development. We begin by fixing some 
$c \in (a, b)$, and for 
$(\mu, \lambda) \in (\mathbb{C} \backslash \mathbb{R}) \times I$ 
letting $\Phi (x; \mu, \lambda)$ denote the fundamental matrix specified 
by
\begin{equation} \label{phi-specified}
J \Phi' = (\mathbb{B} (x; \lambda) + \mu \mathbb{B}_{\lambda} (x; \lambda)) \Phi; 
\quad \Phi (c; \mu, \lambda) = I_{2n}.
\end{equation}
We define
\begin{equation} \label{mathcal-A-defined}
\mathcal{A} (x; \mu, \lambda) := \frac{1}{2 \rm{Im }\mu} 
\Phi (x; \mu, \lambda)^* (J/i) \Phi (x; \mu, \lambda),
\end{equation}
on $(a,b) \times (\mathbb{C} \backslash \mathbb{R}) \times I$. 
It's clear from this definition that with $\lambda \in I$
fixed, for each $\mu \in \mathbb{C} \backslash \mathbb{R}$, we have
$\mathcal{A} (\cdot; \mu, \lambda) \in \AC_{\loc} ((a, b), \mathbb{C}^{2n \times 2n})$,  
with $\mathcal{A} (x; \mu, \lambda)$ self-adjoint 
for all $(x, \mu) \in (a,b) \times \mathbb{C} \backslash \mathbb{R}$.
It follows that the eigenvalues $\{\nu_j (x; \mu, \lambda)\}_{j=1}^{2n}$ 
of $\mathcal{A} (x; \mu, \lambda)$ can be 
ordered so that $\nu_j (x; \mu, \lambda) \le \nu_{j+1} (x; \mu, \lambda)$
for all $j \in \{1, 2, \dots, 2n-1\}$. In addition, it follows from 
Assumption {\bf (B)} that each $\nu_j (x; \lambda)$ is non-decreasing 
as $x$ increases. 

The following lemma is proven as Lemma 2.1 in \cite{HS2022}.

\begin{lemma} \label{subspace-dimensions-lemma}
Let Assumptions {\bf (A)} and {\bf (B)} hold,
and let $(\mu, \lambda) \in (\mathbb{C} \backslash \mathbb{R}) \times I$ 
be fixed. Then the dimension $m_a (\mu, \lambda)$ of the subspace of solutions to 
(\ref{linear-hammy}) that lie left in $(a, b)$ is precisely 
the number of eigenvalues $\nu_j (x; \mu, \lambda) \in \sigma (\mathcal{A} (x; \mu, \lambda))$
that approach a finite limit as $x \to a^+$. Likewise, 
the dimension $m_b (\mu, \lambda)$ of the subspace of solutions to 
(\ref{linear-hammy}) that lie right in $(a, b)$ is precisely 
the number of eigenvalues $\nu_j (x; \mu, \lambda) \in \sigma (\mathcal{A} (x; \mu, \lambda))$
that approach a finite limit as $x \to b^-$.

In addition, for each eigenvalue-eigenvector pair 
$(\nu_j (x; \mu, \lambda), v_j (x; \mu, \lambda))$ (whether or
not the limits described above exist), there exists a sequence 
$\{x_k\}_{k=1}^{\infty}$, with $x_k \to a^+$ so that
$v_j^a (\mu, \lambda) := \lim_{k \to \infty} v_j (x_k; \mu, \lambda)$ is 
well defined, and also a sequence 
$\{\tilde{x}_k\}_{k=1}^{\infty}$, with $\tilde{x}_k \to b^-$ so that
$v_j^b (\mu, \lambda) := \lim_{k \to \infty} v_j (\tilde{x}_k; \mu, \lambda)$ is well defined.
The collection 
\begin{equation*}
\{\Phi (x; \mu, \lambda) v_j^a (\mu, \lambda) \}_{j=2n-m_a (\mu, \lambda)+1}^{2n}    
\end{equation*}
comprises a basis for the space of solutions to (\ref{linear-hammy}) that lie
left in $(a, b)$, and the collection 
\begin{equation*}
\{\Phi (x; \mu, \lambda) v_j^a (\mu, \lambda) \}_{j = 1}^{2n-m_a (\mu, \lambda)+1}    
\end{equation*}
comprises a basis for the space of solutions to (\ref{linear-hammy}) that do not lie
left in $(a, b)$. Likewise, the collection 
\begin{equation*}
\{\Phi (x; \mu, \lambda) v_j^b (\mu, \lambda) \}_{j=1}^{m_b (\mu, \lambda)}    
\end{equation*}
comprises a basis for the space of solutions to (\ref{linear-hammy}) that lie
right in $(a, b)$, and the collection 
\begin{equation*}
\{\Phi (x; \mu, \lambda) v_j^b (\mu, \lambda) \}_{j = m_b (\mu, \lambda)+1}^{2n}    
\end{equation*}
comprises a basis for the space of solutions to (\ref{linear-hammy}) that do not lie
right in $(a, b)$. 
\end{lemma}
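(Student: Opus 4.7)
The plan is to reduce the lemma to the spectral theory of the self-adjoint family $\mathcal{A}(x;\mu,\lambda)$ near the endpoints, via a Niessen-type argument. My starting point is the differential identity
\begin{equation*}
\frac{d}{dx}\mathcal{A}(x;\mu,\lambda) = \Phi(x;\mu,\lambda)^{*} \mathbb{B}_{\lambda}(x;\lambda)\,\Phi(x;\mu,\lambda),
\end{equation*}
obtained by differentiating (\ref{mathcal-A-defined}), inserting $J\Phi' = (\mathbb{B}+\mu\mathbb{B}_{\lambda})\Phi$ together with its adjoint, and exploiting $J^{*}=-J$, $J^{2}=-I_{2n}$, and the self-adjointness of $\mathbb{B}$ and $\mathbb{B}_{\lambda}$; the prefactor $\frac{1}{2\,\mathrm{Im}\,\mu}$ exactly cancels the factor $(\mu-\bar{\mu})/i = 2\,\mathrm{Im}\,\mu$ that emerges. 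Integrating this identity against any $v \in \mathbb{C}^{2n}$ gives
\begin{equation*}
v^{*}\mathcal{A}(x;\mu,\lambda)v - v^{*}\mathcal{A}(c;\mu,\lambda)v = \int_{c}^{x} \big(\mathbb{B}_{\lambda}(s;\lambda)\Phi(s;\mu,\lambda)v,\,\Phi(s;\mu,\lambda)v\big)\,ds,
\end{equation*}
so that $x \mapsto v^{*}\mathcal{A}(x;\mu,\lambda)v$ is non-decreasing (by Assumption \textbf{(B)} and $\mathbb{B}_{\lambda}\ge 0$), and the solution $\Phi(\cdot;\mu,\lambda)v$ of (\ref{linear-hammy}) lies left (respectively, lies right) in $(a,b)$ precisely when $\lim_{x\to a^{+}} v^{*}\mathcal{A}(x)v$ (respectively, $\lim_{x\to b^{-}} v^{*}\mathcal{A}(x)v$) is finite.

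Next, I would bring in the ordered spectrum. Courant--Fischer applied to the monotonicity above forces each $\nu_{j}(x;\mu,\lambda)$ to be non-decreasing in $x$, so the limits $\nu_{j}^{a} := \lim_{x\to a^{+}}\nu_{j}(x;\mu,\lambda) \in [-\infty,\,\nu_{j}(c;\mu,\lambda)]$ exist; the inequalities $\nu_{j}\le \nu_{j+1}$ pass to the limit, so the set of finite-limit indices is a block of the form $\{2n-k+1,\dots,2n\}$ for some $k \in \{0,\dots,2n\}$. Expanding an arbitrary $v$ in the orthonormal eigenbasis of $\mathcal{A}(x)$ and using $v^{*}\mathcal{A}(x)v = \sum_{j}\nu_{j}(x)\,|\langle v_{j}(x),v\rangle|^{2}$, a standard argument shows that $v^{*}\mathcal{A}(x)v$ remains bounded as $x\to a^{+}$ if and only if the weight of $v$ on the ``diverging'' eigenvectors vanishes in the limit. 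This gives $k = m_{a}(\mu,\lambda)$ and proves the first dimension assertion; the argument at $b$ is symmetric.

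For the second assertion, I would fix orthonormal eigenvectors $v_{j}(x;\mu,\lambda)$ of $\mathcal{A}(x;\mu,\lambda)$ at each $x$ and, for any chosen sequence $x_{k}\to a^{+}$, apply a diagonal extraction using compactness of the unit sphere in $\mathbb{C}^{2n}$ to pass to a subsequence along which all $\{v_{j}(x_{k})\}_{j=1}^{2n}$ converge to unit vectors $\{v_{j}^{a}\}_{j=1}^{2n}$. Orthonormality is preserved in the limit, so the $v_{j}^{a}$ are linearly independent. Combining lower semicontinuity of $v\mapsto \lim_{x\to a^{+}}v^{*}\mathcal{A}(x)v$ with the limits $\nu_{j}^{a}$ identifies, for each $j$, whether $\Phi(\cdot;\mu,\lambda)v_{j}^{a}$ lies left: it does precisely when $\nu_{j}^{a}>-\infty$, i.e., when $j > 2n-m_{a}$. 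Matching with the dimension count from the first part promotes $\{\Phi(\cdot;\mu,\lambda)v_{j}^{a}\}_{j=2n-m_{a}+1}^{2n}$ to a basis for the left-lying solutions, while the complementary collection spans a transverse subspace whose nonzero elements fail to lie left; the analogous construction at $b$ completes the proof. I expect the principal obstacle to be this third step: the extraction must be performed carefully when eigenvalues collide, since then $v_{j}(x)$ cannot be chosen continuously. The key technical tools are preservation of orthonormality under subsequential limits (to guarantee independence of the $v_{j}^{a}$) and the monotone-convergence structure of the quadratic form $v^{*}\mathcal{A}(x)v$ (which converts eigenvalue divergence to failure of lying left or right).
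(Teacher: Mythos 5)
The paper itself does not supply a proof here; it simply cites Lemma 2.1 of \cite{HS2022}, so there is no internal argument to compare against. Your proposal does reproduce the Niessen strategy on which the paper relies (cf.\ the discussion around \eqref{mathcal-B} in Section~\ref{applications-section}): the identity $\frac{d}{dx}\mathcal{A}(x;\mu,\lambda)=\Phi^*\mathbb{B}_{\lambda}\Phi$ is correct and is the right starting point, and the characterization ``$\Phi v$ lies left iff $\lim_{x\to a^+}v^*\mathcal{A}(x)v>-\infty$'' and the monotonicity of the ordered eigenvalues both hold.

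However, your central step --- ``$v^*\mathcal{A}(x)v$ remains bounded as $x\to a^+$ \emph{if and only if} the weight of $v$ on the diverging eigenvectors vanishes in the limit'' --- is not correct as stated, and it is the place where the real work is. The ``only if'' direction is fine (and is exactly your earlier observation), but the ``if'' direction fails: $|\langle v_j(x),v\rangle|^2\to 0$ does not control $\nu_j(x)|\langle v_j(x),v\rangle|^2$ unless the weight decays at least as fast as $1/|\nu_j(x)|$, and you have no a priori control on the rate. Before you can even speak of ``the dimension'' you also need to know that the set $L_a:=\{v:\lim_{x\to a^+}v^*\mathcal{A}(x)v>-\infty\}$ is a subspace; this requires a Cauchy--Schwarz estimate for the PSD form $\mathcal{A}(c)-\mathcal{A}(x)$ to bound the cross terms, which your write-up omits. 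Once that is in place, $\dim L_a\le k$ follows from Courant--Fischer: with an orthonormal basis $w_1,\dots,w_\ell$ of $L_a$, polarization plus monotonicity give $w_i^*\mathcal{A}(x)w_j\to m_{ij}$, so $\min_{v\in L_a,|v|=1}v^*\mathcal{A}(x)v$ is bounded below, forcing $\nu_{2n-\ell+1}^a>-\infty$ and hence $\ell\le k$. For $\dim L_a\ge k$, the clean route is the one your third step nearly reaches: for $j>2n-k$ and a subsequential limit $v_j^a$, fix $y>a$, let $n\to\infty$ to obtain $(v_j^a)^*\mathcal{A}(y)v_j^a\ge\nu_j^a$ (using $v_j(x_n)^*\mathcal{A}(y)v_j(x_n)\ge v_j(x_n)^*\mathcal{A}(x_n)v_j(x_n)=\nu_j(x_n)$ for $x_n<y$), and only then let $y\to a^+$; this iterated-limit order sidesteps the $\nu_1(x_n)\cdot 0$ indeterminacy that defeats a direct bound of $v_j^{a*}\mathcal{A}(x_n)v_j^a$. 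Finally, ``lower semicontinuity of $v\mapsto\lim_{x\to a^+}v^*\mathcal{A}(x)v$'' is the wrong invocation: that map is a pointwise infimum (in $x$) of continuous quadratic forms, hence upper, not lower, semicontinuous; the estimate $q(v_j^a)\ge\nu_j^a$ you need comes from the iterated-limit device above, not from semicontinuity. With these repairs, your orthonormality argument does yield $\mathrm{span}\{v_j^a:j>2n-m_a\}=L_a$ and $\mathrm{span}\{v_j^a:j\le 2n-m_a\}=L_a^{\perp}$, giving the basis assertions. The obstacle you flagged --- eigenvalue collisions making $v_j(x)$ non-unique --- is a minor nuisance handled by the subsequence extraction; the genuine subtlety is the dimension count, which your sketch undercounts.
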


To set some notation, for each $j \in \{2n - m_a (\mu, \lambda) + 1, \dots, 2n\}$,
we set 
\begin{equation*}
    \nu_j^a (\mu, \lambda)
    := \lim_{x \to a^+} \nu_j (x; \mu, \lambda),
\end{equation*}
and likewise for each $j \in \{1, 2, \cdots, m_b (\mu, \lambda)\}$,
we set 
\begin{equation*}
    \nu_j^b (\mu, \lambda)
    := \lim_{x \to b^-} \nu_j (x; \mu, \lambda).
\end{equation*}
Using the elements described in Lemma \ref{subspace-dimensions-lemma}, we can specify
Niessen elements associated with (\ref{hammy}). These specifications are adapted 
from \cite{Niessen70, Niessen71, Niessen72} (as developed in Chapter VI of \cite{Krall2002}).
Since the development is similar at the two endpoints $x = a$ and $x = b$, we will 
work through full details only for $x = b$ and summarize results for $x = a$. 

For $j = 1, 2, \dots, n$, we set 
\begin{equation} \label{niessen-components}
\begin{aligned}
y_j^b (x; \mu, \lambda) &:= \Phi (x; \mu, \lambda) v_j^b (\mu, \lambda) \\
z_j^b (x; \mu, \lambda) &:= \Phi (x; \mu, \lambda) v_{n+j}^b (\mu, \lambda). 
\end{aligned}
\end{equation} 
It's clear from our construction that 
$y_j^b (\cdot; \mu, \lambda)$ lies right in $(a, b)$
for each $j \in \{1, 2, \dots, n\}$, while 
$z_j^b (\cdot; \mu, \lambda)$ lies right in $(a, b)$
if and only if $n+j \in \{1, 2, \dots, m_b (\mu, \lambda)\}$ 
is finite. In what follows, we will find it convenient
to introduce the value $r_b (\mu, \lambda) := m_b (\mu, \lambda) - n$. 
For each $j \in \{1, 2, \dots, n\}$,
we define the two-dimensional space
\begin{equation} \label{niessen-spaces}
N_j^b (\mu, \lambda) := \Span \{y_j^b (\cdot; \mu, \lambda), z_j^b (\cdot; \mu, \lambda)\},
\end{equation}
and following \cite{Krall2002} we refer to the collection 
$\{N_j^b (\mu, \lambda)\}_{j=1}^n$ as the {\it Niessen subspaces} at $b$.
According to our labeling convention, the Niessen subspaces
$\{N_j^b (\mu, \lambda)\}_{j=1}^{r_b (\mu, \lambda)}$ all satisfy 
$\dim N_j^b (\mu, \lambda) \cap L^2_{\mathbb{B}_{\lambda}} ((c, b), \mathbb{C}^{2n}) = 2$,
while the remaining Niessen subspaces $\{N_j^b (\mu, \lambda)\}_{r_b (\mu, \lambda) + 1}^n$ 
satisfy $\dim N_j^b (\mu, \lambda) \cap L^2_{\mathbb{B}_{\lambda}} ((c, b), \mathbb{C}^{2n}) = 1$. 
(Here, $c$ continues to be the value $c \in (a, b)$ fixed just prior to (\ref{phi-specified}).)

In the development so far, it hasn't necessarily been the case that 
$m_a (\mu, \lambda)$ and $m_b (\mu, \lambda)$ are independent of $\mu$ and 
$\lambda$, but at this point we add our Assumption {\bf (D)} and henceforth
denote $m_a (\mu, \lambda)$, $m_b (\mu, \lambda)$, $r_a (\mu, \lambda)$, 
and $r_b (\mu, \lambda)$ respectively $m_a$, $m_b$, $r_a$, and $r_b$.
In this setting, we choose $n$ 
solutions of (\ref{linear-hammy}) that lie right in 
$(a, b)$, taking precisely 
one from each Niessen subspace $N_j^b (\mu, \lambda)$ in the following way. 
First, for each $j \in \{1, 2, \dots, r_b\}$,
we let $\beta_j (\mu, \lambda)$ be any complex
number on the circle 
\begin{equation*}
    |\beta_j^b (\mu, \lambda)| = \sqrt{-\nu_j^b (\mu, \lambda)/\nu_{n+j}^b (\mu, \lambda)},
\end{equation*}
where as discussed in \cite{HS2022} these ratios cannot be 0, and we set 
\begin{equation*}
    u^b_j (x; \mu, \lambda) := y^b_j (x; \mu, \lambda) + \beta_j^b (\mu, \lambda) z^b_j (x; \mu, \lambda). 
\end{equation*}
Next, for each $j \in \{r_b + 1, r_b + 2, \dots, n\}$,
we set 
\begin{equation*}
    u^b_j (x; \mu, \lambda) = y^b_j (x; \mu, \lambda). 
\end{equation*}
Correspondingly, we will denote by $\{r_j^b (\mu, \lambda)\}_{j=1}^n$ 
the vectors specified so that $u^b_j (x; \mu, \lambda) = \Phi (x; \mu, \lambda) r_j^b (\mu, \lambda)$
for each $j \in \{1, 2, \dots, n\}$. Precisely, this means that 
\begin{equation*}
    \begin{aligned}
    r^b_j (\mu, \lambda) &= v_j^b (\mu, \lambda) + \beta^b_j (\mu, \lambda) v_{n+j}^b (\mu, \lambda),
    \quad j \in \{1, 2, \dots, r_b\}, \\
    r^b_j (\mu, \lambda) &= v_j^b (\mu, \lambda), 
    \quad \quad \quad \quad \quad \quad  j \in \{r_b + 1, r_b + 2, \dots, n\}. 
    \end{aligned}
\end{equation*}
We can now collect the vectors $\{r_j^b (\mu, \lambda)\}_{j=1}^n$ 
into a frame 
\begin{equation} \label{b-frame}
\mathbf{R}^b (\mu, \lambda) =
\begin{pmatrix}
r_1^b (\mu, \lambda) & r_2^b (\mu, \lambda) & \dots & r_n^b (\mu, \lambda)
\end{pmatrix}.
\end{equation}

In addition to the above specifications, for the Niessen subspaces 
$\{N_j^b (\mu, \lambda)\}_{j=1}^{r_b}$, it will be useful to introduce
notation for elements linearly independent to the 
$\{u_j^b (x; \mu, \lambda)\}_{j = 1}^{r_b}$. For each 
$j \in \{1, 2, \dots, r_b\}$, we take any 
complex number $\gamma_j (\mu, \lambda)$ so that 
$|\gamma_j (\mu, \lambda)| = |\beta_j (\mu, \lambda)|$ 
but  $\gamma_j (\mu, \lambda) \ne \beta_j (\mu, \lambda)$,
and we define the {\it Niessen complement} to $u_j^b (x; \mu, \lambda)$
to be 
\begin{equation} \label{niessen-complements}
v_j^b (x; \mu, \lambda) 
= y^b_j (x; \mu, \lambda) + \gamma_j^b (\mu, \lambda) z^b_j (x; \mu, \lambda). 
\end{equation}

The following three lemmas are adapted respectively from Lemma 2.3,
Claim 2.1, and Claim 2.2 of \cite{HS2022}.

\begin{lemma} \label{krall-niessen-lemma} Let 
Assumptions {\bf (A)} through {\bf (D)} hold, and 
take the collection of Niessen elements $\{u_j^b (x; \mu, \lambda)\}_{j=1}^{n}$
and the collection of Niessen complements $\{v_j^b (x; \mu, \lambda)\}_{j=1}^{r_b}$
to be specified as above. Then the following hold: 

\medskip
(i) For each $j, k \in \{1, 2, \dots, n\}$, 
\begin{equation*}
    (J u_j^b (\cdot; \mu, \lambda), u_k^b (\cdot; \mu, \lambda))_b = 0.
\end{equation*}

\medskip
(ii) For each $j \in \{1, 2, \dots, n\}$, $k \in \{1, 2, \dots, r_b\}$, 
\begin{equation*}
    (J u_j^b (\cdot; \mu, \lambda), v_k^b (\cdot; \mu, \lambda))_b 
    = \begin{cases}
    0 & j \ne k \\
    \kappa_j^b = 2 i \mathrm{Im }\mu (\nu_j^b (\mu, \lambda) 
    + \gamma^b_j (\mu, \lambda) \beta^b_j (\mu, \lambda) \nu_{n+j}^b (\mu, \lambda)) \ne 0 & j = k. 
    \end{cases}
\end{equation*}
\end{lemma}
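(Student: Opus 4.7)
The plan is to reduce each symplectic pairing $(Ju_j^b, u_k^b)_b$ and $(Ju_j^b, v_k^b)_b$ to a spectral computation for the matrix $\mathcal{A}(x;\mu,\lambda)$, and then pass to the limit $x\to b^-$ using the monotonicity supplied by Assumption \textbf{(B)} together with the right-$L^2_{\mathbb{B}_\lambda}$-integrability of the Niessen elements. The starting identity, immediate from (\ref{mathcal-A-defined}), is
\[
(J\Phi(x)\zeta,\Phi(x)\eta) \;=\; 2i(\mathrm{Im}\,\mu)\bigl(\mathcal{A}(x;\mu,\lambda)\zeta,\eta\bigr), \qquad \zeta,\eta\in\mathbb{C}^{2n}.
\]
Writing $u_j^b=\Phi r_j^b$ with $r_j^b$ as already specified, and $v_k^b=\Phi s_k^b$ with $s_k^b=v_k^b+\gamma_k^b v_{n+k}^b$ for $k\le r_b$, the problem reduces to evaluating $\lim_{x\to b^-}(\mathcal{A}(x)r_j^b,r_k^b)$ and $\lim_{x\to b^-}(\mathcal{A}(x)r_j^b,s_k^b)$.

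To establish that these limits exist, I would differentiate the definition of $\mathcal{A}$, using $J\Phi'=(\mathbb{B}+\mu\mathbb{B}_\lambda)\Phi$, to obtain $\mathcal{A}'(x)=\Phi(x)^*\mathbb{B}_\lambda(x;\lambda)\Phi(x)\ge 0$ a.e. Hence $x\mapsto(\mathcal{A}(x)v,v)$ is monotone non-decreasing for every fixed $v\in\mathbb{C}^{2n}$, and for $v=r_j^b$ the increment from $c$ to $x$ equals $\int_c^x(\mathbb{B}_\lambda u_j^b,u_j^b)\,ds$, bounded above by $\|u_j^b\|_{\mathbb{B}_\lambda}^2<\infty$ since $u_j^b$ lies right in $(a,b)$. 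The same bound holds with $s_k^b$ in place of $r_j^b$ when $k\le r_b$, since the Niessen complement $v_k^b$ also lies right. Monotone boundedness produces the diagonal limits, and polarization produces the off-diagonal limits.

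To compute the limits explicitly, I would extract (by compactness of the unit sphere) a sequence $\tilde x_k\to b^-$ along which every orthonormal eigenvector $v_m(\tilde x_k)$ of $\mathcal{A}(\tilde x_k)$ converges; the limits $v_m^b$ inherit orthonormality, and for $m\le m_b$ they coincide with the vectors introduced in Lemma \ref{subspace-dimensions-lemma}. Expanding in the basis $\{v_m(\tilde x_k)\}$, the spectral expression
\[
(\mathcal{A}(\tilde x_k)r_j^b,r_k^b) \;=\; \sum_m\nu_m(\tilde x_k)\,(r_j^b,v_m(\tilde x_k))\overline{(r_k^b,v_m(\tilde x_k))}
\]
splits into contributions from $m\le m_b$ (which simplify by orthonormality of the $v_m^b$) and from $m>m_b$ (where $\nu_m\to+\infty$, but the products $(r_j^b,v_m(\tilde x_k))\overline{(r_k^b,v_m(\tilde x_k))}$ decay fast enough that each term vanishes, as in the Niessen argument of \cite{HS2022}). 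The outcome is the finite-sum identity
\[
(\mathcal{A}(x)r_j^b,r_k^b)_b \;=\; \sum_{m=1}^{m_b}\nu_m^b\,(r_j^b,v_m^b)\overline{(r_k^b,v_m^b)},
\]
together with the obvious analogue in which $r_k^b$ is replaced by $s_k^b$.

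Finally, substituting the explicit expressions for $r_j^b$ and $s_k^b$ and invoking orthonormality of $\{v_m^b\}_{m=1}^{m_b}$: for part (i), the off-diagonal sums vanish term by term; the diagonal at $j=k\le r_b$ collapses to $\nu_j^b+|\beta_j^b|^2\nu_{n+j}^b=0$ by the defining relation $|\beta_j^b|^2=-\nu_j^b/\nu_{n+j}^b$; and the diagonal at $j=k\in\{r_b+1,\dots,n\}$ reduces to $\nu_j^b$, which vanishes by the Niessen/Krall identification that these middle finite-limit eigenvalues are zero (cf.\ \cite{HS2022,Krall2002}). For part (ii), the same method produces the claimed value $\kappa_j^b=2i(\mathrm{Im}\,\mu)(\nu_j^b+\gamma_j^b\beta_j^b\nu_{n+j}^b)$, and non-vanishing follows from $|\gamma_j^b|=|\beta_j^b|$ combined with $\gamma_j^b\neq\beta_j^b$, which prevents the cancellation that occurred in part (i). The main technical obstacle is the step in the third paragraph where the contribution of the infinite-$\nu_m$ modes must be shown to vanish in the limit; this is the Niessen-theoretic heart of the argument, and the treatment in \cite{HS2022} for the linearly $\lambda$-dependent case transfers to our nonlinear setting essentially unchanged, since it relies only on Assumptions \textbf{(A)}, \textbf{(B)}, and \textbf{(D)}, all of which are in force.
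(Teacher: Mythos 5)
Your reduction of each boundary pairing to $\lim_{x\to b^-}(\mathcal{A}(x)r_j^b,r_k^b)$ (and the analogue with $s_k^b$), together with the identities $(J\Phi\zeta,\Phi\eta)=2i(\mathrm{Im}\,\mu)(\mathcal{A}\zeta,\eta)$ and $\mathcal{A}'(x)=\Phi^*\mathbb{B}_\lambda\Phi\ge0$, and the boundedness via $\|u_j^b\|_{\mathbb{B}_\lambda,(c,b)}^2<\infty$, are all correct, and since the paper gives no direct proof (it cites Lemma 2.3 of \cite{HS2022}) your sketch is consistent with the paper's in deferring to that reference. But I see two concerns in the way you close the argument.

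First, and most seriously, the case $r_b<j=k\le n$ of part (i) is handled by asserting $\nu_j^b=0$ as a pre-existing ``Niessen/Krall identification.'' Once your claimed spectral expansion is granted, $\lim(\mathcal{A}(x)v_j^b,v_j^b)=\nu_j^b$, so the statement $\nu_j^b=0$ is \emph{exactly} the assertion $(Ju_j^b,u_j^b)_b=0$ you are trying to prove for those indices; invoking it as a known fact makes the argument circular unless the references establish $\nu_j^b=0$ \emph{prior to} and independent of the Lagrangian-pairing lemma. The correct logical direction (which I believe is what Niessen/Krall actually do) is to prove $(Ju_j^b,u_j^b)_b=0$ directly for the $L^2$ element $u_j^b=\Phi v_j^b$—for instance by combining the Green's-identity representation $\lim(\mathcal{A}(x)v_j^b,v_j^b)=(\mathcal{A}(c)v_j^b,v_j^b)+\|\Phi v_j^b\|^2_{\mathbb{B}_\lambda,(c,b)}$ with the Fatou-type inequality $\nu_j^b=\lim_k\nu_j(\tilde x_k)\ge(\mathcal{A}(c)v_j^b,v_j^b)+\|\Phi v_j^b\|^2$ and the non-negativity of the tail, which squeezes both the tail and the discrepancy to zero—and then read off $\nu_j^b=0$ as a consequence, not a premise. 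Second, the phrase ``decay fast enough'' understates what is required: convergence of the full quadratic form only shows the tail sum $\sum_{m>m_b}\nu_m(\tilde x_k)|(r_j^b,v_m(\tilde x_k))|^2$ stays bounded, not that each term vanishes; the mechanism that pins the tail to zero is precisely the squeeze just described and should be named. A small cosmetic point: with the convention $(u,v)=v^*u$ used in the paper's Green's identity, the expansion yields $\kappa_j^b=2i(\mathrm{Im}\,\mu)\bigl(\nu_j^b+\overline{\gamma_j^b}\,\beta_j^b\,\nu_{n+j}^b\bigr)$, carrying a conjugate on $\gamma_j^b$; the non-vanishing conclusion ($|\gamma_j^b|=|\beta_j^b|$, $\gamma_j^b\ne\beta_j^b$) is unaffected, but this should be reconciled with the displayed formula.
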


\begin{lemma} \label{niessen-lemma1}
Let Assumptions {\bf (A)} through {\bf (D)} hold, and suppose
the Niessen elements for (\ref{linear-hammy}) are chosen to be
\begin{equation*}
    \begin{aligned}
    u_j^b (x; \mu, \lambda) &= \Phi (x; \mu, \lambda) (v_j^b (\mu, \lambda) + \beta_j^b (\mu, \lambda) v^b_{n+j} (\mu, \lambda)),
    \quad j \in \{1, 2, \dots, r_b\} \\
    v_j^b (x; \mu, \lambda) &= \Phi (x; \mu, \lambda) (v_j^b (\mu, \lambda) + \gamma_j^b (\mu, \lambda) v^b_{n+j} (\mu, \lambda)),
    \quad j \in \{1, 2, \dots, r_b\} \\
    u_j^b (x; \mu, \lambda) &= \Phi (x; \mu, \lambda) v_j^b (\mu, \lambda), 
    \quad \quad \quad \quad \quad \quad j \in \{r_b + 1, r_b + 2, \dots, n\},
    \end{aligned}
\end{equation*}
with $\beta_j^b (\mu, \lambda)$ and $\gamma_j^b (\mu, \lambda)$ specified just above (in particular, 
as well-defined non-zero values).
Then the Niessen elements for (\ref{linear-hammy}) with $\mu$ 
replaced by $\bar{\mu}$ (and $\lambda$ unchanged) can be chosen to be
\begin{equation*}
    \begin{aligned}
    u_j^b (x; \bar{\mu}, \lambda) &= \Phi (x; \bar{\mu}, \lambda) (v_j^b (\bar{\mu}, \lambda) 
    + \beta_j^b (\bar{\mu}, \lambda) v^b_{n+j} (\bar{\mu}, \lambda)), \quad j \in \{1, 2, \dots, r_b\} \\
    v_j^b (x; \bar{\mu}, \lambda) &= \Phi (x; \bar{\mu}, \lambda) (v_j^b (\bar{\mu}, \lambda) 
    + \gamma_j^b (\bar{\mu}, \lambda) v^b_{n+j} (\bar{\mu}, \lambda)), \quad j \in \{1, 2, \dots, r_b\} \\
    u_j^b (x; \bar{\mu}, \lambda) &= \Phi (x; \bar{\mu}, \lambda) v_j^b (\bar{\mu}, \lambda), 
    \quad \quad \quad \quad \quad \quad j \in \{r_b + 1, r_b + 2, \dots, n\},
    \end{aligned}
\end{equation*}
with $\beta_j^b (\bar{\mu}, \lambda) =  - \overline{\beta_j^b (\mu, \lambda)}$
and $\gamma_j^b (\bar{\mu}, \lambda) = - \overline{\gamma_j^b (\mu, \lambda)}$ 
for all $j \in \{1, 2, \dots r_b\}$.
\end{lemma}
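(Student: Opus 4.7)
The strategy is to adapt Claim 2.2 of \cite{HS2022} essentially verbatim: since $\lambda$ is held fixed throughout this lemma, the nonlinear dependence of $\mathbb{B}$ on $\lambda$ plays no role. My starting point would be the symplectic identity
\begin{equation*}
    \Phi(x;\bar\mu,\lambda) = -J\,\Phi(x;\mu,\lambda)^{-*}J,
\end{equation*}
which I would verify by differentiating the right-hand side, using that $\mathbb{B}(x;\lambda)+\mu\mathbb{B}_\lambda(x;\lambda)$ is self-adjoint, $J^*=-J$, and $J^2=-I_{2n}$, to confirm it satisfies (\ref{phi-specified}) with $\mu$ replaced by $\bar\mu$, and then checking that both sides coincide with $I_{2n}$ at $x=c$.

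Given this identity, I would substitute into the definition (\ref{mathcal-A-defined}) of $\mathcal{A}$, accounting for the sign flip $\operatorname{Im}\bar\mu=-\operatorname{Im}\mu$, to obtain an explicit relation between $\mathcal{A}(x;\bar\mu,\lambda)$ and $\mathcal{A}(x;\mu,\lambda)$. From this relation I would read off how the limiting eigenvectors and eigenvalues transform: a natural choice of $v_j^b(\bar\mu,\lambda)$ is obtained by applying to $v_j^b(\mu,\lambda)$ the symplectic-linear map induced by the identity above, and the resulting limits $\nu_j^b(\bar\mu,\lambda)$ pair with the $\nu_k^b(\mu,\lambda)$'s in such a way that the ratios satisfy $\nu_j^b(\bar\mu,\lambda)/\nu_{n+j}^b(\bar\mu,\lambda) = \nu_j^b(\mu,\lambda)/\nu_{n+j}^b(\mu,\lambda)$. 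This equality is exactly what is needed for $-\overline{\beta_j^b(\mu,\lambda)}$ to land on the admissible circle $|\beta_j^b(\bar\mu,\lambda)| = \sqrt{-\nu_j^b(\bar\mu,\lambda)/\nu_{n+j}^b(\bar\mu,\lambda)}$, and similarly for the $\gamma_j^b$.

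The final step is to plug the transformed eigenvectors into the construction (\ref{niessen-components})--(\ref{niessen-complements}). Tracing the conjugations through the symplectic identity, the sign-reversed conjugate assignments $\beta_j^b(\bar\mu,\lambda)=-\overline{\beta_j^b(\mu,\lambda)}$ and $\gamma_j^b(\bar\mu,\lambda)=-\overline{\gamma_j^b(\mu,\lambda)}$ emerge as the precise combinations which reproduce the Niessen element $u_j^b(\,\cdot\,;\bar\mu,\lambda)$ and its Niessen complement $v_j^b(\,\cdot\,;\bar\mu,\lambda)$ for the conjugate spectral parameter. The requirement $\beta_j^b(\bar\mu,\lambda)\ne\gamma_j^b(\bar\mu,\lambda)$ is inherited from $\beta_j^b(\mu,\lambda)\ne\gamma_j^b(\mu,\lambda)$ since conjugation and negation are both injective on $\mathbb{C}$.

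The main obstacle I anticipate is purely bookkeeping: carefully tracking the minus signs and conjugations through the chain of substitutions, and reconciling the labeling of the eigenvalues of $\mathcal{A}(x;\bar\mu,\lambda)$ with the monotone ordering convention $\nu_j\le\nu_{j+1}$ so that the index $j$ used in the statement of the lemma genuinely corresponds between the $\mu$ and $\bar\mu$ constructions. In particular, the minus sign in $-\overline{\beta_j^b(\mu,\lambda)}$ must be traced to the sign change in the prefactor $1/(2\operatorname{Im}\mu)$ of $\mathcal{A}$, and the complex conjugation to the adjoint appearing on the right-hand side of the symplectic identity; once these identifications are correctly aligned, the algebraic content of the lemma is routine.
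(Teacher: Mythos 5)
The paper does not actually supply a proof of Lemma \ref{niessen-lemma1}: the text states only that the lemma is ``adapted from Claim 2.1 of \cite{HS2022}'' and moves on, so there is no internal argument to compare against. That said, your reconstruction is consistent with what the paper elsewhere reveals about the underlying mechanism: the relation $\Phi(x;\bar\mu,\lambda)^* J \Phi(x;\mu,\lambda) = J$ is exactly what is invoked in Remark \ref{hydrogen-eigs-relation-remark} (cited there as equation (2.7) of \cite{HS2022}), and your rewriting $\Phi(x;\bar\mu,\lambda) = -J\,\Phi(x;\mu,\lambda)^{-*}J$ is an equivalent form of it. Substituting into (\ref{mathcal-A-defined}) then gives
\begin{equation*}
\mathcal{A}(x;\bar\mu,\lambda) = \frac{1}{(2\,\mathrm{Im}\,\mu)^2}\,J\,\mathcal{A}(x;\mu,\lambda)^{-1}J,
\end{equation*}
from which the invariance of the ratios $-\nu_j^b/\nu_{n+j}^b$ under $\mu\mapsto\bar\mu$ follows, and with it the fact that $-\overline{\beta_j^b(\mu,\lambda)}$ lands on the correct circle. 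This is the right idea.

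Two places need tightening. First, you assert that $\mathbb{B}(x;\lambda)+\mu\mathbb{B}_\lambda(x;\lambda)$ is self-adjoint; for $\mu\in\mathbb{C}\setminus\mathbb{R}$ it is not. What you actually need, and what makes the symplectic identity close, is the precise adjoint relation $(\mathbb{B}(x;\lambda)+\mu\mathbb{B}_\lambda(x;\lambda))^*=\mathbb{B}(x;\lambda)+\bar\mu\mathbb{B}_\lambda(x;\lambda)$; it is this sign flip in $\mu$ that produces the $\bar\mu$ on the left of the identity in the first place. Second, the ``symplectic-linear map'' $v\mapsto Jv$ does not preserve the monotone index $j$: since the induced map on eigenvalues is $\nu\mapsto -1/((2\,\mathrm{Im}\,\mu)^2\nu)$, which exchanges negatives with positives while preserving order within each half, the image of the $j$-th eigenvector (for $j\le n$) of $\mathcal{A}(x;\mu,\lambda)$ is the $(n+j)$-th eigenvector of $\mathcal{A}(x;\bar\mu,\lambda)$, not the $j$-th. (Alternatively, if $\mathbb{B}$ is real-valued one may use the complex-conjugation correspondence $v_j\mapsto\bar v_j$ of Remark \ref{hydrogen-eigs-relation-remark}, which does preserve the index $j$ and keeps $\nu_j^b$ unchanged.) You correctly flag the relabeling as the main bookkeeping obstacle, but the details above show it is not merely cosmetic; the index swap $j\leftrightarrow n+j$ under the $J$-map is exactly what makes the sign-reversed conjugate $-\overline{\beta_j^b}$ (rather than plain $\overline{\beta_j^b}$) the coherent choice, so working this out carefully is what clinches the lemma.
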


\begin{lemma} \label{niessen-lemma2}
Let the Assumptions and notation of Lemma \ref{niessen-lemma1} hold,
and let $\mathbf{R}^b (\mu, \lambda)$ denote the matrix defined in (\ref{b-frame}).
If $\mathbf{R}^b (\bar{\mu}, \lambda)$ denotes the matrix defined in 
(\ref{b-frame}) with $\mu$ replaced by $\bar{\mu}$
and the Niessen elements described in Lemma \ref{niessen-lemma1},
then 
\begin{equation*}
    \mathbf{R}^b (\bar{\mu}, \lambda)^* J \mathbf{R}^b (\mu, \lambda) = 0.
\end{equation*}
\end{lemma}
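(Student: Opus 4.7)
The plan is to exploit a symplectic invariant linking the fundamental matrices at conjugate spectral parameters: this reduces the claim to an $x$-independent identity, which can then be evaluated in the limit $x \to b^-$ using the Niessen structure.

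First, I would establish the pointwise identity
\begin{equation*}
\Phi(x;\bar{\mu},\lambda)^* J \Phi(x;\mu,\lambda) = J, \qquad x \in (a,b).
\end{equation*}
Differentiating the left-hand side and substituting $J\Phi'(x;\mu,\lambda) = (\mathbb{B}(x;\lambda) + \mu \mathbb{B}_\lambda(x;\lambda))\Phi(x;\mu,\lambda)$ together with the corresponding relation for $\bar{\mu}$, and invoking the self-adjointness of $\mathbb{B}(x;\lambda)$ and $\mathbb{B}_\lambda(x;\lambda)$ (where the crucial point is that $\overline{\bar{\mu}} = \mu$ makes the two $\mu$-dependent contributions match), the two terms in the derivative cancel. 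The initial condition $\Phi(c;\mu,\lambda) = \Phi(c;\bar{\mu},\lambda) = I_{2n}$ gives the value $J$ at $x = c$, so the identity persists on $(a,b)$.

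Combining this identity with $u_j^b(x;\mu,\lambda) = \Phi(x;\mu,\lambda) r_j^b(\mu,\lambda)$ and the analog for $\bar{\mu}$, and letting $\mathbf{U}^b(x;\cdot,\lambda)$ collect the $u_j^b(x;\cdot,\lambda)$ as columns, I obtain
\begin{equation*}
\mathbf{R}^b(\bar{\mu},\lambda)^* J \mathbf{R}^b(\mu,\lambda) = \mathbf{U}^b(x;\bar{\mu},\lambda)^* J \mathbf{U}^b(x;\mu,\lambda)
\end{equation*}
for every $x \in (a,b)$. The left-hand side is thus $x$-independent and coincides with its limit as $x \to b^-$. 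The claim is therefore reduced to showing that each entry $u_j^b(x;\bar{\mu},\lambda)^* J u_k^b(x;\mu,\lambda)$ vanishes as $x \to b^-$. I would split this into cases according to whether $j$ and $k$ lie in $\{1,\ldots,r_b\}$ (so the $z$-piece is present) or in $\{r_b+1,\ldots,n\}$ (only the $y$-piece), substitute the explicit Niessen representations from Lemma \ref{niessen-lemma1}, and expand each entry into contributions of the form $y_j^{b*}(x;\bar{\mu}) J y_k^b(x;\mu)$, $z_j^{b*}(x;\bar{\mu}) J z_k^b(x;\mu)$, and the cross terms. Each such piece can be rewritten using the eigendecomposition $\Phi(x;\mu)^* J \Phi(x;\mu) = 2i\,\mathrm{Im}(\mu)\,\mathcal{A}(x;\mu)$ (and its analog for $\bar{\mu}$), and then passed to the limit along a common sequence $\tilde{x}_k \to b^-$ along which all the eigenvector limits $v_l^{b\pm}$ from Lemma \ref{subspace-dimensions-lemma} exist. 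The compatibility choice $\beta_j^b(\bar{\mu},\lambda) = -\overline{\beta_j^b(\mu,\lambda)}$ from Lemma \ref{niessen-lemma1} is precisely what is needed to cancel the surviving diagonal contributions, in direct parallel with the mechanism underlying Lemma \ref{krall-niessen-lemma}(i); off-diagonal contributions vanish because the limiting eigenvectors associated with distinct eigenvalues are asymptotically orthogonal.

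I expect the principal difficulty to be the careful bookkeeping in this final limit step: one must simultaneously handle the regimes where $\nu_l^{b\pm}$ is finite versus where it diverges, track that the $z_j^b$ Niessen-complement piece contributes only for $j \le r_b$, and verify that the coefficients $\beta_j^\pm$ enter with exactly the signs supplied by Lemma \ref{niessen-lemma1} so that the cancellations go through. The structural rigidity provided by the first step is what makes the boundary analysis conclusive: once the relevant expression is known to be $x$-independent, demonstrating vanishing in the limit at $b$ establishes vanishing of the global constant, which is the content of the lemma.
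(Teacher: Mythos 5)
Your first two steps are sound: the symplectic identity $\Phi(x;\bar\mu,\lambda)^*J\Phi(x;\mu,\lambda) = J$ does hold (it appears as equation (2.7) of the cited reference and is reproduced in Remark~\ref{hydrogen-eigs-relation-remark}), and it immediately makes $\mathbf{U}^b(x;\bar\mu,\lambda)^*J\mathbf{U}^b(x;\mu,\lambda) = \mathbf{R}^b(\bar\mu,\lambda)^*J\mathbf{R}^b(\mu,\lambda)$ an $x$-independent matrix whose vanishing is all that must be shown.

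The final step is where there is a genuine gap. Once you invoke the symplectic identity, each piece $y_j^b(x;\bar\mu)^*Jy_k^b(x;\mu)$ collapses at once to the $x$-free scalar $v_j^b(\bar\mu,\lambda)^*Jv_k^b(\mu,\lambda)$; there is no $\mathcal{A}(x;\mu)$ left to eigendecompose, so the identity $\Phi(x;\mu)^*J\Phi(x;\mu) = 2i\,\mathrm{Im}(\mu)\,\mathcal{A}(x;\mu)$ is not applicable here -- it concerns the same-argument form, whereas the form you need, $\Phi(\bar\mu)^*J\Phi(\mu)$, is simply $J$. For the same reason, ``the mechanism underlying Lemma~\ref{krall-niessen-lemma}(i)'' is the wrong reference point: there, the vanishing of $(Ju_j^b(\mu),u_k^b(\mu))_b$ arises from $r_j^b(\mu)^*\mathcal{A}(x;\mu)r_k^b(\mu) \to \delta_{jk}(\nu_j^b + |\beta_j^b|^2\nu_{n+j}^b) = 0$ via $|\beta_j^b|^2 = -\nu_j^b/\nu_{n+j}^b$. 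The present cross-$\mu$ expression $r_j^b(\bar\mu)^*Jr_k^b(\mu)$ contains no eigenvalues $\nu_l^b$ at all, so that cancellation mechanism has nothing to act on, and the $\beta$-convention by itself cannot save you.

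What is actually needed, and missing from your plan, is a structural link between the Niessen vectors at $\bar\mu$ and those at $\mu$. The same symplectic identity yields $\mathcal{A}(x;\bar\mu,\lambda) = (2\,\mathrm{Im}\,\mu)^{-2}J\,\mathcal{A}(x;\mu,\lambda)^{-1}J$, which shows $Jv_{n+j}(x;\mu,\lambda)$ is (up to phase) the eigenvector of $\mathcal{A}(x;\bar\mu,\lambda)$ for its $j$th-smallest eigenvalue; passing to a common subsequence $\tilde{x}_k\to b^-$ gives $v_j^b(\bar\mu,\lambda) = \alpha_j\,Jv_{n+j}^b(\mu,\lambda)$ with $|\alpha_j|=1$. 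Inserting this into $r_j^b(\bar\mu)^*Jr_k^b(\mu)$ and using orthonormality of $\{v_l^b(\mu,\lambda)\}$, the off-diagonal entries vanish and the diagonal entry reduces to
$\delta_{jk}\bigl(\bar\alpha_j\beta_j^b(\mu,\lambda) + \overline{\beta_j^b(\bar\mu,\lambda)}\,\bar\alpha_{n+j}\bigr)$;
this is killed by $\beta_j^b(\bar\mu,\lambda) = -\overline{\beta_j^b(\mu,\lambda)}$ precisely when the $v_j^b(\bar\mu,\lambda)$ are normalized so that $\alpha_j = \alpha_{n+j}$. Without that link, the cancellation you assert is not established, and the bookkeeping you flagged as the principal difficulty cannot be completed with the tools you list.
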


As noted in \cite{HS2022}, with appropriate labeling, statements analogous to 
Lemmas \ref{krall-niessen-lemma},
\ref{niessen-lemma1} and \ref{niessen-lemma2}
can be established with $b$ replaced by $a$. 

Proceeding now with fixed values $\lambda_0 \in I$, 
and $\mu_0 \in \mathbb{C} \backslash \mathbb{R}$, we let 
$\{u_j^b (x; \mu_0, \lambda_0)\}_{j=1}^n$ denote a selection 
of Niessen elements as described in Lemma 
\ref{niessen-lemma1}, and we denote by $U^b (x; \mu_0, \lambda_0)$
the $2n \times n$ matrix comprising the vectors 
$\{u_j^b (x; \mu_0, \lambda_0)\}_{j=1}^n$ as its columns.
Likewise we let $\{u_j^a (x; \mu_0, \lambda_0)\}_{j=1}^n$
denote a selection of Niessen elements that can 
similarly be specified in association with $x = a$,
and we denote by $U^a (x; \mu_0, \lambda_0)$
the $2n \times n$ matrix comprising the vectors 
$\{u_j^a (x; \mu_0, \lambda_0)\}_{j=1}^n$ as its columns. 
In \cite{HS2022}, the authors verify that we can construct functions 
$\{\tilde{u}_j^a (x; \mu_0, \lambda_0)\}_{j=1}^n$
and $\{\tilde{u}_j^b (x; \mu_0, \lambda_0)\}_{j=1}^n$
so that for each $j \in \{1, 2, \dots, n\}$
we have 
$\tilde{u}_j^a (\cdot; \mu_0, \lambda_0), \tilde{u}_j^b (\cdot; \mu_0, \lambda_0)
\in \mathcal{D}_M$, and moreover 
\begin{equation} \label{niessen-modified}
    \tilde{u}_j^a (x; \mu_0, \lambda_0)
    = \begin{cases}
    u_j^a (x; \mu_0, \lambda_0) & \mathrm{near }\,\, x = a \\
    0 & \mathrm{near }\,\, x = b
    \end{cases}; \quad \quad
    \tilde{u}_j^b (x; \mu_0, \lambda_0)
    = \begin{cases}
    0 & \mathrm{near }\,\, x = a \\
    u_j^b (x; \mu_0, \lambda_0) & \mathrm{near }\,\, x = b.
    \end{cases}
\end{equation}
(See Lemma 2.5 in \cite{HS2022}, which is adapted directly 
from Lemma 3.1 in \cite{SunShi2010}.)

We now specify the domain 
\begin{equation} \label{D1-def}
    \mathcal{D}_{\mu_0, \lambda_0} := \mathcal{D}_c 
    + \mathrm{Span}\,\Big{\{} \{\tilde{u}_j^a (\cdot; \mu_0, \lambda_0)\}_{j=1}^n, 
    \{\tilde{u}_j^b (\cdot; \mu_0, \lambda_0)\}_{j=1}^n \Big{\}},
\end{equation}
and for each $\lambda \in I$ we denote by $\mathcal{T}_{\mu_0, \lambda_0} (\lambda)$ the restriction of $\mathcal{T}_M (\lambda)$ 
to $\mathcal{D}_{\mu_0, \lambda_0}$. We know from Theorem 2.1 of \cite{HS2022} that 
$\mathcal{T}_{\mu_0, \lambda_0} (\lambda_0)$ is a self-adjoint operator. In addition, 
we establish in the next theorem that for each $\lambda \in I$,
$\mathcal{T}_{\mu_0, \lambda_0} (\lambda)$ is essentially self-adjoint. We note that the proof of Theorem \ref{self-adjointness-theorem} also establishes Item (i) in Lemma \ref{self-adjoint-operator-lemma}.

\begin{theorem} \label{self-adjointness-theorem}
Let Assumptions {\bf (A)} through {\bf (D)} hold. 
Then for each $\lambda \in I$, 
the operator $\mathcal{T}_{\mu_0, \lambda_0} (\lambda)$ is essentially self-adjoint, and 
so in particular, $\mathcal{T} (\lambda) := \overline{\mathcal{T}_{\mu_0, \lambda_0} (\lambda)} 
= \mathcal{T}_{\mu_0, \lambda_0} (\lambda)^*$ 
is self-adjoint. (Here and below, overbar denotes closure.) The domain $\mathcal{D}$ of $\mathcal{T} (\lambda)$
is 
\begin{equation}
    \mathcal{D} = \{y \in \mathcal{D}_M: \lim_{x \to a^+} U^a (x; \mu_0, \lambda_0)^* J y(x) = 0,
    \quad \lim_{x \to b^-} U^b (x; \mu_0, \lambda_0)^* J y(x) = 0\}.
\end{equation}
\end{theorem}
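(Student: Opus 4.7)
The plan is to follow the template of Theorem~2.1 of \cite{HS2022}, exploiting that $\mathcal{D}_{\mu_0,\lambda_0}$ is $\lambda$-independent so that the self-adjointness argument can be transferred from the reference parameter $\lambda_0$ to an arbitrary $\lambda \in I$ using the \emph{same} fixed Niessen data at $(\mu_0,\lambda_0)$. The key inputs will be Green's identity from Lemma \ref{green-lemma}, the relation $\mathcal{T}_c(\lambda)^* = \mathcal{T}_M(\lambda)$, and the Niessen identities collected in Lemmas \ref{krall-niessen-lemma}--\ref{niessen-lemma2}. A crucial observation is the standard criterion that a symmetric operator $A$ is essentially self-adjoint if and only if its adjoint $A^*$ is itself symmetric; this splits the proof into (i) identifying $\mathcal{T}_{\mu_0,\lambda_0}(\lambda)^*$ explicitly as $\mathcal{T}_M(\lambda)|_{\mathcal{D}}$, and (ii) verifying that this operator is symmetric.

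First I would verify that $\mathcal{T}_{\mu_0,\lambda_0}(\lambda)$ is symmetric on $\mathcal{D}_{\mu_0,\lambda_0}$. By Green's identity this reduces to checking $(Jy,z)_a^b = 0$ for $y,z$ in the generating family $\mathcal{D}_c \cup \{\tilde{u}_j^a\}_{j=1}^n \cup \{\tilde{u}_j^b\}_{j=1}^n$. Pairs with at least one element in $\mathcal{D}_c$ contribute zero by compact support; mixed $\tilde{u}_j^a$--$\tilde{u}_k^b$ pairs vanish because the two functions have disjoint support near each endpoint thanks to \eqref{niessen-modified}; and the remaining pure Niessen pairs are killed by the isotropy relation $(Ju_j^b,u_k^b)_b = 0$ from Lemma \ref{krall-niessen-lemma}(i) and its counterpart at $a$. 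Since $\mathcal{T}_c(\lambda) \subset \mathcal{T}_{\mu_0,\lambda_0}(\lambda)$, the adjoint satisfies $\mathcal{T}_{\mu_0,\lambda_0}(\lambda)^* \subset \mathcal{T}_c(\lambda)^* = \mathcal{T}_M(\lambda)$, so a second application of Green's identity shows that $z \in \mathcal{D}_M$ lies in the adjoint domain exactly when $(Jy,z)_a^b = 0$ for every $y \in \mathcal{D}_{\mu_0,\lambda_0}$. Testing against $y = \tilde{u}_j^a$ (which vanishes near $b$) forces $(Ju_j^a, z)_a = 0$ for each $j$, i.e.\ $\lim_{x\to a^+} U^a(x;\mu_0,\lambda_0)^* J z(x) = 0$; testing against $\tilde{u}_j^b$ gives the analogous condition at $b$. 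This identifies $\mathcal{T}_{\mu_0,\lambda_0}(\lambda)^* = \mathcal{T}_M(\lambda)|_{\mathcal{D}}$.

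The main obstacle is then to show that $\mathcal{T}_M(\lambda)|_{\mathcal{D}}$ is symmetric, i.e.\ that $(Jy,z)_a^b = 0$ for all $y,z \in \mathcal{D}$. This is nontrivial because the Niessen family $\{u_j^b\}_{j=1}^n$ spans only $n$ of the $m_b$ directions lying right. The argument, borrowed directly from \cite{HS2022}, uses the complementary family $\{v_j^b\}_{j=1}^{r_b}$: the non-degenerate boundary pairing $(Ju_j^b, v_k^b)_b = \kappa_j^b\,\delta_{jk}$ of Lemma \ref{krall-niessen-lemma}(ii), together with the orthogonality relation of Lemma \ref{niessen-lemma2}, translates the boundary condition $\lim U^b(x)^* J y = 0$ into the statement that $y$ is asymptotically orthogonal at $b$ to the $v_j^b$-directions, hence asymptotically lies in the isotropic span of the $u_j^b$; combined with $(Ju_j^b, u_k^b)_b = 0$, this yields $(Jy,z)_b = 0$, and the symmetric analysis at $a$ completes the proof. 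The delicate point is justifying the asymptotic Niessen decomposition for a general $\lambda \neq \lambda_0$, and this is precisely where Assumption~\textbf{(C)} enters: the $\lambda$-independence of $\mathcal{D}_M$ and the equivalence of the weighted $L^2$ spaces guarantee that the limiting expansions used at $\lambda_0$ in \cite{HS2022} remain meaningful at every $\lambda \in I$.
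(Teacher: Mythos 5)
Your proposal is correct, but it organizes the argument differently from the paper. The paper proves essential self-adjointness via the range criterion (Theorem~5.21 of \cite{Weidmann1980}): for $u$ orthogonal to $\ran(\mathcal{T}_{\mu_0,\lambda_0}(\lambda) - \bar\mu_0 I)$, one shows $u \in \mathcal{D}_M$ satisfies $\mathcal{T}_M(\lambda)u = \mu_0 u$ together with the Niessen boundary limits, then uses the representation lemma to derive $(Ju,u)_a = (Ju,u)_b = 0$, so that Green's identity \eqref{green2} forces $\|u\|_{\mathbb{B}_\lambda} = 0$; only afterward is the domain of $\mathcal{T}(\lambda)$ identified. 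You instead identify $\mathcal{T}_{\mu_0,\lambda_0}(\lambda)^* = \mathcal{T}_M(\lambda)|_{\mathcal{D}}$ first, then invoke the equivalent criterion that a densely defined symmetric operator is essentially self-adjoint iff its adjoint is symmetric. What your packaging buys is that step~3 can be made nearly trivial once you notice that the boundary pairing $(Jy,z)_a^b$ does \emph{not} depend on $\lambda$: for $y,z \in \mathcal{D}$, apply Green's identity \eqref{green1} at $\lambda=\lambda_0$ and use the known self-adjointness of $\mathcal{T}(\lambda_0) = \mathcal{T}_M(\lambda_0)|_{\mathcal{D}}$ (Theorem~2.1 of \cite{HS2022}) to get $(Jy,z)_a^b = 0$ directly, with no need to revisit the Niessen decomposition. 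Your current sketch of step~3, by contrast, re-derives this via the decomposition $y = y_p + \sum c_j u_j^b$ near $b$, which itself presupposes the resolvent $(\mathcal{T}(\lambda_0)-\mu_0)^{-1}$ to build $y_p$; nothing is wrong, but you gain nothing over the clean $\lambda$-independence observation. One point that should be made more precise: the phrase that $y$ ``asymptotically lies in the isotropic span of the $u_j^b$'' after the boundary condition kills the $d_j$ is inaccurate --- $y_p$ is not in that span, and the vanishing of cross-pairings such as $(Jy_p, z_p)_b$ and $(Jy_p, u_k^b)_b$ requires the separate truncation argument (Lemma~2.5 of \cite{HS2022}) plus self-adjointness of $\mathcal{T}(\lambda_0)$. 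Both routes lean on the same ingredients (Green's identity, the Niessen pairings of Lemma~\ref{krall-niessen-lemma}, the representation of Lemma~\ref{representation-lemma}, and the already-established self-adjointness at $\lambda_0$), so neither is more general; yours is slightly shorter if step~3 is done via the $\lambda$-independence trick, while the paper's is the more standard deficiency-index template.
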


\begin{proof}[Proof of Theorem \ref{self-adjointness-theorem}]
We begin by fixing some $\lambda \in I$ and checking that the 
operator $\mathcal{T}_{\mu_0, \lambda_0} (\lambda)$ is symmetric. 
Using (\ref{green1}), we immediately see that for any $y, z \in \mathcal{D}_c$ we have 
\begin{equation*}
    \langle \mathcal{T}_{\mu_0, \lambda_0} (\lambda) y, z \rangle_{\mathbb{B}_{\lambda}} 
    - \langle  y, \mathcal{T}_{\mu_0, \lambda_0} (\lambda) z \rangle_{\mathbb{B}_{\lambda}}
    = (Jy,z)_a^b = 0,
\end{equation*}
and we can similarly use (\ref{green1}) along with the identities 
\begin{equation*}
    (J y, \tilde{u}_j^a)_a^b = 0,
    \quad (Jy, \tilde{u}_j^b)_a^b = 0,
    \quad (J \tilde{u}_j^a, \tilde{u}_k^b)_a^b = 0,
\end{equation*}
for all $j, k \in \{1, 2, \dots, n\}$ (following from support 
of the elements in all cases). It remains to show that 
\begin{equation}
    (J \tilde{u}_j^a, \tilde{u}_k^a)_a^b = 0,
    \quad (J \tilde{u}_j^b, \tilde{u}_k^b)_a^b = 0,
\end{equation}
but these identities are immediate from Lemma \ref{krall-niessen-lemma} (along
with the analogous statement associated with $x = a$),  
so symmetry is established. 

Next, we will show that $\mathcal{T}_{\mu_0, \lambda_0} (\lambda)$ is essentially self-adjoint. 
According to Theorem 5.21 in \cite{Weidmann1980}, it suffices to 
show that for some (and hence for all) $\mu \in \mathbb{C}\backslash \mathbb{R}$,
\begin{equation} \label{essential-sa-condition}
    \overline{\ran(\mathcal{T}_{\mu_0, \lambda_0} (\lambda) - \mu I)} = L^2_{\mathbb{B}_{\lambda}} ((a, b), \mathbb{C}^{2n}),
    \quad {\rm and} \quad \overline{\ran(\mathcal{T}_{\mu_0, \lambda_0} (\lambda) - \bar{\mu} I)} = L^2_{\mathbb{B}_{\lambda}} ((a, b), \mathbb{C}^{2n}).
\end{equation}
Since we can proceed with any $\mu \in \mathbb{C} \backslash \mathbb{R}$, we can take
$\mu_0$ from (\ref{D1-def}) as our choice. 

We will show that 
\begin{equation} \label{ess-sa-cond}
    \ran(\mathcal{T}_{\mu_0, \lambda_0} (\lambda) - \mu_0 I)^{\perp} = \{0\},
    \quad {\rm and} \quad \ran(\mathcal{T}_{\mu_0, \lambda_0} (\lambda) - \bar{\mu}_0 I)^{\perp} = \{0\}, 
\end{equation}
from which (\ref{essential-sa-condition}) is clear, since 
\begin{equation}
    L^2_{\mathbb{B}_{\lambda}} ((a, b), \mathbb{C}^{2n}) = 
    \ran(\mathcal{T}_{\mu_0, \lambda_0} (\lambda) - \mu_0 I)^{\perp} \oplus  
    \overline{\ran(\mathcal{T}_{\mu_0, \lambda_0} (\lambda) - \mu_0 I)},
\end{equation}
and likewise with $\mu_0$ replaced by $\bar{\mu}_0$.

Starting with the second relation in (\ref{ess-sa-cond}),
we suppose that for some $u \in L^2_{\mathbb{B}_{\lambda}} ((a, b), \mathbb{C}^{2n})$,
\begin{equation*}
\langle (\mathcal{T}_{\mu_0, \lambda_0} (\lambda) - \bar{\mu}_0 I) \psi, u \rangle_{\mathbb{B}_{\lambda}} = 0,
\quad \forall \, \psi \in \mathcal{D}_{\mu_0, \lambda_0},
\end{equation*}
and our goal is to show that 
this implies that $u = 0$ in $L^2_{\mathbb{B}_{\lambda}} ((a, b), \mathbb{C}^{2n})$. 
First, if we restrict to $\psi \in \mathcal{D}_c$, then we have
\begin{equation}
  \langle (\mathcal{T}_c (\lambda) - \bar{\mu}_0 I) \psi, u \rangle_{\mathbb{B}_{\lambda}} = 0, 
  \quad \forall \, \psi \in \mathcal{D}_c.   
\end{equation}
This relation implies that $u \in \dom ((\mathcal{T}_c (\lambda) - \bar{\mu}_0 I)^*) \,\, (= \mathcal{D}_M)$,
so we're justified in writing 
\begin{equation}
  \langle \psi, (\mathcal{T}_M (\lambda) - \mu_0 I) u \rangle_{\mathbb{B}_{\lambda}} = 0, 
  \quad \forall \, \psi \in \mathcal{D}_c.   
\end{equation}
Since $\mathcal{D}_c$ is dense in $L^2_{\mathbb{B}_{\lambda}} ((a, b), \mathbb{C}^{2n})$,
we can conclude that $u$ must satisfy $(\mathcal{T}_M (\lambda) - \mu_0 I) u = 0$. 

Next, we also have the relation 
\begin{equation} \label{with-psi}
  \langle (\mathcal{T}_{\mu_0, \lambda_0} (\lambda) - \bar{\mu}_0 I) \psi, u \rangle_{\mathbb{B}_{\lambda}} = 0, 
  \quad \forall \, \psi \in \mathrm{Span}\,\Big{\{} \{\tilde{u}_j^a\}_{j=1}^n, \{\tilde{u}_j^b\}_{j=1}^n \Big{\}}.   
\end{equation}
For each $j \in \{1, 2, \dots, n\}$, $\tilde{u}^{a, b}_j (\cdot; \mu_0, \lambda_0) \in \mathcal{D}_M$, and 
we've already established that $u \in \mathcal{D}_M$, so we can apply 
Green's identity (\ref{green1}) to see that with $\psi$ as in (\ref{with-psi})
\begin{equation}
    0 = \langle (\mathcal{T}_{\mu_0, \lambda_0} (\lambda) - \bar{\mu}_0 I) \psi, u \rangle_{\mathbb{B}_{\lambda}}
    = \langle \psi, (\mathcal{T}_M (\lambda) - \mu_0 I) u \rangle_{\mathbb{B}_{\lambda}}
    + (J \psi, u)_a^b.
\end{equation}
Since $(\mathcal{T}_M (\lambda) - \mu_0 I) u = 0$, we see that 
$(J \psi, u)_a^b = 0$. In addition, since 
$\tilde{u}_j^b$ is zero near $x=a$, we have
(taking $\psi = \tilde{u}_j^b$)
$(J \tilde{u}_j^b, u)_a = 0$, and consequently we can 
conclude $(J \tilde{u}_j^b, u)_b = 0$.
That is, 
\begin{equation*}
    \lim_{x \to b^-} u(x)^* J \tilde{u}_j^b (x; \mu_0, \lambda_0) = 0.
\end{equation*}
If we take the adjoint of this relation, and recall that 
$\tilde{u}_j^b$ is identical to $u_j^b$ for $x$ near
$b$, then we can express this limit in our preferred form 
\begin{equation*}
    \lim_{x \to b^-} u_j^b (x; \mu_0, \lambda_0)^* J u(x) = 0.
\end{equation*}
This last relation is true for all $j \in \{1, 2, \dots, n\}$,
and a similar relation holds near $x = a$. We can summarize these
observations with the following limits
\begin{equation} \label{u-limits}
    \begin{aligned}
        B^a (\mu_0, \lambda_0) u 
        &:= \lim_{x \to a^+} U^a (x; \mu_0, \lambda_0)^* J u(x) = 0, \\ 
        B^b (\mu_0, \lambda_0) u 
        &:= \lim_{x \to b^-} U^b (x; \mu_0, \lambda_0)^* J u(x) = 0,
    \end{aligned}
\end{equation}
where we have also specified some convenient boundary operator notation. 

At this point, we introduce a useful way of representing elements of $\mathcal{D}_M$. 
Since $\mathcal{D}_M$ does not depend on $\lambda$, we can assert that if $y \in \mathcal{D}_M$,
then for any $\lambda \in I$ there exists 
$f (\cdot; \lambda) \in L^2_{\mathbb{B}_{\lambda}} ((a, b), \mathbb{C}^{2n})$
so that $\mathcal{T}_M (\lambda) y = f (\cdot; \lambda)$. In particular, this is true for 
$\lambda_0$ as above, allowing us to write 
\begin{equation} \label{inhomogeneous-equation}
    \mathcal{T}_M (\lambda_0) y - \mu_0 y = f (\cdot; \lambda_0) - \mu_0 y. 
\end{equation}
Recalling the definition of $\mathcal{T}_M (\lambda_0)$, 
(\ref{inhomogeneous-equation}) can be viewed as an inhomogeneous 
system of ODE for which solutions can be expressed in the usual 
way as the sum of a solution to the inhomogeneous system and 
an appropriate solution to the associated homogeneous system. For 
the particular solution, we can take advantage of the fact that 
$\mathcal{T} (\lambda_0)$ is already known to be a self-adjoint 
operator with domain $\mathcal{D}$, so that with 
$\mu_0 \in \mathbb{C} \backslash \mathbb{R}$,
we can solve (\ref{inhomogeneous-equation}) with 
\begin{equation*}
    y_p = (\mathcal{T} (\lambda_0) - \mu_0)^{-1} (f (\cdot; \lambda_0) - \mu_0 y).
\end{equation*}
We have $y_p \in \mathcal{D}$, so  
\begin{equation} \label{yp-limits}
    \begin{aligned}
        B^a (\mu_0, \lambda_0) y_p(\cdot) &= 0, \\ 
        B^b (\mu_0, \lambda_0) y_p(\cdot) &= 0.
    \end{aligned}
\end{equation}
Turning to solutions of the homogeneous equation associated with 
(\ref{inhomogeneous-equation}), the Niessen elements 
$\{u_j^b (x; \mu_0, \lambda_0)\}_{j=1}^n$ and $\{v_j^b (x; \mu_0, \lambda_0)\}_{j=1}^{r_b}$
comprise (by construction) a basis for the $m_b$-dimensional space of solutions 
to the homogeneous equation associated with 
(\ref{inhomogeneous-equation}) that lie right in $(a, b)$. It follows
that $y$ can be expressed as 
\begin{equation*}
    y(x) = y_p (x) + \sum_{j=1}^n c_j^b (\mu_0, \lambda_0) u_j^b (x; \mu_0, \lambda_0)
    + \sum_{j=1}^{r_b} d_j^b (\mu_0, \lambda_0) v_j^b (x; \mu_0, \lambda_0),
\end{equation*}
for some constants $\{c_j^b (\mu_0, \lambda_0)\}_{j=1}^n$ and 
$\{d_j^b (\mu_0, \lambda_0)\}_{j=1}^{r_b}$. Using (\ref{yp-limits})
along with Lemma \ref{krall-niessen-lemma}, we can write 
\begin{equation} \label{u-limits-compare}
     B^b (\mu_0, \lambda_0) y (\cdot)
     = \sum_{j=1}^{r_b} d_j^b (\mu_0, \lambda_0) 
     B^b (\mu_0, \lambda_0) v_j^b (\cdot; \mu_0, \lambda_0).
\end{equation}

Returning now to (\ref{u-limits}), since $u \in \mathcal{D}_M$, we know 
from (\ref{u-limits-compare}) that for some constants 
$\{d_j^b (\mu_0, \lambda_0)\}_{j=1}^{r_b}$
we have 
\begin{equation*}
     B^b (\mu_0, \lambda_0) u (\cdot)
     = \sum_{j=1}^{r_b} d_j^b (\mu_0, \lambda_0) 
     B^b (\mu_0, \lambda_0) v_j^b (\cdot; \mu_0, \lambda_0).
\end{equation*}
According to Lemma \ref{krall-niessen-lemma}, 
\begin{equation*}
    (B^b (\mu_0, \lambda_0) v_j^b (\cdot; \mu_0, \lambda_0))_i
    = \begin{cases}
    0 & i \ne j \\
    \kappa_j^b (\mu_0, \lambda_0) \ne 0 & i = j 
    \end{cases}.
\end{equation*}
In this way, we see that 
\begin{equation*}
    B^b (\mu_0, \lambda_0) u (\cdot)
    = \begin{pmatrix}
    d_1^b (\mu_0, \lambda_0) \kappa_1^b (\mu_0, \lambda_0) 
    & \dots 
    & d_{r_b}^b (\mu_0, \lambda_0) \kappa_{r_b}^b (\mu_0, \lambda_0) & 0 & \dots & 0
    \end{pmatrix}^T.
\end{equation*}
Since the constants $\{\kappa_j^b (\mu_0, \lambda_0)\}_{j=1}^{r_b}$ are 
all non-zero, we can only have the required relation $B^b (\mu_0, \lambda_0) u (\cdot) = 0$
if $d_j^b (\mu_0, \lambda_0) = 0$ for all 
$j \in \{1, 2, \dots, r_b\}$, and in this case 
\begin{equation} \label{u-at-b}
    u(x) = u_p (x) + \sum_{j=1}^n c_j^b (\mu_0, \lambda_0) u_j^b (x; \mu_0, \lambda_0),
\end{equation}
for some $u_p \in \mathcal{D}$ and some constants $\{c_j^b (\mu_0, \lambda_0)\}_{j=1}^n$. 
Likewise, there exist expansion
constants $\{c_j^a (\mu_0, \lambda_0)\}_{j=1}^n$ so that 
\begin{equation} \label{u-at-a}
    u(x) = u_p (x) + \sum_{j=1}^n c_j^a (\mu_0, \lambda_0) u_j^a (x; \mu_0, \lambda_0).
\end{equation}
For convenient reference, these last observations will be summarized in Lemma \ref{representation-lemma}
at the end of this section. 

Since $\mathcal{T}_M (\lambda) u = \mu_0 u$, we can use (\ref{green2}) from Lemma
\ref{green-lemma} to see that 
\begin{equation} \label{green2a}
    2i ({\rm Im}\,\mu_0) \|u\|_{\mathbb{B}_{\lambda}}^2
    = (Ju, u)_a^b = (Ju, u)_b - (Ju, u)_a.
\end{equation}
We can now check that relation (\ref{u-at-b}) allows us to conclude that 
$(Ju, u)_b = 0$. First, to see that $(Ju_p, u_p)_b = 0$, we use 
Lemma 2.5 in \cite{HS2022} to construct $\tilde{u}_p \in \mathcal{D}$
so that 
\begin{equation*}
    \tilde{u}_p (x) = 
    \begin{cases}
    0 & x \, {\rm near} \, a \\
    u_p (x) & x \, {\rm near} \, b.
    \end{cases}
\end{equation*}
Then, since $\mathcal{T} (\lambda_0)$ is self-adjoint, we can use 
(\ref{green1}) from Lemma \ref{green-lemma} to write 
\begin{equation*}
    0 = \langle \mathcal{T} (\lambda_0) \tilde{u}_p, \tilde{u}_p \rangle
      -  \langle \tilde{u}_p, \mathcal{T} (\lambda_0) \tilde{u}_p \rangle
      = (J\tilde{u}_p, \tilde{u}_p)_a^b = (J\tilde{u}_p, \tilde{u}_p)_b,
\end{equation*}
where the final equality follows from the support of $\tilde{u}_p$. 
Next, according to Lemma \ref{krall-niessen-lemma}
$(J u_k^b (\cdot; \mu_0, \lambda_0), u_j^b (\cdot; \mu_0, \lambda_0))_b = 0$
for all $j, k \in \{1, 2, \dots, n\}$. Last, for terms of the form 
$(J u_p, u_j^b (\cdot; \mu_0, \lambda_0))_b$, we can use 
Lemma 2.5 in \cite{HS2022} to truncate $u_j^b (\cdot; \mu_0, \lambda_0)$ 
and proceed as with $(Ju_p, u_p)_b$. Similarly, using (\ref{u-at-a})
we can show that $(Ju, u)_a = 0$, and consequently from (\ref{green2a})
we conclude that $\|u\|_{\mathbb{B}_{\lambda}} = 0$, and so 
$u = 0$ in $L^2_{\mathbb{B}_{\lambda}} ((a, b), \mathbb{C}^{2n})$, which 
is what we wanted to show (i.e., we have verified the second condition
in (\ref{ess-sa-cond})). 

We turn now to verifying the first condition in (\ref{ess-sa-cond}).
For this, we suppose that for some $u \in L^2_{\mathbb{B}_{\lambda}} ((a, b), \mathbb{C}^{2n})$
we have 
\begin{equation} \label{condition-for-first}
    \langle (\mathcal{T}_{\mu_0, \lambda_0} (\lambda) - \mu_0 I) \psi, u \rangle_{\mathbb{B}_{\lambda}} = 0,
    \quad \forall \, \psi \in \mathcal{D}_{\mu_0, \lambda_0},
\end{equation}
and our goal is to show that this implies $u = 0$ in $L^2_{\mathbb{B}_{\lambda}} ((a, b), \mathbb{C}^{2n})$.
Precisely as with the second condition in (\ref{ess-sa-cond}), we can check 
that (\ref{condition-for-first}) implies that $u \in \mathcal{D}_M$, 
$\mathcal{T}_{\mu_0, \lambda_0} (\lambda) u = \bar{\mu}_0 u$, and also that 
relations (\ref{u-limits}) hold. Since $u \in \mathcal{D}_M$ and relations (\ref{u-limits}) hold, 
we can conclude that there exist constants $\{\tilde{c}_j^b (\mu_0, \lambda_0)\}_{j=1}^n$ so 
that 
\begin{equation} \label{u-at-b-bar}
    u(x) = \tilde{u}_p (x) + \sum_{j=1}^n \tilde{c}_j^b (\mu_0, \lambda_0) u_j^b (x; \mu_0, \lambda_0),
\end{equation}
for some $\tilde{u}_p \in \mathcal{D}$. As in 
the previous case, we can conclude that $(Ju, u)_b = 0$, and by a similar 
argument that $(Ju, u)_a = 0$. Since $\mathcal{T}_{M} (\lambda) u = \bar{\mu}_0 u$, 
we can use (\ref{green2}) from Lemma \ref{green-lemma} to see that 
\begin{equation*}
    2i ({\rm Im}\, \bar{\mu}_0) \|u\|_{\mathbb{B}_{\lambda}}^2 
    = (Ju, u)_a^b = 0, 
\end{equation*}
confirming that $u = 0$ in $L^2_{\mathbb{B}_{\lambda}} ((a, b), \mathbb{C}^{2n})$. 

In summary to this point, we have shown that for each $\lambda \in I$,
if we restrict the maximal operator $\mathcal{T}_M (\lambda)$ to the domain 
$\mathcal{D}_{\mu_0, \lambda_0}$ specified in (\ref{D1-def}), then we obtain 
an essentially self-adjoint operator $\mathcal{T}_{\mu_0, \lambda_0} (\lambda)$. 
Since $\mathcal{T}_{\mu_0, \lambda_0} (\lambda)$ is essentially self-adjoint, 
we can set $\mathcal{T} (\lambda) := \mathcal{T}_{\mu_0, \lambda_0} (\lambda)^*$,
and conclude that $\mathcal{T} (\lambda)$ is self-adjoint (see, e.g. Theorem 5.20
in \cite{Weidmann1980}). 

The final item of Theorem \ref{self-adjointness-theorem} to be clear about is 
the domain of $\mathcal{T} (\lambda)$. For this, we first observe that 
\begin{equation*}
    \mathcal{T}_c (\lambda) \subset \mathcal{T}_{\mu_0, \lambda_0} (\lambda)
    \implies \mathcal{T}_{\mu_0, \lambda_0} (\lambda)^* \subset \mathcal{T}_c (\lambda)^*.
\end{equation*}
Using the relations $\mathcal{T} (\lambda) = \mathcal{T}_{\mu_0, \lambda_0} (\lambda)^*$
and $\mathcal{T}_M (\lambda) = \mathcal{T}_c (\lambda)^*$, we see that 
$\mathcal{T} (\lambda) \subset \mathcal{T}_M (\lambda)$. This leaves only 
the question of what, if any, additional restrictions elements in the 
domain of $\mathcal{T} (\lambda)$ must satisfy. To understand this, we recall 
that, by definition, the domain of $\mathcal{T} (\lambda)$ is
\begin{equation*}
\begin{aligned}
    \mathcal{D}
    &= \{ u\in \mathcal{D}_M: \textrm{there exists } v \in L^2_{\mathbb{B}_{\lambda}} ((a, b), \mathbb{C}^{2n}) \\
    &\textrm{ so that } \langle \mathcal{T}_{\mu_0, \lambda_0} (\lambda) \psi, u \rangle_{\mathbb{B}_{\lambda}}
    = \langle \psi, v \rangle_{\mathbb{B}_{\lambda}} \, \forall \, \psi \in \mathcal{D}_{\mu_0, \lambda_0} \}.
\end{aligned}
\end{equation*}

Let $u \in \mathcal{D}_M$. For all $\psi \in \mathcal{D}_c$, we can immediately write
\begin{equation*}
    \langle \mathcal{T}_{\mu_0, \lambda_0} (\lambda) \psi, u \rangle_{\mathbb{B}_{\lambda}}
    = \langle \mathcal{T}_c (\lambda) \psi, u \rangle_{\mathbb{B}_{\lambda}}
    = \langle \psi, \mathcal{T}_M (\lambda) u \rangle_{\mathbb{B}_{\lambda}}
    = \langle \psi,  v \rangle_{\mathbb{B}_{\lambda}},
    \quad v = \mathcal{T}_M (\lambda) u.
\end{equation*}
In particular, this places no additional restrictions on $u$. On the other hand, for 
any $j \in \{1, 2, \dots, n\}$, we have from (\ref{green1}) in Lemma \ref{green-lemma}
\begin{equation*}
    \langle \mathcal{T}_{\mu_0, \lambda_0} (\lambda) \tilde{u}_j^b, u \rangle_{\mathbb{B}_{\lambda}}
    - \langle \tilde{u}_j^b,  \mathcal{T}_M (\lambda) u \rangle_{\mathbb{B}_{\lambda}}
    = (J \tilde{u}_j^b, u)_b, 
\end{equation*}
where we've recalled that $\tilde{u}_j^b$ is identically zero for $x$ near $a$. We see 
that in order 
to have $u \in \mathcal{D}$, we must have $(J \tilde{u}_j^b, u)_b = 0$, and since this is
true for all $j \in \{1, 2, \dots, n\}$, we can conclude that 
\begin{equation*}
    \lim_{x \to b^-} U^b (x; \mu_0, \lambda_0)^* J u(x) = 0. 
\end{equation*}
A similar argument holds with each $\tilde{u}_j^b$ replaced by 
$\tilde{u}_j^a$, leading to the full characterization of $\mathcal{D}$
given in the statement of Theorem \ref{self-adjointness-theorem}.
\end{proof}

By essentially identical considerations, we can establish 
a similar theorem for $\mathcal{T}^{\alpha} (\cdot)$. In this 
case, we take $\alpha (\lambda)$ as specified in the statement 
of Lemma \ref{self-adjoint-operator-lemma}(ii), and we take 
$U^{\alpha} (x; \lambda)$ to solve
\begin{equation*}
    J (U^{\alpha})' (x; \lambda) = \mathbb{B} (x; \lambda) U^{\alpha} (x; \lambda),
    \quad U^{\alpha} (a; \lambda) = J \alpha (\lambda)^*.
\end{equation*}
If we denote the columns of $U^{\alpha} (x; \lambda)$ by 
$\{u_j^{\alpha} (x; \lambda)\}_{j=1}^n$, and their respective 
truncations $\{\tilde{u}_j^{\alpha} (x; \lambda)\}_{j=1}^n$,
then for a fixed pair $(\mu_0, \lambda_0) \in (\mathbb{C} \backslash \mathbb{R}) \times I$ 
we can define the domain  
\begin{equation} \label{D1-def-alpha}
    \mathcal{D}_{\mu_0, \lambda_0}^{\alpha} := \mathcal{D}_c 
    + \mathrm{Span}\,\Big{\{} \{\tilde{u}_j^{\alpha} (\cdot; \mu_0, \lambda_0)\}_{j=1}^n, \{\tilde{u}_j^b (\cdot; \mu_0, \lambda_0)\}_{j=1}^n \Big{\}}.
\end{equation}
We denote by $\mathcal{T}_{\mu_0, \lambda_0}^{\alpha} (\lambda)$ 
the restriction of $\mathcal{T}_M (\lambda)$ to $\mathcal{D}_{\mu_0, \lambda_0}^{\alpha}$. We note that the proof of Theorem 
\ref{L-alpha-theorem} (omitted due to its similarity to the proof of Theorem \ref{self-adjointness-theorem})
also establishes Item (ii) in Lemma \ref{self-adjoint-operator-lemma}.

\begin{theorem} \label{L-alpha-theorem}
Let Assumptions {\bf (A)} through {\bf (D)} hold, along with 
{\bf (A)$^\prime$}. Then for each $\lambda \in I$ 
the operator $\mathcal{T}_{\mu_0, \lambda_0}^{\alpha} (\lambda)$ is essentially self-adjoint, and 
so in particular, $\mathcal{T}^{\alpha} (\lambda) := \overline{\mathcal{T}_{\mu_0, \lambda_0}^{\alpha} (\lambda)} 
= (\mathcal{T}_{\mu_0, \lambda_0}^{\alpha} (\lambda))^*$ 
is self-adjoint. The domain $\mathcal{D}^{\alpha} (\lambda)$ of $\mathcal{T}^{\alpha} (\lambda)$
is 
\begin{equation}
    \mathcal{D}^{\alpha} (\lambda) = \{y \in \mathcal{D}_M: \alpha (\lambda) y(a) = 0,
    \quad \lim_{x \to b^-} U^b (x; \mu_0, \lambda_0)^* J y(x) = 0\}.
\end{equation}
\end{theorem}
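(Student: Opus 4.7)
The plan is to follow the template of the proof of Theorem \ref{self-adjointness-theorem} nearly verbatim, replacing the singular-endpoint ingredients at $x=a$ by their regular-endpoint counterparts built from the fundamental matrix $U^{\alpha}(x;\lambda)$. Because Assumption \textbf{(A)$^\prime$} is in force, $x=a$ is regular: elements of $\mathcal{D}_M$ admit a well-defined trace $y(a)\in\mathbb{C}^{2n}$, and the $n$ columns of $U^{\alpha}(a;\lambda)=J\alpha(\lambda)^{*}$ span the $n$-dimensional Lagrangian subspace $\ker\alpha(\lambda)\subset\mathbb{C}^{2n}$, thanks to the assumptions $\operatorname{rank}\alpha(\lambda)=n$ and $\alpha(\lambda)J\alpha(\lambda)^{*}=0$. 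The analogues at $a$ of Lemmas \ref{krall-niessen-lemma}--\ref{niessen-lemma2} collapse to this single Lagrangian fact, while all ingredients at $b$ (the Niessen elements $\tilde u_j^b$, the boundary operator $B^b$, the non-degeneracy constants $\kappa_j^b$) are imported unchanged from the proof of Theorem \ref{self-adjointness-theorem}.

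Stage one is symmetry. For any two core elements $y,z\in\mathcal{D}_{\mu_0,\lambda_0}^{\alpha}$, I would apply Green's identity \eqref{green1} and argue that $(Jy,z)_a^b=0$ term by term. The contribution at $b$ vanishes by Lemma \ref{krall-niessen-lemma} exactly as before. The contribution at $a$ reduces to the finite pairing $-y(a)^{*}Jz(a)$, and both traces lie in the Lagrangian $\ker\alpha(\lambda_0)$ by the form of the core, so this pairing vanishes. Cross-terms involving $\mathcal{D}_c$ pieces or between $\tilde u_j^{\alpha}$ and $\tilde u_k^b$ vanish trivially by disjoint support.

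Stage two is essential self-adjointness, established as in Theorem \ref{self-adjointness-theorem} by invoking Theorem 5.21 of \cite{Weidmann1980} and verifying that $\operatorname{ran}(\mathcal{T}_{\mu_0,\lambda_0}^{\alpha}(\lambda)-\mu I)^{\perp}=\{0\}$ for both $\mu=\mu_0$ and $\mu=\bar\mu_0$. Fix $u$ in such a complement. Testing against $\mathcal{D}_c$ (dense in $L^2_{\mathbb{B}_\lambda}$) forces $u\in\mathcal{D}_M$ with $\mathcal{T}_M(\lambda)u=\bar\mu u$. Testing against each $\tilde u_j^b$ and applying Green's identity forces the singular limit $\lim_{x\to b^-}U^b(x;\mu_0,\lambda_0)^{*}Ju(x)=0$, as in the proof of Theorem \ref{self-adjointness-theorem}. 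Testing against each $\tilde u_j^{\alpha}$ forces $U^{\alpha}(a;\lambda_0)^{*}Ju(a)=0$, which, using $U^{\alpha}(a;\lambda_0)^{*}J=\alpha(\lambda_0)J^{*}J=\alpha(\lambda_0)$, is equivalent to $\alpha(\lambda_0)u(a)=0$. One then represents $u$ as a particular solution in $\mathcal{D}$ plus an appropriate combination of Niessen complements at $b$ and $U^{\alpha}$-columns at $a$, mirroring \eqref{u-at-b}--\eqref{u-at-a}, and applies \eqref{green2} to obtain $2i(\operatorname{Im}\mu)\|u\|_{\mathbb{B}_\lambda}^{2}=(Ju,u)_a^b=0$; the endpoint contributions vanish at $b$ by the Niessen argument imported from Theorem \ref{self-adjointness-theorem}, and at $a$ trivially because $u(a)\in\ker\alpha(\lambda_0)$ is Lagrangian.

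Stage three identifies the domain of $\mathcal{T}^{\alpha}(\lambda)$ via the standard adjoint recipe: $u\in\mathcal{D}^{\alpha}(\lambda)$ iff Green's boundary form $(J\psi,u)_a^b$ vanishes for every $\psi\in\mathcal{D}_{\mu_0,\lambda_0}^{\alpha}$. Testing against $\mathcal{D}_c$ gives only $u\in\mathcal{D}_M$ with $v=\mathcal{T}_M(\lambda)u$; testing against each $\tilde u_j^b$ gives the singular limit at $b$; testing against each $\tilde u_j^{\alpha}$ gives $\alpha(\lambda)u(a)=0$ after the same $J^{*}J=I_{2n}$ manipulation. The main obstacle I anticipate is precisely this bookkeeping at the regular endpoint, namely the verification that the boundary functional $U^{\alpha}(a;\cdot)^{*}Ju(\cdot)$ coincides with the algebraic constraint $\alpha(\cdot)u(\cdot)$ and that the core, built from a Lagrangian reference frame, selects exactly the $n$ conditions encoded by the target self-adjoint extension; once this is set, every other step is line-for-line parallel to the proof of Theorem \ref{self-adjointness-theorem} and requires only notational substitutions.
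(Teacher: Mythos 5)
Your proposal follows exactly the route the paper indicates (the paper omits the proof, saying it parallels Theorem \ref{self-adjointness-theorem}), and the three stages you lay out — symmetry via Green's identity, essential self-adjointness via the range criterion, and domain identification via the adjoint recipe — are the right ones, with the Niessen machinery at $b$ imported wholesale and the regular-endpoint Lagrangian algebra at $a$ replacing the Niessen complements there. Your computation $U^{\alpha}(a;\cdot)^{*}J=(J\alpha^{*})^{*}J=\alpha J^{*}J=\alpha$ is the correct bridge between the trace condition and the stated boundary constraint.

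The one wrinkle you should resolve explicitly — and which you slide over by switching between $\lambda_0$ and $\lambda$ between Stages Two and Three — is the $\lambda$-dependence of the core at the regular endpoint. In (\ref{D1-def-alpha}) the core is written as spanned by $\tilde u_j^{\alpha}(\cdot;\mu_0,\lambda_0)$, which would impose $\alpha(\lambda_0)u(a)=0$ on the closure, whereas Theorem \ref{L-alpha-theorem} and Remark \ref{lemma1-remark} require the boundary condition $\alpha(\lambda)y(a)=0$, with the $\lambda_0$-dependence confined to the singular endpoint $b$. For the stated domain to emerge, the core must be taken with $\tilde u_j^{\alpha}(\cdot;\lambda)$ (note $U^{\alpha}$ does not depend on $\mu_0$ at all since it solves an initial-value problem for \eqref{hammy}), making $\mathcal{D}^{\alpha}_{\mu_0,\lambda_0}$ genuinely $\lambda$-dependent at $a$; this costs nothing, since the Lagrangian identity $\alpha(\lambda)J\alpha(\lambda)^{*}=0$ holds for each $\lambda\in I$ by hypothesis and every computation at $a$ goes through for the current $\lambda$. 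Fixing this makes your Stage Two give $\alpha(\lambda)u(a)=0$ rather than $\alpha(\lambda_0)u(a)=0$, and then your Stage Three is consistent and the proof is complete.
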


During the proof of Theorem \ref{self-adjointness-theorem}, we established a 
useful representation for elements $y \in \mathcal{D}_M$ in terms of Niessen 
elements, and for future reference we summarize this representation as
a lemma. 

\begin{lemma} \label{representation-lemma}
Let Assumptions {\bf (A)} through {\bf (D)} hold, and fix
any $\lambda_0 \in I$ and 
any $\mu_0 \in \mathbb{C} \backslash \mathbb{R}$. In addition, 
let $\{u_j^b (x; \mu_0, \lambda_0)\}_{j=1}^n$ 
and $\{v_j^b (x; \mu_0, \lambda_0)\}_{j=1}^{r_b}$ denote 
the Niessen elements constructed in Lemma \ref{niessen-lemma1} 
for (\ref{linear-hammy}) with $\lambda = \lambda_0$ and $\mu = \mu_0$.
Then given any $y \in \mathcal{D}_M$, there exist values $\{c_j^b (\mu_0, \lambda_0)\}_{j=1}^n$ and 
$\{d_j^b (\mu_0, \lambda_0)\}_{j=1}^{r_b}$ so that 
$y$ can be expressed as 
\begin{equation*}
    y(x) = y_p (x) + \sum_{j=1}^n c_j^b (\mu_0, \lambda_0) u_j^b (x; \mu_0, \lambda_0)
    + \sum_{j=1}^{r_b} d_j^b (\mu_0, \lambda_0) v_j^b (x; \mu_0, \lambda_0),
\end{equation*}
where
\begin{equation}
    y_p = (\mathcal{T} (\lambda_0) - \mu_0)^{-1} (f (\cdot; \lambda_0) - \mu_0 y),
\end{equation}
Moreover, 
\begin{equation*} 
     B^b (\mu_0, \lambda_0) y (\cdot)
     = \sum_{j=1}^{r_b} d_j^b (\mu_0, \lambda_0) 
     B^b (\mu_0, \lambda_0) v_j^b (\cdot; \mu_0, \lambda_0).
\end{equation*}
A similar statement holds with $b$ replaced by $a$.
\end{lemma}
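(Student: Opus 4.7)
The plan is essentially to extract and package the representation that already surfaced inside the proof of Theorem \ref{self-adjointness-theorem}, namely around equations (\ref{u-at-b}) and (\ref{u-limits-compare}), and to present it as a stand-alone statement. Nothing new in substance is needed; the task is to present the existence and uniqueness of the expansion cleanly.

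First, given $y \in \mathcal{D}_M$, I would set $f(\cdot;\lambda_0) := \mathcal{T}_M(\lambda_0) y \in L^2_{\mathbb{B}_{\lambda_0}}((a,b), \mathbb{C}^{2n})$. The identity $\mathcal{T}_M(\lambda_0) y - \mu_0 y = f(\cdot;\lambda_0) - \mu_0 y$ is then interpreted as an inhomogeneous system whose right-hand side lies in $L^2_{\mathbb{B}_{\lambda_0}}$. Since $\mathcal{T}(\lambda_0)$ is self-adjoint (Theorem \ref{self-adjointness-theorem}) and $\mu_0 \in \mathbb{C} \setminus \mathbb{R}$, the resolvent $(\mathcal{T}(\lambda_0) - \mu_0)^{-1}$ is a bounded operator on $L^2_{\mathbb{B}_{\lambda_0}}((a,b),\mathbb{C}^{2n})$, so the element $y_p := (\mathcal{T}(\lambda_0) - \mu_0)^{-1}(f(\cdot;\lambda_0) - \mu_0 y)$ is well defined and lies in the domain $\mathcal{D}$.

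Next I would observe that $w := y - y_p$ satisfies the homogeneous Hamiltonian system $Jw' - (\mathbb{B}(x;\lambda_0) + \mu_0 \mathbb{B}_\lambda(x;\lambda_0)) w = 0$ a.e.~on $(a,b)$, and since both $y$ and $y_p$ are in $L^2_{\mathbb{B}_{\lambda_0}}$ near $b$, the solution $w$ lies right in $(a,b)$. By the construction summarized in Lemma \ref{subspace-dimensions-lemma} and the Niessen decomposition, the space of solutions of this homogeneous system that lie right has dimension $m_b = n + r_b$, and by the very definition of the selected Niessen elements the collection $\{u_j^b(\cdot;\mu_0,\lambda_0)\}_{j=1}^n \cup \{v_j^b(\cdot;\mu_0,\lambda_0)\}_{j=1}^{r_b}$ is a basis of this space. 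Expanding $w$ in this basis yields the desired representation
\begin{equation*}
y(x) = y_p(x) + \sum_{j=1}^n c_j^b(\mu_0,\lambda_0)\, u_j^b(x;\mu_0,\lambda_0) + \sum_{j=1}^{r_b} d_j^b(\mu_0,\lambda_0)\, v_j^b(x;\mu_0,\lambda_0).
\end{equation*}

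For the moreover statement, I would apply the boundary form $B^b(\mu_0,\lambda_0)$ from (\ref{u-limits}) to both sides and read off that the $y_p$ contribution vanishes since $y_p \in \mathcal{D}$, while Lemma \ref{krall-niessen-lemma}(i) guarantees $B^b(\mu_0,\lambda_0) u_j^b(\cdot;\mu_0,\lambda_0) = 0$ for every $j$, leaving only the $v_j^b$ contributions. The analogous statement at $x = a$ follows by replacing the right-endpoint Niessen package with its left-endpoint counterpart, using the analogue of Lemma \ref{krall-niessen-lemma} at $x = a$ noted after that lemma. The only mildly delicate point is the basis claim in the middle paragraph — ensuring that the combined Niessen and Niessen-complement collection is linearly independent and spans the right-lying solution space — but this is precisely the content of the Niessen construction recalled between (\ref{niessen-spaces}) and (\ref{niessen-complements}), so no new work is required.
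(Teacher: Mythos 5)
Your proposal is correct and reproduces the paper's own argument: the authors explicitly construct this representation inside the proof of Theorem \ref{self-adjointness-theorem} (around equations (\ref{inhomogeneous-equation}), (\ref{yp-limits}), (\ref{u-at-b}), and (\ref{u-limits-compare})) and then state Lemma \ref{representation-lemma} as a packaging of those observations. Your steps — defining $y_p$ via the resolvent of $\mathcal{T}(\lambda_0)$, observing that $y - y_p$ is a right-lying homogeneous solution expandable in the Niessen basis, and then applying $B^b$ together with Lemma \ref{krall-niessen-lemma}(i) to kill the $y_p$ and $u_j^b$ terms — are exactly the ones the paper uses.
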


\section{Continuation to $\mathbb{R}$}
\label{extension-section}

In the preceding considerations, we fixed some 
$\lambda_0 \in I$, along with 
some $\mu_0 \in \mathbb{C} \backslash \mathbb{R}$ 
and used these values to specify the self-adjoint
operators $\mathcal{T} (\lambda)$ and $\mathcal{T}^{\alpha} (\lambda)$
for each $\lambda \in I$.
With these operators in hand, we fix some interval $[\lambda_1, \lambda_2] \subset I$, 
$\lambda_1 < \lambda_2$, for which we have the exclusion
$0 \notin \sigma_{\ess} (\mathcal{T} (\lambda))$ for 
all $\lambda \in [\lambda_1, \lambda_2]$. Our next goal 
is to fix any $\lambda \in [\lambda_1, \lambda_2]$
and construct a collection $\{u^a_j (x; \lambda)\}_{j=1}^n$
of linearly independent solutions to
\begin{equation} \label{extension-hammy}
    Ju' = \mathbb{B} (x; \lambda) u
\end{equation}
that lie left in $(a, b)$, along with 
a collection $\{u^b_j (x; \lambda)\}_{j=1}^n$
of linearly independent solutions to (\ref{extension-hammy})
that lie right in $(a, b)$. One difficulty we 
encounter is that the matrix $\mathcal{A} (x; \mu, \lambda)$
specified in (\ref{mathcal-A-defined})
is not defined for $\mu = 0$,
and so we cannot directly extend Niessen's 
development to this setting. Instead of 
extending Niessen's development directly, 
we will take advantage of our assumption that 
for all $\lambda \in [\lambda_1, \lambda_2]$,
$0 \notin \sigma_{\ess} (\mathcal{T} (\lambda))$,
along with a theorem from \cite{Weidmann1987} about self-adjoint
operators. 

As a starting point, we fix some $c \in (a, b)$ and consider 
(\ref{extension-hammy}) on $(c, b)$ with 
boundary conditions 
\begin{equation} \label{boundary-c}
    \gamma y(c) = 0,
\end{equation}
and 
\begin{equation} \label{boundary-b}
    \lim_{x \to b^-} U^b (x; \mu_0, \lambda_0)^* J y(x) = 0,
\end{equation}
where $U^b (x; \mu_0, \lambda_0)$ is as in Theorem \ref{self-adjointness-theorem}
and the boundary matrix $\gamma \in \mathbb{C}^{n \times 2n}$
satisfies 
\begin{equation} \label{boundary-matrix}
  \rank \gamma = n, 
    \, \textrm{ and } \, \gamma J \gamma^* = 0,
\end{equation}
and will be specified more precisely below as needed. 
Similarly as in Section \ref{operator-section}, we can associate 
this boundary value problem 
with a self-adjoint operator $\mathcal{T}_{c, b}^{\gamma} (\lambda)$,
with domain 
\begin{equation*}
    \mathcal{D}_{c, b}^{\gamma} := \{y \in \mathcal{D}_{c, b, M}: 
    \gamma y (c) = 0, 
    \quad \lim_{x \to b^-} U^b (x; \mu_0, \lambda_0) J y(x) = 0 \}.
\end{equation*}
Here, $\mathcal{D}_{c, b, M}$ denotes the domain of the 
maximal operator associated with (\ref{extension-hammy}) 
on $(c, b)$.

We start with a lemma. 

\begin{lemma} \label{lemma2-6prime} 
Let Assumptions {\bf (A)} through {\bf (D)} hold.  
For any fixed $\lambda \in [\lambda_1, \lambda_2]$,
suppose $u^b (x; \lambda)$ and $v^b (x; \lambda)$ denote any two solutions 
of (\ref{extension-hammy}) (if such solutions exist) that lie 
right in $(c, b)$ and satisfy (\ref{boundary-b}). 
Then $(J u^b (\cdot; \lambda), v^b (\cdot; \lambda))_b = 0$.
\end{lemma}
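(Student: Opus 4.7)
The plan is to imitate the argument in the proof of Theorem \ref{self-adjointness-theorem} showing $(Ju,u)_b = 0$ for $u$ in the domain of the relevant self-adjoint operator, transplanted from $(a,b)$ to $(c,b)$. First, I would fix any $\gamma \in \mathbb{C}^{n \times 2n}$ with $\rank \gamma = n$ and $\gamma J \gamma^* = 0$ (so that $\ker \gamma$ is a Lagrangian subspace of $\mathbb{C}^{2n}$), and invoke the version of Theorem \ref{L-alpha-theorem} adapted to $(c,b)$ (with the regular endpoint $c$ playing the role of $a$ and $\gamma$ the role of $\alpha$) to obtain the self-adjoint operator $\mathcal{T}_{c,b}^{\gamma}(\lambda_0)$ with domain
\begin{equation*}
\mathcal{D}_{c,b}^{\gamma} = \{y \in \mathcal{D}_{c,b,M} : \gamma y(c) = 0,\ \lim_{x \to b^-} U^b(x;\mu_0,\lambda_0)^* J y(x) = 0\}.
\end{equation*}

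Since $u^b$ and $v^b$ are solutions of $Jy' = \mathbb{B}(x;\lambda)y$ on $(c,b)$ that lie right in $(c,b)$, both belong to the maximal domain $\mathcal{D}_{c,b,M}$, which is $\lambda$-independent by Assumption \textbf{(C)}. The analog of Lemma \ref{representation-lemma} on $(c,b)$ then yields
\begin{equation*}
u^b = y_p^u + \sum_{j=1}^n c_j^u u_j^b(\cdot; \mu_0, \lambda_0) + \sum_{j=1}^{r_b} d_j^u v_j^b(\cdot; \mu_0, \lambda_0),
\end{equation*}
with $y_p^u := (\mathcal{T}_{c,b}^{\gamma}(\lambda_0) - \mu_0)^{-1}(\mathcal{T}_M(\lambda_0) u^b - \mu_0 u^b) \in \mathcal{D}_{c,b}^{\gamma}$, where one notes that $\mathcal{T}_M(\lambda_0) u^b$ is well-defined and lies in $L^2_{\mathbb{B}_{\lambda_0}}$ via Assumption \textbf{(E)}. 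The imposed boundary condition (\ref{boundary-b}) gives $B^b(\mu_0,\lambda_0) u^b = 0$, and by Lemma \ref{krall-niessen-lemma}(ii) all $d_j^u$ vanish. An identical decomposition, with coefficients $c_j^v$ and particular solution $y_p^v \in \mathcal{D}_{c,b}^{\gamma}$, applies to $v^b$.

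Expanding $(Ju^b, v^b)_b$ bilinearly then produces four groups of terms, each of which I can dispatch: (i) the Niessen--Niessen terms $(Ju_j^b, u_k^b)_b$ vanish by Lemma \ref{krall-niessen-lemma}(i); (ii) for each cross term $(Jy_p^u, u_k^b)_b$ (and symmetrically $(Ju_j^b, y_p^v)_b$), I would truncate $u_k^b$ via the Lemma 2.5 of \cite{HS2022} construction to obtain $\tilde u_k^b \in \mathcal{D}_{c,b}^{\gamma}$ that is supported away from $c$ and coincides with $u_k^b$ near $b$, so that Green's identity and the self-adjointness of $\mathcal{T}_{c,b}^{\gamma}(\lambda_0)$ force $(Jy_p^u, u_k^b)_b = (Jy_p^u, \tilde u_k^b)_c^b = 0$; (iii) for the remaining term $(Jy_p^u, y_p^v)_b$, Green's identity applied directly to $y_p^u, y_p^v \in \mathcal{D}_{c,b}^{\gamma}$ gives $(Jy_p^u, y_p^v)_c^b = 0$, and the $c$-contribution vanishes because $y_p^u(c), y_p^v(c) \in \ker \gamma$ is Lagrangian, so $(Jy_p^u(c), y_p^v(c)) = 0$. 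Assembling these yields $(Ju^b, v^b)_b = 0$.

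The main obstacle is verifying that the infrastructure of Section \ref{operator-section}, in particular the essentially self-adjoint operator construction and the representation furnished by Lemma \ref{representation-lemma}, transcribes cleanly from $(a,b)$ to $(c,b)$; this amounts to the observation that the singular-endpoint Niessen analysis at $a$ can be replaced by the straightforward Lagrangian boundary condition $\gamma y(c) = 0$ at the regular endpoint $c$, mirroring the passage from Theorem \ref{self-adjointness-theorem} to Theorem \ref{L-alpha-theorem}. Once that infrastructure is in place, the bilinear expansion together with the truncation argument borrowed from the proof of Theorem \ref{self-adjointness-theorem} closes the lemma mechanically.
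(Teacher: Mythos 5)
Your proof is correct, but it takes a detour that the paper's own proof avoids. Having constructed the self-adjoint operator $\mathcal{T}_{c,b}^{\gamma}(\lambda)$ (at the fixed $\lambda$ of the lemma, not at $\lambda_0$), the paper simply truncates $u^b$ and $v^b$ themselves, using the Lemma~2.5 construction from \cite{HS2022}, to obtain $\tilde u^b, \tilde v^b \in \mathcal{D}_{c,b,M}$ that agree with $u^b, v^b$ near $b$ and vanish near $c$. Because they vanish near $c$ and inherit (\ref{boundary-b}) near $b$, these truncations already lie in $\mathcal{D}_{c,b}^{\gamma}$, and one application of Green's identity plus self-adjointness of $\mathcal{T}_{c,b}^{\gamma}(\lambda)$ gives $(J\tilde u^b, \tilde v^b)_c^b = 0$; the $c$-term vanishes by the support condition, so $(J u^b, v^b)_b = 0$ follows immediately.

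You instead first invoke the Niessen representation of Lemma~\ref{representation-lemma} (transplanted to $(c,b)$ and anchored at the fixed $(\mu_0, \lambda_0)$), kill the $d_j$ coefficients via Lemma~\ref{krall-niessen-lemma}(ii), expand the bilinear form into four groups of terms, and dispatch each by a combination of Lemma~\ref{krall-niessen-lemma}(i) and the same truncation-plus-Green's-identity mechanism. Every step goes through, but the decomposition is a wasted effort: since $u^b$ and $v^b$ already satisfy the boundary condition at $b$ and lie right in $(c,b)$, they are themselves ready to be truncated into $\mathcal{D}_{c,b}^{\gamma}$, and the direct argument then needs nothing else. One small correction: the claim that $\mathcal{T}_M(\lambda_0) u^b$ is well-defined ``via Assumption \textbf{(E)}'' is better attributed to Assumption \textbf{(C)}. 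Because $u^b$ solves the ODE at $\lambda$ and lies right in $(c,b)$, it belongs to $\mathcal{D}_{c,b,M}(\lambda)$, and Assumption \textbf{(C)} makes this set $\lambda$-independent, so $\mathcal{T}_M(\lambda_0) u^b$ is defined. Assumption \textbf{(E)} is a local-in-$\lambda$ factorization statement (valid only for $\lambda \in I_{\lambda_*,r}$) and would not by itself cover arbitrary pairs $\lambda, \lambda_0 \in [\lambda_1, \lambda_2]$.
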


\begin{proof} First, using Lemma 2.5 from \cite{HS2022}, we can construct 
functions $\tilde{u}^b (\cdot; \lambda), \tilde{v}^b (\cdot; \lambda)
\in \mathcal{D}_{c, b, M}$ so that 
\begin{equation*}
    \tilde{u}^b (x; \lambda)
    = \begin{cases}
    0 & \mathrm{near }\,\, x = c \\
    u^b (x; \lambda) & \mathrm{near }\,\, x = b,
    \end{cases}\quad \quad
    \tilde{v}^b (x; \lambda)
    = \begin{cases}
    0 & \mathrm{near }\,\, x = c \\
    v^b (x; \lambda) & \mathrm{near }\,\, x = b.
    \end{cases}
\end{equation*}
Since $\tilde{u}^b (x; \lambda)$ and $\tilde{v}^b (x; \lambda)$  
lie right in $(c, b)$ and satisfy (\ref{boundary-b}), 
it's clear that $\tilde{u}^b (x; \lambda), \tilde{v}^b (x; \lambda)$
are contained in $\mathcal{D}_{c, b}^{\gamma}$. 
Using self-adjointness 
of $\mathcal{T}_{c, b}^{\gamma} (\lambda)$, along with 
Green's identity, we can write 
\begin{equation*}
    \begin{aligned}
    0 &= \langle \mathcal{T}_{c, b}^{\gamma} (\lambda) \tilde{u}^b (\cdot; \lambda), \tilde{v}^b (\cdot; \lambda) \rangle_{\mathbb{B}_{\lambda}}
    - \langle \tilde{u}^b (\cdot; \lambda), \mathcal{T}_{c, b}^{\gamma} (\lambda) \tilde{v}^b (\cdot; \lambda) \rangle_{\mathbb{B}_{\lambda}} \\
    &= (J \tilde{u}^b (\cdot; \lambda), \tilde{v}^b (\cdot; \lambda))_c^b 
    = (J \tilde{u}^b (\cdot; \lambda), \tilde{v}^b (\cdot; \lambda))_b. 
    \end{aligned}
\end{equation*}
Since  $\tilde{u}^b (x; \lambda), \tilde{v}^b (x; \lambda)$ are identical 
to $u^b (x; \lambda), v^b(x; \lambda)$ for $x$ near $b$, this gives the claim.
\end{proof}

\begin{lemma} \label{lemma2-7prime}
Let Assumptions {\bf (A)} through {\bf (D)} hold.
Then for any fixed $\lambda \in [\lambda_1, \lambda_2]$, 
the space of solutions 
of (\ref{extension-hammy}) (if such solutions exist)
that lie right in $(c, b)$ and satisfy (\ref{boundary-b})
has dimension at most $n$. In the event that the dimension 
of this space is $n$, we let $\{u^b_j (x; \lambda)\}_{j=1}^n$
denote a choice of basis. Then for each $x \in (c, b)$ the 
vectors $\{u^b_j (x; \lambda)\}_{j=1}^n$ comprise the 
basis for a Lagrangian subspace of $\mathbb{C}^{2n}$.
\end{lemma}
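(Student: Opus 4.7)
The plan is to combine Lemma \ref{lemma2-6prime} with the classical observation that the symplectic form $(J \cdot, \cdot)$ is conserved along solutions of (\ref{extension-hammy}), and then invoke the elementary linear-algebra fact that an isotropic subspace of $\mathbb{C}^{2n}$ has dimension at most $n$, with equality characterizing Lagrangian subspaces.

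First I would verify the conservation law: for any two solutions $u(\cdot; \lambda), v(\cdot; \lambda)$ of (\ref{extension-hammy}), the function $x \mapsto (J u(x; \lambda), v(x; \lambda))$ is constant on $(c, b)$. This is a direct differentiation using $J u' = \mathbb{B}(x; \lambda) u$, the identity $(J\xi, \eta) = -(\xi, J\eta)$, and the self-adjointness of $\mathbb{B}(x;\lambda)$: one obtains
\begin{equation*}
\frac{d}{dx}(Ju, v) = (\mathbb{B} u, v) - (u, \mathbb{B} v) = 0
\end{equation*}
for a.e.\ $x \in (c, b)$, so the quantity is constant on $(c, b)$.

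Combining this conservation law with Lemma \ref{lemma2-6prime}, if $u^b(\cdot; \lambda)$ and $v^b(\cdot; \lambda)$ are any two solutions of (\ref{extension-hammy}) that lie right in $(c, b)$ and satisfy (\ref{boundary-b}), then
\begin{equation*}
(J u^b(x; \lambda), v^b(x; \lambda)) = \lim_{x \to b^-}(J u^b(x; \lambda), v^b(x; \lambda)) = (J u^b, v^b)_b = 0
\end{equation*}
for every $x \in (c, b)$. Thus the space $S(\lambda)$ of such solutions, evaluated at any fixed $x_0 \in (c, b)$, maps into an isotropic subspace of $\mathbb{C}^{2n}$ with respect to the symplectic form $\omega(\xi, \eta) := (J\xi, \eta)$. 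By uniqueness of solutions to the initial value problem for (\ref{extension-hammy}), the evaluation map $S(\lambda) \to \mathbb{C}^{2n}$, $u \mapsto u(x_0; \lambda)$, is injective, so $\dim S(\lambda)$ equals the dimension of its image. Since any isotropic subspace of $(\mathbb{C}^{2n}, \omega)$ has dimension at most $n$, this yields $\dim S(\lambda) \le n$.

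If the dimension is exactly $n$, then for each $x \in (c, b)$ the vectors $\{u^b_j(x; \lambda)\}_{j=1}^n$ are linearly independent (again by ODE uniqueness) and span an $n$-dimensional isotropic subspace of $\mathbb{C}^{2n}$; by Definition \ref{lagrangian_subspace} this is precisely a Lagrangian subspace. No serious obstacle is anticipated, since both ingredients (Lemma \ref{lemma2-6prime} and the symplectic conservation law) are already in hand; the only point requiring care is the invocation of uniqueness to transfer the linear independence of solutions to pointwise linear independence of their values, and to ensure that the evaluation map realizes the solution space faithfully inside $\mathbb{C}^{2n}$.
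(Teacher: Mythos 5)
Your proof is correct and follows essentially the same route as the paper: the constancy of $(Ju,v)$ along solutions, the vanishing at $b^-$ from Lemma \ref{lemma2-6prime}, injectivity of the evaluation map, and the maximality/dimension bound for isotropic subspaces of $\mathbb{C}^{2n}$. The paper organizes the dimension count slightly differently (assuming $d\ge n$ and deriving a contradiction for $d>n$), but the mathematical content is identical.
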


\begin{proof}
Let $d$ denote the dimension of the space of solutions 
of (\ref{extension-hammy}) that lie right in $(c, b)$ and 
satisfy (\ref{boundary-b}), and suppose $d \ge n$. Let
$\{u_j^b (x; \lambda)\}_{j=1}^d$ denote a basis for 
this space, and notice that for any 
$j, k \in \{1, 2, \dots, d\}$ (and with $^{\prime}$ denoting
differentiation with respect to $x$), 
\begin{equation*}
    \begin{aligned}
    (u_j^b &(x; \lambda)^* J u^b_k (x; \lambda))'
    = u_j^{b \, \prime} (x; \lambda)^* J u^b_k (x; \lambda)
    + u^b_j (x; \lambda)^* J u_k^{b \, \prime} (x; \lambda) \\
    &= - (J u_j^{b \, \prime} (x; \lambda))^* u^b_k (x; \lambda)
    + u_j (x; \lambda)^* J u_k^{b \, \prime} (x; \lambda) \\
    &= - (\mathbb{B} (x; \lambda) u_j^b (x; \lambda))^* u^b_k (x; \lambda)
    +  u_j^b (x; \lambda)^* \mathbb{B} (x; \lambda) u_k^b (x; \lambda) \\
    &= - u_j^b (x; \lambda)^* \mathbb{B} (x; \lambda) u_k^b (x; \lambda)
    + u_j^b (x; \lambda)^* \mathbb{B} (x; \lambda) u_k^b (x; \lambda)
    = 0.
    \end{aligned}
\end{equation*}
We see that $u_j^b (x; \lambda)^* J u^b_k (x; \lambda)$ is constant for 
all $x \in (c, b)$. In addition, according to Lemma 
\ref{lemma2-6prime}, we have 
\begin{equation*}
    \lim_{x \to b^-} u_j^b (x; \lambda)^* J u^b_k (x; \lambda) = 0.
\end{equation*}
We conclude that $u_j^b (x; \lambda)^* J u^b_k (x; \lambda) = 0$ for 
all $x \in (c, b)$.

We see immediately that the first $n$ elements $\{u^b_j (x; \lambda)\}_{j=1}^n$
(or any other $n$ elements taken from $\{u_j^b (x; \lambda)\}_{j=1}^d$) 
form the basis for a Lagrangian subspace of $\mathbb{C}^{2n}$ for all
$x \in (c, b)$. If $d > n$, we get a contradiction to the maximality 
of Lagrangian subspaces, and so we can conclude that $d = n$ (recalling
that this is under the assumption that $d \ge n$). This, of course, 
leaves open the possibility that the dimension 
of the space of solutions 
of (\ref{extension-hammy}) that lie right in $(c, b)$ and 
satisfy (\ref{boundary-b}) is less than $n$.
\end{proof}

\begin{lemma} \label{lemma2-8prime}
Let Assumptions {\bf (A)} through {\bf (D)} hold. 
Then for any fixed $\lambda \in [\lambda_1, \lambda_2]$, 
there exists a matrix $\gamma \in \mathbb{C}^{n \times 2n}$ 
satisfying (\ref{boundary-matrix}) so that $0$
is not an eigenvalue of $\mathcal{T}_{c, b}^{\gamma} (\lambda)$.
\end{lemma}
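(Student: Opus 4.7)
The plan is to recognize that an eigenfunction of $\mathcal{T}_{c,b}^{\gamma}(\lambda)$ at $\mu = 0$ is precisely a solution $y$ of (\ref{extension-hammy}) on $(c,b)$ that lies right in $(c,b)$, satisfies the singular boundary condition (\ref{boundary-b}) at $b$, and satisfies $\gamma y(c) = 0$. The absence of such a $y$ can therefore be arranged by choosing $\gamma$ so that the Lagrangian subspace $\ker \gamma \subset \mathbb{C}^{2n}$ is transverse to the subspace consisting of values at $c$ of solutions that lie right in $(c,b)$ and satisfy (\ref{boundary-b}).

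To make this precise, let $V_b(\lambda) \subset \mathbb{C}^{2n}$ denote the image of the evaluation map $y \mapsto y(c)$ on the space of solutions of (\ref{extension-hammy}) that lie right in $(c,b)$ and satisfy (\ref{boundary-b}). By ODE uniqueness the evaluation is injective, so $\dim V_b(\lambda) = d$ where $d$ is the dimension of that solution space; by Lemma \ref{lemma2-7prime}, $d \le n$. Moreover, the computation in the proof of Lemma \ref{lemma2-7prime} gives $u_j^b(x;\lambda)^* J u_k^b(x;\lambda) = 0$ for all $x \in (c,b)$ and all basis elements, so evaluating at $x = c$ shows that $V_b(\lambda)$ is an isotropic subspace of $(\mathbb{C}^{2n}, (J\cdot,\cdot))$.

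By standard symplectic linear algebra, any isotropic subspace of dimension $\le n$ can be extended to a Lagrangian subspace $L \supset V_b(\lambda)$. Then $JL$ is also Lagrangian, and a direct verification using $J^2 = -I$ and the Lagrangian condition shows $L \cap JL = \{0\}$; in particular $V_b(\lambda) \cap JL = \{0\}$. Choose a frame $\mathbf{L} \in \mathbb{C}^{2n \times n}$ for $JL$ (so $\mathbf{L}^* J \mathbf{L} = 0$ and $\rank \mathbf{L} = n$) and set $\gamma := \mathbf{L}^* J$. Then $\rank \gamma = n$, $\gamma J \gamma^* = \mathbf{L}^* J (J)(J^* \mathbf{L}) = \mathbf{L}^* J \mathbf{L} = 0$ (using $J^* = -J$ and $J^2 = -I$), and $\ker \gamma = (J \cdot \colspan \mathbf{L})^{\perp_{\mathbb{C}}}$ can be checked to equal $JL$ using $\ell^\perp = J\ell$ for any Lagrangian $\ell$. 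Thus $\gamma$ satisfies (\ref{boundary-matrix}) with $\ker \gamma = JL$.

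Now suppose toward a contradiction that $y \in \mathcal{D}_{c,b}^{\gamma}$ is a nonzero element of the kernel of $\mathcal{T}_{c,b}^{\gamma}(\lambda)$. Since $\mathcal{T}_{c, b, M}(\lambda) y = 0$, $y$ solves (\ref{extension-hammy}) on $(c,b)$, and since $y \in L^2_{\mathbb{B}_{\lambda}}((c,b), \mathbb{C}^{2n})$ it lies right in $(c,b)$; combined with the boundary condition (\ref{boundary-b}) built into $\mathcal{D}_{c,b}^{\gamma}$, this gives $y(c) \in V_b(\lambda)$. Meanwhile, $\gamma y(c) = 0$ forces $y(c) \in \ker \gamma = JL$. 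Therefore $y(c) \in V_b(\lambda) \cap JL = \{0\}$, and ODE uniqueness yields $y \equiv 0$, a contradiction. I expect the main technical point to be the symplectic-linear-algebra verification that the constructed $\gamma$ satisfies all three conditions in (\ref{boundary-matrix}) with $\ker \gamma$ exactly the prescribed Lagrangian; everything else is bookkeeping reducing the eigenvalue problem to a transversality statement in $\mathbb{C}^{2n}$.
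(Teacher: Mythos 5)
Your proof is correct and follows essentially the same route as the paper: both reduce absence of a $\mu=0$ eigenvalue to transversality at $x=c$ between the (isotropic, dimension $\le n$) space of boundary values of right-lying solutions satisfying (\ref{boundary-b}) and the Lagrangian $\ker\gamma$, and then choose $\gamma$ to make $\ker\gamma$ the orthogonal complement $JL$ of a Lagrangian extension $L$ of that isotropic subspace. The paper realizes this choice concretely as $\gamma=\mathbf{R}^b(\lambda)^*$ and observes that $\gamma\mathbf{R}^b(\lambda)=\mathbf{R}^b(\lambda)^*\mathbf{R}^b(\lambda)$ is a nonsingular Gram matrix, which is the computational counterpart of your $L\cap JL=\{0\}$ argument.
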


\begin{proof}
First, we recall that $0$ is an eigenvalue of 
$\mathcal{T}_{c, b}^{\gamma} (\lambda)$ if and only if 
there exists a solution 
\begin{equation*}
    y (\cdot; \lambda) 
    \in \AC_{\loc} ([c, b), \mathbb{C}^{2n}) 
    \cap L^2_{\mathbb{B}_{\lambda}} ((c, b), \mathbb{C}^{2n}) 
\end{equation*}
to (\ref{extension-hammy}) so that (\ref{boundary-c})
and (\ref{boundary-b}) are both satisfied. 
Also, according to Lemma \ref{lemma2-7prime}, 
the space of solutions of (\ref{extension-hammy})
that lie right in $(c, b)$ and satisfy (\ref{boundary-b})
has dimension at most $n$. We begin by assuming
that this space of solutions has dimension $n$, and we denote
a basis for the space by $\{u^b_j (x; \lambda)\}_{j=1}^n$.

We let $\Phi (x; \lambda)$ denote a fundamental 
matrix for (\ref{extension-hammy}), initialized 
by $\Phi (c; \lambda) = I_{2n}$. If $U^b (x; \lambda)$
denotes the matrix comprising $\{u^b_j (x; \lambda)\}_{j=1}^n$
as its columns, then there exists a $2n \times n$ 
matrix $\mathbf{R}^b (\lambda) = \genfrac{(}{)}{0pt}{1}{R^b (\lambda)}{S^b (\lambda)}$
so that 
\begin{equation*}
    U^b (x; \lambda) = \Phi (x; \lambda) \mathbf{R}^b (\lambda),
\end{equation*}
for all $x \in [c, b)$ (i.e., $\mathbf{R}^b (\lambda) = U^b (c; \lambda)$). 
We know from Lemma \ref{lemma2-7prime} that $U^b (c; \lambda)$ is a frame 
for a Lagrangian subspace of $\mathbb{C}^{2n}$, and it follows 
immediately that the same is true for $\mathbf{R}^b (\lambda)$.
By taking a derivative in $x$, we can readily 
check that $\Phi (x; \lambda)^* J \Phi (x; \lambda)$ is constant 
in $x$, and evaluation at $x=c$ yields the useful identity  
\begin{equation} \label{fundamental-matrix-identity}
    \Phi (x; \lambda)^* J \Phi (x; \lambda) = J.
\end{equation}
Using this relation, we can compute 
\begin{equation*}
    U^b (x; \lambda)^* J U^b (x; \lambda) = 
    \mathbf{R}^b (\lambda)^* \Phi (x; \lambda)^* J \Phi (x; \lambda) \mathbf{R}^b (\lambda)
     = \mathbf{R}^b (\lambda)^* J \mathbf{R}^b (\lambda).
\end{equation*}

The value $\mu = 0$ will be an eigenvalue of 
$\mathcal{T}_{c, b}^{\gamma} (\lambda)$
if and only if there exists a vector 
$v \in \mathbb{C}^n$ so that 
$y(x; \lambda) = \Phi (x; \lambda) \mathbf{R}^b (\lambda) v$
satisfies 
\begin{equation*}
    \gamma y(c; \lambda) = 0,
\end{equation*}
which we can express (since $\Phi (c; \lambda) = I_{2n}$)
as $\gamma \mathbf{R}^b (\lambda) v = 0$. This relation will 
hold for a vector $v \ne 0$ if and only if the Lagrangian 
spaces with frames $J \gamma^*$ and $\mathbf{R}^b (\lambda)$
intersect. We choose $\gamma = \mathbf{R}^b (\lambda)^*$, noting
that in this case 
\begin{equation*}
    \gamma J \gamma^*
    = \mathbf{R}^b (\lambda)^* J \mathbf{R}^b (\lambda) = 0
\end{equation*}
(i.e., this is a valid choice for $\gamma$, satisfying (\ref{boundary-matrix})) 
but $\gamma \mathbf{R}^b (\lambda) =  \mathbf{R}^b (\lambda)^* \mathbf{R}^b (\lambda)$
is certainly non-singular, so $0$ is not an 
eigenvalue of $\mathcal{T}_{c, b}^{\gamma} (\lambda)$.

In the event that the space of solutions of (\ref{extension-hammy})
that lie right in $(c, b)$ and satisfy (\ref{boundary-b})
has dimension less than $n$, the matrix $\mathbf{R}^b (\lambda)$
(as constructed just above) will have fewer than $n$ columns, but we can add columns 
(which don't correspond with solutions of (\ref{extension-hammy})
that lie right in $(c, b)$ and satisfy (\ref{boundary-b}))
to create the basis for a Lagrangian subspace of $\mathbb{C}^{2n}$.
We can then proceed precisely as before, and we conclude
that the Lagrangian subspace with frame $J \gamma^*$ does
not intersect the Lagrangian subspace with frame 
$\mathbf{R}^b (\lambda)$, certainly including the elements that
correspond with solutions of (\ref{extension-hammy})
that lie right in $(c, b)$ and satisfy (\ref{boundary-b}).
\end{proof}

\begin{lemma} \label{lemma2-9prime}
Let Assumptions {\bf (A)} through {\bf (D)} hold,  
and suppose that for each  
$\lambda \in [\lambda_1, \lambda_2]$, we have the exclusion
$0 \notin \sigma_{\ess} (\mathcal{T} (\lambda))$. 
Then for each $\lambda \in [\lambda_1, \lambda_2]$, 
the space of solutions of (\ref{extension-hammy})
that lie right in $(c, b)$ and satisfy (\ref{boundary-b}) has 
dimension $n$. If we let $\{u^b_j (x; \lambda)\}_{j=1}^n$ denote
a basis for this space, then for each $x \in (c, b)$,
the vectors $\{u^b_j (x; \lambda)\}_{j=1}^n$ comprise a basis
for a Lagrangian subspace of $\mathbb{C}^{2n}$.
\end{lemma}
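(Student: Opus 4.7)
Denote by $d$ the dimension of the space of solutions of (\ref{extension-hammy}) that lie right in $(c,b)$ and satisfy (\ref{boundary-b}). Lemma \ref{lemma2-7prime} immediately supplies $d \le n$ together with the Lagrangian property of the resulting subspace at each $x \in (c,b)$, so the only real content of the lemma is the matching lower bound $d \ge n$, which is where the essential-spectrum hypothesis must be used.

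To establish $d \ge n$, I would first invoke Lemma \ref{lemma2-8prime} to fix a matrix $\gamma \in \mathbb{C}^{n \times 2n}$ with $\rank \gamma = n$ and $\gamma J \gamma^* = 0$ for which $0 \notin \sigma_p(\mathcal{T}_{c,b}^{\gamma}(\lambda))$. Next, I would appeal to the standard decomposition principle for self-adjoint singular Sturm-Liouville/Hamiltonian operators, which applies in our weighted setting because cutting the interval at the regular interior point $c$ and equipping $(a,c)$ with any self-adjoint boundary condition only produces a finite-rank (hence relatively compact) perturbation of the resolvent, to conclude $\sigma_{\ess}(\mathcal{T}_{c,b}^{\gamma}(\lambda)) \subseteq \sigma_{\ess}(\mathcal{T}(\lambda))$. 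Combined with the point-spectrum exclusion just obtained, this forces $0 \in \rho(\mathcal{T}_{c,b}^{\gamma}(\lambda))$.

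With $\mathcal{T}_{c,b}^{\gamma}(\lambda)$ invertible at $0$, I would translate invertibility into a dimension count. Combine the $n$-dimensional subspace of solutions of $Jy' = \mathbb{B}(x;\lambda)y$ satisfying $\gamma y(c) = 0$ (exactly $n$-dimensional because $\gamma$ has rank $n$) with the $d$ independent solutions lying right in $(c,b)$ and satisfying (\ref{boundary-b}) to construct a candidate Green's kernel via variation of parameters. Boundedness of the resolvent as an operator onto all of $L^2_{\mathbb{B}_{\lambda}}((c,b),\mathbb{C}^{2n})$ forces the right-side frame to contribute $n$ linearly independent columns; otherwise the variation-of-parameters formula fails to produce $L^2$-near-$b$ preimages for an $(n-d)$-codimensional family of right-hand sides, contradicting $0 \in \rho(\mathcal{T}_{c,b}^{\gamma}(\lambda))$. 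This yields $d \ge n$, and the Lagrangian conclusion then follows from Lemma \ref{lemma2-7prime}.

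The main obstacle will be this last step: verifying carefully that invertibility of the singular boundary-value operator at $0$ genuinely requires a full $n$-dimensional contribution from right-$L^2$ solutions satisfying (\ref{boundary-b}), rather than being consistent with a deficient right-side space. If the direct Green's function bookkeeping becomes awkward — particularly in tracking $L^2$-near-$b$ behavior through variation of parameters — a cleaner alternative is a deformation argument: embed into the $\mu$-family $Jy' = (\mathbb{B}(x;\lambda) + \mu \mathbb{B}_{\lambda}(x;\lambda))y$, where for $\mu \in \mathbb{C} \setminus \mathbb{R}$ the analogous dimension is exactly $n$ by Lemma \ref{subspace-dimensions-lemma} together with the Niessen construction of Section \ref{operator-section}, and then use analytic continuation of the resolvent through $\mu = 0$, legitimate precisely because $0 \in \rho(\mathcal{T}_{c,b}^{\gamma}(\lambda))$, to transport the dimension count to the real parameter.
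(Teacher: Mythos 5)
Your overall strategy matches the paper's: invoke Lemma \ref{lemma2-8prime} to choose $\gamma$ with $0 \notin \sigma_p(\mathcal{T}_{c,b}^{\gamma}(\lambda))$, use the decomposition principle ($\sigma_{\ess}(\mathcal{T}_{c,b}^{\gamma}(\lambda)) \subset \sigma_{\ess}(\mathcal{T}(\lambda))$) to upgrade this to $0 \in \rho(\mathcal{T}_{c,b}^{\gamma}(\lambda))$, then leverage resolvent-set membership to force the dimension up to $n$, and finish with Lemma \ref{lemma2-7prime}. Where you sketch a Green's function bookkeeping argument, the paper instead quotes Theorem 7.1 in \cite{Weidmann1987}, which asserts directly that the dimension of the space of $L^2$-right-lying solutions satisfying (\ref{boundary-b}) is constant over $\rho(\mathcal{T}_{c,b}^{\gamma}(\lambda))$; your second, deformation-based alternative is essentially the same as invoking this theorem. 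Your primary Green's function proposal is plausible but amounts to re-deriving this invariance from scratch, and you yourself flag that the $L^2$-near-$b$ accounting is the unresolved part.

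The genuine gap is in your deformation alternative, and it is the nontrivial part of the paper's proof that you do not address. You claim that at $(\mu_0, \lambda)$ with $\mu_0 \in \mathbb{C}\setminus\mathbb{R}$, the dimension of solutions lying right in $(c,b)$ and satisfying (\ref{boundary-b}) is exactly $n$ ``by Lemma \ref{subspace-dimensions-lemma} together with the Niessen construction.'' This is immediate only for $\lambda = \lambda_0$: the boundary condition (\ref{boundary-b}) is anchored to $U^b(x; \mu_0, \lambda_0)$, i.e., to Niessen elements built at the fixed reference pair $(\mu_0, \lambda_0)$, and by construction precisely $n$ independent solutions at $(\mu_0, \lambda_0)$ satisfy it. For $\lambda \ne \lambda_0$, the Niessen construction applied at $(\mu_0, \lambda)$ would produce a different boundary condition $\lim_{x\to b^-} U^b(x;\mu_0,\lambda)^* J y(x) = 0$, not (\ref{boundary-b}), and there is no direct Niessen-based reason the solutions at $(\mu_0,\lambda)$ should satisfy the $(\mu_0,\lambda_0)$-anchored condition. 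The paper closes this gap by first proving the claim for $\lambda = \lambda_0$, then for general $\lambda$ using Assumption \textbf{(C)}, Lemma \ref{representation-lemma} to express solutions at $(\mu_0,\lambda)$ in terms of the $(\mu_0,\lambda_0)$-Niessen elements and complements, and a linear algebra argument (the $r_b \times (n+r_b)$ system $D\mathbf{e} = 0$) to extract $n$ independent combinations annihilating the Niessen complements. Without this step, or your Green's function argument made fully rigorous, the lower bound $d \ge n$ at general $\lambda \in [\lambda_1,\lambda_2]$ is not established.
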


\begin{proof}
We fix any $\lambda \in [\lambda_1, \lambda_2]$, and observe 
from Lemma \ref{lemma2-8prime} that we can select
$\gamma \in \mathbb{C}^{n \times 2n}$ satisfying  
(\ref{boundary-matrix}) so that $0$ is not 
an eigenvalue of the self-adjoint operator 
$\mathcal{T}_{c, b}^{\gamma} (\lambda)$.
In addition, we know from Theorem 11.5 in \cite{Weidmann1987},
appropriately adapted to our setting, 
that $\sigma_{\ess} (\mathcal{T}_{c, b}^{\gamma} (\lambda)) 
\subset \sigma_{\ess} (\mathcal{T} (\lambda))$,
so we can conclude (using our assumption  
$0 \notin \sigma_{\ess} (\mathcal{T} (\lambda))$)
that, in fact, $0 \in \rho (\mathcal{T}_{c, b}^{\gamma} (\lambda))$.
This last inclusion allows us to apply Theorem 7.1 in 
\cite{Weidmann1987}, which asserts (among other things) that 
the space of solutions of (\ref{extension-hammy}) that lie 
right in $(c, b)$ and satisfy (\ref{boundary-b}) has
the same dimension for each  
$\mu \in \rho (\mathcal{T}_{c, b}^{\gamma} (\lambda))$. We know by 
construction that for $\mu_0$ and $\lambda = \lambda_0$ as in the 
specification of $\mathcal{D}$ this dimension is 
precisely $n$, and so we can conclude that it must be
$n$ for $\mu = 0$ as well (still with $\lambda = \lambda_0$). 
We can now conclude from 
Lemma \ref{lemma2-7prime} that this space must be 
a Lagrangian subspace of $\mathbb{C}^{2n}$ for 
each $x \in (c, b)$. This gives the claim for the 
specific choice $\lambda = \lambda_0$. 

For $\lambda \in [\lambda_1, \lambda_2] \backslash \lambda_0$,
we have from Assumption {\bf (C)} that there exist $n + r_b$
solutions $\{u_j^b (x; \mu_0, \lambda)\}_{j=1}^{n + r_b}$ to 
$Ju' - \mathbb{B} (x; \lambda) u = \mu_0 \mathbb{B}_{\lambda} (x; \lambda) u$ 
that lie right in $(c, b)$. Such functions are solutions to the 
eigenvalue problem $\mathcal{T}_{c, b, M} (\lambda) u = \mu_0 u$
(noting that the equation is regular at $x = c$, so 
$u \in \mathcal{D}_{c, b, M} = \dom (\mathcal{T}_{c, b, M} (\lambda))$).
According to Lemma \ref{representation-lemma},  
for each $j \in \{1, 2, \dots, n+r_b\}$, we can write 
\begin{equation} \label{u-representation}
    u_j^b (x; \mu_0; \lambda) = u_{p, j} (x) 
    + \sum_{k=1}^n c_k^j (\mu_0, \lambda, \lambda_0) u_k^b (x; \mu_0, \lambda_0)
    + \sum_{k=1}^{r_b} d_k^j (\mu_0, \lambda, \lambda_0) v_k^b (x; \mu_0, \lambda_0),
\end{equation}
for some constants $\{c_k^j (\mu_0, \lambda, \lambda_0)\}_{k = 1}^{n}$
and $\{d_k^j (\mu_0, \lambda, \lambda_0)\}_{k = 1}^{r_b}$, and 
where $u_{p, j} \in \mathcal{D}_{c, b}^{\gamma}$. (Here, we recall that under
Assumption {\bf (C)} $r_b$ does not depend on $\lambda$.) 
We would like to show that by taking appropriate 
linear combinations of the functions $\{u_j^b (x; \mu_0, \lambda)\}_{j=1}^{n + r_b}$,
we can construct $n$ linearly independent solutions of 
$Ju' - \mathbb{B} (x; \lambda) u = \mu_0 \mathbb{B}_{\lambda} (x; \lambda) u$ 
that lie right in $(c, b)$ and satisfy (\ref{boundary-b}). 

First, suppose we're in the limit point case so that $r_b = 0$. Then 
\begin{equation*}
    u_j^b (x; \mu_0; \lambda) = u_{p, j} (x) 
    + \sum_{k=1}^n c_k^j (\mu_0, \lambda, \lambda_0) u_k^b (x; \mu_0, \lambda_0),
\end{equation*}
and since the elements $u_{p, j}$ and $\{u_k^b (x; \mu_0, \lambda_0)\}_{k=1}^n$
all lie right in $(c, b)$ and satisfy (\ref{boundary-b}), we see immediately
that the elements $\{u_j^b (x; \mu_0; \lambda)\}_{j=1}^n$ must also have these
properties. 

For the general case with $r_b \in \{1, 2, \dots, n\}$, we will show that 
by taking appropriate linear combinations of the elements 
$\{u_j^b (x; \mu_0, \lambda)\}_{j=1}^{n + r_b}$ we can construct at 
least $n$ linearly independent solutions of 
$Ju' - \mathbb{B} (x; \lambda) u = \mu_0 \mathbb{B}_{\lambda} (x; \lambda) u$ 
that lie right in $(c, b)$ and satisfy (\ref{boundary-b}). Using 
(\ref{u-representation}), we see that linear combinations of the 
elements $\{u_j^b (x; \mu_0, \lambda)\}_{j=1}^{n + r_b}$ can be 
expressed as 
\begin{equation*}
    \begin{aligned}
    \sum_{j=1}^{n+r_b} e_j u_j^b (x; \mu_0, \lambda)
    &= \sum_{j=1}^{n+r_b} e_j u_{p, j} (x) 
    + \sum_{j=1}^{n+r_b} e_j \sum_{k=1}^n c_k^j (\mu_0, \lambda, \lambda_0) u_k^b (x; \mu_0, \lambda_0) \\
    &+ \sum_{j=1}^{n+r_b} e_j \sum_{k=1}^{r_b} d_k^j (\mu_0, \lambda, \lambda_0) v_k^b (x; \mu_0, \lambda_0),
    \end{aligned}
\end{equation*}
for some constants $\{e_j\}_{j=1}^{n+r_b}$. We want to find all collections
of such sets for which the sum involving $\{v_k^b (x; \mu_0, \lambda_0)\}_{k=1}^{r_b}$
is eliminated. Recalling again that 
the elements $u_{p, j}$ and $\{u_k^b (x; \mu_0, \lambda_0)\}_{k=1}^n$
all lie right in $(c, b)$ and satisfy (\ref{boundary-b}), our goal is 
to show that the coefficients $\{e_j\}_{j=1}^{n+r_b}$ can be chosen 
so that the Niessen elements $\{v_k^b (x; \mu_0, \lambda_0)\}_{k=1}^{r_b}$
are eliminated entirely from this sum for at least $n$ choices of 
the coefficients. In order to effect this elimination, we need to 
choose the coefficients $\{e_j\}_{j=1}^{n+r_b}$ so that for 
each $k \in \{1, 2, \dots, r_b\}$ we have 
\begin{equation*}
\sum_{j=1}^{n+r_b} e_j d_k^j (\mu_0, \lambda, \lambda_0) = 0.
\end{equation*}
I.e., we have $r_b$ equations for the $n+r_b$ unknowns 
$\{e_j\}_{j=1}^{n+r_b}$. For purposes of notation, it will be convenient to 
let $D$ denote the $r_b \times (n+r_b)$ matrix 
$D = (d_k^j)_{k, j = 1}^{r_b, n+r_b}$ and likewise to let 
$\mathbf{e}$ denote the column vector of length $n+r_b$ 
with entries $\{e_j\}_{j=1}^{n+r_b}$. Then we can express the 
system we need to solve as $D \mathbf{e} = 0$. Here, since $D$
only has $r_b$ rows, $\rank D \le r_b$, so that $\nullity D \ge n$,
from which we conclude that we can find at least $n$ suitable 
choices of the coefficients. 

At this point, we've shown that there exist at least 
$n$ linearly independent functions 
$\{u_j^b (x; \lambda)\}_{j=1}^{n}$ 
that solve 
\begin{equation*}
    Ju' - \mathbb{B} (x; \lambda) u = \mu_0 \mathbb{B} (x; \lambda) u
\end{equation*}
and additionally 
lie right in $(c, b)$ and satisfy (\ref{boundary-b}).
(The collection $\{u_j^b (x; \lambda)\}_{j=1}^{n}$ comprises 
linear combinations of the elements 
$\{u_j^b (x; \mu_0, \lambda)\}_{j=1}^{n + r_b}$
just above, and our convention of suppressing dependence on 
$\mu_0$ in the former is intended merely to draw a distinction
between the two collections without introducing additional
cumbersome notation.) As described at the outset of the proof, 
since $0 \in \rho(\mathcal{T}^{\gamma}_{c, b} (\lambda))$,
we can conclude that there must be at least $n$ linearly 
independent solutions $\{u_j^b (x; \lambda)\}_{j=1}^n$ 
of (\ref{extension-hammy}) that 
lie right in $(c, b)$ and satisfy (\ref{boundary-b}).
Last, according to Lemma \ref{lemma2-7prime}, for 
each $x \in (c, b)$ these elements comprise a basis for 
a Lagrangian subspace of $\mathbb{C}^{2n}$.
\end{proof}

We next develop a Green's function for the inhomogeneous 
system
\begin{equation} \label{inhomogeneous-problem}
    Jy' - \mathbb{B} (x; \lambda) y = \mathbb{B}_{\lambda} (x; \lambda) f,
\end{equation}
The construction follows a standard argument, and is 
included in the appendix. 

\begin{lemma} \label{green-function-lemma} 
Let Assumptions {\bf (A)} through {\bf (D)} hold, and  
fix any $\lambda \in [\lambda_1, \lambda_2]$ for which 
$0 \notin \sigma_{\ess} (\mathcal{T} (\lambda))$. Using 
Lemma \ref{lemma2-8prime}, take $\gamma$ satisfying (\ref{boundary-matrix}) 
so that $0 \notin \sigma_p (\mathcal{T}^{\gamma}_{c, b} (\lambda))$
(and so consequently $0 \in \rho (\mathcal{T}^{\gamma}_{c, b} (\lambda))$). 
Then for any $f \in L^2_{\mathbb{B}_{\lambda}} ((c, b), \mathbb{C}^{2n})$
the inhomogeneous problem (\ref{inhomogeneous-problem})
can be solved for $y \in \mathcal{D}^{\gamma}_{c, b}$ via the 
Green's function formulation
\begin{equation*}
    y (x; \lambda) 
    = \int_c^b G^{\gamma}_{c, b} (x, \xi; \lambda) \mathbb{B}_{\lambda} (\xi; \lambda) f(\xi) d\xi,
\end{equation*}
where 
\begin{equation*}
    G^{\gamma}_{c, b} (x, \xi; \lambda)
    = \begin{cases}
    - \Phi (x; \lambda) 
    \begin{pmatrix}
    0 & \mathbf{R}^b (\lambda)
    \end{pmatrix} 
    \mathbb{M} (\lambda)
    \begin{pmatrix}
    J \gamma^* & 0
    \end{pmatrix}^*
    \Phi (\xi; \lambda)^* & c < \xi < x < b \\
    \Phi (x; \lambda) 
    \begin{pmatrix}
    J \gamma^* & 0
    \end{pmatrix} 
    \mathbb{M} (\lambda)
    \begin{pmatrix}
    0 & \mathbf{R}^b (\lambda)
    \end{pmatrix}^*
    \Phi (\xi; \lambda)^* & c < x < \xi < b,
    \end{cases}
\end{equation*}
and $\Phi (x; \lambda)$ denotes a 
fundamental matrix for (\ref{extension-hammy}) satisfying
\begin{equation*}
    J \Phi' = \mathbb{B} (x; \lambda) \Phi, \quad \Phi (c; \lambda) = I_{2n}.
\end{equation*}
Here, $\mathbf{R}^b (\lambda)$ is the frame for a Lagrangian subspace of 
$\mathbb{C}^{2n}$, and 
\begin{equation*}
    \mathbb{M} (\lambda) = \mathbb{E} (\lambda)^{-1} J (\mathbb{E}(\lambda)^*)^{-1},
    \quad \mathbb{E} (\lambda) = (J \gamma^* \,\, \mathbf{R}^b (\lambda)).
\end{equation*}
\end{lemma}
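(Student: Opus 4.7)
The plan is standard variation of parameters in symplectic coordinates, followed by enforcement of the two boundary conditions and a rewriting in the symplectically invariant form asserted in the statement.

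First I would verify that $\mathbb{E}(\lambda)$ is invertible, so that $\mathbb{M}(\lambda)$ is well-defined. Since $\gamma J \gamma^* = 0$, the columns of $J\gamma^*$ span a Lagrangian subspace, and by Lemma \ref{lemma2-9prime} so do the columns of $\mathbf{R}^b(\lambda)$. Non-invertibility of $\mathbb{E}(\lambda)$ would mean these two Lagrangian subspaces intersect nontrivially, producing a nonzero $v \in \mathbb{C}^{2n}$ for which $y(x) = \Phi(x;\lambda) v$ is a nontrivial solution of (\ref{extension-hammy}) satisfying both (\ref{boundary-c}) and (\ref{boundary-b}); this contradicts $0 \in \rho(\mathcal{T}^\gamma_{c,b}(\lambda))$, which holds by the choice of $\gamma$ (Lemma \ref{lemma2-8prime}) together with the hypothesis $0 \notin \sigma_{\ess}(\mathcal{T}(\lambda))$ (invoked as in Lemma \ref{lemma2-9prime} via Theorem 11.5 of \cite{Weidmann1987}).

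Second, I would impose the ansatz
\[
G^\gamma_{c,b}(x,\xi;\lambda) = \begin{cases} \Phi(x;\lambda) \mathbf{R}^b(\lambda) B(\xi;\lambda), & c < \xi < x < b, \\ \Phi(x;\lambda) J\gamma^* A(\xi;\lambda), & c < x < \xi < b, \end{cases}
\]
so that the $\xi > x$ branch automatically satisfies (\ref{boundary-c}) at $x = c$ (since $\gamma J\gamma^* = 0$) and, for each fixed $\xi$, the columns of the $\xi < x$ branch are solutions of the homogeneous equation in $\mathrm{range}(U^b(\cdot;\lambda))$ and hence satisfy (\ref{boundary-b}) by Lemma \ref{lemma2-9prime}. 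Substituting $y(x) = \int_c^b G^\gamma_{c,b}(x,\xi;\lambda) \mathbb{B}_\lambda(\xi;\lambda) f(\xi)\, d\xi$ into (\ref{inhomogeneous-problem}) reduces matters to the jump condition $G_+(x,x) - G_-(x,x) = -J$, namely $\mathbf{R}^b(\lambda) B(x;\lambda) - J\gamma^* A(x;\lambda) = -\Phi(x;\lambda)^{-1} J$. Using (\ref{fundamental-matrix-identity}) to get $\Phi^{-1} J = J\Phi^*$, this is
\[
\mathbb{E}(\lambda) \begin{pmatrix} -A(x;\lambda) \\ B(x;\lambda) \end{pmatrix} = -J\Phi(x;\lambda)^*,
\]
so $A$ and $B$ are determined by applying $\mathbb{E}(\lambda)^{-1}$.

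Third, I would recast the result in the asserted symplectically invariant form. Writing $(J\gamma^* \,\, 0) = \mathbb{E}(\lambda) P_c$ and $(0 \,\, \mathbf{R}^b(\lambda)) = \mathbb{E}(\lambda) P_b$ with $P_c, P_b$ the two natural $2n \times 2n$ block-diagonal projectors, and substituting into the claimed formula, one factor of $\mathbb{E}^*$ cancels $(\mathbb{E}^*)^{-1}$ from $\mathbb{M}(\lambda)$, leaving $\mathbb{E}(\lambda) P_b \mathbb{E}(\lambda)^{-1}$ (for $\xi < x$), respectively $\mathbb{E}(\lambda) P_c \mathbb{E}(\lambda)^{-1}$ (for $\xi > x$), acting on $J\Phi(\xi;\lambda)^*$. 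Since $\mathbb{E}(\lambda) P_b \mathbb{E}(\lambda)^{-1}$ is the oblique projection onto $\mathrm{range}(\mathbf{R}^b(\lambda))$ along $\mathrm{range}(J\gamma^*) = \ker \gamma$, and $\mathbb{E}(\lambda) P_c \mathbb{E}(\lambda)^{-1}$ is its complement, these collapse to the $A$ and $B$ determined above. The main obstacle I anticipate is not this algebra but the regularity verification that $y(\cdot;\lambda) \in \mathcal{D}^\gamma_{c,b}$ at the singular endpoint $b$: for each $\xi$ the $\xi < x$ branch of $G^\gamma_{c,b}$ has columns in the span of $U^b(\cdot;\lambda)$, which by Lemma \ref{lemma2-9prime} both lie right in $(c,b)$ with respect to $\mathbb{B}_\lambda$ and annihilate the boundary functional $U^b(\cdot;\mu_0,\lambda_0)^* J \cdot$ as $x \to b^-$; a Fubini-type argument, using $f \in L^2_{\mathbb{B}_\lambda}((c,b),\mathbb{C}^{2n})$ and the local $L^1$ bounds from Assumptions {\bf (A)}, then passes the boundary limit through the integral to give $y \in \mathcal{D}^\gamma_{c,b}$.
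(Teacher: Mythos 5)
Your plan is sound and is essentially the same variation-of-parameters argument the paper gives in its Appendix, just organized a bit differently: you posit the Green's-kernel ansatz with each branch already in the span satisfying its boundary condition, then determine the free data from the jump at $x=\xi$; the paper writes the general solution $y = \Phi v$ with an integration constant and imposes both boundary conditions to pin the constant down. The invertibility argument for $\mathbb{E}(\lambda)$ (nontrivial intersection of the two Lagrangian planes would produce an eigenfunction, contradicting $0 \in \rho(\mathcal{T}^\gamma_{c,b}(\lambda))$) is correct and is the same observation the paper makes via the nonsingularity of $(J\gamma^*)^* J \mathbf{R}^b(\lambda)$, and your closing regularity discussion at the singular endpoint is at the right level of detail.

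Two points in the middle deserve attention. First, a sign slip: for $Jy'-\mathbb{B}y=\mathbb{B}_\lambda f$ the jump is $G_-(x,x)-G_+(x,x)=-J$ (equivalently $G_+-G_-=J$), not $G_+-G_-=-J$; your displayed matrix equation is the correct one, so only the verbal statement is off. Second, and more substantially, the claim that after writing $(J\gamma^*\;\,0)=\mathbb{E}P_c$, $(0\;\,\mathbf{R}^b)=\mathbb{E}P_b$ ``one factor of $\mathbb{E}^*$ cancels $(\mathbb{E}^*)^{-1}$'' is not a cancellation: after substitution the $\xi<x$ branch reads $-\Phi(x)\,\mathbb{E}P_b\,\mathbb{M}(\lambda)\,P_c\mathbb{E}^*\,\Phi(\xi)^*$, and $(\mathbb{E}^*)^{-1}P_c\mathbb{E}^* \ne I$. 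Passing to the form $-\Phi(x)\,\mathbb{E}P_b\mathbb{E}^{-1}J\,\Phi(\xi)^*$ (which is what your $(A,B)$-determination yields) requires the identity $P_b\mathbb{M}(\lambda)P_c = P_b\mathbb{M}(\lambda)$, equivalently $P_b\mathbb{M}(\lambda)P_b=0$ (with a companion $P_c\mathbb{M}(\lambda)P_c=0$). That block off-diagonality of $\mathbb{M}(\lambda)=-(\mathbb{E}^*J\mathbb{E})^{-1}$ is a consequence of the Lagrangian property of the two frames (it forces $\mathbb{E}^*J\mathbb{E}$ to have vanishing diagonal blocks), and it is precisely what the paper's Appendix computes explicitly. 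Without invoking it, the step from the $(A,B)$-representation to the asserted symmetric form is unjustified — the conclusion you reach is correct, but the stated route to it does not work.
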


We next use our Green's function formulation from Lemma 
\ref{green-function-lemma} to obtain useful pointwise estimates on 
elements $(\mathcal{T}_{c, b}^{\gamma} (\lambda_*)^{-1} \mathcal{E} (\cdot; \lambda, \lambda_*) f) (x)$.
For this discussion, it will be convenient to set 
\begin{equation} \label{mathcalR}
    \mathcal{R}_{\mathcal{E}} (\lambda)
    := \mathcal{T}^{\gamma}_{c, b} (\lambda_*)^{-1}
    \mathcal{E} (\cdot; \lambda, \lambda_*),
\end{equation}
and we note the inequalities 
\begin{equation} \label{mathcalR-inequalities}
\| \mathcal{R}_{\mathcal{E}} (\lambda)^k\|
\le \| \mathcal{T}^{\gamma}_{c, b} (\lambda_*)^{-1} \|^k
\| \mathcal{E} (\cdot; \lambda, \lambda_*) \|^k,
\end{equation}
where in all cases $\| \cdot \|$ denotes operator norm, with 
the operator viewed as a map from $L^2_{\mathbb{B}_{\lambda}} ((c, b), \mathbb{C}^{2n})$
to itself.

\begin{lemma} \label{useful-estimates-lemma}
Let Assumptions {\bf (A)} through {\bf (D)} hold, and 
fix any $\lambda_* \in [\lambda_1, \lambda_2]$ for which 
$0 \notin \sigma_{\ess} (\mathcal{T} (\lambda_*))$. Using 
Lemma \ref{lemma2-8prime}, take $\gamma$, depending on 
$\lambda_*$, satisfying (\ref{boundary-matrix})
so that $0 \notin \sigma_p (\mathcal{T}^{\gamma}_{c, b} (\lambda_*))$
(and so consequently $0 \in \rho (\mathcal{T}^{\gamma}_{c, b} (\lambda_*))$).
Finally, for some interval $\mathcal{I} \subset [\lambda_1, \lambda_2]$,
let $\mathcal{E} (x; \lambda, \lambda_*)$ denote any 
measurable map from $[c, b) \times \mathcal{I}$
to $\mathbb{C}^{2n \times 2n}$ (in particular, not necessarily the map specified in 
Assumption {\bf (E)}) so that for all $\lambda \in \mathcal{I}$ 
$\mathcal{E} (\cdot; \lambda, \lambda_*)$ is bounded as a map 
from $L^2_{\mathbb{B}_{\lambda}} ((c, b), \mathbb{C}^{2n})$ to 
itself. 
Then for any fixed $x \in [c, b)$ there exists a value $C (x; \lambda_*)$
so that for any $f \in L^2_{\mathbb{B}_{\lambda}} ((c, b), \mathbb{C}^{2n})$
we have 
\begin{equation*}
  |(\mathcal{R}_{\mathcal{E}} (\lambda) f) (x)|
  \le C (x; \lambda_*) \|\mathcal{E} (\cdot; \lambda, \lambda_*)\| \|f\|_{\mathbb{B}_{\lambda}},
\end{equation*}
for all $\lambda \in \mathcal{I}$. Moreover, for any $b' \in (c, b)$ there exists
a value $C_{b'} (\lambda_*)$ so that $C(x; \lambda_*) \le C_{b'} (\lambda_*)$
for all $x \in [c, b']$. 
\end{lemma}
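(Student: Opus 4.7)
The plan is to combine the explicit Green's function formula from Lemma \ref{green-function-lemma} with a weighted Cauchy--Schwarz inequality based on the non-negativity of $\mathbb{B}_{\lambda}$. Setting $h := \mathcal{E}(\cdot;\lambda,\lambda_*) f$, Lemma \ref{green-function-lemma} yields
\begin{equation*}
(\mathcal{R}_{\mathcal{E}}(\lambda) f)(x)
= \int_c^b G^{\gamma}_{c,b}(x,\xi;\lambda_*)\,\mathbb{B}_{\lambda}(\xi;\lambda_*)\,h(\xi)\,d\xi.
\end{equation*}
For any fixed unit vector $e\in \mathbb{C}^{2n}$, I would introduce the auxiliary function $g_{x,e}(\xi) := G^{\gamma}_{c,b}(x,\xi;\lambda_*)^* e$, so that $e^* (\mathcal{R}_{\mathcal{E}}(\lambda) f)(x)$ equals the weighted pairing $\int_c^b (\mathbb{B}_{\lambda}(\xi;\lambda_*) g_{x,e}(\xi), h(\xi))\,d\xi$. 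Since $\mathbb{B}_{\lambda}(\cdot;\lambda_*)$ is self-adjoint and non-negative, this pairing is bounded in modulus by $\|g_{x,e}\|_{\mathbb{B}_{\lambda}(\lambda_*)} \,\|h\|_{\mathbb{B}_{\lambda}(\lambda_*)}$ by Cauchy--Schwarz (applied via the non-negative square root of $\mathbb{B}_{\lambda}$).

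The substantive step is to control $\|g_{x,e}\|_{\mathbb{B}_{\lambda}(\lambda_*)}$. Here I would split the integral over $(c,x)$ and $(x,b)$ according to the two branches of $G^{\gamma}_{c,b}$. On the bounded piece $(c,x)$, the relevant columns of $\Phi(\xi;\lambda_*)^* J\gamma^*$ are continuous on the compact interval $[c,x]$ and hence bounded, while $b_1\in L^1_{\loc}$ via Assumption {\bf (A)} keeps the integral of $(\mathbb{B}_{\lambda}(\xi;\lambda_*)\cdot,\cdot)$ finite. On the half-open piece $(x,b)$, the $\xi$-dependence of $G^{\gamma}_{c,b}(x,\xi;\lambda_*)^*$ comes through $\Phi(\xi;\lambda_*)\mathbf{R}^b(\lambda_*) = U^b(\xi;\lambda_*)$, and by construction in Lemma \ref{lemma2-9prime} the columns of $U^b(\cdot;\lambda_*)$ lie right in $(c,b)$. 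Consequently $g_{x,e}|_{(x,b)} \in L^2_{\mathbb{B}_{\lambda}(\lambda_*)}((x,b),\mathbb{C}^{2n})$, and combining with the bounded piece on $(c,x)$ gives a finite $\|g_{x,e}\|_{\mathbb{B}_{\lambda}(\lambda_*)}$.

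Having established $|e^*(\mathcal{R}_{\mathcal{E}}(\lambda) f)(x)| \le \|g_{x,e}\|_{\mathbb{B}_{\lambda}(\lambda_*)}\,\|h\|_{\mathbb{B}_{\lambda}(\lambda_*)}$, maximization over unit $e$ converts it to a pointwise bound for $|(\mathcal{R}_{\mathcal{E}}(\lambda) f)(x)|$. The factor $\|h\|_{\mathbb{B}_{\lambda}(\lambda_*)}$ is controlled by $\|\mathcal{E}(\cdot;\lambda,\lambda_*)\|\,\|f\|_{\mathbb{B}_{\lambda}}$, using Assumption {\bf (C)} to absorb the equivalence between the $\lambda$- and $\lambda_*$-weighted norms into the constant. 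Define $C(x;\lambda_*)$ as the supremum over unit $e$ of $\|g_{x,e}\|_{\mathbb{B}_{\lambda}(\lambda_*)}$ times the equivalence constant; this gives the desired pointwise bound.

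Finally, for uniformity on $[c,b']$, note that $\Phi(\cdot;\lambda_*)$ and $\Phi(\cdot;\lambda_*)^{-1}$ are continuous (hence bounded) on $[c,b']$, the $(c,x)$-piece of $\|g_{x,e}\|^2_{\mathbb{B}_{\lambda}(\lambda_*)}$ is monotone non-decreasing in $x$ and thus bounded by its value at $x=b'$, and the $(x,b)$-piece is monotone non-increasing in $x$ and thus bounded by its value at $x=c$. I expect the main technical hurdle to lie in cleanly setting up the Cauchy--Schwarz step with the non-invertible weight $\mathbb{B}_{\lambda}$ and in verifying that the off-diagonal branch of the Green's function, restricted to $(x,b)$, genuinely lives in $L^2_{\mathbb{B}_{\lambda}(\lambda_*)}$; both reduce to transferring the ``lies right'' property of the $u^b_j(\cdot;\lambda_*)$ columns through the Green's function structure.
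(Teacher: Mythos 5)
Your proof is correct and follows essentially the same route as the paper's: Green's function representation split across the two branches, Cauchy--Schwarz in the $\mathbb{B}_{\lambda}$-weighted inner product, local absolute continuity of $\Phi$ to handle the $(c,x)$-piece, and the ``lies right'' property of $U^b(\cdot;\lambda_*)=\Phi(\cdot;\lambda_*)\mathbf{R}^b(\lambda_*)$ to handle the $(x,b)$-piece. The only cosmetic differences are your dualization against a unit vector $e$ (the paper works row by row) and your slightly more explicit monotonicity observation for uniformity on $[c,b']$, which the paper subsumes into the statement that $C(x;\lambda_*)$ is uniformly bounded there.
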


\begin{proof} By construction of $G^{\gamma}_{c, b} (x, \xi; \lambda_*)$,
along with the definition of $\mathcal{R}_{\mathcal{E}} (\lambda)$,
for any $f \in L^2_{\mathbb{B}_{\lambda}} ((c, b), \mathbb{C}^{2n})$, 
\begin{equation*}
(\mathcal{R}_{\mathcal{E}} (\lambda) f) (x)
= \int_c^b G^{\gamma}_{c, b} (x, \xi; \lambda_*) \mathbb{B}_{\lambda} (\xi; \lambda_*) 
\mathcal{E} (\xi; \lambda, \lambda_*) f (\xi) d\xi.
\end{equation*}
Using Lemma \ref{green-function-lemma}, we obtain the inequality 
\begin{equation} \label{IandII}
\begin{aligned}
&|(\mathcal{R}_{\mathcal{E}} (\lambda) f) (x)| \\
&\le
\Big| \Phi (x; \lambda_*) 
    \begin{pmatrix}
    0 & \mathbf{R}^b (\lambda_*)
    \end{pmatrix} 
    \mathbb{M} (\lambda_*)
    \int_c^x
    \begin{pmatrix}
    J \gamma^* & 0
    \end{pmatrix}^*
    \Phi (\xi; \lambda_*)^* \mathbb{B}_{\lambda} (\xi; \lambda_*) 
\mathcal{E} (\xi; \lambda, \lambda_*) f (\xi) d\xi \Big| \\
&\quad + \Big|  \Phi (x; \lambda_*) 
    \begin{pmatrix}
    J \gamma^* & 0
    \end{pmatrix} 
    \mathbb{M} (\lambda_*)
\int_{x}^{b}  
    \begin{pmatrix}
    0 & \mathbf{R}^b (\lambda_*)
    \end{pmatrix}^*
    \Phi (\xi; \lambda_*)^* \mathbb{B}_{\lambda} (\xi; \lambda_*) 
\mathcal{E} (\xi; \lambda, \lambda_*) f (\xi) d\xi \Big| \\
&=: I_1 + I_2.
\end{aligned}
\end{equation}

Beginning with $I_1$, since $\Phi (x; \lambda_*)$ is absolutely continuous
on $[c, b']$ for any $c < b' < b$, the value 
\begin{equation*}
    C_1 (x; \lambda_*) 
    := |\Phi (x; \lambda_*) 
    \begin{pmatrix}
    0 & \mathbf{R}^b (\lambda_*)
    \end{pmatrix} 
    \mathbb{M} (\lambda_*)| 
\end{equation*}
is fixed and finite. The integral in $I_1$ can be associated with a 
collection of $L^2_{\mathbb{B}_{\lambda}} ((c, b), \mathbb{C}^{2n})$ 
inner products (i.e., one for each row of the matrix 
$(J \gamma^* \,\,\, 0)^* \Phi (\xi; \lambda_*)^*$), allowing 
us to write 
\begin{equation*}
    I_1 \le C_1 (x; \lambda_*) C_2 (x; \lambda_*) 
    \| \mathcal{E} (\cdot; \lambda, \lambda_*) f\|_{\mathbb{B}_{\lambda}} 
    \le C_1 (x; \lambda_*) C_2 (x; \lambda_*) 
    \| \mathcal{E} (\cdot; \lambda, \lambda_*)\| \|f\|_{\mathbb{B}_{\lambda}} 
\end{equation*}
where $C_2 (x; \lambda_*)$ is the Euclidean length of the vector in 
$\mathbb{R}^n$ whose $i^{\rm th}$ component is the 
$L^2_{\mathbb{B}_{\lambda}} ((c, b), \mathbb{C}^{2n})$ norm 
of the $i^{\rm th}$ column of $\Phi (x; \lambda_*) (J \gamma^* \,\,\, 0)$. 
We emphasize that $\Phi (\cdot; \lambda_*) (J \gamma^* \,\,\, 0)$ is not 
generally in $L^2_{\mathbb{B}_{\lambda}} ((c, b), \mathbb{C}^{2n \times 2n})$,
but since $x \in [c, b)$ is fixed, $C_2 (x; \lambda_*)$ is finite by 
the local absolute continuity of $\Phi (x; \lambda_*)$. Also, we see 
that $C_2 (x; \lambda_*)$ is uniformly bounded for all 
$x \in [c, b']$.  

Likewise, for $I_2$ in (\ref{IandII}), we can write  
\begin{equation*}
    I_2 \le \tilde{C}_1 (x; \lambda_*) \tilde{C}_2 (x; \lambda_*) 
    \| \mathcal{E} (\cdot; \lambda, \lambda_*)\| \|f\|_{\mathbb{B}_{\lambda}}, 
\end{equation*}
where in this case 
\begin{equation*}
    \tilde{C}_1 (x; \lambda_*)
    = |\Phi (x; \lambda_*)  \begin{pmatrix} J \gamma^* & 0 \end{pmatrix} \mathbb{M} (\lambda_*)|,
\end{equation*}
and $\tilde{C}_2 (x; \lambda_*)$ is the Euclidean length of the vector in 
$\mathbb{R}^n$ whose $i^{\rm th}$ component is the 
$L^2_{\mathbb{B}_{\lambda}} ((c, b), \mathbb{C}^{2n})$ norm 
of the $i^{\rm th}$ column of $\Phi (x; \lambda_*) (0 \,\,\, \mathbf{R}^b (\lambda_*))$. 
In this case, it's important that the columns of 
$\Phi (x; \lambda_*) (0 \,\,\, \mathbf{R}^b (\lambda_*))$ lie in 
 $L^2_{\mathbb{B}_{\lambda}} ((c, b), \mathbb{C}^{2n \times 2n})$.
Combining 
these estimates on $I_1$ and $I_2$, we arrive that the claimed estimate 
\begin{equation*}
    |(\mathcal{R}_{\mathcal{E}} (\lambda) f) (x)|
    \le C(x; \lambda_*) \| \mathcal{E} (\cdot; \lambda, \lambda_*)\| \|f\|_{\mathbb{B}_{\lambda}},
\end{equation*}
where 
\begin{equation*}
C(x; \lambda_*) = C_1 (x; \lambda_*) C_2 (x; \lambda_*) + \tilde{C}_1 (x; \lambda_*) \tilde{C}_2 (x; \lambda_*),  
\end{equation*}
with $C (x; \lambda_*)$ uniformly bounded for all $x \in [c, b']$.
\end{proof}

For the final two lemmas of this section we will add Assumption {\bf (E)}
to our list of hypotheses. 

\begin{lemma} \label{continuation-lemma}
Let Assumptions {\bf (A)} through {\bf (E)} hold, and  
suppose that for some fixed $\lambda_* \in [\lambda_1, \lambda_2]$
there is an open interval $I_*$ containing $\lambda_*$ so that 
for each $\lambda \in I_* \cap [\lambda_1, \lambda_2]$,
we have $0 \notin \sigma_{\ess} (\mathcal{T} (\lambda))$.
Let $\{u^b_j (x; \lambda_*)\}_{j=1}^n$ denote a basis for the 
$n$-dimensional space of solutions of (\ref{extension-hammy})
(with $\lambda = \lambda_*$)
that lie right in $(c, b)$ and satisfy (\ref{boundary-b})
(guaranteed to exist by Lemma \ref{lemma2-9prime}). Then 
there exists a constant $r > 0$, depending on both $\lambda_*$ 
and $\mathcal{T}_{c, b}^{\gamma} (\lambda_*)$ 
(including the choice of $\gamma$) so that the elements 
$\{u^b_j (x; \lambda_*)\}_{j=1}^n$ can be
extended in $\lambda$ to the interval $I_{\lambda_*, r}$
(as specified in Assumption {\bf (E)}).
The extensions $\{u^b_j (x; \lambda)\}_{j=1}^n$
comprise a basis for the space of solutions 
of (\ref{linear-hammy}) that lie right in $(a, b)$ and 
satisfy (\ref{boundary-b}), and moreover they 
are continuously differentiable on $I_{\lambda_*, r}$, and for every 
$\lambda \in I_{\lambda_*, r}$ satisfy the 
relations
\begin{equation} \label{diff1}
    J (\partial_{\lambda} u_j^b)' (x; \lambda)
    = \mathbb{B}_{\lambda} (x; \lambda) u_j^b (x; \lambda) 
    + \mathbb{B} (x; \lambda) \partial_{\lambda} u_j^b (x; \lambda),
\end{equation}
for a.e. $x \in (a, b)$, and 
\begin{equation} \label{diff2}
    \lim_{x \to b^-} u_j^b (x; \lambda_*)^* J \partial_{\lambda} u_k^b (x; \lambda_*)
    = 0,
    \quad \forall \,\, j, k \in \{1, 2, \dots, n\}.
\end{equation}
\end{lemma}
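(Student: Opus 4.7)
The plan is to view the continuation problem as a perturbation of the $\lambda = \lambda_*$ system and to invert the linearized problem using the Green's function supplied by Lemma \ref{green-function-lemma}. First I fix $\gamma$ as in Lemma \ref{lemma2-8prime} so that $0 \in \rho(\mathcal{T}^{\gamma}_{c,b}(\lambda_*))$, which by continuity persists on some neighborhood of $\lambda_*$. For each $j \in \{1, 2, \dots, n\}$ I seek the extended solution in the ansatz form $u_j^b(x;\lambda) = u_j^b(x;\lambda_*) + w_j(x;\lambda)$ with the correction $w_j(\cdot;\lambda) \in \mathcal{D}^{\gamma}_{c,b}$. Using Assumption {\bf (E)} to write $\mathbb{B}(x;\lambda) - \mathbb{B}(x;\lambda_*) = \mathbb{B}_{\lambda}(x;\lambda_*)\mathcal{E}(x;\lambda,\lambda_*)$, the requirement that $u_j^b(\cdot;\lambda)$ solve (\ref{extension-hammy}) becomes the operator equation
\begin{equation*}
(I - \mathcal{R}_{\mathcal{E}}(\lambda))\, w_j(\cdot;\lambda) = \mathcal{R}_{\mathcal{E}}(\lambda)\, u_j^b(\cdot;\lambda_*),
\end{equation*}
with $\mathcal{R}_{\mathcal{E}}(\lambda)$ as in (\ref{mathcalR}). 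By Assumption {\bf (E)}(ii) and (\ref{mathcalR-inequalities}), I can pick $r>0$ small enough that $\|\mathcal{R}_{\mathcal{E}}(\lambda)\| < 1$ for all $\lambda \in I_{\lambda_*,r}$, whereupon $I - \mathcal{R}_{\mathcal{E}}(\lambda)$ is invertible via the Neumann series and $w_j(\cdot;\lambda) \in \mathcal{D}^{\gamma}_{c,b}$ is uniquely determined.

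Next I verify the claimed properties of the family $\{u_j^b(\cdot;\lambda)\}_{j=1}^n$. Since $w_j(\cdot;\lambda) \in \mathcal{D}^{\gamma}_{c,b}$ carries the boundary condition (\ref{boundary-b}) and $u_j^b(\cdot;\lambda_*)$ does too, $u_j^b(\cdot;\lambda)$ satisfies (\ref{boundary-b}) and lies right in $(c,b)$; extending by backward ODE evolution from $c$ then gives solutions of (\ref{linear-hammy}) that lie right in $(a,b)$. The pointwise bounds of Lemma \ref{useful-estimates-lemma} yield $w_j(x;\lambda) \to 0$ uniformly on compact subsets of $[c,b)$ as $\lambda \to \lambda_*$, so linear independence of $\{u_j^b(\cdot;\lambda)\}_{j=1}^n$ at any fixed interior $x$ transfers from $\lambda = \lambda_*$ by continuity, giving a basis of the $n$-dimensional space guaranteed by Lemma \ref{lemma2-9prime}. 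Assumption {\bf (E)}(iii) makes $\lambda \mapsto \mathcal{R}_{\mathcal{E}}(\lambda)$ a $C^1$ map in operator norm, and differentiating the fixed-point equation yields
\begin{equation*}
\partial_{\lambda} w_j(\cdot;\lambda) = (I - \mathcal{R}_{\mathcal{E}}(\lambda))^{-1}\, (\partial_{\lambda} \mathcal{R}_{\mathcal{E}})(\lambda)\, \bigl(u_j^b(\cdot;\lambda_*) + w_j(\cdot;\lambda)\bigr) \in \mathcal{D}^{\gamma}_{c,b},
\end{equation*}
proving $C^1$ dependence. Relation (\ref{diff1}) then follows by $\lambda$-differentiating $J u_j^{b\prime}(x;\lambda) = \mathbb{B}(x;\lambda) u_j^b(x;\lambda)$, with the interchange of $\partial_x$ and $\partial_{\lambda}$ justified through the integral representation provided by Lemma \ref{green-function-lemma}.

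The main obstacle is relation (\ref{diff2}), which asks for a boundary Wronskian to vanish. Here I plan to use Lemma 2.5 of \cite{HS2022} to construct a truncation $\tilde{u}_j^b \in \mathcal{D}^{\gamma}_{c,b}$ with $\tilde{u}_j^b = 0$ near $x=c$ and $\tilde{u}_j^b = u_j^b(\cdot;\lambda_*)$ near $x=b$. Both $\tilde{u}_j^b$ and $\partial_{\lambda} u_k^b(\cdot;\lambda_*) = \partial_{\lambda} w_k(\cdot;\lambda_*)$ lie in $\mathcal{D}^{\gamma}_{c,b}$, the latter because at $\lambda=\lambda_*$ it equals $\mathcal{T}^{\gamma}_{c,b}(\lambda_*)^{-1} (\partial_{\lambda}\mathcal{E})(\cdot;\lambda_*,\lambda_*) u_k^b(\cdot;\lambda_*)$ (using that $\mathcal{R}_{\mathcal{E}}(\lambda_*) = 0$ since $\|\mathcal{E}(\cdot;\lambda_*,\lambda_*)\| = 0$ by Assumption {\bf (E)}(ii)), which is in the range of $\mathcal{T}^{\gamma}_{c,b}(\lambda_*)^{-1}$. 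Relation (\ref{diff1}) reads $\mathcal{T}_{c,b,M}(\lambda_*) \partial_{\lambda} u_k^b(\cdot;\lambda_*) = u_k^b(\cdot;\lambda_*)$ in operator form, and applying Green's identity (Lemma \ref{green-lemma}) on $(c,b)$ gives
\begin{equation*}
\langle \mathcal{T}^{\gamma}_{c,b}(\lambda_*) \tilde{u}_j^b, \partial_{\lambda} u_k^b \rangle_{\mathbb{B}_{\lambda}} - \langle \tilde{u}_j^b, u_k^b \rangle_{\mathbb{B}_{\lambda}} = (J \tilde{u}_j^b, \partial_{\lambda} u_k^b)_c^b = (J u_j^b, \partial_{\lambda} u_k^b)_b.
\end{equation*}
Self-adjointness of $\mathcal{T}^{\gamma}_{c,b}(\lambda_*)$ forces the left-hand side to vanish, yielding (\ref{diff2}) and completing the proof.
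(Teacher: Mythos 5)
Your proposal follows essentially the same route as the paper: fixed-point/Neumann-series construction from the ansatz $u_j^b(x;\lambda) = u_j^b(x;\lambda_*) + w_j(x;\lambda)$ with $w_j \in \mathcal{D}^{\gamma}_{c,b}$, the operator equation $(I - \mathcal{R}_{\mathcal{E}}(\lambda))u_j^b(\cdot;\lambda) = u_j^b(\cdot;\lambda_*)$, and the observation that $\mathcal{R}_{\mathcal{E}}(\lambda_*) = 0$ to simplify $\partial_{\lambda} u_j^b(\cdot;\lambda_*) = \mathcal{T}^{\gamma}_{c,b}(\lambda_*)^{-1}\mathcal{E}_{\lambda}(\cdot;\lambda_*,\lambda_*)u_j^b(\cdot;\lambda_*)$. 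Your argument for (\ref{diff2}) via truncation plus Green's identity plus self-adjointness of $\mathcal{T}^{\gamma}_{c,b}(\lambda_*)$ is in fact spelled out more explicitly than the paper's (the paper compresses it to the remark that $\mathcal{T}^{\gamma}_{c,b}(\lambda_*)^{-1}$ maps into $\mathcal{D}^{\gamma}_{c,b}$ and elements there ``satisfy the limit we need''), and you correctly identify the needed ingredient $\mathcal{T}_{c,b,M}(\lambda_*)\partial_\lambda u_k^b = u_k^b$.

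Where you are substantially terser than the paper is the passage from $L^2_{\mathbb{B}_\lambda}$-valued $C^1$-dependence to the pointwise statements: that $\lambda \mapsto u_j^b(x;\lambda)$ is continuously differentiable for each fixed $x$ and that (\ref{diff1}) holds a.e.\ in $x$. Differentiating the fixed-point identity gives you the formula for $\partial_\lambda w_j$ as an element of $L^2_{\mathbb{B}_\lambda}((c,b),\mathbb{C}^{2n})$, and the interchange of $\partial_x$ and $\partial_\lambda$ requires: (i) the pointwise bound $|(\mathcal{R}_{\mathcal{E}}(\lambda)f)(x)| \le C(x;\lambda_*)\|\mathcal{E}(\cdot;\lambda,\lambda_*)\|\|f\|_{\mathbb{B}_\lambda}$ of Lemma \ref{useful-estimates-lemma}, applied term-by-term to bound the derivative series and show its pointwise absolute convergence; (ii) the domination hypothesis of Assumption {\bf (E)}(iv) to differentiate through the Green's-function integral; and (iii) the $L^1_{\loc}$ bounds $b_0, b_1$ from Assumption {\bf (A)} to differentiate the integrated form of the ODE $Ju_j^b(x;\lambda) = Ju_j^b(c;\lambda) - \int_x^c \mathbb{B}(\xi;\lambda)u_j^b(\xi;\lambda)\,d\xi$. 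You gesture at Lemma \ref{green-function-lemma} as the justification, which is the right pointer, but the actual paper proof devotes most of its length (the chain of estimates (\ref{iteration-inequality-pointwise}), (\ref{undifferentiated-estimates}), (\ref{iteration-inequality-derivative})--(\ref{more-derivative-estimates}), and the dominated-convergence step) precisely to these items. These are not optional: without the pointwise estimates the derivative formula holds only as an $L^2$-limit, which is too weak to assert (\ref{diff1}) a.e.\ or to make sense of (\ref{diff2}) as a genuine pointwise limit of a Wronskian.
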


\begin{proof}
We begin by observing that the starting element $u_j^b (x; \lambda_*)$
solves 
\begin{equation*}
    J (u_j^{b})' = \mathbb{B} (x; \lambda_*) u_j^b.
\end{equation*}
Our goal is to construct $u_j^b (x; \lambda)$ so that 
\begin{equation} \label{lambda-equation}
    J (u_j^{b})' = \mathbb{B} (x; \lambda) u_j^b,
\end{equation}
for $\lambda$ near $\lambda_*$. For this, we express the 
latter equation as 
\begin{equation*}
     J (u_j^{b})' - \mathbb{B} (x; \lambda_*) u_j^b
     = (\mathbb{B} (x; \lambda) - \mathbb{B} (x; \lambda_*)) u_j^b.
\end{equation*}
Our strategy will be to look for solutions of this equation of 
the form 
\begin{equation} \label{ujb-equation}
    u_j^b (x; \lambda)  = u_j^b (x; \lambda_*) 
    + F_j^b (x; \lambda, \lambda_*),
\end{equation}
where $F_j^b (\cdot; \lambda, \lambda_*) \in \mathcal{D}^{\gamma}_{c, b}$
satisfies 
\begin{equation*}
     J (F_j^{b})' - \mathbb{B} (x; \lambda_*) F_j^b
     = (\mathbb{B} (x; \lambda) - \mathbb{B} (x; \lambda_*)) u_j^b (x; \lambda),
\end{equation*}
or equivalently 
\begin{equation} \label{resolvent-star}
    \mathcal{T}_{c, d}^{\gamma} (\lambda_*) F_j^b (\cdot; \lambda, \lambda_*)
    = \mathcal{E} (x; \lambda, \lambda_*) u_j^b (\cdot; \lambda),
\end{equation}
where $\mathcal{E} (x; \lambda, \lambda_*)$ is described in Assumption {\bf (E)}. 

If a solution of (\ref{resolvent-star}) exists with $F_j^b (x; \lambda, \lambda_*)$ 
contained in 
$\mathcal{D}_{c, b}^{\gamma}$, then we have the relation 
\begin{equation*}
    F_j^b (\cdot; \lambda, \lambda_*)
    = \mathcal{R}_{\mathcal{E}} (\lambda) u_j^b (\cdot; \lambda),
\end{equation*}
where since $0 \in \rho (\mathcal{T}_{c, b}^{\gamma} (\lambda_*))$,
we have that $\mathcal{T}_{c, b}^{\gamma} (\lambda_*)^{-1}$ is a bounded
linear operator mapping $L^2_{\mathbb{B}_{\lambda}} ((c, b), \mathbb{C}^{2n})$
into $\mathcal{D}_{c, b}^{\gamma}$, so that in particular 
$ F_j^b (\cdot; \lambda, \lambda_*)$ satisfies
(\ref{boundary-b}). In this way, we arrive at the integral 
equation 
\begin{equation*}
    u_j^b (\cdot; \lambda)
    = u_j^b (\cdot; \lambda_*) 
    +\mathcal{R}_{\mathcal{E}} (\lambda) u_j^b (\cdot; \lambda).
\end{equation*}
Rearranging, we can express this equation as 
\begin{equation} \label{neumann-type}
   (I - \mathcal{R}_{\mathcal{E}} (\lambda)) u_j^b (\cdot; \lambda)
   = u_j^b (\cdot; \lambda_*).
\end{equation}
According to Assumption {\bf (E)} we can choose $r > 0$ sufficiently small so that 
\begin{equation} \label{the-inequality}
    \| \mathcal{T}^{\gamma}_{c, b} (\lambda_*)^{-1} \|
    \|\mathcal{E} (\cdot; \lambda, \lambda_*) \| < 1,
\end{equation}
for all $\lambda \in I_{\lambda_*, r}$.
Accordingly (using (\ref{mathcalR-inequalities})), $\|\mathcal{R}_{\mathcal{E}} (\lambda)\| < 1$ 
for all $\lambda \in I_{\lambda_*, r}$,
and by the standard theory of Neumann series (for example, the 
discussion of Example 4.9 on p. 32 of \cite{Kato}), 
we can solve (\ref{neumann-type}) with 
\begin{equation} \label{neumann-series}
    u_j^b (\cdot; \lambda)
   = (I - \mathcal{R}_{\mathcal{E}} (\lambda))^{-1} u_j^b (\cdot; \lambda_*)
   \in L^2_{\mathbb{B}_{\lambda}} ((a, b), \mathbb{C}^{2n}),
\end{equation}
for all $\lambda \in I_{\lambda_*, r}$.

We've already noted that $F_j^b (x; \lambda_*, \lambda)$ is 
contained in $\mathcal{D}_{c, b}^{\gamma}$, and so in particular
lies right in $(c,b)$ and satisfies (\ref{boundary-b}). In addition,
$u_j^b (\cdot; \lambda_*)$ lies right in $(c,b)$ and satisfies 
(\ref{boundary-b}), so we can conclude that $u_j^b (x; \lambda)$
is a solution of (\ref{lambda-equation}) that 
lies right in $(c,b)$ and satisfies (\ref{boundary-b}). 
Proceeding similarly for
each $j \in \{1, 2, \dots, n\}$, we obtain a collection 
of extensions $\{u^b_j (x; \lambda)\}_{j=1}^n$.

In addition, by virtue of (\ref{neumann-type}) and (\ref{neumann-series}),
we see that $\{u^b_j (x; \lambda)\}_{j=1}^n$ inherits 
linear independence from the set $\{u^b_j (x; \lambda_*)\}_{j=1}^n$.
We conclude from Lemma \ref{lemma2-7prime} that 
the set $\{u^b_j (x; \lambda)\}_{j=1}^n$ comprises a 
basis for the space of solutions 
of (\ref{extension-hammy}) that lie right in $(c, b)$ and 
satisfy (\ref{boundary-b}), and additionally
that for each $x \in (c, b)$ the vectors
$\{u^b_j (x; \lambda)\}_{j=1}^n$ comprise the basis
of a Lagrangian subspace of $\mathbb{C}^{2n}$. 

Turning now to the dependence of our extensions 
$\{u_j^b (\cdot; \lambda)\}_{j=1}^n$ on $\lambda$, 
we observe that by standard Neumann expansion, we can 
express (\ref{neumann-series}) as 
\begin{equation} \label{the-real-neumann-series}
u_j^b (\cdot; \lambda) = \sum_{k=0}^{\infty} 
\mathcal{R}_{\mathcal{E}} (\lambda)^k u_j^b (\cdot; \lambda_*),
\end{equation}
and conclude that for each $\lambda \in I_{\lambda_*, r}$ this 
series converges absolutely in 
$L^2_{\mathbb{B}_{\lambda}} ((c, b), \mathbb{C}^{2n})$. In
order to conclude something about the dependence of 
$u_j^b (\cdot; \lambda)$ on $\lambda$, we will 
proceed by showing that the series of term-by-term $\lambda$-derivatives
of (\ref{the-real-neumann-series}) converges absolutely for all 
$\lambda \in I_{\lambda_*, r}$. As a starting point, we need to verify 
that the individual summands are in fact differentiable in $\lambda$,
and we will do this iteratively, verifying that if some 
\begin{equation*}
    f (\cdot; \lambda) 
    \in \AC_{\loc} ([c, b), \mathbb{C}^{2n}) \cap L^2_{\mathbb{B}_{\lambda}} ((c, b), \mathbb{C}^{2n}), 
\end{equation*}
is differentiable as a map $\lambda \mapsto L^2_{\mathbb{B}_{\lambda}} ((c, b), \mathbb{C}^{2n})$,
then so is 
\begin{equation} \label{cap-F-defined}
    F(\cdot; \lambda) = \mathcal{R}_{\mathcal{E}} (\lambda) f(\cdot; \lambda).
\end{equation}
To this end, we write
\begin{equation*}
    \begin{aligned}
    \frac{1}{h} (F (\cdot; \lambda+h) &- F(\cdot; \lambda))
    = \frac{1}{h} (\mathcal{R}_{\mathcal{E}} (\lambda+h) f(\cdot; \lambda+h) 
    - \mathcal{R}_{\mathcal{E}} (\lambda) f(\cdot; \lambda)) \\
    &= \frac{1}{h} \Big(\mathcal{R}_{\mathcal{E}} (\lambda+h) - \mathcal{R}_{\mathcal{E}} (\lambda) \Big) f(\cdot; \lambda+h) 
    - \frac{1}{h} \mathcal{R}_{\mathcal{E}} (\lambda) \Big( f(\cdot; \lambda) - f(\cdot; \lambda+h) \Big),
    \end{aligned}
\end{equation*}
from which we see that 
$F (\cdot; \lambda)$ inherits the differentiability in 
$\lambda$ of $f(\cdot; \lambda)$ (by virtue of the assumed differentiability
of $\mathcal{E} (\cdot; \lambda, \lambda_*)$ in $\lambda$). In addition,
we can use the inequality 
\begin{equation*}
\begin{aligned}
    \| \frac{1}{h} (F (\cdot; \lambda+h) &- F(\cdot; \lambda))  \|_{\mathbb{B}_{\lambda}} 
    \le \| \mathcal{T}_{c, b}^{\gamma} (\lambda_*)^{-1} \|
    \| \frac{1}{h} (\mathcal{E} (\cdot; \lambda+h, \lambda_*) - \mathcal{E} (\cdot; \lambda, \lambda_*)) \| 
    \| f(\cdot; \lambda+h)  \|_{\mathbb{B}_{\lambda}} \\
    &+ \| \mathcal{T}_{c, b}^{\gamma} (\lambda_*)^{-1} \|
    \| \mathcal{E} (\cdot; \lambda, \lambda_*) \| 
    \| \frac{1}{h} (f(\cdot; \lambda+h) - f(\cdot; \lambda) )  \|_{\mathbb{B}_{\lambda}}
\end{aligned}
\end{equation*}
to obtain the relation
\begin{equation} \label{iteration-relation-derivative}
    \| F_{\lambda} (\cdot; \lambda) \|_{\mathbb{B}_{\lambda}}
    \le \| \mathcal{T}_{c, b}^{\gamma} (\lambda_*)^{-1} \|
    \| \mathcal{E}_{\lambda} (\cdot; \lambda, \lambda_*) \| 
    \| f(\cdot; \lambda)  \|_{\mathbb{B}_{\lambda}}
    +
    \| \mathcal{T}_{c, b}^{\gamma} (\lambda_*)^{-1} \|
    \| \mathcal{E} (\cdot; \lambda, \lambda_*) \|
    \| f_{\lambda} (\cdot; \lambda)  \|_{\mathbb{B}_{\lambda}}.
\end{equation}

To see that 
\begin{equation} \label{the-real-neumann-series-derivatives}
\sum_{k=0}^{\infty} 
\frac{d}{d \lambda} \mathcal{R}_{\mathcal{E}} (\lambda)^k u_j^b (\cdot; \lambda_*),
\end{equation}
converges absolutely, we begin by 
using (\ref{iteration-relation-derivative})
to observe that (for $k=1$)
\begin{equation} \label{k1estimate1}
    \| \frac{d}{d \lambda} \mathcal{R}_{\mathcal{E}} (\lambda) u_j^b (\cdot; \lambda_*) \|_{\mathbb{B}_{\lambda}}
    \le \| \mathcal{T}_{c, b}^{\gamma} (\lambda_*)^{-1} \|
    \| \mathcal{E}_{\lambda} (\cdot; \lambda, \lambda_*) \| 
    \| u^b_j (\cdot; \lambda_*)  \|_{\mathbb{B}_{\lambda}},
\end{equation}
where the contribution associated with $f_{\lambda} (\cdot; \lambda)$ 
doesn't appear because $u^b_j (\cdot; \lambda_*)$ doesn't depend on 
$\lambda$. Next, using (\ref{iteration-relation-derivative}) again, 
this time with $f (\cdot; \lambda) = \mathcal{R}_{\mathcal{E}} (\lambda) u_j^b (\cdot; \lambda_*)$
we combine (\ref{mathcalR-inequalities}) (with $k=1$) and (\ref{k1estimate1})
to obtain the estimate 
\begin{equation*}
    \| \frac{d}{d \lambda} \mathcal{R}_{\mathcal{E}} (\lambda)^2 u_j^b (\cdot; \lambda_*) \|_{\mathbb{B}_{\lambda}}
    \le 2 \| \mathcal{T}_{c, b}^{\gamma} (\lambda_*)^{-1} \|^2
    \| \mathcal{E} (\cdot; \lambda, \lambda_*) \| 
    \| \mathcal{E}_{\lambda} (\cdot; \lambda, \lambda_*) \| 
    \| u^b_j (\cdot; \lambda_*)  \|_{\mathbb{B}_{\lambda}}.
\end{equation*}
Continuing this way, we obtain the inequality 
\begin{equation*}
    \| \frac{d}{d \lambda} \mathcal{R}_{\mathcal{E}} (\lambda)^k u_j^b (\cdot; \lambda_*) \|_{\mathbb{B}_{\lambda}}
    \le k \| \mathcal{T}_{c, b}^{\gamma} (\lambda_*)^{-1} \|^k
    \| \mathcal{E} (\cdot; \lambda, \lambda_*) \|^{k-1} 
    \| \mathcal{E}_{\lambda} (\cdot; \lambda, \lambda_*) \| 
    \| u^b_j (\cdot; \lambda_*)  \|_{\mathbb{B}_{\lambda}},
\end{equation*}
from which the absolute convergence of (\ref{the-real-neumann-series-derivatives}),
uniform for $\lambda$ in compact subsets of $I_{\lambda_*, r}$,
is immediate from the ratio test. We can conclude that 
$\lambda \mapsto u_j^b (\cdot; \lambda)$ 
is differentiable as a map taking $\lambda \in I_{\lambda_*, r}$
to $L^2_{\mathbb{B}_{\lambda}} ((c, b), \mathbb{C}^{2n})$.

Next, we check that for each fixed $x \in [c, b)$, the map 
$\lambda \mapsto u_j^b (x; \lambda)$ taking $I_{\lambda_*, r}$
to $\mathbb{C}^{2n}$ is differentiable in 
$\lambda$. For this, we 
need to understand the convergence of the series
\begin{equation} \label{pointwise}
    \sum_{k=0}^{\infty} 
\Big(\mathcal{R}_{\mathcal{E}} (\lambda)^k u_j^b (\cdot; \lambda_*)\Big) (x),
\end{equation}
in $\mathbb{C}^{2n}$, and also the associated series of derivatives 
\begin{equation} \label{pointwise-derivatives}
    \sum_{k=0}^{\infty} 
\frac{\partial}{\partial \lambda} 
\Big(\mathcal{R}_{\mathcal{E}} (\lambda)^k u_j^b (\cdot; \lambda_*)\Big) (x).
\end{equation}
First, we check that (\ref{pointwise}) is absolutely 
convergent for all $\lambda \in I_{\lambda_*, r}$. To this end, we 
write (for $k \ge 1$)
\begin{equation} \label{iteration-inequality-pointwise}
\begin{aligned}
\Big|\Big(\mathcal{R}_{\mathcal{E}} (\lambda)^k u_j^b (\cdot; \lambda_*)\Big) (x)\Big|
&= \Big| \Big(\mathcal{R}_{\mathcal{E}} (\lambda) \mathcal{R}_{\mathcal{E}} (\lambda)^{(k-1)} u_j^b (\cdot; \lambda_*)\Big) (x)\Big| \\
&\le C(x; \lambda_*) \|\mathcal{E} (\cdot; \lambda, \lambda_*)\| 
\| \mathcal{R}_{\mathcal{E}} (\lambda)^{(k-1)} u_j^b (\cdot; \lambda_*) \|_{\mathbb{B}_{\lambda}} \\
&\le C(x; \lambda_*) \|\mathcal{E} (\cdot; \lambda, \lambda_*)\| 
\| \mathcal{R}_{\mathcal{E}} (\lambda)\|^{(k-1)} \|u_j^b (\cdot; \lambda_*) \|_{\mathbb{B}_{\lambda}},
\end{aligned}
\end{equation}
where in obtaining the first inequality we have used Lemma \ref{useful-estimates-lemma}. Recalling 
that $\|\mathcal{R}_{\mathcal{E}} (\lambda)\| < 1$ for $\lambda \in I_{\lambda_*, r}$,
we see that 
\begin{equation} \label{undifferentiated-estimates}
\begin{aligned}
     \sum_{k=0}^{\infty} &\Big|\Big(\mathcal{R}_{\mathcal{E}} (\lambda)^k u_j^b (\cdot; \lambda_*)\Big) (x)\Big|
     \le |u_j^b (x; \lambda_*)| 
     + \sum_{k=1}^{\infty} \Big|\Big(\mathcal{R}_{\mathcal{E}} (\lambda)^k u_j^b (\cdot; \lambda_*)\Big) (x)\Big| \\ 
     &\le |u_j^b (x; \lambda_*)| 
     + C(x; \lambda_*) \|\mathcal{E} (\cdot; \lambda, \lambda_*)\| \|u_j^b (\cdot; \lambda_*) \|_{\mathbb{B}_{\lambda}} 
     \sum_{k=1}^{\infty} \| \mathcal{R}_{\mathcal{E}} (\lambda)\|^{(k-1)} \\
     & = |u_j^b (x; \lambda_*)| + C(x; \lambda_*) \|\mathcal{E} (\cdot; \lambda, \lambda_*)\| \|u_j^b (\cdot; \lambda_*) \|_{\mathbb{B}_{\lambda}} 
     \frac{1}{1-\| \mathcal{R}_{\mathcal{E}} (\lambda)\|},
\end{aligned}     
\end{equation}
verifying that (\ref{pointwise}) is absolutely convergent for all $\lambda \in I_{\lambda_*, r}$. 

Turning to (\ref{pointwise-derivatives}), we first need to verify that each summand in 
(\ref{pointwise}) is indeed differentiable in $\lambda$. For this, it suffices to show that 
if 
\begin{equation} \label{inclusions}
    f (\cdot; \lambda), f_\lambda (\cdot; \lambda) 
    \in \AC_{\loc} ([c, b), \mathbb{C}^{2n}) \cap L^2_{\mathbb{B}_{\lambda}} ((c, b), \mathbb{C}^{2n}), 
\end{equation}
then $F (\cdot; \lambda)$ from (\ref{cap-F-defined}) and $F_{\lambda} (\cdot; \lambda)$ 
are also contained in this intersection. The required differentiability then follows upon 
iteration of 
\begin{equation*}
    (\mathcal{R}_{\mathcal{E}} (\lambda)^k u_j^b (\cdot; \lambda_*)) (x)
    = \int_c^b G_{c, b}^{\gamma} (x; \lambda, \lambda_*) \mathbb{B}_{\lambda} (\xi; \lambda_*)
    \mathcal{E} (\xi; \lambda, \lambda_*) (\mathcal{R}_{\mathcal{E}} (\lambda)^{k-1} u_j^b (\cdot; \lambda_*)) (\xi) d\xi,
\end{equation*}
starting with $f(x; \lambda) = u^b_j (x; \lambda_*)$. To see that 
$F (\cdot; \lambda) \in \AC_{\loc} ([c, b), \mathbb{C}^{2n}) \cap L^2_{\mathbb{B}_{\lambda}} ((c, b), \mathbb{C}^{2n})$,
we recall that since $0 \in \rho (\mathcal{T}^{\gamma}_{c, b} (\lambda_*))$,
$\mathcal{R}_{\mathcal{E}} (\lambda)$ maps $L^2_{\mathbb{B}_{\lambda}} ((c, b), \mathbb{C}^{2n})$
into $\mathcal{D}^{\gamma}_{c, b}$, and moreover that all elements of $\mathcal{D}^{\gamma}_{c, b}$
lie in this intersection. Turning to $F_{\lambda} (x; \lambda)$, the 
inclusions (\ref{inclusions}) allow us to use the Lebesgue Dominated
Convergence Theorem to differentiate through the integral to write 
\begin{equation} \label{pointwise-integral}
    F_{\lambda} (x; \lambda) 
    = \int_c^b G^{\gamma}_{c, b} (x, \xi; \lambda_*) \mathbb{B}_{\lambda} (\xi; \lambda_*)
    \Big(\mathcal{E}_{\lambda} (\xi; \lambda, \lambda_*) f(\xi; \lambda) 
    + \mathcal{E} (\xi; \lambda, \lambda_*) f_{\lambda} (\xi; \lambda) \Big) d\xi.
\end{equation} 
From Lemma \ref{green-function-lemma}, we see that we only need to verify
the inclusions 
\begin{equation*}
\mathcal{E}_{\lambda} (\cdot; \lambda, \lambda_*) f(\cdot; \lambda),
\mathcal{E} (\cdot; \lambda, \lambda_*) f_{\lambda} (\cdot; \lambda)
\in L^2_{\mathbb{B}_{\lambda}} ((c, b), \mathbb{C}^{2n}),
\end{equation*}
each of which is immediate from our Assumption {\bf (E)}, which
asserts that $\mathcal{E} (\cdot; \lambda, \lambda_*)$ 
and  $\mathcal{E}_{\lambda} (\cdot; \lambda, \lambda_*)$ are both 
bounded linear operators mapping $L^2_{\mathbb{B}_{\lambda}} ((c, b), \mathbb{C}^{2n})$ 
to itself. We conclude that each summand in (\ref{pointwise}) is differentiable in 
$\lambda$ for all $\lambda \in I_{\lambda_*, r}$. 

In order to complete this part of the proof, 
we need to verify that (\ref{pointwise-derivatives}) 
converges absolutely for all $\lambda \in I_{\lambda_*, r}$. 
To this end, 
we observe from (\ref{pointwise-integral}) and Lemma \ref{useful-estimates-lemma}
the estimate 
\begin{equation} \label{iteration-inequality-derivative}
|F_{\lambda} (x; \lambda)| \le C(x; \lambda_*) 
 \| \mathcal{E}_{\lambda} (\cdot; \lambda, \lambda_*) \| 
  \| f (\cdot; \lambda)  \|_{\mathbb{B}_{\lambda}}
 +
 C(x; \lambda_*) \| \mathcal{E} (\cdot; \lambda, \lambda_*) \|
  \| f_{\lambda} (\cdot; \lambda)  \|_{\mathbb{B}_{\lambda}},
\end{equation}
where $C(x; \lambda_*)$ is as in Lemma \ref{useful-estimates-lemma}.
For $k=1$, this allows us to write 
\begin{equation} \label{k1estimate}
    | \frac{\partial}{\partial \lambda} (\mathcal{R}_{\mathcal{E}} (\lambda) u_j^b (\cdot; \lambda_*))(x) |
    \le C (x; \lambda_*)
    \| \mathcal{E}_{\lambda} (\cdot; \lambda, \lambda_*) \| 
    \| u^b_j (\cdot; \lambda_*)  \|_{\mathbb{B}_{\lambda}},
\end{equation}
where the contribution associated with $f_{\lambda} (\cdot; \lambda)$ 
doesn't appear because $u^b_j (\cdot; \lambda_*)$ doesn't depend on 
$\lambda$. Next, using (\ref{iteration-inequality-derivative}) again, 
this time with $f (x; \lambda) = (\mathcal{R}_{\mathcal{E}} (\lambda) u_j^b (\cdot; \lambda_*)) (x)$
we combine (\ref{mathcalR-inequalities}) (with $k=1$) and (\ref{k1estimate1})
to obtain the estimate 
\begin{equation*}
    | \frac{\partial}{\partial \lambda} (\mathcal{R}_{\mathcal{E}} (\lambda)^2 u_j^b (\cdot; \lambda_*))(x)|
    \le 2 C(x; \lambda_*) 
    \| \mathcal{T}_{c, b}^{\gamma} (\lambda_*)^{-1} \|
    \| \mathcal{E} (\cdot; \lambda, \lambda_*) \| 
    \| \mathcal{E}_{\lambda} (\cdot; \lambda, \lambda_*) \| 
    \| u^b_j (\cdot; \lambda_*)  \|_{\mathbb{B}_{\lambda}}.
\end{equation*}
Continuing in this way, we obtain, for each $k \in \mathbb{N}$, the inequality 
\begin{equation} \label{more-derivative-estimates}
\begin{aligned}
    | \frac{\partial}{\partial \lambda} &(\mathcal{R}_{\mathcal{E}} (\lambda)^k u_j^b (\cdot; \lambda_*))(x)| \\
    &\le k C(x; \lambda_*) \| \mathcal{T}_{c, b}^{\gamma} (\lambda_*)^{-1} \|^{k-1}
    \| \mathcal{E} (\cdot; \lambda, \lambda_*) \|^{k-1} 
    \| \mathcal{E}_{\lambda} (\cdot; \lambda, \lambda_*) \| 
    \| u^b_j (\cdot; \lambda_*)  \|_{\mathbb{B}_{\lambda}},
\end{aligned}
\end{equation}
from which the absolute convergence of (\ref{pointwise-derivatives}),
uniform for $\lambda$ in compact subsets of $I_{\lambda_*, r}$,
is immediate from the ratio test, along with the inequality 
(\ref{the-inequality}). We can conclude that 
for each fixed $x \in [c, b)$, the map 
$\lambda \mapsto u_j^b (x; \lambda)$ taking $I_{\lambda_*, r}$
to $\mathbb{C}^{2n}$ is continuously differentiable in $\lambda$. 

For the final statements in the lemma, addressing differentiability 
in $\lambda$, we first observe that by using (\ref{the-real-neumann-series})
along with the estimates (\ref{undifferentiated-estimates}) we can 
conclude that that given any $\lambda_* \in [\lambda_1, \lambda_2]$ 
and any compact set $[c, d] \subset (a, b)$, there exists
a value $r_* > 0$ and a constant $K_0$, depending only on $c, d, \lambda_*$, and $r_*$ so that 
\begin{equation*}
    |u_j^b (x; \lambda)|
    \le K_0, \quad 
    \forall \, (x, \lambda) \in [c, d] \times I_{\lambda_*, r_*}. 
\end{equation*}
Likewise, if we combine (\ref{pointwise-derivatives}), established above
as a uniformly converging expression of $\partial_{\lambda} u_j^b (x; \lambda)$,
with the estimates (\ref{more-derivative-estimates}), along with the 
relation
\begin{equation*}
\| \mathcal{T}_{c, b}^{\gamma} (\lambda_*)^{-1} \| \| \mathcal{E} (\cdot; \lambda, \lambda_*) \| < 1,    
\end{equation*}
we see that given any $\lambda_* \in [\lambda_1, \lambda_2]$ 
and any compact set $[c, d] \subset (a, b)$, there exists
a value $r_* > 0$ and a constant $K_1$, depending only on $c, d, \lambda_*$, and $r_*$ so that 
\begin{equation*}
    |\partial_{\lambda} u_j^b (x; \lambda)|
    \le K_1, \quad 
    \forall \, (x, \lambda) \in [c, d] \times I_{\lambda_*, r_*}. 
\end{equation*}

At this point, we proceed by integrating $J (u_j^b)' = \mathbb{B} (x; \lambda) u_j^b$
on $(x, c)$ to obtain the integral relation
\begin{equation} \label{nonlinear-integral}
J u_j^b (x; \lambda) = J u_j^b (c; \lambda)
- \int_x^c \mathbb{B} (\xi; \lambda) u_j^b (\xi; \lambda) d\xi. 
\end{equation}
In order to establish the claimed derivative relation, we need to 
justify differentiating in $\lambda$ through the integral, 
and for this we use our assumption that there exist 
$b_0, b_1 \in L^1_{\loc} ((a,b), \mathbb{R})$,
independent of $\lambda$, so that 
\begin{equation*}
    |\mathbb{B} (x; \lambda)| \le b_0 (x), \, \forall \, \lambda \in I,\, {\rm a.e.}\, x \in (a, b),
    \quad  |\mathbb{B}_{\lambda} (x; \lambda)| \le b_1 (x), \forall \, \lambda \in I,\, {\rm a.e.}\, x \in (a, b).
\end{equation*}
With these assumptions, along with the pointwise estimates obtained above, we
can justify differentiating (\ref{nonlinear-integral}) in $\lambda$ and 
$x$ to get 
\begin{equation*}
    J (\partial_{\lambda} u_j^b)' = \mathbb{B}_{\lambda} (x; \lambda) u_j^b
    + \mathbb{B} (x; \lambda) \partial_{\lambda} u_j^b. 
\end{equation*}

Last, we need to verify the limit (\ref{diff2}). 
For this, we work again with (\ref{pointwise-derivatives}),
using the estimates (\ref{more-derivative-estimates}). 
Recalling that, by assumption, 
\begin{equation*}
    \lim_{\lambda \to \lambda_*} \|\mathcal{E} (\cdot; \lambda, \lambda_*)\| = 0,
\end{equation*}
we see that the only non-zero summand in (\ref{pointwise-derivatives}) 
is the one with $k = 1$. I.e., 
\begin{equation*}
    \partial_{\lambda} u_j^b (x; \lambda_*)
    = \frac{\partial}{\partial \lambda} (\mathcal{R}_{\mathcal{E}} (\lambda) u_j^b (\cdot; \lambda_*)) (x).
\end{equation*}
Here, 
\begin{equation*}
(\mathcal{R}_{\mathcal{E}} (\lambda) u_j^b (\cdot; \lambda_*)) (x)
= \int_c^b G_{c,b}^{\gamma} (x; \xi; \lambda_*) \mathbb{B}_{\lambda} (\xi; \lambda_*) 
\mathcal{E} (\xi; \lambda, \lambda_*) u_j^b (\xi; \lambda_*) d\xi,
\end{equation*}
and we would like to differentiate in $\lambda$ through the integral sign. For this we need to 
uniformly dominate the integrand 
\begin{equation*}
    G_{c,b}^{\gamma} (x; \xi; \lambda_*) \mathbb{B}_{\lambda} (\xi; \lambda_*) 
\mathcal{E}_{\lambda} (\xi; \lambda, \lambda_*) u_j^b (\xi; \lambda_*),
\end{equation*}
and this is precisely what we assume is possible in 
Assumption {\bf (E)}(iv). 

We can now write 
\begin{equation*}
    \partial_{\lambda} u_j^b (x; \lambda_*)
    = \mathcal{T}_{c, b}^{\gamma} (\lambda_*)^{-1} 
    (\mathcal{E}_{\lambda} (\cdot; \lambda, \lambda_*) u_j^b (\cdot; \lambda)) (x).
\end{equation*}
Finally, $\mathcal{T}_{c, b}^{\gamma} (\lambda_*)^{-1}$ maps into $\mathcal{D}_{c, b}^{\gamma}$,
and elements in $\mathcal{D}_{c, b}^{\gamma}$ satisfy the limit we need. 
\end{proof}

\begin{lemma} \label{lemma2-11prime}
Let Assumptions {\bf (A)} through {\bf (E)} hold, and  
suppose that for all $\lambda \in [\lambda_1, \lambda_2]$,
$0 \notin \sigma_{\ess} (\mathcal{T} (\lambda))$. 
In addition, for each $\lambda \in [\lambda_1, \lambda_2]$, 
let $\ell_b (x; \lambda)$ denote the Lagrangian subspace
associated with the basis $\{u^b_j (x; \lambda)\}_{j=1}^n$ constructed in 
Lemma \ref{lemma2-9prime}. 
Then $\ell_b: (c, b) \times [\lambda_1, \lambda_2] \to \Lambda (n)$
is continuous. Moreover, 
we can choose the bases $\{u^b_j (x; \lambda)\}_{j=1}^n$
so that for each $j \in \{1, 2, \dots, n\}$, the 
function $u_j^b (x; \lambda)$ is piecewise continuously 
differentiable 
in $\lambda$ on $[\lambda_1, \lambda_2]$. 
\end{lemma}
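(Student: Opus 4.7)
The plan is to deduce both claims from Lemma \ref{continuation-lemma} by a compactness argument on $[\lambda_1, \lambda_2]$. That lemma is purely local in $\lambda$: around each $\lambda_* \in [\lambda_1, \lambda_2]$ it produces a radius $r > 0$ and a basis $\{u_j^{b,*}(x;\lambda)\}_{j=1}^n$ of solutions of (\ref{extension-hammy}) that lie right in $(c,b)$ and satisfy (\ref{boundary-b}), continuously differentiable in $\lambda$ on $I_{\lambda_*, r}$. The task is to paste these local families into a globally defined piecewise-$C^1$-in-$\lambda$ basis and to verify that the associated Lagrangian subspace $\ell_b$ is well-defined and continuous on $(c,b) \times [\lambda_1, \lambda_2]$.

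To obtain the basis for the second assertion, I would extract a finite subcover of $[\lambda_1, \lambda_2]$ from the open cover $\{I_{\lambda_*, r(\lambda_*)}: \lambda_* \in [\lambda_1, \lambda_2]\}$ and refine it to a partition $\lambda_1 = t_0 < t_1 < \cdots < t_N = \lambda_2$, choosing centers $\lambda_*^i$ with $[t_{i-1}, t_i] \subset I_{\lambda_*^i, r_i}$. Setting $u_j^b(x;\lambda) := u_j^{b,i}(x;\lambda)$ for $\lambda \in [t_{i-1}, t_i]$ then yields the required piecewise continuous differentiability in $\lambda$. Across each junction $t_i$ the individual basis vectors may jump, since the local bases from the two sides need not agree even though they span the same Lagrangian subspace.

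For continuity of $\ell_b$, the key observation is that although the basis vectors may jump at the partition points, the span does not: by Lemma \ref{lemma2-9prime}, for each $\lambda \in [\lambda_1, \lambda_2]$ the solutions to (\ref{extension-hammy}) that lie right in $(c,b)$ and satisfy (\ref{boundary-b}) form a uniquely determined $n$-dimensional Lagrangian subspace, so all pieced-together frames represent the same map $\ell_b(x;\lambda)$. Within each rectangle $(c,b) \times [t_{i-1}, t_i]$, I would establish joint continuity of the local frame $U^{b,i}(x;\lambda)$ in $(x,\lambda)$; then the projection $\mathcal{P}^i(x;\lambda) = U^{b,i} ((U^{b,i})^* U^{b,i})^{-1} (U^{b,i})^*$ is continuous (the rank is full by Lemma \ref{lemma2-9prime}), and so is $\ell_b$ in the projection metric. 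Continuity across junctions is automatic because the projection depends only on the subspace, not on the frame.

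The principal technical obstacle is the joint continuity of $U^{b,i}(x;\lambda)$ in $(x,\lambda)$. Lemma \ref{continuation-lemma} supplies continuity of $\lambda \mapsto u_j^{b,i}(x;\lambda)$ at each fixed $x$, local absolute continuity in $x$ at each fixed $\lambda$, and the uniform pointwise bound $|u_j^{b,i}(x;\lambda)| \le K_0$ on compact $[c, d] \times I_{\lambda_*^i, r_i}$. Combining these with the Volterra representation (\ref{nonlinear-integral}) and the uniform-in-$\lambda$ bound $|\mathbb{B}(x;\lambda)| \le b_0(x)$ from Assumption \textbf{(A)}, a dominated-convergence argument (or equivalently a Gr\"onwall estimate applied after inserting the intermediate value $u_j^{b,i}(x;\lambda_0)$) delivers joint continuity and completes the proof.
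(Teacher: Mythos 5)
Your proposal is correct and follows essentially the same route as the paper: both arguments invoke Lemma \ref{continuation-lemma} for the local $C^1$-in-$\lambda$ extensions, extract a finite subcover of $[\lambda_1,\lambda_2]$ by compactness, patch the local frames together at interior junction points (accepting jumps of the frame but not of the span, since Lemma \ref{lemma2-7prime} or Lemma \ref{lemma2-9prime} pins down the subspace uniquely), and then read off piecewise $C^1$ dependence and continuity of $\ell_b$ in the projection metric. One small remark: you flag the joint continuity of $U^{b,i}(x;\lambda)$ as the main technical obstacle and propose a dominated-convergence/Gr\"onwall argument, but the paper's Lemma \ref{continuation-lemma} already supplies a uniform bound $|\partial_\lambda u_j^{b}(x;\lambda)|\le K_1$ on compact rectangles $[c,d]\times I_{\lambda_*,r_*}$, which gives equicontinuity in $\lambda$ directly and makes the joint continuity an immediate two-term triangle-inequality estimate; no separate Gr\"onwall step is needed.
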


\begin{proof}
First, for each fixed $\lambda_* \in [\lambda_1, \lambda_2]$,
we can use Lemma \ref{continuation-lemma} to obtain a
differentiable-in-$\lambda$ family of bases $\{u^{b, \lambda_*}_j (x; \lambda)\}_{j=1}^n$,
for all $\lambda \in I_{\lambda_*, r_*}$, where $r_* > 0$
is a constant depending on $\lambda_*$ 
(and $\mathcal{T}_{c, b}^{\gamma} (\lambda_*)$, 
including the boundary matrix $\gamma$). By Lemma 
\ref{lemma2-7prime}, these elements must comprise a 
basis for the same Lagrangian subspaces $\ell_b (x; \lambda)$
as the bases $\{u^b_j (x; \lambda)\}_{j=1}^n$
constructed in Lemma \ref{lemma2-9prime}. 
This process creates an open cover of $[\lambda_1, \lambda_2]$, 
created by the union of all of these intervals. Next, we use 
compactness of the interval $[\lambda_1, \lambda_2]$ to extract 
a finite subcover, comprising intervals 
$\{I_{\lambda_*^j, r_*^j}\}_{j=1}^N$ for point-radius 
pairs $\{(\lambda_*^j, r_*^j)\}_{j=1}^N$,
where for notational convenience, we can select the values 
$\{\lambda_*^j\}_{j=1}^N$ so that 
\begin{equation*}
    \lambda_1 =: \lambda_*^1 < \lambda_*^2 < \dots < \lambda_*^N := \lambda_2,
\end{equation*}
and where the values $r_*^j > 0$ are constants respectively 
associated with the values $\lambda_*^j$ in our construction of the family of 
intervals. 

Starting at $\lambda_*^1$, we can take $\{u^b_j (x; \lambda_*^1)\}_{j=1}^n$
to be a basis for the Lagrangian subspace $\ell_b (x; \lambda_*^1)$. As
$\lambda$ increases from $\lambda_*^1$, the extensions
$\{u^{b, \lambda_*^1}_j (x; \lambda)\}_{j=1}^n$ in $I_{\lambda_*^1, r_*^1}$
comprise bases for the Lagrangian subspaces $\ell_b (x; \lambda)$. 
By construction, the set 
$I_{\lambda_*^1, r_*^1} \cap I_{\lambda_*^2, r_*^2}$ must 
be non-empty. We take any $\lambda_*^{1, 2}$ in this intersection,
and we note that at this value of $\lambda$ the 
extensions $\{u^{b, \lambda_*^1}_j (x; \lambda_*^{1, 2})\}_{j=1}^n$ in 
$I_{\lambda_*^1, r_*^1}$ 
serve as a basis for the same Lagrangian subspace as 
the extensions 
$\{u^{b, \lambda_*^2}_j (x; \lambda_*^{1, 2})\}_{j=1}^n$ 
in $I_{\lambda_*^2, r_*^2}$. 
This allows us to continuously switch from the
frame $\{u^{b, \lambda_*^1}_j (x; \lambda_*^{1, 2})\}_{j=1}^n$ to the 
frame $\{u^{b, \lambda_*^2}_j (x; \lambda_*^{1, 2})\}_{j=1}^n$. 

We now allow $\lambda$ to increase from $\lambda_*^{1, 2}$, 
and take the elements $\{u^{b, \lambda_*^2}_j (x; \lambda)\}_{j=1}^n$
as our choice of bases for the Lagrangian subspaces $\ell_b (x; \lambda)$. 
By construction, the set $I_{\lambda^2_*, r_*^2} \cap I_{\lambda_*^3, r_*^3}$
must be non-empty, and we take any $\lambda_*^{2, 3}$
in this intersection, noting that at this value of $\lambda$ the 
analytic extensions
$\{u^{b, \lambda_*^2}_j (x; \lambda_*^{2, 3})\}_{j=1}^n$ in $I_{\lambda_*^2, r_*^2}$ 
serve as a basis for the same Lagrangian subspace as 
the extensions 
$\{u^{b, \lambda_*^3}_j (x; \lambda_*^{2, 3})\}_{j=1}^n$ in $I_{\lambda_*^3, r_*^3}$. 
Continuing in this way, we see that 
$\ell_b: (c, b) \times [\lambda_1, \lambda_2] \to \Lambda (n)$ 
is continuous. 

Summarising our notation, the interval $[\lambda_1, \lambda_2]$ has been 
partitioned into values 
\begin{equation*}
    \lambda_1 =: \lambda_*^{0, 1} < \lambda_*^{1, 2} < \lambda_*^{2, 3} 
    < \dots < \lambda_*^{N-1, N} < \lambda_*^{N, N+1} := \lambda_2,
\end{equation*}
and we use the frame $\{u_j^{b, \lambda_*^k} (x; \lambda)\}_{j=1}^n$ on 
the interval $[\lambda_*^{k-1, k}, \lambda_*^{k, k+1}]$ for all 
$k = 1, 2, \dots, N$. It's clear from the construction 
in Lemma \ref{continuation-lemma} that 
for each $j \in \{1, 2, \dots, n\}$, $u_j^{b, \lambda_*^k} (x; \lambda)$
is continuously differentiable in $\lambda$ 
on $(\lambda_*^{k-1, k}, \lambda_*^{k, k+1})$, so the frame obtained 
by patching these bases together at the points $\{\lambda_*^{i, j}\}_{i, j = 0,1}^{N, N+1}$
is piecewise continuously differentiable. 
\end{proof}

With appropriate modifications, 
Lemmas \ref{lemma2-6prime}--\ref{lemma2-11prime}
can be stated with $\{u_j^b (x; \lambda)\}_{j=1}^n$ replaced by 
$\{u_j^a (x; \lambda)\}_{j=1}^n$.
In addition, under the assumption {\bf (A)$^\prime$}, the analysis 
of $\mathcal{T} (\cdot)$ in this section can be carried out 
for $\mathcal{T}^{\alpha} (\cdot)$, and in particular, Lemmas 
\ref{lemma2-9prime} through \ref{lemma2-11prime}  
hold with $\mathcal{T} (\lambda)$ replaced by 
$\mathcal{T}^{\alpha} (\lambda)$.

\section{The Maslov Index} \label{maslov-section}

Our framework for computing the Maslov index is adapted from 
Section 2 of \cite{HS2}, and we briefly sketch the main ideas
here. Given any pair of Lagrangian subspaces $\ell_1$ and 
$\ell_2$ with respective frames $\mathbf{X}_1 = \genfrac{(}{)}{0pt}{1}{X_1}{Y_1}$
and $\mathbf{X}_2 = \genfrac{(}{)}{0pt}{1}{X_2}{Y_2}$, we consider the matrix
\begin{equation} \label{tildeW}
\tilde{W} := - (X_1 + iY_1)(X_1-iY_1)^{-1} (X_2 - iY_2)(X_2+iY_2)^{-1}. 
\end{equation}
In \cite{HS2}, the authors establish: (1) the inverses 
appearing in (\ref{tildeW}) exist; (2) $\tilde{W}$ is independent
of the specific frames $\mathbf{X}_1$ and $\mathbf{X}_2$ (as long
as these are indeed frames for $\ell_1$ and $\ell_2$); (3) $\tilde{W}$ is 
unitary; and (4) the identity 
\begin{equation} \label{key}
\dim (\ell_1 \cap \ell_2) = \dim (\ker (\tilde{W} + I)).
\end{equation}
Given two continuous paths of Lagrangian subspaces 
$\ell_i: [0, 1] \to \Lambda (n)$, $i = 1, 2$, with 
respective frames $\mathbf{X}_i: [0,1] \to \mathbb{C}^{2n \times n}$,
relation (\ref{key}) allows us to compute the Maslov 
index $\mas (\ell_1, \ell_2; [0,1])$ as a spectral flow
through $-1$ for the path of matrices 
\begin{equation} 
\tilde{W} (t) := - (X_1 (t) + iY_1 (t))(X_1 (t)-iY_1 (t))^{-1} 
(X_2 (t) - iY_2 (t))(X_2 (t)+iY_2 (t))^{-1}. 
\end{equation}

In \cite{HS2}, the authors provide a rigorous definition 
of the Maslov index based on the spectral flow developed 
in \cite{P96}. Here, rather, we give only an intuitive 
discussion. As a starting point, 
if $-1 \in \sigma (\tilde{W} (t_*))$ for some $t_* \in [0, 1]$, 
then we refer to $t_*$ as a crossing point, and its 
multiplicity is taken to be $\dim (\ell_1 (t_*) \cap \ell_2 (t_*))$, 
which by virtue of (\ref{key}) is equivalent to the 
multiplicity of $-1$ as an eigenvalue of $\tilde{W} (t_*)$. 
We compute the Maslov index $\mas (\ell_1, \ell_2; [0, 1])$ 
by allowing $t$ to increase from $0$ to $1$ and incrementing 
the index whenever an eigenvalue crosses $-1$ in the 
counterclockwise direction, while decrementing the index
whenever an eigenvalue crosses $-1$ in the clockwise
direction. These increments/decrements are counted with 
multiplicity, so for example, if a pair of eigenvalues 
crosses $-1$ together in the counterclockwise direction, 
then a net amount of $+2$ is added to the index. Regarding
behavior at the endpoints, if an eigenvalue of $\tilde{W}$
rotates away from $-1$ in the clockwise direction as $t$ increases
from $0$, then the Maslov index decrements (according to 
multiplicity), while if an eigenvalue of $\tilde{W}$
rotates away from $-1$ in the counterclockwise direction as $t$ increases
from $0$, then the Maslov index does not change. Likewise, 
if an eigenvalue of $\tilde{W}$ rotates into $-1$ in the 
counterclockwise direction as $t$ increases
to $1$, then the Maslov index increments (according to 
multiplicity), while if an eigenvalue of $\tilde{W}$
rotates into $-1$ in the clockwise direction as $t$ increases
to $1$, then the Maslov index does not change. Finally, 
it's possible that an eigenvalue of $\tilde{W}$ will arrive 
at $-1$ for $t = t_*$ and remain at $-1$ as $t$ traverses
an interval. In these cases, the 
Maslov index only increments/decrements upon arrival or 
departure, and the increments/decrements are determined 
as for the endpoints (departures determined as with $t=0$,
arrivals determined as with $t = 1$).

One of the most important features of the Maslov index is homotopy invariance, 
for which we need to consider continuously varying families of Lagrangian 
paths. To set some notation, we denote by $\mathcal{P} (\mathcal{I})$ the collection 
of all paths $\mathcal{L} (t) = (\ell_1 (t), \ell_2 (t))$, where 
$\ell_1, \ell_2: \mathcal{I} \to \Lambda (n)$ are continuous paths in the 
Lagrangian--Grassmannian. We say that two paths 
$\mathcal{L}, \mathcal{M} \in \mathcal{P} (\mathcal{I})$ are homotopic in $\Lambda (n)$ provided 
there exists a family $\mathcal{H}_s$ so that 
$\mathcal{H}_0 = \mathcal{L}$, $\mathcal{H}_1 = \mathcal{M}$, 
and $\mathcal{H}_s (t)$ is continuous as a map from $(t,s) \in \mathcal{I} \times [0,1]$
into $\Lambda (n) \times \Lambda (n)$. 

The Maslov index has the following properties. 

\medskip
\noindent
{\bf (P1)} (Path Additivity) If $\mathcal{L} \in \mathcal{P} (\mathcal{I})$
and $a, b, c \in \mathcal{I}$, with $a < b < c$, then 
\begin{equation*}
\mas (\mathcal{L};[a, c]) = \mas (\mathcal{L};[a, b]) + \mas (\mathcal{L}; [b, c]).
\end{equation*}

\medskip
\noindent
{\bf (P2)} (Homotopy Invariance) If $\mathcal{I} = [a, b]$ and  
$\mathcal{L}, \mathcal{M} \in \mathcal{P} (\mathcal{I})$ 
are homotopic in $\Lambda (n)$ with $\mathcal{L} (a) = \mathcal{M} (a)$ and  
$\mathcal{L} (b) = \mathcal{M} (b)$ (i.e., if $\mathcal{L}, \mathcal{M}$
are homotopic with fixed endpoints) then 
\begin{equation*}
\mas (\mathcal{L};[a, b]) = \mas (\mathcal{M};[a, b]).
\end{equation*} 

Straightforward proofs of these properties appear in \cite{HLS2017}
for Lagrangian subspaces of $\mathbb{R}^{2n}$, and proofs in the current setting of 
Lagrangian subspaces of $\mathbb{C}^{2n}$ are essentially identical. 

As noted previously, the direction we associate with a 
crossing point is determined by the direction in which eigenvalues
of $\tilde{W}$ rotate through $-1$ (counterclockwise is positive, 
while clockwise is negative). In order to analyze this direction
in specific cases, we will make use of the following lemma from 
\cite{HS2}. 

\begin{lemma} \label{monotonicity1}
Suppose $\ell_1, \ell_2: \mathcal{I} \to \Lambda (n)$ denote paths of 
Lagrangian subspaces of $\mathbb{C}^{2n}$ with respective frames 
$\mathbf{X}_1 = \genfrac{(}{)}{0pt}{1}{X_1}{Y_1}$ and $\mathbf{X}_2 = \genfrac{(}{)}{0pt}{1}{X_2}{Y_2}$
that are differentiable at $t_0 \in \mathcal{I}$. If the matrices 
\begin{equation*}
- \mathbf{X}_1 (t_0)^* J \mathbf{X}_1' (t_0) = X_1 (t_0)^* Y_1' (t_0) - Y_1 (t_0)^* X_1'(t_0)
\end{equation*}
and (noting the sign change)
\begin{equation*}
\mathbf{X}_2 (t_0)^* J \mathbf{X}_2' (t_0) = - (X_2 (t_0)^* Y_2' (t_0) - Y_2 (t_0)^* X_2'(t_0))
\end{equation*}
are both non-negative, and at least one is positive definite, then the eigenvalues of 
$\tilde{W} (t)$ rotate in the counterclockwise direction as $t$ increases through $t_0$. 
Likewise, if both of these matrices are non-positive, and at least one is 
negative definite, then the eigenvalues of $\tilde{W} (t)$ rotate in the clockwise direction as 
$t$ increases through $t_0$.
\end{lemma}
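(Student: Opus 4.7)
The plan is to translate the claimed monotonicity of the eigenvalues of $\tilde{W}(t)$ through $-1$ into a sign condition on an explicit Hermitian form on the kernel $V_0 := \ker(\tilde{W}(t_0)+I)$, and then to identify that form with the hypotheses through the Cayley-transform structure of $\tilde W$.

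First I would record the following unitary eigenvalue perturbation fact: if $\tilde W(t)$ is a smooth path of unitary matrices and $v_0$ is a unit vector in $V_0$, then the eigenvalue branch of $\tilde W(t)$ emanating from $-1$ along $v_0$ is $\lambda(t) = e^{i\theta(t)}$ with $\theta(t_0)=\pi$ and
\begin{equation*}
    \theta'(t_0) \;=\; i\,v_0^*\,\tilde W'(t_0)\,v_0,
\end{equation*}
which is automatically real because $\tilde W^*\tilde W'$ is anti-Hermitian. For higher-multiplicity crossings, Kato's analytic perturbation theory lets me replace simple-branch bookkeeping by a signature count of the Hermitian form $Q(v) := i\,v^*\tilde W'(t_0)\,v$ restricted to $V_0$. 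Thus counterclockwise rotation of all eigenvalues through $-1$ is equivalent to positive-definiteness of $Q$ on $V_0$, and clockwise rotation to negative-definiteness.

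Second, I would compute $\tilde W'(t_0)$ directly from the defining expression, writing $\tilde W = -P_1 Q_1^{-1} P_2 Q_2^{-1}$ with $P_1 = X_1+iY_1$, $Q_1 = X_1-iY_1$, $P_2 = X_2-iY_2$, $Q_2 = X_2+iY_2$, and applying the product rule together with $(M^{-1})' = -M^{-1}M'M^{-1}$. The Lagrangian identities $X_j^*Y_j = Y_j^*X_j$ translate into $P_j^*Q_j = Q_j^*P_j$, equivalently that $P_jQ_j^{-1}$ is unitary, which is the essential tool for tidying up the four resulting terms. Under the Cayley parametrization, the maps $v \mapsto \mathbf{X}_2(t_0)(X_2(t_0)+iY_2(t_0))^{-1}v$ and $v \mapsto \mathbf{X}_1(t_0)(X_1(t_0)-iY_1(t_0))^{-1}v$ both identify $V_0$ isomorphically with the intersection $\ell_1(t_0)\cap\ell_2(t_0)$, and they agree on $V_0$ (this is what $v_0 \in V_0$ encodes).

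Third, I would substitute the computed $\tilde W'(t_0)$ into $Q(v)$ and restrict to $V_0$. Using unitarity of $P_jQ_j^{-1}$ together with the coincidence of the two parametrizations on $V_0$, the cross-terms mixing index $1$ and index $2$ blocks collapse, and what survives is a positive scalar multiple of
\begin{equation*}
    w_1^*\bigl(X_1^*Y_1' - Y_1^*X_1'\bigr)w_1 \;+\; w_2^*\bigl(-(X_2^*Y_2' - Y_2^*X_2')\bigr)w_2,
\end{equation*}
where $w_1, w_2$ are the preimages of the common vector in $\ell_1(t_0)\cap\ell_2(t_0)$, all evaluated at $t_0$. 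The stated non-negativity of both summands with at least one positive definite on $\mathbb{C}^n$ then forces $Q$ to be positive definite on $V_0$, since the parametrizations are injective, so no nonzero $v \in V_0$ can produce $w_1 = w_2 = 0$. The clockwise case is identical up to sign.

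The main obstacle will be the bookkeeping in the third step: showing that, after differentiating the Cayley transforms and restricting to $V_0$, the mixed-index terms really cancel and that the same positive overall constant appears in front of both summands. The Lagrangian relations $P_j^*Q_j = Q_j^*P_j$ and the identity $\tilde W(t_0)v_0 = -v_0$ are precisely the algebraic inputs that make this cancellation work; the differentiability of the frames at $t_0$ is what makes the whole computation well defined.
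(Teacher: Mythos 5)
This lemma is not proved in the paper itself --- it is cited from \cite{HS2} --- so I review your proposal on its own merits.

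The overall plan is correct, and the computation in your third step checks out. Taking $a=(X_2+iY_2)^{-1}v_0$ and $b=(X_1+iY_1)^{-1}v_0$ for $v_0\in V_0$ (so that $\mathbf{X}_1 b=\mathbf{X}_2 a$ lies in $\ell_1(t_0)\cap\ell_2(t_0)$), one verifies directly that
\begin{equation*}
    i\,v_0^*\tilde{W}'(t_0)v_0
    = 2\,b^*\bigl(X_1^*Y_1'-Y_1^*X_1'\bigr)b
    - 2\,a^*\bigl(X_2^*Y_2'-Y_2^*X_2'\bigr)a,
\end{equation*}
evaluated at $t_0$, using only $P_j^*P_j=Q_j^*Q_j$ and $P_2 a = Q_1 b$; and since $X_j\pm iY_j$ is invertible for a Lagrangian frame, $a,b\neq 0$ whenever $v_0\neq 0$, which gives strict positivity under the hypotheses. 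One algebraic claim in your plan is, however, wrong: the Lagrangian relation $X_j^*Y_j=Y_j^*X_j$ is equivalent to $P_j^*P_j=Q_j^*Q_j$ (unitarity of $P_jQ_j^{-1}$), not to $P_j^*Q_j=Q_j^*P_j$, which fails already when $X_1=Y_1=I_n$ (there $P_1^*Q_1=-2iI_n$ while $Q_1^*P_1=2iI_n$). Likewise, the preimage map on the $\ell_1$ side should be $(X_1(t_0)+iY_1(t_0))^{-1}$, not $(X_1(t_0)-iY_1(t_0))^{-1}$.

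The substantive gap is in your first step. The lemma assumes only that the frames are differentiable at the single point $t_0$, not analytic and not even $C^1$, so Kato's analytic perturbation theory --- which is precisely what supplies the eigenvalue branches and the signature bookkeeping at a degenerate crossing --- is unavailable. Under that minimal regularity a degenerate crossing need not admit differentiable eigenvalue branches at all, and the claimed reduction to positive-definiteness of $Q(v)=i\,v^*\tilde{W}'(t_0)v$ on $V_0$ has to be justified through the spectral-flow formulation the paper itself points to in \cite{P96}: for instance, compare $\tilde{W}(t)$ near $t_0$ to a unitary one-parameter group with the same value and derivative at $t_0$, compute the latter's flow through $-1$ from the Hermitian matrix $i\,\tilde{W}(t_0)^{-1}\tilde{W}'(t_0)$ restricted to $V_0$ (which on $V_0$ coincides with $Q$), and show by continuity that the two flows agree. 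Step~1 as written asserts the conclusion of that argument without supplying it.
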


\section{Proofs of the Main Theorems} \label{theorems-section}

In this section, we use our Maslov index framework to prove Theorems 
\ref{regular-singular-theorem} and \ref{singular-theorem}.

\subsection{Proof of Theorem \ref{regular-singular-theorem}} 
\label{regular-singular-section}

Fix any pair $\lambda_1, \lambda_2 \in I$, $\lambda_1 < \lambda_2$
so that for all $\lambda \in [\lambda_1, \lambda_2]$, we have
the exclusion $0 \notin \sigma_{\ess} (\mathcal{T}^{\alpha} (\lambda))$,
and let $\ell_{\alpha} (x; \lambda)$ denote the map of Lagrangian subspaces
associated with the frames $\mathbf{X}_{\alpha} (x; \lambda)$ specified 
in (\ref{frame-alpha}). Keeping in mind that $\lambda_2$ is fixed, let 
$\ell_b (x; \lambda_2)$ denote the map of Lagrangian subspaces
associated with the frames $\mathbf{X}_b (x; \lambda_2)$ specified 
in (\ref{frame-b}). We emphasize that since $\lambda_2$ is fixed we
don't yet require Lemma \ref{lemma2-11prime} to extend the 
frame $\mathbf{X}_b (x; \lambda_2)$ to additional values
$\lambda \in [\lambda_1, \lambda_2]$. 
We will establish Theorem \ref{regular-singular-theorem} by 
considering the Maslov index for $\ell_{\alpha} (x; \lambda)$ and 
$\ell_b (x; \lambda_2)$ along a path designated as the 
{\it Maslov box} in the next paragraph. As described in 
Section \ref{maslov-section}, this Maslov index is computed as
a spectral flow for the matrix 
\begin{equation} \label{tildeW-bc1}
\begin{aligned}
\tilde{W} (x; \lambda) &= - (X_{\alpha} (x; \lambda) + i Y_{\alpha} (x; \lambda))
(X_{\alpha} (x; \lambda) - i Y_{\alpha} (x; \lambda))^{-1} \\
& \times (X_b (x; \lambda_2) - i Y_b (x; \lambda_2))
(X_b (x; \lambda_2) + i Y_b (x; \lambda_2))^{-1}.
\end{aligned}
\end{equation}

By Maslov Box in this case we mean the following sequence of contours, 
specified for some value $c \in (a, b)$ to be chosen sufficiently  
close to $b$ during the analysis (sufficiently large if $b = + \infty$):
(1) fix $x = a$ and let $\lambda$ increase from $\lambda_1$ to $\lambda_2$ 
(the {\it bottom shelf}); 
(2) fix $\lambda = \lambda_2$ and let $x$ increase from $a$ to $c$ 
(the {\it right shelf}); (3) fix $x = c$ and let $\lambda$
decrease from $\lambda_2$ to $\lambda_1$ (the {\it top shelf}); and (4) fix
$\lambda = \lambda_1$ and let $x$ decrease from $c$ to $a$ (the 
{\it left shelf}). (See Figure \ref{box-figure}.)

\begin{figure}[ht]
\begin{center}
\begin{tikzpicture}
\draw[<->] (-8,-1.5) -- (1,-1.5);	
\draw[<->] (0,-2) -- (0,4.5);	
\node at (.5,4.3) {$x$};
\node at (-7.5,-2) {$\lambda$};
\node at (-6,-2) {$\lambda_1$};
\node at (-1,-2) {$\lambda_2$};
\node at (.5,3) {$c$};
\draw[-] (-.1,3) -- (.1,3);
\node at (.5,-1) {$a$};
\draw[-] (-.1,-1) -- (.1,-1);
%
\draw[thick, ->] (-6,-1) -- (-3.5,-1);
\draw[thick] (-3.5,-1) -- (-1,-1);	
\draw[thick, ->] (-1,-1) -- (-1,1);
\draw[thick] (-1,1) -- (-1, 3);
\draw[thick,->] (-1,3) -- (-3.5, 3);
\draw[thick] (-3.5,3) -- (-6, 3);
\draw[thick,->] (-6,3) -- (-6, 1);
\draw[thick] (-6,1) -- (-6, -1);
%
\node[scale = .75] at (-3.5, -.55) {$\mas (\ell_{\alpha} (a; \cdot), \ell_b (a; \lambda_2); [\lambda_1, \lambda_2])$};
\node[scale = .75, rotate=90] at (-.55, 1.1) {$\mas (\ell_{\alpha} (\cdot; \lambda_2), \ell_b (\cdot; \lambda_2); [a, c])$};
\node[scale = .75] at (-3.6, 3.45) {$- \mas (\ell_{\alpha} (c; \cdot), \ell_b (c; \lambda_2); [\lambda_1, \lambda_2])$};
\node[scale = .75, rotate=90] at (-6.45, 1.1) {$- \mas (\ell_{\alpha} (\cdot; \lambda_1), \ell_b (\cdot; \lambda_2); [a, c])$};
\end{tikzpicture}
\end{center}
\caption{The Maslov Box.} \label{box-figure}
\end{figure}
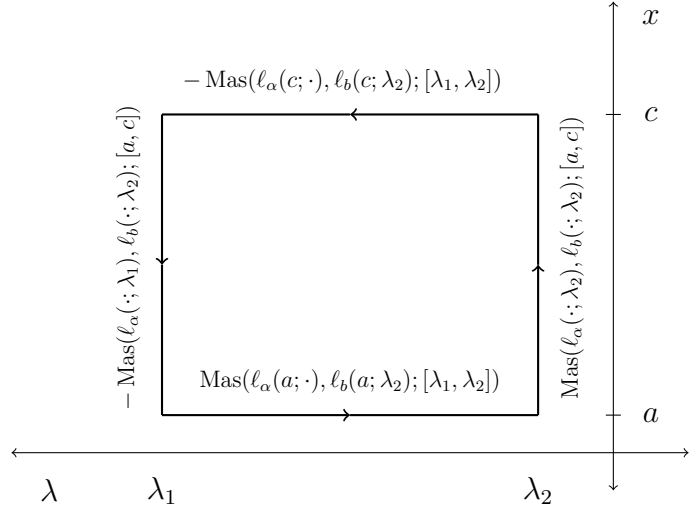

{\it Right shelf.}
We begin our analysis with the right shelf, for which $\mathbf{X}_{\alpha}$ 
and $\mathbf{X}_b$ are both evaluated at $\lambda_2$. By construction, 
$\ell_{\alpha} (x; \lambda_2)$ and $\ell_b (x; \lambda_2)$ will have non-trivial
intersection at some $x \in [a, c]$ (and so for all $x \in [a, c]$) with dimension $m$ 
if and only if $\lambda_2$ is an eigenvalue of the operator pencil 
$\mathcal{T}^{\alpha} (\cdot)$ with multiplicity $m$ (i.e., $0$ is an 
eigenvalue of $\mathcal{T}^{\alpha} (\lambda_2)$ with multiplicity $m$). 
In the event that $\lambda_2$
is not an eigenvalue of $\mathcal{T}^{\alpha} (\cdot)$, there will be no 
crossing points along the right shelf. On the other hand, if
$\lambda_2$ is an eigenvalue of $\mathcal{T}^{\alpha} (\cdot)$ with multiplicity
$m$, then $\tilde{W} (x; \lambda_2)$ will have $-1$ as an eigenvalue
with multiplicity $m$ for all $x \in [a, c]$. In either case,
\begin{equation*}
\mas (\ell_{\alpha} (\cdot; \lambda_2), \ell_b (\cdot; \lambda_2); [a, c]) = 0,
\end{equation*} 
so there is no contribution from the right shelf. 

{\it Bottom shelf.} For the bottom shelf, $\ell_{\alpha} (a; \lambda)$ is determined 
by the boundary matrix $\alpha (\lambda)$, leading to the Maslov index 
\begin{equation*}
\mas (\ell_{\alpha} (a; \cdot), \ell_b (a; \lambda_2); [\lambda_1, \lambda_2]),
\end{equation*} 
which appears in the statement of Theorem \ref{regular-singular-theorem}. As 
observed in Remark \ref{regular-singular-remark}, there is no contribution 
from this term if $\alpha (\lambda)$ is constant. 

{\it Top shelf.} For the top shelf, $\tilde{W} (c; \lambda)$ detects intersections
between $\ell_{\alpha} (c; \lambda)$ and $\ell_b (c; \lambda_2)$ as $\lambda$
decreases from $\lambda_2$ to $\lambda_1$. Such intersections
correspond precisely with eigenvalues of  
$\mathcal{T}_{a, c}^{\alpha} (\cdot)$, where 
$\mathcal{T}_{a, c}^{\alpha} (\cdot)$ denotes the 
finite-interval (or {\it truncated}) operator specified
similarly as $\mathcal{T}^{\alpha} (\cdot)$, except on the domain
\begin{equation*}
    \mathcal{D}_{a, c}^{\alpha} := 
    \{y \in \mathcal{D}_{a, c, M}: \alpha (\lambda) y(a) = 0, 
    \quad \mathbf{X}_b (c; \lambda_2)^* J y (c) = 0 \},
\end{equation*}
where $\mathcal{D}_{a, c, M}$ denotes the domain of the maximal
operator specified as in Definition \ref{maximal-operator}, 
except on $(a, c)$.
Similarly as in Section \ref{operator-section}, we can check that 
for each $\lambda \in [\lambda_1, \lambda_2]$
$\mathcal{T}_{a, c}^{\alpha} (\lambda)$ is a self-adjoint operator. (In fact,
since $\mathcal{T}_{a, c}^{\alpha} (\lambda)$ is posed on a bounded interval
$(a, c)$ with $\mathbb{B} (\cdot; \lambda), \mathbb{B}_{\lambda} (\cdot; \lambda) \in L^1 ((a, c), \mathbb{C}^{2n \times 2n})$, 
for all $\lambda \in [\lambda_1, \lambda_2]$, 
self-adjointness can be established by more routine considerations.)

We know from Lemma \ref{monotonicity1} that monotonicity
in $\lambda$ is determined by 
$- \mathbf{X}_{\alpha} (c; \lambda)^* J \partial_{\lambda} \mathbf{X}_{\alpha} (c; \lambda)$, 
and we readily compute 
\begin{equation*}
\begin{aligned}
\frac{\partial}{\partial x} \mathbf{X}_{\alpha}^* &(x; \lambda) J \partial_{\lambda} \mathbf{X}_{\alpha} (x; \lambda)
= \mathbf{X}_{\alpha}^{\prime} (x; \lambda)^* J \partial_{\lambda} \mathbf{X}_{\alpha} (x; \lambda) 
+ \mathbf{X}_{\alpha} (x; \lambda)^* J \partial_{\lambda} \mathbf{X}_{\alpha}^{\prime} (x; \lambda) \\
&= - \mathbf{X}_{\alpha}^{\prime}(x; \lambda)^* J^* \partial_{\lambda} \mathbf{X}_{\alpha} (x; \lambda) 
+ \mathbf{X}_{\alpha} (x; \lambda)^* \partial_{\lambda} J \mathbf{X}_{\alpha}^{\prime} (x; \lambda) \\
&= - \mathbf{X}_{\alpha} (x; \lambda)^* \mathbb{B} (x; \lambda) \partial_{\lambda} \mathbf{X}_{\alpha} (x; \lambda) 
+ \mathbf{X}_{\alpha} (x; \lambda)^* \mathbb{B} (x; \lambda) \partial_{\lambda} \mathbf{X}_{\alpha} (x; \lambda) \\
&+ \mathbf{X}_{\alpha}^* \partial_{\lambda} \mathbb{B} (x; \lambda) \mathbf{X}_{\alpha} (x; \lambda) 
= \mathbf{X}_{\alpha} (x; \lambda)^* \mathbb{B}_{\lambda} (x; \lambda) \mathbf{X}_{\alpha} (x; \lambda),
\end{aligned}
\end{equation*}
where the differentiation of $\mathbf{X}_{\alpha} (x; \lambda)$ in $x$ and $\lambda$, including the 
exchange of order of these derivatives, is straightforward since the columns of $\mathbf{X}_{\alpha} (x; \lambda)$
are simply solutions to standard initial value problems. 
Integrating on $[a,x]$, and noting that 
\begin{equation*}
    \mathbf{X}_{\alpha}^* (a; \lambda) J \partial_{\lambda} \mathbf{X}_{\alpha} (a; \lambda)
    = \alpha (\lambda) J \partial_\lambda \alpha^* (\lambda),
\end{equation*}
we obtain the relation   
\begin{equation*}
\mathbf{X}_{\alpha} (x; \lambda)^* J \partial_{\lambda} \mathbf{X}_{\alpha} (x; \lambda)
= \alpha (\lambda) J \partial_\lambda \alpha^* (\lambda)
+ \int_a^x \mathbf{X}_{\alpha} (y;\lambda)^* \mathbb{B}_{\lambda} (y; \lambda) \mathbf{X}_{\alpha} (y; \lambda) dy. 
\end{equation*}
By assumption, the matrix $\alpha (\lambda) J \partial_\lambda \alpha^* (\lambda)$
is non-negative for all $\lambda \in [\lambda_1, \lambda_2]$, so
monotonicity along the top shelf follows by setting $x = c$ and appealing 
to Assumption {\bf (B)}. 
In this way, we see that Assumption {\bf (B)} (along with our assumptions on 
the nature of $\alpha (\lambda)$) ensures that 
as $\lambda$ increases, the eigenvalues of $\tilde{W} (c; \lambda)$ will 
rotate monotonically in the clockwise direction. 
Since each crossing along the top shelf
corresponds with a value $\lambda$ for which $0$ is an eigenvalue of 
$\mathcal{T}_{a, c}^{\alpha} (\lambda)$, we can conclude that 
\begin{equation} \label{truncated-count-alpha}
\mathcal{N}_{a, c}^{\alpha} ([\lambda_1, \lambda_2)) = 
- \mas (\ell_{\alpha} (c; \cdot), \ell_b (c; \lambda_2); [\lambda_1, \lambda_2]),
\end{equation}
where $\mathcal{N}_{a, c}^{\alpha} ([\lambda_1, \lambda_2))$ denotes a count, including
multiplicities, of the values $\lambda \in [\lambda_1, \lambda_2)$ 
for which $0$ is an eigenvalue of $\mathcal{T}_{a, c}^{\alpha} (\lambda)$. 
We note that $\lambda_1$ is included in the count, because in the event
that $(c, \lambda_1)$ is a crossing point, eigenvalues of 
$\tilde{W} (c; \lambda)$ will rotate away from $-1$ in the clockwise 
direction as $\lambda$ increases from $\lambda_1$ (thus decrementing the 
Maslov index). Likewise, $\lambda_2$ is not included in the count, because in the event
that $(c, \lambda_2)$ is a crossing point, eigenvalues of 
$\tilde{W} (c; \lambda)$ will rotate into $-1$ in the clockwise 
direction as $\lambda$ increases to $\lambda_2$ (thus leaving the 
Maslov index unchanged). 

{\it Left shelf.} 
Our analysis so far leaves only the left shelf to consider, and 
we observe that the Maslov index on the left shelf can be expressed as 
\begin{equation*}
- \mas (\ell_{\alpha} (\cdot; \lambda_1), \ell_b (\cdot; \lambda_2); [a, c]).
\end{equation*} 
Using path additivity and homotopy invariance, we can sum the Maslov
indices on each shelf of the Maslov Box to arrive at the relation 
\begin{equation} \label{box-sum-alpha}
\mathcal{N}_{a, c}^{\alpha} ([\lambda_1, \lambda_2)) 
= \mas (\ell_{\alpha} (\cdot; \lambda_1), \ell_b (\cdot; \lambda_2); [a, c])
- \mas (\ell_{\alpha} (a; \cdot), \ell_b (a; \lambda_2); [\lambda_1, \lambda_2]).
\end{equation}

In order to obtain a statement about $\mathcal{N}^{\alpha} ([\lambda_1, \lambda_2))$, 
we observe that eigenvalues of $\mathcal{T}^{\alpha} (\cdot)$ correspond precisely 
with intersections of $\ell_{\alpha} (c; \lambda)$
and $\ell_b (c; \lambda)$. (We emphasize that in this last statement, 
$\ell_b$ is evaluated at $\lambda$, not $\lambda_2$, and so we 
are using Lemma \ref{lemma2-11prime}). Employing a monotonicity
argument similar to the one above for the top shelf, we can conclude that 
\begin{equation} \label{full-count-alpha}
\mathcal{N}^{\alpha} ([\lambda_1, \lambda_2)) = 
- \mas (\ell_{\alpha} (c; \cdot), \ell_b (c; \cdot); [\lambda_1, \lambda_2]).
\end{equation}

\begin{remark}
The monotonicity argument in the case of (\ref{full-count-alpha}) is a bit more subtle than in 
the case above for the top shelf, and in order to keep the analysis 
as complete as possible, we include the full argument in the 
appendix, Section \ref{monotonicity-lambda-section}. 
\end{remark}

Our next goal is to relate the Maslov index on the right-hand side of 
(\ref{full-count-alpha}) to Maslov indices in which $\lambda$ only varies
in one or the other of $\ell_{\alpha} (c; \lambda)$ and $\ell_b (c; \lambda)$.
For this, we have the following claim. 

\begin{claim} \label{triangle-claim-alpha}
Under the assumptions of Theorem \ref{regular-singular-theorem}, 
and for any $c \in (a, b)$,
\begin{equation*}
\begin{aligned}
\mas (\ell_{\alpha} (c; \cdot), \ell_b (c; \cdot); [\lambda_1, \lambda_2])
& =
\mas (\ell_{\alpha} (c; \lambda_1), \ell_b (c; \cdot); [\lambda_1, \lambda_2]) \\
&\quad + \mas (\ell_{\alpha} (c; \cdot), \ell_b (c; \lambda_2); [\lambda_1, \lambda_2]). 
\end{aligned}
\end{equation*}
\end{claim}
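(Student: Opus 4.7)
The plan is to prove Claim \ref{triangle-claim-alpha} by exhibiting the diagonal path and the corresponding ``L-shaped'' path in the parameter square $[\lambda_1,\lambda_2]^2$ as homotopic paths in $\Lambda(n)\times\Lambda(n)$ with fixed endpoints, then invoking homotopy invariance (Property \textbf{(P2)}) together with path additivity (Property \textbf{(P1)}). This is a standard ``catenation'' maneuver for the Maslov index, and the only substantive work is verifying that all ingredients are continuous in both variables.

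First I would define the two-parameter map
\begin{equation*}
    \mathcal{H}: [\lambda_1,\lambda_2]^2 \to \Lambda(n)\times\Lambda(n),
    \qquad
    \mathcal{H}(s,t) := \bigl(\ell_\alpha(c;s), \ell_b(c;t)\bigr).
\end{equation*}
Continuity of $s \mapsto \ell_\alpha(c;s)$ follows from smooth dependence of the initial-value problem \eqref{frame-alpha} on $\lambda$, while continuity of $t \mapsto \ell_b(c;t)$ was established in Lemma \ref{lemma2-11prime}. Consequently $\mathcal{H}$ is continuous on the square. The diagonal path $\lambda \mapsto \mathcal{H}(\lambda,\lambda)$ on $[\lambda_1,\lambda_2]$ represents $\mas(\ell_\alpha(c;\cdot),\ell_b(c;\cdot);[\lambda_1,\lambda_2])$. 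On the other hand, the ``L-shaped'' path that first traverses the left edge (holding $s=\lambda_1$ while $t$ runs from $\lambda_1$ to $\lambda_2$) and then traverses the top edge (holding $t=\lambda_2$ while $s$ runs from $\lambda_1$ to $\lambda_2$), after composition with $\mathcal{H}$ and reparametrization to the interval $[\lambda_1,\lambda_2]$, represents (via Property \textbf{(P1)}) the sum
\begin{equation*}
    \mas(\ell_\alpha(c;\lambda_1),\ell_b(c;\cdot);[\lambda_1,\lambda_2])
    + \mas(\ell_\alpha(c;\cdot),\ell_b(c;\lambda_2);[\lambda_1,\lambda_2]).
\end{equation*}

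Next I would exhibit a homotopy between these two paths in $[\lambda_1,\lambda_2]^2$. For example, for $\sigma\in[0,1]$ one may take the piecewise linear path that goes from $(\lambda_1,\lambda_1)$ along the left edge to $(\lambda_1,\lambda_1+\sigma(\lambda_2-\lambda_1))$, then diagonally to $(\lambda_2-(1-\sigma)(\lambda_2-\lambda_1),\lambda_2)$, and then along the top to $(\lambda_2,\lambda_2)$; at $\sigma=0$ this recovers the diagonal and at $\sigma=1$ the L-shape. Each of these paths has fixed endpoints $(\lambda_1,\lambda_1)$ and $(\lambda_2,\lambda_2)$, and the entire family varies continuously in $\sigma$, so composition with $\mathcal{H}$ produces a continuous homotopy in $\Lambda(n)\times\Lambda(n)$ with fixed endpoints. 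Property \textbf{(P2)} then equates the Maslov indices of the two paths, yielding the claimed identity.

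The main point to be careful about is reparametrization: the Maslov indices in the claim are all computed over the common interval $[\lambda_1,\lambda_2]$, but the L-shape naturally splits into two halves. This is handled by reparametrizing each half to $[\lambda_1,\lambda_2]$ and invoking the fact that the Maslov index depends only on the image path, not on the parametrization (a routine consequence of the homotopy invariance). No further obstacle arises, since the necessary continuity of $\ell_\alpha(c;\cdot)$ and $\ell_b(c;\cdot)$ is already in hand.
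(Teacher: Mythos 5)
Your proposal is correct and is essentially the same argument as the paper's: the paper traverses the closed triangular loop (left edge, top edge, then the reversed diagonal) in the parameter square and invokes path additivity plus homotopy invariance of the contractible loop, which is trivially equivalent to your comparison of the diagonal with the L-shaped path as two fixed-endpoint homotopic paths. The extra details you supply (the explicit piecewise-linear homotopy and the remark on reparametrization invariance) are correct and simply make explicit what the paper leaves to the reader.
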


\begin{proof} With $c \in (a, b)$ fixed, we consider 
$\ell_{\alpha} (c; \cdot), \ell_b (c; \cdot): [\lambda_1, \lambda_2] \to \Lambda (n)$
and set 
\begin{equation*}
\begin{aligned}
\tilde{W}_c (\lambda, \mu) &:= 
- (X_{\alpha} (c; \lambda) + i Y_{\alpha} (c; \lambda)) (X_{\alpha} (c; \lambda) - i Y_{\alpha} (c; \lambda))^{-1} \\
& \quad \quad \quad \times (X_b (c; \mu) - i Y_b (c; \mu)) (X_b (c; \mu) + i Y_b (c; \mu))^{-1}. 
\end{aligned}
\end{equation*}
We now compute the Maslov index associated with $\tilde{W}_c (\lambda, \mu)$ along
the triangular path in $[\lambda_1, \lambda_2] \times [\lambda_1, \lambda_2]$ comprising 
the following three paths: (1) fix $\lambda = \lambda_1$ and let $\mu$ increase from 
$\lambda_1$ to $\lambda_2$; (2) fix $\mu = \lambda_2$ and let $\lambda$ increase 
from $\lambda_1$ to $\lambda_2$; and (3) let $\lambda$ and $\mu$ decrease together 
(i.e., with $\lambda = \mu$) from $\lambda_2$ to $\lambda_1$. 
(See Figure \ref{triangle-claim-figure}.)
The claim follows from path additivity and homotopy invariance.
\end{proof}

\begin{figure}[ht]
\begin{center}
\begin{tikzpicture}
\draw[<-, thick] (-5.5,0) -- (1,0);	
\draw[->, thick] (.5,.5) -- (.5,-5.5);	
%
\draw[->, thick] (-4.5,-4.5) -- (-4.5,-2.5);
\draw[thick] (-4.5,-2.5) -- (-4.5,-1);
\draw[->, thick] (-4.5,-1) -- (-2.5,-1);
\draw[thick] (-2.5, -1) -- (-.5, -1);
\draw[->, thick] (-.5,-1) -- (-2.5,-2.75);
\draw[thick] (-2.5,-2.75) -- (-4.5,-4.5);
%
\node at (-5.5, .5) {$\lambda$};
\node at (-4.5,.5) {$\lambda_1$};
\draw[thick] (-4.5,.1) -- (-4.5,-.1);
\node at (-.5,.5) {$\lambda_2$};
\draw[thick] (-.5,.1) -- (-.5,-.1);
\node at (1,-1) {$\lambda_2$};
\draw[thick] (.4,-1) -- (.6,-1);
\node at (1,-4.5) {$\lambda_1$};
\draw[thick] (.4,-4.5) -- (.6,-4.5);
\node at (1,-5.5) {$\mu$};
%
\node[scale = .75] at (-2.5, -.6) {$\mas (\ell_{\alpha} (c; \cdot), \ell_b (c; \lambda_2); [\lambda_1, \lambda_2])$};
\node[scale = .75, rotate=90] at (-4.9, -2.7) {$\mas (\ell_{\alpha} (c; \lambda_1), \ell_b (c; \cdot); [\lambda_1, \lambda_2])$};
\node[scale = .75, rotate=43] at (-2.2, -3.1) {$- \mas (\ell_{\alpha} (c; \cdot), \ell_b (c; \cdot); [\lambda_1, \lambda_2])$};
\end{tikzpicture}
\end{center}
\caption{Triangular path in the $(\lambda, \mu)$-plane for Claim \ref{triangle-claim-alpha}.} \label{triangle-claim-figure}
\end{figure}
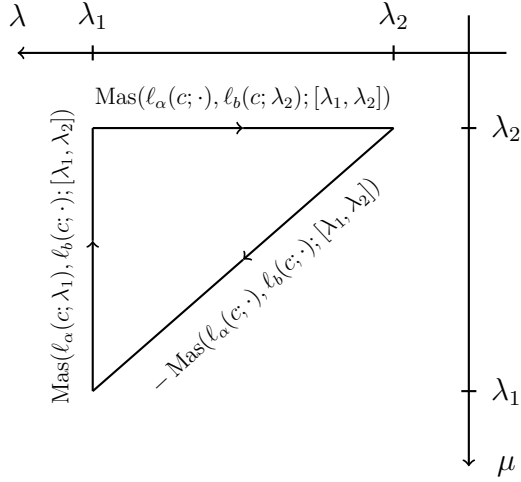

We can conclude from (\ref{truncated-count-alpha}), (\ref{full-count-alpha}), and Claim \ref{triangle-claim-alpha} 
that 
\begin{equation} \label{N-Nc-alpha}
\mathcal{N}^{\alpha} ([\lambda_1, \lambda_2)) = \mathcal{N}_{a, c}^{\alpha} ([\lambda_1, \lambda_2))
- \mas (\ell_{\alpha} (c; \lambda_1), \ell_b (c; \cdot); [\lambda_1, \lambda_2]). 
\end{equation}
By monotonicity, 
\begin{equation*}
\mas (\ell_{\alpha} (c; \lambda_1), \ell_b (c; \cdot); [\lambda_1, \lambda_2]) \le 0,
\end{equation*}
from which we can additionally conclude that 
\begin{equation*}
\mathcal{N}^{\alpha} ([\lambda_1, \lambda_2)) 
\ge \mathcal{N}_{a, c}^{\alpha} ([\lambda_1, \lambda_2)). 
\end{equation*}
In light of (\ref{box-sum-alpha}), this gives 
\begin{equation} \label{count-inequality-alpha}
  \mathcal{N}^{\alpha} ([\lambda_1, \lambda_2)) 
\ge \mas (\ell_{\alpha} (\cdot; \lambda_1), \ell_b (\cdot; \lambda_2); [a, c])
- \mas (\ell_{\alpha} (a; \cdot), \ell_b (a; \lambda_2); [\lambda_1, \lambda_2]).  
\end{equation}
Here, we emphasize that it follows from (\ref{full-count-alpha}), 
monotonicity in the calculation of the right-hand side of 
(\ref{full-count-alpha}), and continuity in $\lambda$ that
the count $\mathcal{N}^{\alpha} ([\lambda_1, \lambda_2))$ must be
finite. 

The first summand on the right-hand side of (\ref{count-inequality-alpha}) is computed over the 
compact interval $[a, c]$ on which (\ref{linear-hammy}) can be viewed as 
a regular system, as analyzed in \cite{HS2}. In \cite{HS2}, the authors
show that the direction of crossing points for such systems are all positive as 
$x$ increases from $a$ to $c$. (See the statement and proof of Theorem 1.1
in \cite{HS2}.) It follows that as $c \to b^-$ the values 
$\mas (\ell_{\alpha} (\cdot; \lambda_1), \ell_b (\cdot; \lambda_2); [a, c])$
are monotonically non-decreasing, and since $\mathcal{N}^{\alpha} ([\lambda_1, \lambda_2))$
and $\mas (\ell_{\alpha} (a; \cdot), \ell_b (a; \lambda_2); [\lambda_1, \lambda_2])$ 
are both finite and independent of $c$, we can conclude that the limit
\begin{equation*}
    \lim_{c \to b^-} \mas (\ell_{\alpha} (\cdot; \lambda_1), \ell_b (\cdot; \lambda_2); [a, c]),
\end{equation*}
must exist, and in fact that it must be the case that this limit is obtained 
for all $c$ sufficiently close to $b$ (sufficiently large if $b = + \infty$). 
As noted in Remark \ref{regular-singular-remark}, we denote this limit by
$\mas (\ell_{\alpha} (\cdot; \lambda_1), \ell_b (\cdot; \lambda_2); [a, b))$.
In this way, the first assertion of Theorem \ref{regular-singular-theorem}
is obtained by taking a limit on both sides of (\ref{count-inequality-alpha})
as $c \to b^-$.

For the second assertion in Theorem \ref{regular-singular-theorem}, we can combine 
(\ref{N-Nc-alpha}) and (\ref{box-sum-alpha}) to write 
\begin{equation*}
\begin{aligned}
\mathcal{N}^{\alpha} ([\lambda_1, \lambda_2)) 
&= \mas (\ell_{\alpha} (\cdot; \lambda_1), \ell_b (\cdot; \lambda_2); [a, c])
- \mas (\ell_{\alpha} (a; \cdot), \ell_b (a; \lambda_2); [\lambda_1, \lambda_2]) \\
&\quad - \mas (\ell_{\alpha} (c; \lambda_1), \ell_b (c; \cdot); [\lambda_1, \lambda_2]).     
\end{aligned}
\end{equation*}
If there exists a value $c_0 \in (a, b)$ so that for 
all $c \in (c_0, b)$
\begin{equation*} 
    \ell_{\alpha} (c; \lambda_1) \cap \ell_b (c; \lambda)
    = \{0\}, \quad \forall \,\, \lambda \in [\lambda_1, \lambda_2), 
\end{equation*}
then for all $c \in (c_0, b)$ we have 
\begin{equation*}
\mathcal{N}^{\alpha} ([\lambda_1, \lambda_2)) 
= \mas (\ell_{\alpha} (\cdot; \lambda_1), \ell_b (\cdot; \lambda_2); [a, c])
- \mas (\ell_{\alpha} (a; \cdot), \ell_b (a; \lambda_2); [\lambda_1, \lambda_2]).
\end{equation*}
The second claim in Theorem \ref{regular-singular-theorem} follows immediately
upon taking $c \to b^-$, noting particularly that by monotonicity the right-hand
side of this last expression remains fixed for $c \in (c_0, b)$. 
\hfill $\square$

\subsection{Proof of Theorem \ref{singular-theorem}} 
\label{singular-theorem-section}

Similarly as in the proof of Theorem \ref{regular-singular-theorem}, we fix any pair 
$\lambda_1, \lambda_2 \in I$, $\lambda_1 < \lambda_2$, 
for which we have that for each $\lambda \in [\lambda_1, \lambda_2]$,  
$0 \notin \sigma_{\ess} (\mathcal{T} (\lambda))$. 
For the proof of Theorem \ref{singular-theorem}, we let $\ell_b (x; \lambda_2)$
be as in the proof of Theorem \ref{regular-singular-theorem}, and we 
let $\ell_a (x; \lambda)$ denote the map 
of Lagrangian subspaces associated with the frames $\mathbf{X}_a (x; \lambda)$
constructed as in Lemma \ref{lemma2-11prime}, except with the analysis 
on $(a, c)$ rather than $(c, b)$.
We will establish Theorem \ref{singular-theorem} by considering the Maslov
index for $\ell_a (x; \lambda)$ and $\ell_b (x; \lambda_2)$
along the Maslov box designated just below. As described in 
Section \ref{maslov-section}, this Maslov index is computed as
a spectral flow for the matrix 
\begin{equation} \label{singular-tildeW}
\begin{aligned}
\tilde{W} (x; \lambda) &= - (X_a (x; \lambda) + i Y_a (x; \lambda))
(X_a (x; \lambda) - i Y_a (x; \lambda))^{-1} \\
& \times (X_b (x; \lambda_2) - i Y_b (x; \lambda_2))
(X_b (x; \lambda_2) + i Y_b (x; \lambda_2))^{-1}
\end{aligned}
\end{equation}
(re-defined from Section \ref{regular-singular-section}). 

In this case, the Maslov Box will consist of the following sequence of contours, 
specified for some values $c_1, c_2 \in (a, b)$, $c_1 < c_2$ to be 
chosen sufficiently close to $a$ and $b$ (respectively) during the analysis:
(1) fix $x = c_1$ and let $\lambda$ increase from $\lambda_1$ to $\lambda_2$ 
(the {\it bottom shelf}); 
(2) fix $\lambda = \lambda_2$ and let $x$ increase from $c_1$ to $c_2$ 
(the {\it right shelf}); (3) fix $x = c_2$ and let $\lambda$
decrease from $\lambda_2$ to $\lambda_1$ (the {\it top shelf}); and (4) fix
$\lambda = \lambda_1$ and let $x$ decrease from $c_2$ to $c_1$ 
(the {\it left shelf}). (The figure is similar to Figure \ref{box-figure}). 

{\it Right shelf.} 
In this case, our calculation along the right shelf detects intersections 
between $\ell_a (x; \lambda_2)$ and $\ell_b (x; \lambda_2)$ as $x$ increases
from $c_1$ to $c_2$. By construction, 
$\ell_a (x; \lambda_2)$ and $\ell_b (x; \lambda_2)$ will have non-trivial intersection
at some value $x \in [c_1, c_2]$ with dimension $m$ if and only if $\lambda_2$ is an 
eigenvalue of the operator pencil $\mathcal{T} (\cdot)$ with multiplicity $m$. 
In the event that $\lambda_2$
is not an eigenvalue of $\mathcal{T} (\cdot)$, there will be no 
crossing points along the right shelf. On the other hand, if
$\lambda_2$ is an eigenvalue of $\mathcal{T} (\cdot)$ with multiplicity
$m$, then $\tilde{W} (x; \lambda_2)$ will have $-1$ as an eigenvalue
with multiplicity $m$ for all $x \in [c_1, c_2]$. In either case,
\begin{equation} \label{right-shelf-singular}
\mas (\ell_a (\cdot; \lambda_2), \ell_b (\cdot; \lambda_2); [c_1, c_2]) = 0.
\end{equation} 

{\it Bottom shelf.} For the bottom shelf, we're looking for 
intersections between $\ell_a (c_1;\lambda)$ and 
$\ell_b (c_1; \lambda_2)$ as $\lambda$ increases from 
$\lambda_1$ to $\lambda_2$. Since $\ell_a (c_1; \lambda)$
corresponds with solutions that lie left in $(a, b)$, 
this leads to a calculation similar to the calculation of 
\begin{equation*}
\mas (\ell_{\alpha} (c; \cdot), \ell_b (c; \lambda_2); [\lambda_1, \lambda_2]),
\end{equation*}  
which arose in our analysis of the top shelf for the proof
of Theorem \ref{regular-singular-theorem}. For the moment, 
the only thing we will note about this quantity is that due 
to monotonicity in $\lambda$ (following similarly as in 
Section \ref{monotonicity-lambda-section}), we have
the inequality 
\begin{equation} \label{bottom-shelf-inequality}
\mas (\ell_a (c_1; \cdot), \ell_b (c_1; \lambda_2); [\lambda_1, \lambda_2]) \le 0.
\end{equation}  

{\it Top shelf.} For the top shelf, $\tilde{W} (c_2; \lambda)$ detects intersections
between $\ell_a (c_2; \lambda)$ and $\ell_b (c_2; \lambda_2)$ as $\lambda$
decreases from $\lambda_2$ to $\lambda_1$. In this way, intersections
correspond precisely with eigenvalues of the restriction
$\mathcal{T}_{a, c_2} (\cdot)$ of the maximal operator associated with
(\ref{linear-hammy}) on $(a, c_2)$ to the domain 
\begin{equation*}
    \mathcal{D}_{a, c_2} := 
    \{y \in \mathcal{D}_{a, c_2, M}: \lim_{x \to a^+} U^a (x; \mu_0, \lambda_0)^* J y (x) = 0, 
    \quad \mathbf{X}_b (c_2; \lambda_2)^* J y (c_2) = 0 \}.
\end{equation*}
Similarly as in Section \ref{operator-section}, we can check that 
for each $\lambda \in [\lambda_1, \lambda_2]$, $\mathcal{T}_{a, c_2} (\lambda)$ 
is a self-adjoint operator. 

We can verify monotonicity along the top shelf almost precisely as 
in Section \ref{monotonicity-lambda-section}, and we can conclude from 
this that 
\begin{equation} \label{truncated-count}
\mathcal{N}_{a, c_2} ([\lambda_1, \lambda_2)) = 
- \mas (\ell_a (c_2; \cdot), \ell_b (c_2; \lambda_2); [\lambda_1, \lambda_2]),
\end{equation}
where $\mathcal{N}_{a, c_2} ([\lambda_1, \lambda_2))$ denotes a count 
of the number of eigenvalues that $\mathcal{T}_{a, c_2} (\cdot)$ has on 
the interval $[\lambda_1, \lambda_2)$. (The inclusion of $\lambda_1$ and exclusion 
of $\lambda_2$ are precisely as discussed in the proof of 
Theorem \ref{regular-singular-theorem}.)

Similarly as with Claim \ref{triangle-claim-alpha}, we obtain the relation 
\begin{equation} \label{triangle-claim}
\begin{aligned}
\mas (\ell_a (c_2; \cdot), \ell_b (c_2; \cdot); [\lambda_1, \lambda_2])
&= 
\mas (\ell_a (c_2; \lambda_1), \ell_b (c_2; \cdot); [\lambda_1, \lambda_2]) \\
&\quad + \mas (\ell_a (c_2; \cdot), \ell_b (c_2; \lambda_2); [\lambda_1, \lambda_2]).
\end{aligned}
\end{equation}
Recalling that $\mathcal{N} ([\lambda_1, \lambda_2))$ denotes 
the number of eigenvalues that $\mathcal{T} (\cdot)$
has on the interval $[\lambda_1, \lambda_2)$, we can write
\begin{equation} \label{nlam1lam2}
\begin{aligned}
\mathcal{N}([\lambda_1, \lambda_2)) &= - \mas (\ell_a (c_2; \cdot), \ell_b (c_2; \cdot); [\lambda_1, \lambda_2]) \\
&= - \mas (\ell_a (c_2; \lambda_1), \ell_b (c_2; \cdot); [\lambda_1, \lambda_2])
- \mas (\ell_a (c_2; \cdot), \ell_b (c_2; \lambda_2); [\lambda_1, \lambda_2]) \\
&= \mathcal{N}_{a, c_2} ([\lambda_1, \lambda_2)) - \mas (\ell_a (c_2; \lambda_1), \ell_b (c_2; \cdot); [\lambda_1, \lambda_2]).   
\end{aligned}
\end{equation}

{\it Left shelf.} 
Our analysis so far leaves only the left shelf to consider, and 
we observe that it can be expressed as 
\begin{equation*}
- \mas (\ell_a (\cdot; \lambda_1), \ell_b (\cdot; \lambda_2); [c_1, c_2]).
\end{equation*} 
Using path additivity and homotopy invariance, we can sum the Maslov
indices on each shelf of the Maslov Box to arrive at the relation 
\begin{equation} \label{box-sum-equation}
\mathcal{N}_{a, c_2} ([\lambda_1, \lambda_2)) 
= \mas (\ell_a (\cdot; \lambda_1), \ell_b (\cdot; \lambda_2); [c_1, c_2])
- \mas (\ell_a (c_1;\cdot), \ell_b (c_1; \lambda_2); [\lambda_1, \lambda_2]).
\end{equation}
Using (\ref{nlam1lam2}) and (\ref{box-sum-equation}), we can now write 
\begin{equation} \label{full-count-equation}
\begin{aligned}
\mathcal{N} ([\lambda_1, \lambda_2)) &=
\mathcal{N}_{a, c_2} ([\lambda_1, \lambda_2)) - \mas (\ell_a (c_2; \lambda_1), \ell_b (c_2; \cdot); [\lambda_1, \lambda_2]) \\
&= 
\mas (\ell_a (\cdot; \lambda_1), \ell_b (\cdot; \lambda_2); [c_1, c_2])
- \mas (\ell_a (c_1; \cdot), \ell_b (c_1; \lambda_2); [\lambda_1, \lambda_2]) \\
& - \mas (\ell_a (c_2; \lambda_1), \ell_b (c_2; \cdot); [\lambda_1, \lambda_2]).
\end{aligned}
\end{equation}

Recalling the monotonicity relation (\ref{bottom-shelf-inequality}), and noting likewise
the inequality 
\begin{equation*}
\mas (\ell_a (c_2; \lambda_1), \ell_b (c_2; \cdot); [\lambda_1, \lambda_2]) \le 0,
\end{equation*}
we can conclude the inequality 
\begin{equation} \label{count-inequality} 
\mathcal{N} ([\lambda_1, \lambda_2)) 
\ge \mas (\ell_a (\cdot; \lambda_1), \ell_b (\cdot; \lambda_2); [c_1, c_2]).
\end{equation}
The right-hand side of (\ref{count-inequality}) is computed over the 
compact interval $[c_1, c_2]$ on which (\ref{linear-hammy}) can be viewed as 
a regular system, as analyzed in \cite{HS2}. In \cite{HS2}, the authors
show that the direction of crossing points for such systems are all positive as 
$x$ increases from $c_1$ to $c_2$. (See the statement and proof of Theorem 1.1
in \cite{HS2}.) It follows that as $c_1 \to a^+$ and $c_2 \to b^-$ the values 
$\mas (\ell_{\alpha} (\cdot; \lambda_1), \ell_b (\cdot; \lambda_2); [c_1, c_2])$
are monotonically non-decreasing, and since $\mathcal{N}([\lambda_1, \lambda_2))$
is finite, we can conclude that the limit
\begin{equation*}
    \lim_{\genfrac{}{}{0pt}{}{c_1 \to a^+}{c_2 \to b^-}} 
    \mas (\ell_{\alpha} (\cdot; \lambda_1), \ell_b (\cdot; \lambda_2); [c_1, c_2]),
\end{equation*}
must exist, and in fact that it must be the case that this limit is obtained 
for all $c_1$ sufficiently close to $a$ (sufficiently negative if $a = - \infty$)
and all $c_2$ sufficiently close to $b$ (sufficiently large if $b = + \infty$). 
As noted in Remark \ref{singular-remark}, we denote this limit by
$\mas (\ell_{\alpha} (\cdot; \lambda_1), \ell_b (\cdot; \lambda_2); (a, b))$.
In this way, the first assertion of Theorem \ref{singular-theorem}
is obtained by taking a limit on both sides of (\ref{count-inequality})
as $c_1 \to a^+$ and $c_2 \to b^-$.

For the second claim in Theorem \ref{singular-theorem}, we observe that 
under the assumption that there exists a value $c_a \in (a, b)$ so that for 
all $c \in (a, c_a)$
\begin{equation*}
    \ell_a (c; \lambda) \cap \ell_b (c; \lambda_2)
    = \{0\}, \quad \forall \,\, \lambda \in [\lambda_1, \lambda_2), 
\end{equation*}
we can take $c_1$ sufficiently close to $a$ (sufficiently negative
if $a = -\infty$) so that 
\begin{equation*}
    \mas (\ell_a (c_1; \cdot), \ell_b (c_1; \lambda_2); [\lambda_1, \lambda_2]) = 0,
\end{equation*}
and likewise under the assumption that there exists a value 
$c_b \in (a, b)$ so that for all $c \in (c_b, b)$
\begin{equation*}
    \ell_a (c; \lambda_1) \cap \ell_b (c; \lambda)
    = \{0\}, \quad \forall \,\, \lambda \in [\lambda_1, \lambda_2), 
\end{equation*}
we can take $c_2$ sufficiently close to $b$ (sufficiently large 
if $b = + \infty$) so that 
\begin{equation*}
    \mas (\ell_a (c_2; \lambda_1), \ell_b (c_2; \cdot); [\lambda_1, \lambda_2]) = 0.
\end{equation*}
If follows then from (\ref{full-count-equation}) that 
\begin{equation*}
    \mathcal{N} ([\lambda_1, \lambda_2))
    = \mas (\ell_a (\cdot; \lambda_1), \ell_b (\cdot; \lambda_2); [c_1, c_2])
\end{equation*}
for all such $c_1$ and $c_2$. The second claim of Theorem \ref{singular-theorem}
now follows trivially upon taking $c_1 \to a^+$ and $c_2 \to b^-$.
\hfill $\square$

\section{Applications} \label{applications-section}

In this section, we will discuss two two specific 
applications of our main results, 
though we first need to make one further observation 
associated with Niessen's approach. We recall that 
the key element in Niessen's approach is an emphasis 
on the matrix 
\begin{equation*}
    \mathcal{A} (x; \mu, \lambda)
    = \frac{1}{2 {\rm Im}\,\mu} \Phi (x; \mu, \lambda)^* (J/i) \Phi(x; \mu, \lambda),
\end{equation*}
where $\Phi (x; \mu, \lambda)$ denotes a fundamental matrix for 
(\ref{linear-hammy}), and we clearly require 
${\rm Im }\,\mu \ne 0$. 
We saw in Section \ref{operator-section} that 
if $\{\nu_j (x; \mu, \lambda)\}_{j=1}^{2n}$ denote the eigenvalues
of $\mathcal{A} (x; \mu, \lambda)$, then the number of solutions of 
(\ref{linear-hammy}) that lie left in $(a, b)$ is precisely 
the number of these eigenvalues with a finite limit as 
$x$ approaches $a$, while the number of solutions of 
(\ref{linear-hammy}) that lie right in $(a, b)$ is precisely 
the number of these eigenvalues with a finite limit as 
$x$ approaches $b$. Under Assumption {\bf (D)}, these numbers 
are constant in $\mu$ and $\lambda$ on the set 
$(\mathbb{C} \backslash \mathbb{R}) \times I$,
and so we can categorize the limit-case 
(i.e., limit-point, limit-circle, or limit-$m$) of 
(\ref{linear-hammy}) at $x = a$ (resp. $x=b$) by fixing any 
$(\mu, \lambda) \in (\mathbb{C} \backslash \mathbb{R}) \times I$ 
and computing the values 
$\{\nu_j (x; \mu, \lambda)\}_{j=1}^{2n}$ as $x$ tends to $a$
(resp. as $x$ tends to $b$). 
(This is precisely what we will do
in our examples below.) Furthermore, we have additionally seen in Section \ref{operator-section} 
that for each $\nu_j (x; \mu, \lambda)$ (with or without a finite 
limit), we can associate a (sub)sequence of eigenvectors 
$\{v_j (x_k; \mu, \lambda)\}_{k=1}^{\infty}$
that converges, as $x_k \to a^+$, to some $v_j^a (\mu, \lambda)$ that lies on the 
unit circle in $\mathbb{C}^{2n}$, and similarly for a 
sequence $x_k \to b^-$. If $\nu_j (x; \mu, \lambda)$
has a finite limit as $x \to a^+$, then $\Phi (x; \mu, \lambda) v_j^a (\mu, \lambda)$
will lie left in $(a, b)$, while if $\nu_j (x; \mu, \lambda)$
has a finite limit as $x \to b^-$, then $\Phi (x; \mu, \lambda) v_j^b (\mu, \lambda)$
will lie right in $(a, b)$.

In practice, in order to construct the frames $\mathbf{X}_a (x; \lambda_i)$, 
$\mathbf{X}_b (x; \lambda_i)$, $i = 1, 2$, 
we would like to fix either $\lambda = \lambda_1$ or $\lambda = \lambda_2$
and extend these ideas to values $\mu = 0$. Following 
\cite{HS2022}, we do this by replacing 
$\mathcal{A} (x; \mu, \lambda)$ with 
\begin{equation} \label{mathcal-B}
    \mathcal{B} (x; \lambda) := \Phi(x; 0, \lambda)^* J \partial_{\lambda} \Phi (x; 0, \lambda).
\end{equation}
At this point, $\mu$ is fixed at zero, so we are working directly with 
(\ref{hammy}) rather than (\ref{linear-hammy}), but for notational 
consistency we will continue to view $\Phi$ as a function of 
$x$, $\mu$, and $\lambda$. 

If we differentiate (\ref{mathcal-B}) with respect to $x$, we find that 
\begin{equation} \label{mathcal-B-prime}
    \mathcal{B}' (x; \lambda) 
    = \Phi(x; 0, \lambda)^* \mathbb{B}_{\lambda} (x; \lambda) \Phi (x; 0, \lambda), 
\end{equation}
and upon integrating we find that we can alternatively express 
$\mathcal{B} (x; \lambda)$ as 
\begin{equation} \label{mathcal-B-integrated}
    \mathcal{B} (x; \lambda) 
    = \int_c^x \Phi (\xi; 0, \lambda)^* \mathbb{B}_{\lambda} (\xi; \lambda) \Phi (\xi; 0, \lambda) d\xi,
\end{equation}
where we've observed that since $\Phi (c; 0, \lambda) = I_{2n}$,
we have $\mathcal{B} (c; \lambda) = 0$.
Recalling that $\mathbb{B}_{\lambda} (x; \lambda)$ 
is self-adjoint for a.e. $x \in (a, b)$,
we see from this relation that $\mathcal{B} (x; \lambda)$ is 
self-adjoint for all $x \in (a, b)$. 
Consequently, the eigenvalues of $\mathcal{B} (x; \lambda)$
must be real-valued, and we denote these values $\{\eta_j (x; \lambda)\}_{j=1}^{2n}$.
Since $\mathcal{B} (c; \lambda) = 0$, we can conclude that 
$\eta_j (c; \lambda) = 0$ for all $j \in \{1, 2, \dots, 2n\}$,
and all $\lambda \in I$. In addition, according to 
(\ref{mathcal-B-prime}), along with Condition {\bf (B)}, 
for each fixed $\lambda \in \mathbb{R}$,
the eigenvalues $\{\eta_j (x; \lambda)\}_{j=1}^{2n}$ will 
be non-decreasing as $x$ increases. It follows that as $x \to b^-$, each 
eigenvalue $\eta_j (x; \lambda)$ will either approach $+ \infty$
or a finite limit. In the latter case, we set 
\begin{equation*}
    \eta_j^b (\lambda) := \lim_{x \to b^-} \eta_j (x; \lambda).
\end{equation*}
Likewise, as $x \to a^+$, each eigenvalue $\eta_j (x; \lambda)$
will either approach $- \infty$ or a finite limit. In the latter
case, we set 
\begin{equation*}
    \eta_j^a (\lambda) := \lim_{x \to a^+} \eta_j (x; \lambda).
\end{equation*}

The following lemma can be adapted directly from Lemma 5.1 
in \cite{HS2022}. 

\begin{lemma} \label{subspace-dimensions-lemma-real}
Let Assumptions {\bf (A)} and {\bf (B)} hold,
and let $\lambda \in [\lambda_1, \lambda_2]$ be fixed.
Then the dimension $m_a (0, \lambda)$ of the subspace of solutions to 
(\ref{hammy}) that lie left in $(a, b)$ is precisely 
the number of eigenvalues $\eta_j (x; \lambda) \in \sigma (\mathcal{B} (x; \lambda))$
that approach a finite limit as $x \to a^+$. Likewise, 
the dimension $m_b (0, \lambda)$ of the subspace of solutions to 
(\ref{hammy}) that lie right in $(a, b)$ is precisely 
the number of eigenvalues $\eta_j (x; \lambda) \in \sigma (\mathcal{B} (x; \lambda))$
that approach a finite limit as $x \to b^-$.
\end{lemma}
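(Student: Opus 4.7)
The plan is to leverage the integral identity (\ref{mathcal-B-integrated}) to translate the $L^2_{\mathbb{B}_{\lambda}}$-integrability of a solution into a spectral condition on $\mathcal{B}(x;\lambda)$. First I would fix $\lambda \in [\lambda_1,\lambda_2]$ and observe that any solution of (\ref{hammy}) has the form $y(x) = \Phi(x;0,\lambda) v$ for some $v \in \mathbb{C}^{2n}$. Combining (\ref{mathcal-B-integrated}) with the definition of the weighted norm yields the key identity
\begin{equation*}
\int_x^c (\mathbb{B}_{\lambda}(\xi;\lambda) y(\xi), y(\xi))\, d\xi = - v^* \mathcal{B}(x;\lambda) v, \qquad x \in (a,c),
\end{equation*}
so $y$ lies left in $(a,b)$ if and only if $\lim_{x \to a^+} v^*(-\mathcal{B}(x;\lambda)) v$ is finite. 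A polarization argument shows that the set $L^a$ of such $v$ is a linear subspace of $\mathbb{C}^{2n}$, and the lemma reduces to proving that $\dim L^a$ equals the number of eigenvalues $\eta_j(x;\lambda)$ that stay bounded as $x \to a^+$.

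The next step would be to establish this dimension count via a min-max argument. Let $\eta_1(x;\lambda) \le \eta_2(x;\lambda) \le \cdots \le \eta_{2n}(x;\lambda)$ denote the ordered eigenvalues of $\mathcal{B}(x;\lambda)$. Since (\ref{mathcal-B-prime}) together with Assumption \textbf{(B)} gives $\mathcal{B}'(x;\lambda) \ge 0$ in the Loewner order, Weyl's monotonicity theorem implies each $\eta_j(\cdot;\lambda)$ is non-decreasing in $x$, so the limits $\eta_j^a(\lambda) \in [-\infty,0]$ exist. Let $k$ be the number of indices $j$ with $\eta_j^a(\lambda) > -\infty$. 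For the inclusion $\dim L^a \ge k$, I would use the convergent-subsequence construction of limiting eigenvectors $\{v_j^a(\lambda)\}$ exactly as in Lemma \ref{subspace-dimensions-lemma} (adapted to the $\mu=0$ setting by replacing $\mathcal{A}(x;\mu,\lambda)$ with $\mathcal{B}(x;\lambda)$), and verify that $v^*(-\mathcal{B}(x;\lambda))v$ stays bounded for $v$ in their span. For the reverse inclusion $\dim L^a \le k$, the Courant--Fischer characterization applied to the $2n-k$ smallest eigenvalues of $\mathcal{B}(x;\lambda)$, all of which tend to $-\infty$, forces any $(k+1)$-dimensional subspace to contain a unit vector on which $v^*(-\mathcal{B}(x;\lambda)) v$ diverges to $+\infty$.

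The statement for $x \to b^-$ follows by the symmetric identity
\begin{equation*}
\int_c^x (\mathbb{B}_{\lambda}(\xi;\lambda) y(\xi), y(\xi))\, d\xi = v^* \mathcal{B}(x;\lambda) v,
\end{equation*}
where now the relevant eigenvalues are those $\eta_j(x;\lambda)$ remaining bounded above as $x \to b^-$; the rest of the argument is identical modulo sign conventions.

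The main obstacle will be the careful passage from the $x$-dependent eigenstructure to the limiting subspace $L^a$: while the eigenvalues $\eta_j(x;\lambda)$ vary monotonically and continuously, the associated eigenvectors generally do not converge without extracting subsequences, and limits of distinct $\eta_j$'s may coincide, so some care is needed to ensure the limiting vectors remain linearly independent and span the correct subspace. This is precisely the technical point already addressed in the proof of Lemma \ref{subspace-dimensions-lemma} (and in Lemma 5.1 of \cite{HS2022}), and I would import that construction essentially verbatim; the only modification is that our $\mathcal{B}(x;\lambda)$ captures nonlinear $\lambda$-dependence through $\mathbb{B}_{\lambda}$ rather than through a linear pencil $B_1$, which changes none of the monotonicity or limit arguments.
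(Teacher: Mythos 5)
Your reconstruction is correct and follows the same route the paper defers to via Lemma 5.1 of \cite{HS2022}: translate $L^2_{\mathbb{B}_\lambda}$-integrability near an endpoint into boundedness of the quadratic form $v \mapsto v^*\mathcal{B}(x;\lambda)v$, exploit the Loewner monotonicity of $\mathcal{B}(\cdot;\lambda)$, build limiting eigenvectors along subsequences for the lower bound, and use Courant--Fischer for the upper bound. Two cosmetic touch-ups are worth noting: the non-negativity of $\mathcal{B}'(x;\lambda)$ already follows from Assumption \textbf{(A)} (which posits $\mathbb{B}_\lambda(x;\lambda)\ge 0$) rather than \textbf{(B)}, and in the $\dim L^a \le k$ direction Courant--Fischer only produces an $x$-dependent maximizer, so one additional polarization/Cauchy--Schwarz argument (the same one that makes $L^a$ a subspace) is needed to promote pointwise boundedness on a candidate $(k+1)$-dimensional test subspace to uniform boundedness and thereby contradict the divergence of $-\eta_{2n-k}(x;\lambda)$.
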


\begin{remark}
We emphasize that as opposed to the case $\mu \in \mathbb{C} \backslash \mathbb{R}$,
we cannot conclude from these considerations that $m_a (0, \lambda), m_b (0, \lambda) \ge n$.
Rather, in this case we conclude these inequalities for all $\lambda \in [\lambda_1, \lambda_2]$ 
from Lemma \ref{lemma2-9prime} (under the assumptions stated for that lemma). 
\end{remark}

If, for each $x \in (a, b)$, we let $\{w_j (x; \lambda)\}_{j=1}^{2n}$ denote 
an orthonormal collection of eigenvectors associated with 
the eigenvalues $\{\eta_j (x; \lambda)\}_{j=1}^{2n}$, then as in the 
proof of Lemma 2.1 of \cite{HS2022}, we can find 
(for each $j \in \{1, 2, \dots, 2n\}$) a sequence $\{w_j (x_k; \lambda)\}_{k=1}^{\infty}$
that converges, as $x_k \to a^+$, to some $w_j^a (\lambda)$ on the unit circle in 
$\mathbb{C}^{2n}$, and likewise we can find 
a sequence $\{w_j (x_k; \lambda)\}_{k=1}^{\infty}$
that converges, as $x_k \to b^-$, to some $w_j^b (\lambda)$ on the unit circle in 
$\mathbb{C}^{2n}$. Moreover, if $\eta_j (x; \lambda)$
has a finite limit as $x \to a^+$, then $\Phi (x; 0, \lambda) w_j^a (\lambda)$
will lie left in $(a, b)$, while if $\eta_j (x; \lambda)$
has a finite limit as $x \to b^-$, then $\Phi (x; 0, \lambda) w_j^b (\lambda)$
will lie right in $(a, b)$.

These considerations provide a practical method for constructing the frames
$\mathbf{X}_a (x; \lambda)$ and $\mathbf{X}_b (x; \lambda)$ that 
we will need in order to implement Theorems \ref{regular-singular-theorem}
and \ref{singular-theorem}. Most directly, if (\ref{linear-hammy})
is limit-point at $x = a$ (resp., $x=b$), then the procedure 
described in the previous paragraph will provide precisely $n$ 
linearly independent solutions to (\ref{hammy}) that 
lie left in $(a, b)$ (resp., right in $(a, b)$), and 
these can be taken to comprise the columns of $\mathbf{X}_a (x; \lambda)$
(resp., $\mathbf{X}_b (x; \lambda)$). 

More generally, Lemma \ref{subspace-dimensions-lemma} can be used 
to construct left and right lying solutions of (\ref{linear-hammy})
for some $(\lambda_0, \mu_0) \in I \times (\mathbb{C} \backslash \mathbb{R})$,
and these can then be used to specify the Niessen elements described 
in the lead-in to Lemma \ref{krall-niessen-lemma}. I.e., the matrices
$U^a (x; \mu_0, \lambda_0)$ and $U^b (x; \mu_0, \lambda_0)$ discussed in 
Section \ref{operator-section} can be constructed in this way. 
Working, for example, with the solutions constructed above 
for $\mu = 0$ that lie left in $(a, b)$, we 
can identify $n$ linearly independent solutions to (\ref{hammy})
$\{u_j^a (x; \lambda)\}_{j=1}^n$
that satisfy 
\begin{equation*}
    \lim_{x \to a^+} U^a (x; \mu_0, \lambda_0)^* J u_j^a (x; \lambda) = 0. 
\end{equation*}
This collection $\{u_j^a (x; \lambda)\}_{j=1}^n$ can be taken to comprise 
the columns of $\mathbf{X}_a (x; \lambda)$, and we can proceed 
similarly for $\mathbf{X}_b (x; \lambda)$. 

Before turning to two specific applications, we verify that our full assumptions
{\bf (A)} through {\bf (F)} hold for three important general cases.

\subsection{Verification of Assumptions {\bf (A)} through {\bf (F)}}
\label{verifications-section}

In this section, we verify that our full assumptions {\bf (A)} through 
{\bf (F)} hold for systems (\ref{hammy}) that are linear in $\lambda$
(i.e., with $\mathbb{B} (x; \lambda)$ as in (\ref{linear-in-lambda})),
and also for systems with two common types of nonlinear dependence
on $\lambda$. While these constitute important cases in their own 
right, our primary goal is to establish a straightforward framework
for checking these assumptions in additional cases that come up 
in applications. 

\subsubsection{The Linear Case}
\label{linear-section}

In order to be clear that this analysis is a genuine extension of the case 
in which $\mathbb{B} (x; \lambda)$ is linear in $\lambda$, we verify that our 
assumptions hold in the case (\ref{linear-in-lambda})
under the following assumptions from \cite{HS2022}, which include the cases
considered in \cite{GZ2017}:

\medskip
$\mathbf{ (\tilde{A})}$ We take $B_0, B_1 \in L^1_{\loc} ((a, b), \mathbb{C}^{2n \times 2n})$,
and additionally assume that $B_0 (x)$ and $B_1 (x)$ are both self-adjoint
for a.e. $x \in (a, b)$, with also $B_1 (x)$ non-negative for 
a.e. $x \in (a, b)$.

\medskip
$\mathbf{ (\tilde{B})}$ If $y(\cdot; \lambda) \in \AC_{\loc} ((a, b), \mathbb{C}^{2n})$ 
is any non-trivial solution of (\ref{hammy}) with $\mathbb{B} (x; \lambda)$
as in (\ref{linear-in-lambda}), then 
\begin{equation*}
\int_c^d (B_1 (x) y(x; \lambda), y(x; \lambda)) dx > 0,
\end{equation*}
for all $[c, d] \subset (a, b)$, $c < d$. 

\medskip
$\mathbf{ (\tilde{C})}$ If $\tilde{m}_a (\lambda)$ denotes the dimension 
of the subspace of solutions to (\ref{hammy})-(\ref{linear-in-lambda})
(i.e., the system (\ref{hammy}) with $\mathbb{B} (x; \lambda)$ as 
in (\ref{linear-in-lambda}))
that lie left in $(a, b)$ and $\tilde{m}_b (\lambda)$ denotes the dimension
of the subspace of solutions to (\ref{hammy})-(\ref{linear-in-lambda})
that lie right in $(a, b)$, then the values $\tilde{m}_a (\lambda)$ and $\tilde{m}_b (\lambda)$ are 
both constant for all $\lambda \in \mathbb{C} \backslash \mathbb{R}$.

\begin{lemma} \label{linear-in-lambda-lemma}
Under Assumptions $\mathbf{ (\tilde{A})}$, $\mathbf{ (\tilde{B})}$, and $\mathbf{ (\tilde{C})}$,
the system (\ref{hammy})-(\ref{linear-in-lambda}) satisfies Assumptions 
{\bf (A)} through {\bf (E)} for $I = \mathbb{R}$, and Assumption {\bf (F)} holds for all 
$\lambda_1, \lambda_2 \in \mathbb{R}$, $\lambda_1 < \lambda_2$.
\end{lemma}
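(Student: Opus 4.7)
My plan is to verify Assumptions {\bf (A)}--{\bf (F)} in sequence, exploiting the key simplification that in the linear case (\ref{linear-in-lambda}), $\mathbb{B}_{\lambda}(x;\lambda) = B_1(x)$ is independent of $\lambda$. First, for {\bf (A)}, self-adjointness of $\mathbb{B}(x;\lambda) = B_0(x) + \lambda B_1(x)$ and of $\mathbb{B}_{\lambda}(x;\lambda) = B_1(x)$, together with non-negativity of $B_1$, are immediate from $(\tilde{A})$; on any bounded $\lambda$-subinterval of $I$ (which is all the main theorems require) the uniform dominating functions $b_0 := |B_0| + M|B_1|$ (with $M$ the supremum of $|\lambda|$ on that subinterval) and $b_1 := |B_1|$ lie in $L^1_{\loc}$. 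Since $\mathbb{B}_{\lambda} = B_1$, the positivity integral in {\bf (B)} is precisely $(\tilde{B})$. For {\bf (C)}, the weight $B_1$ is manifestly $\lambda$-independent so the weighted spaces $L^2_{\mathbb{B}_{\lambda}}$ coincide, and rewriting (\ref{domain-relation1}) in the form $Jy' - B_0(x) y = B_1(x)\tilde{f}$ with $\tilde{f} = f + \lambda y$ (as noted in Remark \ref{maximal-domain-remark}) displays $\mathcal{D}_M$ as a $\lambda$-independent set.

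For {\bf (D)}, the key idea is a shift of the spectral parameter. The perturbed equation (\ref{linear-hammy}) becomes
\begin{equation*}
Jy' = (B_0 + \lambda B_1)y + \mu B_1 y = B_0 y + (\lambda + \mu) B_1 y,
\end{equation*}
which upon setting $\tilde{\mu} := \lambda + \mu$ reduces to the base system $Jy' = B_0 y + \tilde{\mu} B_1 y$. For $\lambda \in \mathbb{R}$ and $\mu \in \mathbb{C}\setminus\mathbb{R}$, $\tilde{\mu}$ ranges over the same open half-plane as $\mu$, so $m_a(\mu,\lambda) = \tilde{m}_a(\tilde{\mu})$ and similarly for $m_b$. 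By $(\tilde{C})$ the values $\tilde{m}_a, \tilde{m}_b$ are constant on each half-plane, and hence $m_a(\mu,\lambda), m_b(\mu,\lambda)$ are constant on $(\mathbb{C}\setminus\mathbb{R})\times\mathbb{R}$.

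For {\bf (E)}, setting $\mathcal{E}(x;\lambda,\lambda_*) := (\lambda - \lambda_*) I_{2n}$ gives (\ref{E-B-diff}) tautologically from $\mathbb{B}(x;\lambda) - \mathbb{B}(x;\lambda_*) = (\lambda - \lambda_*)B_1(x)$. The operator $\mathcal{E}(\cdot;\lambda,\lambda_*)$ is scalar multiplication on $L^2_{B_1}$ with norm $|\lambda-\lambda_*|$, establishing (i) and (ii); its $\lambda$-derivative is $I_{2n}$, giving (iii); and for (iv), Cauchy-Schwarz applied to the positive semidefinite form $u^*B_1(x)v$ yields
\begin{equation*}
|f(x)^*B_1(x)g(x)| \le (f^*B_1 f)^{1/2}(g^*B_1 g)^{1/2} \le \tfrac{1}{2}\bigl(f^*B_1 f + g^*B_1 g\bigr),
\end{equation*}
whose right-hand side lies in $L^1$ because $f, g \in L^2_{B_1}$. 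Finally, for {\bf (F)}, non-negativity of $\mathbb{B}(x;\lambda_2) - \mathbb{B}(x;\lambda_1) = (\lambda_2-\lambda_1)B_1(x)$ is immediate from $(\tilde{A})$, and the moreover clause follows via Remark \ref{assumption-F-remark} from the Atkinson positivity condition $(\tilde{B})$. The verification as a whole is essentially routine; the only step requiring a genuine idea is the shift trick for {\bf (D)}, which is the sole mechanism for converting the $\mu$-constancy guaranteed by $(\tilde{C})$ into joint constancy in $(\mu,\lambda)$.
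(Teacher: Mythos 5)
Your verification proceeds in the same way as the paper's, exploiting $\mathbb{B}_\lambda = B_1$ throughout: Assumption (B) is exactly $(\tilde{B})$, (C) follows from $\lambda$-independence of the weight and the reformulation $Jy'-B_0y = B_1\tilde f$, (D) via the shift $\tilde\mu = \lambda+\mu$ and $(\tilde{C})$, (E) via $\mathcal E=(\lambda-\lambda_*)I$, and (F) via $\mathbb B(\cdot;\lambda_2)-\mathbb B(\cdot;\lambda_1)=(\lambda_2-\lambda_1)B_1$ together with Remark \ref{assumption-F-remark}. Your treatment of (A) is actually slightly more careful than the paper's: the paper sets $b_0 = |B_0|$, which does not dominate $|B_0 + \lambda B_1|$ uniformly in $\lambda$ over all of $\mathbb{R}$ unless $B_1 \equiv 0$; your choice $b_0 = |B_0| + M|B_1|$ restricted to bounded $\lambda$-subintervals fixes this while noting, correctly, that the main theorems only ever use compact $[\lambda_1,\lambda_2]$. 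Your Cauchy--Schwarz computation for (E)(iv) also supplies the detail the paper compresses into ``immediately seen.''
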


\begin{proof}
First, noting that 
$\mathbb{B}_{\lambda} (x; \lambda) = B_1 (x)$, we immediately 
see that Assumptions {\bf (A)} hold with $b_0 (x) = |B_0 (x)|$ 
and $b_1 (x) = |B_1 (x)|$, and moreover that Assumption {\bf (B)}
is just a restatement of Assumption $\mathbf{ (\tilde{B})}$ in 
this case. 

For Assumption {\bf (C)}, in the case of (\ref{linear-in-lambda}),
the norm on $L^2_{\mathbb{B}_{\lambda}} ((a, b), \mathbb{C}^{2n})$ becomes 
\begin{equation*}
    \|f\|_{\mathbb{B}_{\lambda}} 
    = \Big(\int_a^b (B_1 (x) f(x), f(x)) dx\Big)^{1/2},  
\end{equation*}
from which it's clear that the space $L^2_{\mathbb{B}_{\lambda}} ((a, b), \mathbb{C}^{2n})$
is independent of $\lambda$. Likewise, the maximal domain constructed in 
Definition \ref{maximal-operator} is based on the equation 
\begin{equation*}
    Jy' - B_0 (x) y = B_1 (x) f,
\end{equation*}
and so is clearly identical for all $\lambda \in \mathbb{R}$ (since 
$\lambda$ doesn't appear).

For Assumption {\bf (D)}, we note that in this case (\ref{linear-hammy}) 
can be expressed as 
\begin{equation*} 
    Jy' = B_0 (x) y + (\lambda + \mu) B_1 (x) y. 
\end{equation*}
By Assumption $\mathbf{ (\tilde{C})}$, 
$m_a (\mu, \lambda) = \tilde{m}_a (\lambda + \mu)$ is constant for all 
$\lambda + \mu \in \mathbb{C} \backslash \mathbb{R}$, and this condition
implies that $m_a (\mu, \lambda)$ is constant for all 
$(\lambda, \mu) \in \mathbb{R} \times (\mathbb{C} \backslash \mathbb{R})$, 
which is the first half of Assumption {\bf (D)}. The condition on 
$m_b (\mu; \lambda)$ follows similarly. 

Turning now to Assumption {\bf (E)}, we observe that in the case of 
(\ref{linear-in-lambda}), we have simply 
\begin{equation*}
    \mathcal{E} (x; \lambda, \lambda_*)
    = (\lambda - \lambda_*) I,
\end{equation*}
for which Items (i) through (iv) of Assumption {\bf (E)} are immediately 
seen to hold. 

Finally, for Assumption {\bf (F)}, in the case of (\ref{linear-in-lambda}) 
we can write 
\begin{equation*}
    \mathbb{B} (x; \lambda_2) - \mathbb{B} (x; \lambda_1)
    = (\lambda_2 - \lambda_1) B_1 (x),
\end{equation*}
making (\ref{difference-definite}) from Remark \ref{assumption-F-remark} an 
immediate consequence of Assumption $\mathbf{ (\tilde{B})}$. 
\end{proof}

\subsubsection{Quadratic Schr\"odinger Systems}
\label{quadratic-schrodinger-systems-section}

Next, we identify conditions under which Assumptions {\bf (A)} through 
{\bf (F)} hold for Hamiltonian systems associated with Quadratic  
Schr\"odinger Systems
\begin{equation*}
    -\phi'' + V(x) \phi = (\lambda Q_1 (x) + \lambda^2 Q_2 (x)) \phi,
    \quad \phi(x) \in \mathbb{C}^n,
\end{equation*}
where $V(x)$, $Q_1 (x)$, and $Q_2 (x)$ are taken to be 
$n \times n$ matrices that 
will be characterized below in Assumption {\bf (Q)}. 
Writing $y = \genfrac{(}{)}{0pt}{1}{y_1}{y_2}$ with $y_1 = \phi$ and $y_2 = \phi'$,
we obtain the system $J y' = \mathbb{B} (x; \lambda) y$ with 
\begin{equation} \label{quadratic-sturm-liouville-B}
    \mathbb{B} (x; \lambda)
    = \begin{pmatrix}
    \lambda Q_1 (x) + \lambda^2 Q_2 (x) - V(x) & 0 \\
    0 & I
    \end{pmatrix},
\end{equation}
and 
\begin{equation} \label{quadratic-sturm-liouville-B-derivative}
    \mathbb{B}_{\lambda} (x; \lambda)
    = \begin{pmatrix}
    Q_1 (x) + 2 \lambda Q_2 (x) & 0 \\
    0 & 0
    \end{pmatrix}.
\end{equation}
We make the following assumptions on $V$, $Q_1$, and $Q_2$:

\medskip
{\bf (Q)} We assume $Q_1, Q_2, V \in L^1_{\loc} ((a, b), \mathbb{R}^{n \times n})$, and 
also that  $Q_1, Q_2 \in L^{\infty} ((a, b), \mathbb{R}^{n \times n})$, with 
$Q_1 (x)$, $Q_2 (x)$ and $V(x)$ symmetric for a.e. $x \in (a, b)$.
In addition, we assume there exists a closed interval $I \subset \mathbb{R}$ 
and a constant $\theta > 0$ so that for each $\lambda \in I$ the following 
holds: for a.e. $x \in (a, b)$
\begin{equation*}
    ((Q_1 (x) + 2 \lambda Q_2 (x)) v, v) \ge \theta |v|^2,
    \quad \forall \, v \in \mathbb{R}^n.
\end{equation*}

\medskip

\begin{lemma} \label{quadratic-sturm-liouville-lemma}
Let Assumptions {\bf (Q)} hold for some closed interval $I \subset \mathbb{R}$. 
Then the system (\ref{hammy})-(\ref{quadratic-sturm-liouville-B}) satisfies Assumptions 
{\bf (A)} through {\bf (E)} on $I$, and Assumption {\bf (F)} holds for all 
$\lambda_1, \lambda_2 \in I$, $\lambda_1 < \lambda_2$.
\end{lemma}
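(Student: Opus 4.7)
The strategy is to verify Assumptions \textbf{(A)} through \textbf{(F)} in turn, exploiting the explicit block-diagonal structure of $\mathbb{B}$ and $\mathbb{B}_{\lambda}$ from (\ref{quadratic-sturm-liouville-B})--(\ref{quadratic-sturm-liouville-B-derivative}), together with the uniform lower bound $Q_1 + 2\lambda Q_2 \ge \theta I_n$ provided by \textbf{(Q)} (and the tacit assumption that $I$ is bounded, so that $M := \max_{\lambda \in I}|\lambda|$ is finite). Assumption \textbf{(A)} is immediate: with $b_0(x) = |V(x)| + M \|Q_1\|_\infty + M^2 \|Q_2\|_\infty + 1$ and $b_1(x) = \|Q_1\|_\infty + 2M\|Q_2\|_\infty$, both in $L^1_{\loc}((a,b), \mathbb{R})$, the required pointwise bounds hold; self-adjointness is manifest from the symmetry of $V, Q_1, Q_2$, and non-negativity of $\mathbb{B}_{\lambda}$ is a direct consequence of \textbf{(Q)}. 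For \textbf{(B)}, the integrand in (\ref{positive-definite}) simplifies to $((Q_1 + 2\lambda Q_2) y_1, y_1) \ge \theta |y_1|^2$, and a non-trivial solution of (\ref{hammy}) cannot have $y_1 \equiv 0$ on any subinterval: the Hamiltonian structure forces $y_2 = y_1'$, so $y_1 \equiv 0$ on $[c,d]$ would give $y_2 \equiv 0$ there and, by uniqueness of the Cauchy problem, $y \equiv 0$ throughout $(a,b)$.

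For \textbf{(C)}, the uniform equivalence $\theta |f_1|^2 \le ((Q_1 + 2\lambda Q_2) f_1, f_1) \le (\|Q_1\|_\infty + 2M\|Q_2\|_\infty) |f_1|^2$ with constants independent of $\lambda \in I$ shows that, up to equivalent norms, $L^2_{\mathbb{B}_{\lambda}}((a,b), \mathbb{C}^{2n})$ is independent of $\lambda$ and identifies with $L^2((a,b), \mathbb{C}^n)$ via the first-block projection. Decomposing (\ref{domain-relation1}) into its two rows gives $y_1' = y_2$ together with $-y_1'' + (V - \lambda Q_1 - \lambda^2 Q_2) y_1 = (Q_1 + 2\lambda Q_2) f_1$; by the norm equivalence above, $f_1 \in L^2_{\mathbb{B}_{\lambda}}$ iff the left-hand side is in $L^2$, and the difference of the left-hand sides at two values $\lambda_0, \lambda \in I$ is $-(\lambda - \lambda_0)[Q_1 + (\lambda + \lambda_0) Q_2] y_1 \in L^{\infty} \cdot L^2 \subset L^2$, so the condition is $\lambda$-independent and $\mathcal{D}_M$ does not depend on $\lambda$. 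For \textbf{(D)}, fix $\lambda \in I$; since $\mathbb{B}(x;\lambda)$ and $\mathbb{B}_{\lambda}(x;\lambda)$ are real-valued, the standard theory for linear Hamiltonian systems (Theorem V.2.2 of \cite{Krall2002}, as invoked in the discussion preceding \textbf{(D)}) gives constancy of $m_a(\mu,\lambda), m_b(\mu,\lambda)$ for $\mu \in \mathbb{C}\setminus\mathbb{R}$. Upgrading to constancy as $\lambda$ varies over $I$ is the main obstacle: I would fix $\mu_0 = i$ and argue that the eigenvalues of the Niessen matrix $\mathcal{A}(x; i, \lambda)$ defined in (\ref{mathcal-A-defined}) depend continuously on $\lambda$, so that by Lemma \ref{subspace-dimensions-lemma} the integer counts $m_a(i, \cdot), m_b(i, \cdot)$ cannot jump on the connected set $I$; in practice a direct reduction to the standard Schr\"odinger LP/LC classification for $-y_1'' + V y_1 = 0$ is typically available, since the $\lambda$-dependent terms enter as an $L^\infty$ perturbation that does not alter the classification at singular endpoints.

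For \textbf{(E)}, I set
\[
\mathcal{E}(x; \lambda, \lambda_*) := (\lambda - \lambda_*) \begin{pmatrix} (Q_1 + 2\lambda_* Q_2)^{-1}[Q_1 + (\lambda + \lambda_*) Q_2] & 0 \\ 0 & 0 \end{pmatrix};
\]
identity (\ref{E-B-diff}) is then a direct block computation, and items (i)--(iii) follow from $\|Q_1\|_\infty, \|Q_2\|_\infty < \infty$ together with the uniform bound $\|(Q_1 + 2\lambda_* Q_2)^{-1}\| \le \theta^{-1}$, with the $\mathbf{o}(1)$ decay in (ii) coming from the explicit prefactor $(\lambda - \lambda_*)$ and the continuous differentiability in (iii) being polynomial in $\lambda$ at the pointwise level and inheriting operator-norm boundedness from the same $L^{\infty}$ estimates. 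For item (iv), $|f(x)^* \mathbb{B}_{\lambda}(x;\lambda) g(x)| \le (\|Q_1\|_\infty + 2M\|Q_2\|_\infty) |f_1(x)||g_1(x)|$, with the right-hand side integrable by Cauchy--Schwarz, independently of $\lambda$. Finally, for \textbf{(F)}, direct computation gives $\mathbb{B}(\cdot;\lambda_2) - \mathbb{B}(\cdot;\lambda_1) = (\lambda_2 - \lambda_1)\,\mathrm{diag}(Q_1 + (\lambda_1 + \lambda_2) Q_2,\, 0)$; convexity of $I$ places the midpoint $(\lambda_1 + \lambda_2)/2$ in $I$, and applying \textbf{(Q)} there yields $Q_1 + (\lambda_1 + \lambda_2) Q_2 \ge \theta I_n$, giving the non-negativity. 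The Atkinson positivity (\ref{difference-definite}) of Remark \ref{assumption-F-remark} then follows from the same first-block/uniqueness argument as in \textbf{(B)}.
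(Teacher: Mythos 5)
Your verifications of \textbf{(A)}, \textbf{(B)}, \textbf{(C)}, \textbf{(E)}, and \textbf{(F)} are essentially identical to the paper's: same choice of $b_0, b_1$, same reduction of (\ref{positive-definite}) to the first component plus the Cauchy-uniqueness observation for \textbf{(B)}, the same two-sided norm equivalence for \textbf{(C)}, the same block-diagonal $\mathcal{E}_{11} = (\lambda-\lambda_*)(Q_1+2\lambda_* Q_2)^{-1}(Q_1 + (\lambda+\lambda_*)Q_2)$ for \textbf{(E)}, and the same midpoint-in-$I$ argument for \textbf{(F)}.

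The genuine gap is your treatment of \textbf{(D)}. You propose to argue that the eigenvalues of the Niessen matrix $\mathcal{A}(x;i,\lambda)$ depend continuously on $\lambda$ and therefore, by Lemma \ref{subspace-dimensions-lemma}, the integer counts $m_a(i,\cdot)$ and $m_b(i,\cdot)$ cannot jump on the connected set $I$. This does not follow. Lemma \ref{subspace-dimensions-lemma} characterizes $m_a$ (resp.\ $m_b$) as the number of eigenvalues $\nu_j(x;\mu,\lambda)$ admitting a \emph{finite limit} as $x\to a^+$ (resp.\ $x \to b^-$); continuity of $\nu_j(x;\mu,\cdot)$ at each fixed $x$ says nothing about whether the limiting behavior as $x$ tends to the singular endpoint varies continuously in $\lambda$, and in general the number of eigenvalues that escape to infinity can jump. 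Your fallback --- that the $\lambda$-dependent terms are an $L^\infty$ perturbation that ``does not alter the classification'' --- is not a citable theorem and is not what the paper does: bounded perturbations \emph{can} alter limit-point/limit-circle classification (for instance, at endpoints where $\mathbb{B}_\lambda$ degenerates). The paper instead defers \textbf{(D)} until \emph{after} \textbf{(E)} is in place, then invokes Lemma \ref{assumption-d-lemma}, which reduces $\lambda$-constancy of $m_a, m_b$ to the single analytic condition $\|\mathcal{E}_\lambda(\cdot;\lambda,\lambda_0) - I\| = \mathbf{o}(1)$ as $\lambda\to\lambda_0$; in the quadratic Schr\"odinger case this follows at once from (\ref{quadratic-lambda-derivative}) via $\partial_\lambda\mathcal{E}_{11}(x;\lambda,\lambda_0) - I = 2(\lambda-\lambda_0)(Q_1 + 2\lambda_0 Q_2)^{-1}Q_2$. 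You should replace your continuity sketch with an appeal to Lemma \ref{assumption-d-lemma} (whose proof is a Green's function and Neumann-series contradiction argument in the appendix, not an eigenvalue-continuity argument) and verify its hypothesis explicitly as above.

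One minor point: the hypothesis in \textbf{(Q)} allows $I$ to be a closed but unbounded interval, yet your $b_0, b_1$ (and the paper's, for that matter) take a $\max$ over $\lambda \in I$; it is worth stating that the lemma implicitly requires $I$ compact, rather than relegating it to a tacit assumption.
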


\begin{proof} 
Under Assumptions {\bf (Q)}, and for $\mathbb{B} (x; \lambda)$ 
and $\mathbb{B}_{\lambda} (x; \lambda)$ as in (\ref{quadratic-sturm-liouville-B})
and (\ref{quadratic-sturm-liouville-B-derivative}), we can set 
\begin{equation*}
    b_0 (x) := \max_{\lambda \in I} |\mathbb{B} (x; \lambda)|,
    \quad b_1 (x) := \max_{\lambda \in I} |\mathbb{B}_{\lambda} (x; \lambda)|,
\end{equation*}
and it's clear that Assumptions {\bf (A)} hold with these choices of 
$b_0 (x)$ and $b_1 (x)$. 

For {\bf (B)}, we take any non-trivial solution 
$y(\cdot; \lambda) \in \AC_{\loc} ((a, b), \mathbb{C}^{2n})$
of (\ref{hammy})-(\ref{quadratic-sturm-liouville-B}),
and for any fixed $\lambda_* \in I$, compute
(recalling $y = \genfrac{(}{)}{0pt}{1}{y_1}{y_2}$)
\begin{equation*}
\begin{aligned}
\|y\|_{\mathbb{B}_{\lambda_*}}^2
&= \int_c^d (\mathbb{B}_{\lambda} (x; \lambda_*)  y(x; \lambda), y(x; \lambda)) dx \\
&= \int_c^d ((Q_1 (x) + 2 \lambda_* Q_2 (x)) y_1, y_1)_{\mathbb{C}^{n}} dx
\ge \theta \int_c^d |y_1 (x; \lambda)|^2 dx
\end{aligned}
\end{equation*}
for all $[c, d] \subset (a, b)$, $c < d$. If this final integral is 0, we 
must have $y_1 (x; \lambda) = 0$ for all $x \in (c, d)$, and consequently 
(since $y_2 = y_1'$) we must have $y (x; \lambda) = 0$ for all $x \in (c, d)$. 
But this contradicts
the assumption that $y (x; \lambda)$ is non-trivial, so we can conclude 
that this integral is positive, from which {\bf (B)} is immediate. 

Turning to {\bf (C)}, for any measurable $f: (a, b) \to \mathbb{C}^{2n}$, 
expressed as $f = \genfrac{(}{)}{0pt}{1}{f_1}{f_2}$, and any $\lambda \in I$, we can write  
\begin{equation*}
    \|f\|_{\mathbb{B}_{\lambda}}^2 
    = \int_a^b ((Q_1 (x) + 2 \lambda Q_2 (x))f_1, f_1) dx.
\end{equation*}
Since $Q_1, Q_2 \in L^{\infty} ((a, b), \mathbb{C}^{n \times n})$, we 
have the inequality
\begin{equation*}
    \int_a^b ((Q_1 (x) + 2 \lambda Q_2 (x))f_1, f_1) dx 
    \le \max_{\lambda \in I} \| Q_1 (x) + 2 \lambda Q_2 (x) \|_{L^{\infty} ((a, b), \mathbb{C}^{2n})}
    \|f_1\|^2_{L^2 ((a, b), \mathbb{C}^n)},
\end{equation*}
so in particular there exists a constant $C_1$ so that 
\begin{equation} \label{quadratic-C1} 
    \|f\|_{\mathbb{B}_{\lambda}} \le C_1 \|f_1\|_{L^2 ((a, b), \mathbb{C}^n)},
    \quad \forall \, \lambda \in I.
\end{equation}
Likewise we have 
\begin{equation} \label{quadratic-lower-bound}
    \int_a^b ((Q_1 (x) + 2 \lambda Q_2 (x))f_1, f_1) dx 
    \ge \theta \|f_1\|^2_{L^2 ((a, b), \mathbb{C}^n)},
    \quad \forall \, \lambda \in I,
\end{equation}
so that 
\begin{equation} \label{quadratic-theta}
     \|f\|_{\mathbb{B}_{\lambda}}
     \ge \sqrt{\theta} \|f_1\|_{L^2 ((a, b), \mathbb{C}^n)}.
\end{equation}
In total, we can conclude that for all $\lambda \in I$ 
\begin{equation*}
    L^2_{\mathbb{B}_{\lambda}} ((a, b), \mathbb{C}^{2n})
    = \{\textrm{Measurable functions} \, f: (a, b) \to \mathbb{C}^{2n}:
    f_1 \in L^2 ((a, b), \mathbb{C}^n)\},
\end{equation*}
and this space is clearly independent of $\lambda$. For the second 
part of {\bf (C)}, for any fixed $\lambda \in I$, we can characterize
$\mathcal{D}_M (\lambda)$ in this case as the collection of 
functions 
\begin{equation*}
    y \in \AC_{\loc} ((a, b), \mathbb{C}^{2n}) \cap L^2_{\mathbb{B}_{\lambda}} ((a, b), \mathbb{C}^{2n})
\end{equation*}
for which there exists $f \in L^2_{\mathbb{B}_{\lambda}} ((a, b), \mathbb{C}^{2n})$, 
$f = \genfrac{(}{)}{0pt}{1}{f_1}{f_2}$, so 
that (using (\ref{domain-relation2}))
\begin{equation} \label{quadratic-s-item6}
    \begin{aligned}
    -y_2' + (\lambda^2 Q_2 (x) + V(x)) y_1 &= (Q_1 (x) + 2 \lambda Q_2 (x)) f_1 \\
    y_1' &= y_2,
    \end{aligned}
\end{equation}
for a.e. $x \in (a, b)$. Given that $y \in \mathcal{D}_M (\lambda)$, for some 
$\lambda \in I$, we need to verify that for any other 
$\tilde{\lambda} \in I$ we also have 
$y \in \mathcal{D}_M (\tilde{\lambda})$. I.e., we need to show that for any 
$\tilde{\lambda} \in I$ there exists 
$\tilde{f} \in L^2_{\mathbb{B}_{\lambda}} ((a, b), \mathbb{C}^{2n})$, 
$\tilde{f} = \genfrac{(}{)}{0pt}{1}{\tilde{f}_1}{\tilde{f}_2}$, so that 
\begin{equation*} 
    \begin{aligned}
    -y_2' + (\tilde{\lambda}^2 Q_2 (x) + V(x)) y_1 &= (Q_1 (x) + 2 \tilde{\lambda} Q_2 (x)) \tilde{f}_1 \\
    y_1' &= y_2,
    \end{aligned}
\end{equation*}
for a.e. $x \in (a, b)$. To see this, we begin by re-writing the first equation in (\ref{quadratic-s-item6}) as 
\begin{equation*}
    -y_2' + (\tilde{\lambda}^2 Q_2 (x) + V(x)) y_1 = (Q_1 (x) + 2 \lambda Q_2 (x)) f_1 
    + (\tilde{\lambda}^2 - \lambda^2) Q_2 (x) y_1. 
\end{equation*}
In this way, we see that we need to find 
$\tilde{f} \in L^2_{\mathbb{B}_{\lambda}} ((a, b), \mathbb{C}^{2n})$
so that 
\begin{equation*}
   (Q_1 (x) + 2 \tilde{\lambda} Q_2 (x)) \tilde{f}_1
   = (Q_1 (x) + 2 \lambda Q_2 (x)) f_1 
    + (\tilde{\lambda}^2 - \lambda^2) Q_2 (x) y_1. 
\end{equation*}
Under Assumption {\bf (Q)}, we can write 
\begin{equation*}
    \tilde{f}_1
   = (Q_1 (x) + 2 \tilde{\lambda} Q_2 (x))^{-1} \Big((Q_1 (x) + 2 \lambda Q_2 (x)) f_1 
    + (\tilde{\lambda}^2 - \lambda^2) Q_2 (x) y_1\Big),
\end{equation*}
and it follows from the assumed $L^{\infty} ((a, b), \mathbb{C}^{2n})$ bounds on 
$Q_1$, $Q_2$, and $(Q_1 + 2 \tilde{\lambda} Q_2)^{-1}$ that 
$\tilde{f}_1 \in L^2 ((a, b), \mathbb{C}^n)$, so that 
$\tilde{f} \in L^2_{\mathbb{B}_{\lambda}} ((a, b), \mathbb{C}^{2n})$.

Our verification of Assumption {\bf (D)} will make use of ideas 
developed during our verification of Assumption {\bf (E)}, so we turn 
next to this latter case, for which we begin by 
identifying the function $\mathcal{E} (x; \lambda, \lambda_*)$
so that 
\begin{equation*}
    \mathbb{B} (x; \lambda) - \mathbb{B} (x; \lambda_*)
    = \mathbb{B}_{\lambda} (x; \lambda_*) \mathcal{E} (x; \lambda, \lambda_*).
\end{equation*}
In this case, the relation can be expressed as 
\begin{equation*}
    (\lambda - \lambda_*)
    \begin{pmatrix}
    Q_1 (x) + (\lambda + \lambda_*) Q_2 (x) & 0 \\
    0 & 0
    \end{pmatrix}
    = 
    \begin{pmatrix}
    Q_1 (x) + 2 \lambda_* Q_2 (x) & 0 \\
    0 & 0
    \end{pmatrix}
    \begin{pmatrix}
    \mathcal{E}_{11} & \mathcal{E}_{12} \\
    \mathcal{E}_{21} & \mathcal{E}_{22}
    \end{pmatrix},
\end{equation*}
where we're using the block notation
\begin{equation*}
\mathcal{E} (x; \lambda, \lambda_*)
=
 \begin{pmatrix}
    \mathcal{E}_{11} (x; \lambda, \lambda_*) & \mathcal{E}_{12} (x; \lambda, \lambda_*) \\
    \mathcal{E}_{21} (x; \lambda, \lambda_*) & \mathcal{E}_{22} (x; \lambda, \lambda_*)
    \end{pmatrix}.
\end{equation*}
We may as well take $ \mathcal{E}_{12} =  \mathcal{E}_{21} = \mathcal{E}_{22} = 0$,
requiring only 
\begin{equation*}
    (\lambda - \lambda_*)  (Q_1 (x) + (\lambda + \lambda_*) Q_2 (x))
    = (Q_1 (x) + 2 \lambda_* Q_2 (x)) \mathcal{E}_{11}.
\end{equation*}
Since $Q_1 (x) + 2 \lambda_* Q_2 (x)$ is positive definite for 
a.e. $x \in (a, b)$ we can write 
\begin{equation} \label{quadratic-E11}
    \mathcal{E}_{11} (x; \lambda, \lambda_*)
    = (\lambda - \lambda_*) (Q_1 (x) + 2 \lambda_* Q_2 (x))^{-1} 
    (Q_1 (x) + (\lambda + \lambda_*) Q_2 (x)),
\end{equation}
and likewise 
\begin{equation} \label{quadratic-lambda-derivative}
    \partial_{\lambda} \mathcal{E}_{11} (x; \lambda, \lambda_*)
    = (Q_1 (x) + 2 \lambda_* Q_2 (x))^{-1} 
    (Q_1 (x) + 2\lambda Q_2 (x)).
\end{equation}

For Assumption {\bf (E)}(i), we need to determine the behavior of 
$\mathcal{E} (\cdot; \lambda, \lambda_*)$ as a map from 
$L^2_{\mathbb{B}_{\lambda}} ((a, b), \mathbb{C}^n)$ to 
itself. Here, for any $f \in L^2_{\mathbb{B}_{\lambda}} ((a, b), \mathbb{C}^n)$,
expressed as $f = \genfrac{(}{)}{0pt}{1}{f_1}{f_2}$, we can write 
\begin{equation*}
    \begin{aligned}
    \|\mathcal{E} (\cdot; \lambda, \lambda_*) f\|_{\mathbb{B}_{\lambda_*}}^2
    &= \int_a^b (\mathcal{E} (x; \lambda, \lambda_*) f(x))^* 
    \mathbb{B}_{\lambda} (x; \lambda_*) \mathcal{E} (x; \lambda, \lambda_*) f(x) dx \\
    &= \int_a^b (\mathcal{E}_{11} (x; \lambda, \lambda_*) f_1(x))^* 
    (Q_1 (x) + 2 \lambda_* Q_2 (x)) \mathcal{E}_{11} (x; \lambda, \lambda_*) f_1 (x) dx \\
    &\le \|\mathcal{E}_{11} (\cdot; \lambda, \lambda_*)\|_{L^{\infty}}^2 
    \| Q_1 (\cdot) + 2 \lambda_* Q_2 (\cdot) \|_{L^{\infty}} \|f_1\|_{L^2}^2,
    \end{aligned}
\end{equation*}
where each norm in this last expression is unweighted and computed on $(a, b)$. 
Under our assumptions on $Q_1 (x)$ and $Q_2 (x)$, the multiplier of 
$\|f_1\|_{L^2}^2$ is bounded, and in light of (\ref{quadratic-theta}) we 
can conclude that there exists a constant $C_2$ so that 
\begin{equation} \label{quadratic-E-bound}
\|\mathcal{E} (\cdot; \lambda, \lambda_*)\| 
\le C_2 \|\mathcal{E}_{11} (\cdot; \lambda, \lambda_*)\|_{L^{\infty}},     
\end{equation}
from which it's clear that 
$\mathcal{E} (\cdot; \lambda, \lambda_*) 
\in \mathbb{B} (L^2_{\mathbb{B}_{\lambda}} ((a, b), \mathbb{C}^{2n}))$. 

For Assumption {\bf (E)}(ii), we see from (\ref{quadratic-E11}) that 
\begin{equation*}
    \|\mathcal{E}_{11} (\cdot; \lambda, \lambda_*)\|_{L^{\infty}}
    \le |\lambda - \lambda_*| \| (Q_1 (\cdot) + 2 \lambda_* Q_2 (\cdot))^{-1} \|_{L^{\infty}}
    \| Q_1 (\cdot) + (\lambda + \lambda_*) Q_2 (\cdot) \|_{L^{\infty}},
\end{equation*}
from which it's clear from (\ref{quadratic-E-bound})
that $\|\mathcal{E} (\cdot; \lambda, \lambda_*)\| = \mathbf{O} (|\lambda - \lambda_*|)$
for $\lambda$ near $\lambda_*$, which is stronger than the claim.  

For Assumption {\bf (E)}(iii), we see from 
(\ref{quadratic-lambda-derivative}) that for a.e. $x \in (a, b)$, 
$\mathcal{E}_{\lambda} (x; \lambda, \lambda_*)$ is continuous 
in $\lambda$. In order to verify that 
$\mathcal{E} (\cdot; \lambda, \lambda_*)$ is differentiable
in $\lambda$ as a map from $I$ to 
$\mathcal{B} (L^2_{\mathbb{B}_{\lambda}} ((a, b), \mathbb{C}^{2n}))$, 
we let $\epsilon (h; \lambda) 
\in \mathcal{B} (L^2_{\mathbb{B}_{\lambda}} ((a, b), \mathbb{C}^{2n}))$
be the operator so that 
\begin{equation*}
\mathcal{E} (\cdot; \lambda + h, \lambda_*)
= \mathcal{E} (\cdot; \lambda, \lambda_*)
+ \mathcal{E}_{\lambda} (\cdot; \lambda, \lambda_*) h
+ \epsilon (h; \lambda) h,
\end{equation*}
and our goal is to show that 
\begin{equation} \label{quadratic-limit}
    \lim_{h \to 0} \| \epsilon (h; \lambda) \| = 0.
\end{equation}
Using (\ref{quadratic-E11}) and (\ref{quadratic-lambda-derivative})
we find that 
\begin{equation*}
\epsilon (h; \lambda)
= h \begin{pmatrix}
(Q_1 (\cdot) + 2 \lambda_* Q_2 (\cdot))^{-1} Q_2 (\cdot) & 0 \\
0 & 0
\end{pmatrix},
\end{equation*}
from which (\ref{quadratic-limit}) follows from our previous
calculations. As expected, the derivative of 
$\mathcal{E} (\cdot; \lambda, \lambda_*)$ is 
precisely the usual partial derivative $\frac{\partial \mathcal{E}}{\partial {\lambda}} (\cdot; \lambda, \lambda_*)$,
and we need to verify that this latter operator is 
continuous on $I_{\lambda_*, r}$. For this, we fix any
$\lambda_0 \in I$, and observe that
for any other $\lambda \in I$, we can write 
\begin{equation*}
    \partial_{\lambda} \mathcal{E}_{11} (x; \lambda, \lambda_*)
    - \partial_{\lambda} \mathcal{E}_{11} (x; \lambda_0, \lambda_*)
    = 2 (\lambda - \lambda_0) (Q_1 (x) + 2 \lambda_* Q_2 (x))^{-1} Q_2 (x). 
\end{equation*}
Proceeding similarly as in the calculations leading to 
(\ref{quadratic-E-bound}), we find that there exists a constant 
$C_3$ so that 
\begin{equation*}
    \|  \partial_{\lambda} \mathcal{E}_{11} (x; \lambda, \lambda_*)
    - \partial_{\lambda} \mathcal{E}_{11} (x; \lambda_0, \lambda_*) \|
    \le C_3 |\lambda - \lambda_0|,
\end{equation*}
which is more than we require.  

Turning to Assumption {\bf (E)}(iv), given any 
$f, g \in L^2_{\mathbb{B}_{\lambda}} ((a, b), \mathbb{C}^{2n})$,
expressed as $f = \genfrac{(}{)}{0pt}{1}{f_1}{f_2}$ 
and $g = \genfrac{(}{)}{0pt}{1}{g_1}{g_2}$,
we can compute directly to see that 
\begin{equation*}
    \begin{aligned}
    f(x)^* \mathbb{B}_{\lambda} (x; \lambda) g(x)
    &= f_1 (x)^* (Q_1 (x) + 2 \lambda Q_2 (x)) g_1 (x) \\
    & =  f_1 (x)^* Q_1 (x) g_1 (x)
    + 2 \lambda f_1 (x)^* Q_2 (x) g_1 (x).
    \end{aligned}
\end{equation*}
It follows that we can take the function $h$ specified in Assumption {\bf (E)}(iv)
to be 
\begin{equation*}
    h(x) := | f_1 (x)^* Q_1 (x) g_1 (x)|
    + 2 K |f_1 (x)^* Q_2 (x) g_1 (x)|,
\end{equation*}
where 
\begin{equation*}
    K = \max \{|\lambda|: \lambda \in I\}.
\end{equation*}

This completes the verification of Assumption {\bf (E)}, and 
at this point, we return to establishing Assumption {\bf (D)}, 
which we recall addresses the dimension $m_a (\mu; \lambda)$ of the space of 
solutions to (\ref{linear-hammy}) that lie left in $(a, b)$, and 
the dimension $m_b (\mu; \lambda)$ of the space of 
solutions to (\ref{linear-hammy}) that lie right in $(a, b)$. For 
a fixed value $\lambda_0 \in I$, suppose $\mathbb{B} (x; \lambda_0)$
and $\mathbb{B}_{\lambda} (x; \lambda_0)$ have real-valued entries for 
a.e. $x \in (a, b)$. Then we can conclude from Theorem V.2.2 in \cite{Krall2002}
that $m_a (\mu; \lambda_0)$ is constant for all $\mu \in \mathbb{C} \backslash \mathbb{R}$,
and similarly for $m_b (\mu; \lambda_0)$. For Assumption {\bf (D)}, we need to 
additionally check that $m_a (\mu; \lambda)$ and  $m_b (\mu; \lambda)$
remain constant as $\lambda$ varies as well. For this, we can establish the 
following lemma, which we prove in the appendix.

\begin{lemma} \label{assumption-d-lemma}
Let Assumptions {\bf (A)} through {\bf (C)} hold, along with Assumptions
{\bf (E)}, and suppose that for each $\lambda_0 \in I$ the values
$m_a (\mu, \lambda_0)$ (from Definition \ref{left-right-definition}) 
are constant for all $\mu \in \mathbb{C} \backslash \mathbb{R}$,
and likewise for the values $m_b (\mu, \lambda_0)$. In addition, 
assume that for each $\lambda_0 \in I$
\begin{equation*}
    \|\mathcal{E}_{\lambda} (\cdot; \lambda, \lambda_0) - I\| = \mathbf{o} (1),
    \quad \lambda \to \lambda_0.
\end{equation*}
Then the values $m_a (\mu; \lambda)$ are constant for all 
$(\mu, \lambda) \in (\mathbb{C} \backslash \mathbb{R}) \times I$, and likewise for 
the values $m_b (\mu; \lambda)$. 
\end{lemma}

The key observation in this lemma is that Assumption {\bf (D)}
now follows from the condition 
\begin{equation} \label{quadratic-lemma-claim}
    \|\mathcal{E}_{\lambda} (\cdot; \lambda, \lambda_0) - I\| = \mathbf{o} (1),
    \quad \lambda \to \lambda_0.
\end{equation}
For this, we first use (\ref{quadratic-lambda-derivative}) to write 
\begin{equation*}
\begin{aligned}
\partial_{\lambda} \mathcal{E}_{11} (x; \lambda, \lambda_0) - I
&= (Q_1 (x) + 2 \lambda_0 Q_2 (x))^{-1} 
    \Big{\{} (Q_1 (x) + 2\lambda Q_2 (x)) - (Q_1 (x) + 2 \lambda_0 Q_2 (x)) \Big{\}} \\
&= 2 (\lambda - \lambda_0) (Q_1 (x) + 2 \lambda_0 Q_2 (x))^{-1} Q_2 (x),
\end{aligned}
\end{equation*}
from which it follows similarly as in the calculations leading to 
(\ref{quadratic-E-bound}) that there exists a constant $C_4$ 
so that 
\begin{equation*}
 \| \mathcal{E}_{\lambda} (\cdot; \lambda, \lambda_0) - I \|
 \le C_4 |\lambda - \lambda_0|.
\end{equation*}
The claim (\ref{quadratic-lemma-claim}) is now immediate. 

Last, we verify Assumption {\bf (F)}. For this, we fix any 
$\lambda_1, \lambda_2 \in I$, $\lambda_1 < \lambda_2$, and compute 
\begin{equation*}
    \mathbb{B} (x; \lambda_2) - \mathbb{B} (x; \lambda_1)
    = \begin{pmatrix}
       (\lambda_2 - \lambda_1) (Q_1 (x) + (\lambda_1 + \lambda_2) Q_2 (x)) & 0 \\
       0 & 0
    \end{pmatrix}.
\end{equation*}
Since $(\lambda_1 + \lambda_2)/2 \in [\lambda_1, \lambda_2] \subset I$, we can conclude 
from Assumptions {\bf (Q)} that the matrix 
$Q_1 (x) + (\lambda_1 + \lambda_2) Q_2 (x)$ is positive definite, and 
consequently that $\mathbb{B} (x; \lambda_2) - \mathbb{B} (x; \lambda_1)$
is non-negative. In addition, the second part of Assumption {\bf (F)} 
follows similarly as Assumption {\bf (B)}, using Remark \ref{assumption-F-remark}.
\end{proof}

\subsubsection{Degenerate Sturm-Liouville Systems}
\label{degenerate-sturm-liouville-section}

In this section, we consider systems 
\begin{equation} \label{da-equation}
\mathcal{L} \phi 
= - (P(x) \phi')' + V(x) \phi = \lambda \phi, 
\end{equation} 
with degenerate matrices 
\begin{equation*}
P (x) = 
\begin{pmatrix}
P_{11} (x) & 0 \\
0 & 0
\end{pmatrix}.
\end{equation*}
Here, for some $0 < m < n$, $P_{11} \in AC_{\loc} ((a, b), \mathbb{C}^{m \times m})$ 
is a map into the space of self-adjoint matrices. We assume $P_{11} (x)$ is 
invertible for all $x \in (a, b)$, and additionally that
$V \in C ((a, b), \mathbb{C}^{n \times n})$.
For notational convenience, we will write 
\begin{equation} \label{degenerate-sls-v}
V (x) = 
\begin{pmatrix}
V_{11} (x) & V_{12} (x) \\
V_{12} (x)^* & V_{22} (x)
\end{pmatrix},
\end{equation}
where for each $x \in (a, b)$, $V_{11} (x)$ is a 
self-adjoint $m \times m$ matrix, 
$V_{12} (x)$ is an $m \times (n-m)$
matrix, and $V_{22} (x)$ is a self-adjoint $(n-m) \times (n-m)$
matrix. We will write 
\begin{equation*}
\phi = 
\begin{pmatrix}
\phi_1 \\
\phi_2
\end{pmatrix};
\quad \phi_1 (x; \lambda) \in \mathbb{C}^m; 
\quad \phi_2 (x; \lambda) \in \mathbb{C}^{n-m}, 
\end{equation*}
allowing us to express the system (\ref{da-equation}) as 
\begin{equation} \label{da-system-form}
\begin{aligned}
- (P_{11} (x) \phi_1')' + V_{11} (x) \phi_1 + V_{12} (x) \phi_2 &= \lambda \phi_1 \\
V_{12} (x)^* \phi_1 + V_{22} (x) \phi_2 &= \lambda \phi_2.  
\end{aligned}
\end{equation}

First, it follows from Theorem 2.2 of \cite{ALMS1994} 
that the essential spectrum associated with (\ref{da-system-form}) 
contains (though might not be limited to) the union of the ranges of the 
eigenvalues of $V_{22} (x)$ as $x$ ranges over 
$(a, b)$. More precisely, let $\{\nu_k (x)\}_{k=1}^{n-m}$ denote the 
eigenvalues of $V_{22} (x)$, and let $\mathcal{R}_k$ denote the 
range of $\nu_k : (a, b) \to \mathbb{R}$. Then the 
essential spectrum associated with (\ref{da-system-form})
contains the set 
\begin{equation*}
\mathcal{R} := \bigcup_{k=1}^{n-m} \mathcal{R}_k.
\end{equation*}
For values $\lambda \notin \mathcal{R}$, we can eliminate 
$\phi_2$ from (\ref{da-system-form}) to obtain the system 
\begin{equation*}
- (P_{11} (x) \phi_1')' + V_{11} (x) \phi_1 + V_{12} (x) (\lambda I - V_{22} (x))^{-1} V_{12} (x)^* \phi_1  
= \lambda \phi_1. 
\end{equation*} 
We can express this system as a first-order system in the usual way, writing
$y_1 = \phi_1$ and $y_2 = P_{11} \phi_1'$. In this way, we obtain (\ref{hammy})
with 
\begin{equation} \label{sls-b}
\mathbb{B} (x; \lambda) =
\begin{pmatrix}
\lambda I - \mathbf{V} (x; \lambda) & 0 \\
0 & P_{11} (x)^{-1} 
\end{pmatrix},
\end{equation}
where we've set 
\begin{equation*}
\mathbf{V} (x; \lambda) = V_{11} (x) + V_{12} (x) (\lambda I - V_{22} (x))^{-1} V_{12} (x)^*. 
\end{equation*}
In order to analyze the system (\ref{hammy})-(\ref{sls-b}) using our general 
framework, we need to verify Assumptions {\bf (A)} through {\bf (F)}.  In order to 
do this, we make the following assumptions. 

\medskip
\noindent
{\bf (DSLS)} For (\ref{hammy})-(\ref{sls-b}), assume 
$P_{11} \in AC_{\loc} ((a,b), \mathbb{R}^{m \times m})$, with 
$P_{11} (x)$ self-adjoint for all $x \in (a, b)$, and also 
that there exists a constant $\theta > 0$ so that 
\begin{equation*}
    (P_{11} (x) v, v) \ge \theta |v|^2,
    \quad \forall \, x \in (a, b), \, v \in \mathbb{C}^n. 
\end{equation*}
In addition, for $V$ as in (\ref{degenerate-sls-v}), suppose 
$V \in C((a, b), \mathbb{R}^{n \times n})$, and also that 
there exists a constant $C$ so that 
\begin{equation*}
    |V_{12} (x)|, |V_{22} (x)| \le C,
    \quad \forall \, x \in (a, b).
\end{equation*}
\medskip

\begin{lemma} \label{degenerate-sturm-liouville-lemma}
Let Assumptions {\bf (DSLS)} hold, and fix any interval 
$I = [\lambda_1, \lambda_2]$, $\lambda_1 < \lambda_2$,
for which there exists some $\delta > 0$ so that
for any $x \in (a, b)$ if $\nu (x)$ is any eigenvalue
of $V_{22} (x)$ then either $\nu (x) \le \lambda_1 - \delta$ or 
$\nu (x) \ge \lambda_2 + \delta$. Then the system (\ref{hammy})-(\ref{sls-b})
satisfies Assumptions {\bf (A)} through {\bf (F)} on $I$. 
\end{lemma}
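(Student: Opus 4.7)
The plan is to verify Assumptions (A) through (F) for the system (\ref{hammy})-(\ref{sls-b}) by exploiting the uniform spectral gap $\delta > 0$ separating $[\lambda_1, \lambda_2]$ from the range of every eigenvalue $\nu_k(x)$ of $V_{22}(x)$. This gap yields a uniform bound on $\|(\lambda I - V_{22}(x))^{-1}\|$ for all $(x, \lambda) \in (a,b) \times I$, which, combined with the uniform boundedness of $V_{12}$, $V_{22}$ and the uniform ellipticity of $P_{11}$, reduces most estimates to routine manipulations. The argument will closely parallel the treatment of the quadratic Schr\"odinger case in Section \ref{quadratic-schrodinger-systems-section}, with the role of the positive definite weight $Q_1 + 2 \lambda Q_2$ played by the matrix $I + V_{12}(x)(\lambda I - V_{22}(x))^{-2} V_{12}(x)^*$, which arises as the top-left block of $\mathbb{B}_\lambda(x;\lambda)$.

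First I would establish Assumption (A) by direct computation: the top-left block of $\mathbb{B}_\lambda$ is self-adjoint and, since $V_{12}(\lambda I - V_{22})^{-2} V_{12}^* = (V_{12} M)(V_{12} M)^*$ with $M = (\lambda I - V_{22})^{-1}$, is bounded below by $I$; continuity of $V$ and local absolute continuity of $P_{11}^{-1}$ supply the required locally $L^1$ majorants $b_0, b_1$. Assumption (B) then reduces, via this $\geq I$ lower bound, to showing that no nontrivial solution can have $y_1 \equiv 0$ on any subinterval, which is immediate from the system $-y_2' = (\lambda I - \mathbf{V})y_1$, $y_1' = P_{11}^{-1} y_2$ together with invertibility of $P_{11}$.

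The pivotal calculation for Assumptions (C), (D), and (E) is the resolvent identity $(\lambda I - V_{22})^{-1} - (\lambda_* I - V_{22})^{-1} = (\lambda_* - \lambda)(\lambda I - V_{22})^{-1}(\lambda_* I - V_{22})^{-1}$, which yields the closed form
\begin{equation*}
\mathcal{E}_{11}(x; \lambda, \lambda_*) = (\lambda - \lambda_*)\bigl[I + V_{12}(\lambda_* I - V_{22})^{-2} V_{12}^*\bigr]^{-1}\bigl[I + V_{12}(\lambda I - V_{22})^{-1}(\lambda_* I - V_{22})^{-1} V_{12}^*\bigr],
\end{equation*}
with all remaining blocks of $\mathcal{E}$ taken to be zero. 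From this formula, boundedness and continuous differentiability of $\mathcal{E}$ as a multiplication operator on $L^2_{\mathbb{B}_\lambda}$, as well as $\|\mathcal{E}\| = \mathbf{O}(|\lambda - \lambda_*|)$, are immediate, giving (E). The equivalence of the $L^2_{\mathbb{B}_\lambda}$ spaces and the $\lambda$-independence of $\mathcal{D}_M$ in (C) follow by rewriting the defining equation for $\mathcal{D}_M(\lambda)$ at a nearby $\tilde{\lambda}$ and inverting the top block of $\mathbb{B}_{\tilde{\lambda}}$, which is available thanks to the $\geq I$ bound. For (D) I would invoke Lemma \ref{assumption-d-lemma}: Krall's Theorem V.2.2 gives constancy of $m_a(\mu, \lambda_0)$ and $m_b(\mu, \lambda_0)$ in $\mu \in \mathbb{C} \setminus \mathbb{R}$ from reality of $\mathbb{B}(x;\lambda_0)$, while the auxiliary hypothesis $\|\mathcal{E}_\lambda(\cdot;\lambda, \lambda_0) - I\| = \mathbf{o}(1)$ follows from the formula above by noting that at $\lambda = \lambda_*$ the two bracketed factors are inverses, so $\partial_\lambda \mathcal{E}_{11}|_{\lambda = \lambda_*} = I$.

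For (F), non-negativity of $\mathbb{B}(x;\lambda_2) - \mathbb{B}(x;\lambda_1)$ is a sign argument driven by the spectral gap: the top-left block equals $(\lambda_2 - \lambda_1)\bigl[I + V_{12}(\lambda_1 I - V_{22})^{-1}(\lambda_2 I - V_{22})^{-1} V_{12}^*\bigr]$, and since every eigenvalue $\nu_k(x)$ of $V_{22}(x)$ satisfies either $\nu_k \leq \lambda_1 - \delta$ or $\nu_k \geq \lambda_2 + \delta$, each product $(\lambda_1 - \nu_k)(\lambda_2 - \nu_k)$ is strictly positive, making the sandwich non-negative and the bracket $\geq I$. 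The \emph{moreover} clause of (F) follows via Remark \ref{assumption-F-remark} by the same $y_1 \not\equiv 0$ argument used for (B). The main obstacle I anticipate is the verification of (C), where one must ensure that the algebraic rearrangement producing $\tilde{f}$ from $f$ at shifted values of $\lambda$ genuinely stays in the weighted $L^2$ space and that $\mathcal{D}_M$ does not secretly depend on $\lambda$; here the uniform resolvent bound from the spectral gap is exactly what controls all intermediate terms and closes the argument.
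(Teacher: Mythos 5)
Your proposal is correct and follows essentially the same route as the paper: you identify the same multiplier $\mathcal{E}_{11}(x;\lambda,\lambda_*) = (I - \mathbf{V}_\lambda(x;\lambda_*))^{-1}[(\lambda-\lambda_*)I - (\mathbf{V}(x;\lambda) - \mathbf{V}(x;\lambda_*))]$ (written explicitly via the resolvent identity), the same $\partial_\lambda \mathcal{E}_{11}|_{\lambda=\lambda_*} = I$ observation feeding Lemma \ref{assumption-d-lemma}, and the same spectral-gap sign computation $(\lambda_1 - \nu)(\lambda_2 - \nu) > 0$ for (F). The only stylistic difference is that the paper phrases the estimates in terms of $\mathbf{V}_\lambda$ and $\mathbf{V}(\lambda)-\mathbf{V}(\lambda_*)$ rather than unpacking them into $V_{12}, V_{22}$, but the computations are identical.
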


\begin{proof}
For Assumption {\bf (A)}, we observe that it follows from our 
assumptions on $V(x)$ that for each $\lambda \in [\lambda_1, \lambda_2]$
the matrix norm $|\mathbf{V} (x; \lambda)|$ is uniformly bounded for all 
$x \in (a, b)$. It follows from compactness of $[\lambda_1, \lambda_2]$ 
that we can set 
\begin{equation*}
    b_0 (x) := \max_{\lambda \in [\lambda_1, \lambda_2]}
    |\mathbb{B} (x; \lambda)|,
\end{equation*}
and conclude that for any compact interval $[c, d] \subset (a, b)$,
$c < d$, there exists a constant $C_0^{c, d}$ so that $b_0 (x) \le C_0^{c, d}$
for all $x \in (a, b)$. It follows immediately that 
$b_0 \in L^1_{\loc} ((a, b), \mathbb{R})$, and that 
$|\mathbb{B} (x; \lambda)| \le b_0 (x)$ for all 
$x \in (a, b)$, $\lambda \in [\lambda_1, \lambda_2]$. 
Next, computing directly, we can write 
\begin{equation} \label{sls-b-derivative}
\mathbb{B}_{\lambda} (x; \lambda) =
\begin{pmatrix}
I - \mathbf{V}_{\lambda} (x; \lambda) & 0 \\
0 & 0 
\end{pmatrix},
\end{equation}
where 
\begin{equation} \label{v-derivative}
\mathbf{V}_{\lambda} (x; \lambda) = - V_{12} (x) (\lambda I - V_{22} (x))^{-2} V_{12} (x)^*.
\end{equation} 
Similarly as with $\mathbb{B} (x; \lambda)$, we can set 
\begin{equation*}
    b_1 (x) := \max_{\lambda \in [\lambda_1, \lambda_2]}
    |\mathbb{B}_{\lambda} (x; \lambda)|,
\end{equation*}
and conclude that for any compact interval $[c, d] \subset (a, b)$,
$c < d$, there exists a constant $C_1^{c, d}$ so that $b_1 (x) \le C_1^{c, d}$
for all $x \in (a, b)$. It follows immediately that 
$b_1 \in L^1_{\loc} ((a, b), \mathbb{R})$, and that 
$|\mathbb{B}_{\lambda} (x; \lambda)| \le b_1 (x)$ for all 
$x \in (a, b)$, $\lambda \in [\lambda_1, \lambda_2]$.

For Assumption {\bf (B)}, we let 
$y (\cdot; \lambda) \in AC_{\loc} ((a, b), \mathbb{C}^{2m})$
be any non-trivial solution of (\ref{hammy})-(\ref{sls-b}), 
expressed as $y = \genfrac{(}{)}{0pt}{1}{y_1}{y_2}$, and for any 
$[c, d] \subset (a, b)$, $c < d$, compute 
\begin{equation} \label{sls-integral}
    \int_c^d (\mathbb{B}_{\lambda} (x; \lambda) y (x; \lambda), y(x; \lambda)) dx
    = \int_c^d \Big((I - \mathbf{V}_{\lambda} (x; \lambda)) y_1 (x; \lambda), y_1 (x; \lambda)\Big) dx.
\end{equation}
It's clear from (\ref{v-derivative}) that $\mathbf{V}_{\lambda} (x; \lambda)$ is non-positive,
and so $I - \mathbf{V}_{\lambda} (x; \lambda)$ is positive definite for all $x \in [c, d]$ and all 
$\lambda \in [\lambda_1, \lambda_2]$. We can conclude that (\ref{sls-integral})
can only be 0 if $y_1 (x; \lambda) = 0$ for all $x \in [c, d]$, and in this case 
we must also have $y_2 (x; \lambda) = 0$ for all $x \in [c, d]$. But this contradicts
our assumption that $y(x; \lambda)$ is a non-trivial solution, allowing us to conclude
that the left-hand side of (\ref{sls-integral}) must be positive; i.e., that 
Assumption {\bf (B)} must hold. 

For Assumption {\bf (C)}, we first observe that given any measurable function $f$, expressed 
as $f = \genfrac{(}{)}{0pt}{1}{f_1}{f_2}$, we can write 
\begin{equation*}
    \int_a^b (\mathbb{B}_{\lambda} (x; \lambda) f(x), f(x)) dx
    = \int_a^b ((I - \mathbf{V}_{\lambda} (x; \lambda)) f_1(x), f_1(x)) dx.
\end{equation*}
Since $\mathbf{V}_{\lambda} (x; \lambda)$ is non-positive we see that 
\begin{equation*}
    |((I - \mathbf{V}_{\lambda} (x; \lambda)) f_1(x), f_1(x))| \ge |f_1 (x)|^2
\end{equation*}
for a.e. $x \in (a, b)$, and in addition, under Assumptions {\bf (DSLS)}, there exists
a constant $C_1$ so that 
\begin{equation*}
    |((I - \mathbf{V}_{\lambda} (x; \lambda)) f_1(x), f_1(x))| \le C_1 |f_1 (x)|^2.
\end{equation*}
Similarly as in Section \ref{quadratic-schrodinger-systems-section}, we can conclude that  
\begin{equation*}
    L^2_{\mathbb{B}_{\lambda}} ((a, b), \mathbb{C}^{2n})
    = \{\textrm{Measurable functions} \, f: (a, b) \to \mathbb{C}^{2n}:
    f_1 \in L^2 ((a, b), \mathbb{C}^n)\},
\end{equation*}
and this space is independent of $\lambda$.

Turning to the second part of Assumption {\bf (C)}, a function 
\begin{equation*}
    y \in \AC_{\loc} ((a, b), \mathbb{C}^{2m}) \cap L^2_{\mathbb{B}_{\lambda}} ((a, b), \mathbb{C}^{2m})
\end{equation*}
is in $\mathcal{D}_M (\lambda)$ if and only if there exists some 
$f \in L^2_{\mathbb{B}_{\lambda}} ((a, b), \mathbb{C}^{2m})$, expressed as
$f = \genfrac{(}{)}{0pt}{1}{f_1}{f_2}$, so that 
\begin{equation} \label{degenerate-sls-C}
    \begin{pmatrix}
    -y_2' \\ y_1'
    \end{pmatrix}
    -
    \begin{pmatrix}
    (\lambda I - \mathbf{V} (x; \lambda)) y_1 \\ P_{11} (x)^{-1} y_2
    \end{pmatrix}
    = \begin{pmatrix}
    (I - \mathbf{V}_{\lambda} (x; \lambda)) f_1 \\ 0
    \end{pmatrix},
\end{equation}
for a.e. $x \in (a, b)$. 
For $\lambda, \tilde{\lambda} \in [\lambda_1, \lambda_2]$, 
in order to show that $\mathcal{D}_M (\lambda)$ and 
$\mathcal{D}_M (\tilde{\lambda})$ are identical sets, it
is sufficient to show the following: given that we can 
find $f_1 \in L^2 ((a, b), \mathbb{C}^m)$ for which 
(\ref{degenerate-sls-C}) holds, we can find 
$\tilde{f}_1 \in L^2 ((a, b), \mathbb{C}^m)$ for which the 
same relations hold with $\lambda$ replaced by $\tilde{\lambda}$. 
As a starting point, we can rearrange the first equation in 
(\ref{degenerate-sls-C}) as 
\begin{equation*}
    - y_2' - (\tilde{\lambda} I - \mathbf{V} (x; \tilde{\lambda})) y_1
    = - (\tilde{\lambda} I - \mathbf{V} (x; \tilde{\lambda})) y_1 
    + (\lambda I - \mathbf{V} (x; \lambda)) y_1
    + (I - \mathbf{V}_{\lambda} (x; \lambda)) f_1.
\end{equation*}
We see from this that we need to find $\tilde{f}_1 \in L^2 ((a, b), \mathbb{C}^m)$
so that 
\begin{equation*}
    \Big{\{} (\lambda I - \mathbf{V} (x; \lambda)) - (\tilde{\lambda} I - \mathbf{V} (x; \tilde{\lambda})) \Big{\}} y_1
    + (I - \mathbf{V}_{\lambda} (x; \lambda)) f_1 
    = (I - \mathbf{V}_{\lambda} (x; \tilde{\lambda})) \tilde{f}_1.
\end{equation*}
We've seen that under Assumptions {\bf (DSLS)} the matrix 
$I - \mathbf{V}_{\lambda} (x; \tilde{\lambda})$ is invertible 
for all $x \in (a, b)$, allowing us to solve for $\tilde{f}_1$
as 
\begin{equation*}
    \tilde{f}_1 (x)
    = (I - \mathbf{V}_{\lambda} (x; \tilde{\lambda}))^{-1}
    \Big{\{} (\lambda - \tilde{\lambda})y_1 - (\mathbf{V} (x; \lambda) - \mathbf{V} (x; \tilde{\lambda})) y_1  
    + (I - \mathbf{V}_{\lambda} (x; \lambda)) f_1 \Big{\}}. 
\end{equation*}
Here, 
\begin{equation} \label{delta-V}
\mathbf{V} (x; \lambda) - \mathbf{V} (x; \tilde{\lambda})
= - (\lambda - \tilde{\lambda}) V_{12} (x) (\lambda I - V_{22} (x))^{-1}
(\tilde{\lambda} I - V_{22} (x))^{-1} V_{12} (x)^*,
\end{equation}
from which it follows from Assumptions {\bf (DSLS)} that there 
exists a constant $C_2$ so that
\begin{equation*}
\| \mathbf{V} (\cdot; \lambda) - \mathbf{V} (\cdot; \tilde{\lambda}) \|_{L^{\infty} ((a, b), \mathbb{C}^{m \times m})}
\le C_2 |\lambda - \tilde{\lambda}|,
\end{equation*}
for all $\lambda, \tilde{\lambda} \in [\lambda_1, \lambda_2]$.
Using the $L^{\infty} ((a, b), \mathbb{C}^{m \times m})$ control we have on 
$(\tilde{\lambda} I - \mathbf{V}_{\lambda} (x; \tilde{\lambda}))^{-1}$
and $\mathbf{V} (x; \lambda) - \mathbf{V} (x; \tilde{\lambda})$, along with 
$L^2 ((a, b), \mathbb{C}^m)$ control on $y_1$ and $f_1$, we see that 
$\tilde{f}_1 \in L^2 ((a, b), \mathbb{C}^m)$ as required. 

Similarly as in Section \ref{quadratic-schrodinger-systems-section}, we 
will next verify Assumption {\bf (E)}, then return to the verification  
of Assumption {\bf (D)}. For Assumption {\bf (E)}, we begin by 
identifying the function $\mathcal{E} (x; \lambda, \lambda_*)$. In this 
case, 
\begin{equation} \label{sls-B-difference}
    \mathbb{B} (x; \lambda) - \mathbb{B} (x; \lambda_*)
    = \begin{pmatrix}
    (\lambda - \lambda_*) I - (\mathbf{V} (x; \lambda) - \mathbf{V} (x; \lambda_*)) & 0 \\
    0 & 0 
    \end{pmatrix},
\end{equation}
and by comparing this with $\mathbb{B}_{\lambda} (x; \lambda_*)$ 
(from {\ref{sls-b-derivative}}), we see that we can express 
$\mathcal{E} (x; \lambda, \lambda_*)$ as 
\begin{equation*}
    \mathcal{E} (x; \lambda, \lambda_*)
    = \begin{pmatrix}
    \mathcal{E}_{11} (x; \lambda, \lambda_*) & 0 \\
    0 & 0
    \end{pmatrix},
\end{equation*}
with 
\begin{equation} \label{E11}
\mathcal{E}_{11} (x; \lambda, \lambda_*)
= (I - \mathbf{V}_{\lambda} (x; \lambda_*))^{-1}
 \Big{\{} (\lambda - \lambda_*) I - (\mathbf{V} (x; \lambda) - \mathbf{V} (x; \lambda_*)) \Big{\}}.
\end{equation}
(Here, the entries in the bottom row of $\mathcal{E} (x; \lambda, \lambda_*)$ don't have a role in the analysis, and 
are chosen to be zero for simplicity.) Computing directly, we also see that 
\begin{equation} \label{sls-E11-derivative}
\partial_{\lambda} \mathcal{E}_{11} (x; \lambda, \lambda_*)
= (I - \mathbf{V}_{\lambda} (x; \lambda_*))^{-1}
(I - \mathbf{V}_{\lambda} (x; \lambda)).
\end{equation}

For Assumptions {\bf (E)}(i) and {\bf (E)}(ii), we observe that for 
any $f \in L^2_{\mathbb{B}_{\lambda}} ((a, b), \mathbb{C}^{2m})$, 
expressed as $f = \genfrac{(}{)}{0pt}{1}{f_1}{f_2}$, we have 
\begin{equation*}
    \| \mathcal{E} (\cdot; \lambda, \lambda_*) f  \|_{\mathbb{B}_{\lambda}}^2
    = \int_a^b (\mathcal{E}_{11} (x; \lambda, \lambda_*) f_1 (x))^* 
    (I - \mathbf{V}_{\lambda} (x; \lambda)) \mathcal{E}_{11} (x; \lambda, \lambda_*) f_1 (x) dx, 
\end{equation*}
for which we can combine (\ref{E11}) with (\ref{delta-V}) (with $\lambda_*$ 
in place of $\tilde{\lambda}$ in the latter) to see that there exists a 
constant $C_3$ so that 
\begin{equation} \label{sls-E-bound}
    \|\mathcal{E} (\cdot; \lambda, \lambda_*) \| \le C_3 |\lambda - \lambda_*|.
\end{equation}
Assumptions {\bf (E)}(i) and {\bf (E)}(ii) both follow immediately. 

For Assumption {\bf (E)}(iii), it's clear from 
(\ref{sls-E11-derivative}) that for a.e. $x \in (a, b)$, 
$\mathcal{E}_{\lambda} (x; \lambda, \lambda_*)$ is continuous 
in $\lambda$. In order to verify that 
$\mathcal{E} (\cdot; \lambda, \lambda_*)$ is differentiable
as a map from $I_{\lambda_*, r} \subset I$ to 
$\mathcal{B} (L^2_{\mathbb{B}_{\lambda}} ((a, b), \mathbb{C}^{2n}))$, 
we let $\epsilon (h; \lambda) 
\in \mathcal{B} (L^2_{\mathbb{B}_{\lambda}} ((a, b), \mathbb{C}^{2n}))$
be the operator so that 
\begin{equation*}
\mathcal{E} (\cdot; \lambda + h, \lambda_*)
= \mathcal{E} (\cdot; \lambda, \lambda_*)
+ \mathcal{E}_{\lambda} (\cdot; \lambda, \lambda_*) h
+ \epsilon (h; \lambda) h,
\end{equation*}
and our goal is to show that 
\begin{equation} \label{degenerate-limit}
    \lim_{h \to 0} \| \epsilon (h; \lambda) \| = 0.
\end{equation}
Using (\ref{E11}) and (\ref{sls-E11-derivative})
we find that 
\begin{equation*}
\epsilon (h; \lambda)
= \begin{pmatrix} 
\epsilon_{11} (h; \lambda)  & 0 \\
0 & 0
\end{pmatrix},
\end{equation*}
where 
\begin{equation*}
\epsilon_{11} (h; \lambda)
= - h (I - \mathbf{V}_{\lambda} (\cdot; \lambda_*))^{-1} V_{12} (\cdot)
((\lambda + h)I - V_{22} (\cdot))^{-1} (\lambda I - V_{22} (\cdot))^{-2}
V_{12} (x)^*.
\end{equation*}
Proceeding now similarly as in the calculations leading to 
(\ref{sls-E-bound}), we find that (\ref{degenerate-limit}) 
holds. As expected, the derivative of 
$\mathcal{E} (\cdot; \lambda, \lambda_*)$ is 
precisely the usual partial derivative 
$\frac{\partial \mathcal{E}}{\partial \lambda} (\cdot; \lambda, \lambda_*)$,
and we need to verify that this latter operator is 
continuous on $I_{\lambda_*, r} \subset I$. For this, we fix any
$\lambda_0 \in I_{\lambda_*, r}$, and observe that
for any other $\lambda \in I_{\lambda_*, r}$, we can write 
\begin{equation} \label{degenerate-derivative-difference}
    \partial_{\lambda} \mathcal{E}_{11} (x; \lambda, \lambda_*)
    - \partial_{\lambda} \mathcal{E}_{11} (x; \lambda_0, \lambda_*)
    = - (I - \mathbf{V}_{\lambda} (x; \lambda_*))^{-1} 
    (\mathbf{V}_{\lambda} (x; \lambda) - \mathbf{V}_{\lambda} (x; \lambda_0)). 
\end{equation}
Focusing on the difference 
$\mathbf{V}_{\lambda} (x; \lambda) - \mathbf{V}_{\lambda} (x; \lambda_*)$, we 
can write 
\begin{equation} \label{sls-V-derivative-difference}
    \begin{aligned}
    \mathbf{V}_{\lambda} &(x; \lambda) - \mathbf{V}_{\lambda} (x; \lambda_0)
    = - V_{12} (x) \Big{\{} ((\lambda I - V_{22} (x))^{-1})^* (\lambda I - V_{22} (x))^{-1} \\
    & \quad \quad - ((\lambda_0 I - V_{22} (x))^{-1})^* (\lambda_0 I - V_{22} (x))^{-1} \Big{\}} V_{12} (x)^* \\
    &= - V_{12} (x) ((\lambda I - V_{22} (x))^{-1})^* 
    \Big{\{} (\lambda I - V_{22} (x))^{-1} - (\lambda_0 I - V_{22} (x))^{-1} \Big{\}} V_{12} (x)^* \\
    &\quad \quad + V_{12} (x)\Big{\{} (\lambda I - V_{22} (x))^{-1} - (\lambda_0 I - V_{22} (x))^{-1} \Big{\}}
    (\lambda_0 I - V_{22} (x))^{-1} V_{12} (x)^* \\
    &= - (\lambda - \lambda_0) V_{12} (x) \Big{\{} ((\lambda I - V_{22} (x))^{-1})^2 (\lambda_0 I - V_{22} (x))^{-1} \\
    &\quad \quad + (\lambda I - V_{22} (x))^{-1} ((\lambda_0 I - V_{22} (x))^{-1})^2 \Big{\}} V_{12} (x)^*.
    \end{aligned}
\end{equation}
Combining this relation with (\ref{degenerate-derivative-difference}), we find that 
there exists a constant $C_4$ so that 
\begin{equation*}
    \| \partial_{\lambda} \mathcal{E} (\cdot; \lambda, \lambda_*)
    -  \partial_{\lambda} \mathcal{E} (\cdot; \lambda_0, \lambda_*) \|
    \le C_4 |\lambda - \lambda_0|,
\end{equation*}
which implies continuity. 

For Assumption {\bf (E)}(iv), we let $f, g \in L^2_{\mathbb{B}_{\lambda}} ((a, b), \mathbb{C}^{2m})$
and observe that 
\begin{equation*}
    |f (x)^* \mathbb{B}_{\lambda} (x; \lambda) g(x)|
     = |f_1 (x)^* (I - \mathbf{V}_{\lambda} (x; \lambda)) g_1 (x)|.
\end{equation*}
If we set 
\begin{equation*}
    \tilde{h} (x) := \max_{\lambda \in [\lambda_1, \lambda_2]}
    \| I - \mathbf{V}_{\lambda} (x; \lambda) \|_{L^{\infty} ((a, b), \mathbb{C}^{m \times m})},
\end{equation*}
then according to Assumptions {\bf (DSLS)} (and using compactness of $[\lambda_1, \lambda_2]$),
there exists a constant $C_5$ so that $|\tilde{h} (x)| \le C_5$ for all $x \in (a, b)$. It follows
that Assumption {\bf (E)}(iv) is satisfied with 
\begin{equation*}
    h(x) := \tilde{h} (x)^2  |(I - \mathbf{V}_{\lambda} (x; \lambda_*))^{-1}|
    |f_1 (x)| |g_1 (x)|.
\end{equation*}

At this point, we return to establishing Assumption {\bf (D)}, which 
now follows via Lemma \ref{assumption-d-lemma} from the 
condition 
\begin{equation*}
    \|\mathcal{E}_{\lambda} (\cdot; \lambda, \lambda_0) - I\| = \mathbf{o} (1),
    \quad \lambda \to \lambda_0.
\end{equation*}
For this, we first use (\ref{sls-E11-derivative}) to write
\begin{equation*}
    \begin{aligned}
    \partial_{\lambda} \mathcal{E}_{11} (x; \lambda, \lambda_*) - I
    &= (I - \mathbf{V}_{\lambda} (x; \lambda_*))^{-1} 
    (I - \mathbf{V}_{\lambda} (x; \lambda)) - I \\
    &= (I - \mathbf{V}_{\lambda} (x; \lambda_*))^{-1} 
    \Big{\{}  (I - \mathbf{V}_{\lambda} (x; \lambda)) -  (I - \mathbf{V}_{\lambda} (x; \lambda_*)) \Big{\}} \\
    &= (I - \mathbf{V}_{\lambda} (x; \lambda_*))^{-1} 
    (\mathbf{V}_{\lambda} (x; \lambda) - \mathbf{V}_{\lambda} (x; \lambda_*)). 
    \end{aligned}
\end{equation*}
In view of (\ref{sls-V-derivative-difference}), we can conclude that there exists
a constant $C_6$ so that 
\begin{equation*}
    \| \partial_{\lambda} \mathcal{E}_{11} (x; \lambda, \lambda_*) - I \|_{L^{\infty} ((a, b), \mathbb{C}^{m \times m})}
    \le C_6 |\lambda - \lambda_*|,
\end{equation*}
which is stronger than we require. 

Finally, for Assumption {\bf (F)} we see from 
(\ref{sls-B-difference}) (with $\lambda_1$ and $\lambda_2$ 
respectively in place of $\lambda_*$ and $\lambda$) that 
we need to understand the matrix 
$\mathbf{V} (x; \lambda_2) - \mathbf{V} (x; \lambda_1)$. 
Using (\ref{delta-V}), we can write 
\begin{equation*} 
\mathbf{V} (x; \lambda_2) - \mathbf{V} (x; \lambda_1)
= - (\lambda_2 - \lambda_1) V_{12} (x) (\lambda_2 I - V_{22} (x))^{-1}
(\lambda_1 I - V_{22} (x))^{-1} V_{12} (x)^*.
\end{equation*}
In order to understand the sign of this resulting matrix, we observe
by spectral mapping that for each $x \in (a, b)$ the eigenvalues of
the matrix
$(\lambda_2 I - V_{22} (x))^{-1} (\lambda_1 I - V_{22} (x))^{-1}$
are precisely the values 
$(\lambda_2 - \nu (x))^{-1} (\lambda_1 - \nu (x))^{-1}$, where 
$\nu (x)$ is an eigenvalue of $V_{22} (x)$. 

If $\nu (x)$ is an eigenvalue of $V_{22} (x)$, then by assumption we have
either $\nu (x) + \delta < \lambda_1$ or $\nu (x) - \delta > \lambda_2$.
In the former case, we have $\lambda_1 - \nu (x) > \delta$, which 
also implies $\lambda_2 - \nu (x) > \delta$, so that 
\begin{equation} \label{sls-ratio}
    \frac{1}{(\lambda_1 - \nu (x)) (\lambda_2 - \nu (x))} > 0.
\end{equation}
Likewise, if $\nu (x) - \delta > \lambda_2$ then we have 
$\lambda_2 - \nu (x) < -\delta$, which 
also implies $\lambda_1 - \nu (x) < - \delta$, so that
the left-hand side of (\ref{sls-ratio}) is again 
positive. We conclude that for every $x \in (a, b)$, 
the eigenvalues of the matrix 
$(\lambda_2 I - V_{22} (x))^{-1} (\lambda_1 I - V_{22} (x))^{-1}$
are all positive, and consequently the matrix 
$\mathbf{V} (x; \lambda_2) - \mathbf{V} (x; \lambda_1)$
is necessarily non-positive (and negative definite if 
$\rank V_{12} (x) = m$). Since $\lambda_2 > \lambda_1$, it follows
that the matrix 
\begin{equation*}
    (\lambda_2 - \lambda_1) I - (\mathbf{V} (x; \lambda_2) - \mathbf{V} (x; \lambda_1))
\end{equation*}
is positive-definite. In this way, we see that 
$\mathbb{B} (x; \lambda_2) - \mathbb{B} (x; \lambda_1)$ is non-negative. In addition, 
if $y (\cdot; \lambda_1) \in \AC_{\loc} ((a, b), \mathbb{C}^{2m})$ is any solution
of $Jy' = \mathbb{B} (x; \lambda_1) y$, then for any interval 
$[c, d] \subset (a, b)$, $c < d$,
\begin{equation*}
\begin{aligned}
    \int_c^d &((\mathbb{B} (x; \lambda_2) - \mathbb{B} (x; \lambda_1)) y_1 (x; \lambda_1), y(x; \lambda_1)) dx \\
    & = \int_c^d (((\lambda_2 - \lambda_1) I - (\mathbf{V} (x; \lambda_2) - \mathbf{V} (x; \lambda_1)) y_1 (x; \lambda_1), y_1 (x; \lambda_1)) dx,
\end{aligned}    
\end{equation*}
and this must be strictly positive unless $y_1 (x; \lambda_1) = 0$ for all $x \in (c, d)$. 
But since $y_2 = P_{11} (x)^{-1} y_1$, this would contradict the assumption that 
$y(x; \lambda)$ is a non-trivial solution. This completes the verification of
Assumption {\bf (F)}, and so concludes the verifications of Assumptions 
{\bf (A)} through {\bf (F)}. 
\end{proof}

\subsection{Specific Applications}
\label{specific-applications-section}

In this section, we apply our framework to a specific case of the 
quadratic Schr\"odinger equation, and to an equation of Hain-L\"ust 
type arising in the study of magnetohydrodynamics.

\subsubsection{Quadratic Schr\"odinger Equation}
\label{quadratic-schrodinger-example}

When Schr\"odinger's equation for the hydrogen atom is expressed in 
spherical coordinates and analyzed by separation of variables, the 
resulting radial equation can be expressed as 
\begin{equation} \label{schrodinger-hydrogen}
H\phi := - \frac{1}{x^2} (x^2 \phi')' - \frac{\gamma}{x} \phi
+ \frac{\ell (\ell+1)}{x^2} \phi = \lambda \phi,
\quad x \in (0, \infty),
\end{equation}
where $\gamma>0$ is a physical constant and $\ell$ is a non-negative
integer associated with angular momentum (see, e.g., Chapter 12 
in \cite{Ga1974}). In the case $\ell = 0$, the change of 
dependent variable $\psi = x \phi$ reduces (\ref{schrodinger-hydrogen})
to
\begin{equation} \label{schrodinger-hydrogen-transformed}
\mathcal{H}\psi := - \psi'' - \frac{\gamma}{x} \psi = \lambda \psi.
\end{equation}
Equations (\ref{schrodinger-hydrogen}) (with $\ell=0$) and 
(\ref{schrodinger-hydrogen-transformed}) were analyzed in 
\cite{{HS2022}} with the renormalized oscillation approach, 
and in this section we extend that analysis 
to the case in which (\ref{schrodinger-hydrogen-transformed})
is augmented with a quadratic nonlinearity in $\lambda$. As 
background for this, we observe that in the setting
of (\ref{schrodinger-hydrogen})
eigenvalues $\lambda$ correspond with energies, and additionally 
that these energies can affect the potential function so that
Schr\"odinger's equation becomes nonlinear in the spectral 
parameter, 
\begin{equation} \label{schrodinger-nonlinear}
-\psi'' + V(x; \lambda) \psi = \lambda \psi
\end{equation}
(see, e.g., \cite{JJ1972}). In particular, in \cite{JJ1972}, 
the authors consider potentials $V(x;\lambda)$ of the 
forms 
\begin{equation*}
    V^{\pm} (x; \lambda) = U(x) \pm 2 \sqrt{\lambda} Q(x),
\end{equation*}
with $U(x)$ and $Q(x)$ appropriately specified,
leading to Schr\"odinger's equations with quadratic dependence on 
a spectral parameter $\kappa := \sqrt{\lambda}$. For the current
analysis, we take precisely this form with 
$U(x) = - \frac{\gamma}{x}$ and the choice $V^+ (x; \lambda)$
to obtain 
\begin{equation} \label{schrodinger-quadratic-eqn}
\mathcal{H} \psi = - \psi'' - \frac{\gamma}{x} \psi
= - 2 \kappa Q(x) \psi + \kappa^2 \psi.  
\end{equation}
As discussed in \cite{JJ1972}, the eigenvalues of interest 
in (\ref{schrodinger-quadratic-eqn}) are negative-real, so 
$\kappa$ is purely imaginary. In order to work with a real 
spectral parameter, we write $\kappa = i\tau$, and for a 
specific example, we adopt 
from \cite{JJ1972} a form of $Q(x)$, which we
index with a parameter $\delta \ge 0$ to study its effect on 
the location of the eigenvalues. In particular, we use 
\begin{equation*}
    Q(x; \delta) := \frac{i \delta}{2} \frac{e^{-x/2}}{1-.5e^{-x/2}}.
\end{equation*}
Combining these observations, we arrive at the family of equations
\begin{equation} \label{schrodinger-family}
    H\psi = - \psi'' + V(x) \psi
    = (Q_1 (x; \delta) \lambda + Q_2 (x) \lambda^2) \psi,
    \quad x \in (0, \infty),
\end{equation}
where $V (x) = -\frac{\gamma}{x}$, 
\begin{equation*}
    Q_1 (x; \delta) = \frac{\delta}{2} \frac{e^{-x/2}}{1-.5e^{-x/2}},
\end{equation*}
$Q_2 (x) = -1$, and we are using $\lambda$ rather than $\tau$ 
to denote the spectral parameter. For the calculations
carried out below we will complete the specification of 
$H$ by taking $\gamma = 4$. 

The first step of our analysis of (\ref{schrodinger-family}) is to 
verify that Assumptions {\bf (Q)} from 
Section \ref{quadratic-schrodinger-systems-section} are satisfied. 
For this, we first observe by inspection that 
$V, Q_1, Q_2 \in L^1_{\loc} ((0, \infty), \mathbb{R})$,
and also that $Q_1, Q_2 \in L^{\infty} ((0, \infty), \mathbb{R})$. 
In addition, since $V$, $Q_1$, and $Q_2$ are all functions 
mapping into the real numbers they are immediately symmetric
when viewed as $1 \times 1$ matrices. For the final assumption in {\bf (Q)}, we can write 
\begin{equation*}
    Q_1 (x) + 2 \lambda Q_2 (x)
    = \frac{\delta}{2} \frac{e^{-x/2}}{1-.5e^{-x/2}} - 2\lambda
    \ge - 2\lambda,
\end{equation*}
so the condition is satisfied on any interval $I = [\lambda_1, \lambda_2]$,
$\lambda_1 < \lambda_2$, for which $\lambda_2 < 0$. 

Next, we fix a particular interval $[\lambda_1, \lambda_2]$ that will remain 
unchanged throughout the analysis. In order to anchor our work to the 
case in which $\lambda$ appears linearly, we will start by thinking about 
the case $\delta = 0$, for which (\ref{schrodinger-family}) becomes
\begin{equation} \label{schrodinger-family2}
    - \psi'' + V(x) \psi
    = - \lambda^2 \psi.
\end{equation}
This is precisely the case analyzed in \cite{HS2022}, except that in 
\cite{HS2022} the appearance of $- \lambda^2$ here is 
replaced by $\lambda \le 0$. As discussed in \cite{HS2022}, 
the eigenvalues 
(with appropriate boundary conditions, discussed in detail below) are known
to be
\begin{equation*} \label{schrodinger-eigenvalues}
    \lambda = \pm \frac{\gamma}{2n},
    \quad n = 1, 2, 3, \dots.
\end{equation*}
The lowest four eigenvalues are $-2$, $-1$, $-\frac{2}{3}$,
and $-\frac{1}{2}$, and in order to focus on the first three of these
we will take $\lambda_1 = -3$ and $\lambda_2 = -7/12$. 

In the usual way, we will express (\ref{schrodinger-family})
as a first-order system by setting $y_1 = \psi$ and $y_2 = \psi'$. We 
obtain (\ref{hammy}) with  
\begin{equation} \label{quadratic-b}
    \mathbb{B} (x; \lambda)
    = \begin{pmatrix}
    \frac{\lambda \delta}{2} \frac{e^{-x/2}}{1-.5e^{-x/2}} - \lambda^2 + \frac{\gamma}{x} & 0 \\
    0 & 1
    \end{pmatrix}.
\end{equation}
For each $\lambda \in [-3,-7/12]$, the maximal operator $\mathcal{T}_M (\lambda)$
is now defined precisely as in Definition \ref{maximal-operator}, Part (ii).

Our starting point for the analysis is to determine the 
limit-case of (\ref{hammy}) with (\ref{quadratic-b}). For this,
we fix any $\lambda_0 \in [-3, -7/12]$ and any 
$\mu_0 \in \mathbb{C} \backslash \mathbb{R}$, and we consider
the linear Hamiltonian system 
\begin{equation} \label{for-niessen1}
    Jy' = \mathbb{B} (x; \lambda_0) y + \mu_0 \mathbb{B}_{\lambda} (x; \lambda_0)y.
\end{equation}
Specifically, we will take $\lambda_0 = -1$ and $\mu_0 = i$. We let 
$\Phi (x; i, -1)$ denote a fundamental matrix for this 
system specified with $\Phi (1; i, -1) = I_2$ (i.e., we 
take $c=1$ in our general development). With 
$\mathcal{A} (x; i, -1)$ correspondingly defined 
as in (\ref{mathcal-A-defined}), the value $m_0 (i,-1)$
can be determined by considering the eigenvalues of 
$\mathcal{A} (x; i, -1)$ as $x \to 0^+$, and 
the value $m_{\infty} (i,-1)$
can be determined by considering the eigenvalues of 
$\mathcal{A} (x; i, -1)$ as $x \to + \infty$. We 
denote the eigenvalues of $\mathcal{A} (x; i, -1)$
by $\{\nu_j (x; i, -1)\}_{j=1}^2$, and proceeding numerically,
we find that for the case $\delta = 0$ we approximately have 
\begin{equation} \label{eg1evals}
    \begin{aligned}
    \nu_1 (\epsilon;i,-1) &= -1.3305 \\
    \nu_2 (\epsilon;i,-1) &= .1879,
    \end{aligned}
\end{equation}
and 
\begin{equation*}
    \begin{aligned}
    \nu_1 (10;i,-1) &= -1.0302 \times 10^{-8} \\
    \nu_2 (10;i,-1) &= 2.7857 \times 10^7,
    \end{aligned}
\end{equation*}
where here and below we are taking $\epsilon = 10^{-10}$ as 
a suitably small value for the approximations. 
From the first pair we expect that $m_0 (i;-1) = 2$, from 
which we can conclude from Assumption {\bf (D)} 
(verified for this case in Section 
\ref{quadratic-schrodinger-systems-section}) 
that $m_0 (\mu, \lambda) = 2$ for all 
$\lambda \in [-3, -7/12]$ and 
$\mu \in \mathbb{C} \backslash \mathbb{R}$. By definition, 
then, $\mathcal{T}_M (\lambda)$ is in the limit-circle
case at $x=0$ for all $\lambda \in [-3, -7/12]$. Likewise, 
$m_{\infty} (i;-1) = 1$, from which we can conclude
that $\mathcal{T}_M (\lambda)$ is in the limit-point
case at $x=\infty$ for all $\lambda \in [-3, -7/12]$.
Finally, we note that similar values and conclusions
hold for the cases $\delta = 1$ and $\delta = 5$,
though we omit details in those cases.

\begin{remark} \label{quadratic-limit-case-remark}
Throughout this section, our numerical calculations are 
intended only to illustrate the theory, and we make no 
effort to rigorously justify either the values we obtain
or the conclusions we draw from them. For example, 
in this last calculation, we have not attempted to 
find a rigorous error interval for the values of
$\nu_1 (10^{-10};i,-1)$, $\nu_2 (10^{-10};i,-1)$, 
and $\nu_1 (10;i,-1)$,
and we offer no additional 
direct justification that $\nu_2 (x;i,-1)$ is indeed
tending to $+ \infty$ as $x$ tends to $+ \infty$. 
Nonetheless, we observe that 
in the current relatively simple setting the
associated observations about limiting behavior
can be verified rigorously using Frobenius regular singular-point
theory as $x \to 0^+$ (as described, e.g., in Section 5.6
of \cite{BDM2017}) and a standard asymptotic analysis as
$x \to + \infty$ (as in \cite{HS2020}). Finally, 
we note that in all of our numerical calculations, we work with more 
precise numbers than those given in the text. 
\end{remark}

Since $\mathcal{T}_M (\lambda)$ is in the limit-point case
at $x = + \infty$, no boundary condition is necessary on 
that side, but on the left we must specify a boundary 
condition based on a choice of Niessen elements. As
discussed in Section 5.2 of \cite{HS2022}, it's natural 
in physical arguments to specify boundary conditions 
so that solutions of (\ref{schrodinger-hydrogen}) remain
bounded as $x \to 0^+$. Through the map $\psi = x\phi$, such 
solutions correspond with solutions of (\ref{schrodinger-hydrogen-transformed})
for which $\psi (x; \lambda) \to 0$ as $x \to 0^+$. 
Motivated by these observations, we will choose the Niessen 
elements for our boundary condition at $x = 0$ so that 
solutions vanish as $x \to 0^+$. 

As discussed in Section \ref{operator-section}, the Niessen 
space associated with $x=0$ comprises two solutions to 
(\ref{for-niessen1}), constructed as 
\begin{equation*}
    y_1^0 (x; i, -1) = \Phi (x; i, -1) v_1^0 (i, -1),
    \quad 
    y_2^0 (x; i, -1) = \Phi (x; i, -1) v_2^0 (i, -1),
\end{equation*}
where $v_1^0 (i, -1)$ and $v_2^0 (i, -1)$ are respectively 
vectors obtained as limits ($x \to 0^+$) of eigenvectors
of $\mathcal{A} (x; i,-1)$ associated respectively with
the eigenvalues $\nu_1 (x; i, -1)$ and $\nu_2 (x; i, -1)$
(obtained as described in Lemma \ref{subspace-dimensions-lemma}). 
For computational purposes, we approximate these vectors 
in the case $\delta = 0$ with 
\begin{equation*}
\begin{aligned}
    v_1^0 (i, -1) 
    &\cong 
    v_1 (\epsilon; i, -1)
    =
    \begin{pmatrix}
    .8631 \\ -.1762 + 4733i
    \end{pmatrix}; \\
     v_2^0 (i, -1) 
    &\cong 
    v_2 (\epsilon; i, -1)
    =
    \begin{pmatrix}
    .1762 + 4733i \\ .8631
    \end{pmatrix}.
\end{aligned}
\end{equation*}

\begin{remark} \label{hydrogen-eigs-relation-remark}
    We see in these calculations that 
    \begin{equation*}
        v_2^0 (i; -1) = J \overline{v_1^0 (i; -1)},
    \end{equation*}
    where the overbar denotes complex conjugate. In 
    order to verify that this is generally the case, we 
    first recall the identities 
    \begin{equation} \label{phiJphi}
        \begin{aligned}
            \Phi (x; \bar{\mu}; \lambda)^* J \Phi (x; \mu, \lambda) &= J \\
            \Phi (x; \bar{\mu}, \lambda) &= \overline{\Phi (x; \mu, \lambda)}.
        \end{aligned}
    \end{equation}
The first of these is equation (2.7) from \cite{HS2022}, and 
the second follows from (\ref{for-niessen1}) since 
$\mathbb{B} (x; \lambda)$ has real-valued entries. Now, suppose
$\nu_1$ is an eigenvalue of $\mathcal{A} (x; \mu, \lambda)$
with eigenvector $v_1$ so that 
\begin{equation*}
    \frac{1}{2\im \mu} \Phi (x; \mu, \lambda)^* (J/i) \Phi (x; \mu, \lambda) v_1 = \nu_1 v_1.
\end{equation*}
If we take the complex conjugate of this relation, we obtain 
\begin{equation} \label{new-remark1}
    \frac{1}{2\im \mu} \Phi (x; \bar{\mu}, \lambda)^* (J/i) \Phi (x; \bar{\mu}, \lambda) \bar{v}_1 = \nu_1 \bar{v}_1,
\end{equation}
where we have recalled that $\nu_1 \in \mathbb{R}$. According to 
(\ref{phiJphi}), the matrix $-J \Phi (x; \mu, \lambda) J$ is the 
inverse of $\Phi (x; \bar{\mu}, \lambda)^*$, so if we multiply
(\ref{new-remark1}) on the left by  $-\Phi (x; \mu, \lambda)^* J \Phi (x; \mu, \lambda) J$
we obtain the relation
\begin{equation*}
    \frac{1}{2\im \mu} (J/i) \bar{v}_1
    = \nu_1 \Phi (x; \mu, \lambda)^* J \Phi (x; \mu, \lambda) J \bar{v}_1.
\end{equation*}
Rearranging, we see that 
\begin{equation*}
     \frac{1}{2\im \mu} \Phi (x; \mu, \lambda)^* (J/i) \Phi (x; \mu, \lambda) J \bar{v}_1
     = - \frac{1}{(2 \im \mu)^2 \nu_1} J \bar{v}_1,
\end{equation*}
showing that if $\nu_1$ is an eigenvalue of $\mathcal{A} (x; \mu, \lambda)$
with eigenvector $v_1$ then $\nu_2 = -1/((2\im \mu)^2 \nu_1)$ is an 
eigenvalue of $\mathcal{A} (x; \mu, \lambda)$ with eigenvector
$v_2 = J \bar{v}_1$. (Cf. Lemma 2.2 in \cite{HS2022}.)
\end{remark}

Our boundary conditions at $x=0$ will be specified via a 
vector function
\begin{equation} \label{U0defined}
    \begin{aligned}
    U^0 (x; i,-1) 
    &= y_1^0 (x; i, -1) + \beta z_1^0 (x; i, -1) \\
    &= \Phi (x; i, -1) (v_1^0 (i, -1) + \beta v_2^0 (i, -1)),
    \end{aligned}
\end{equation}
where $\beta$ is taken to be a complex number so that 
\begin{equation*}
    |\beta| = \sqrt{-\nu_1^0 (i, -1)/\nu_2^0 (i, -1)}.
\end{equation*}
Using (\ref{eg1evals}), we can approximate the right-hand side of this relation 
with 
\begin{equation*}
\sqrt{-\nu_1 (\epsilon; i, -1)/\nu_2 (\epsilon; i, -1)}
= 2.6609.
\end{equation*}
We've seen that the physical boundary condition we expect
is 
\begin{equation*}
    \lim_{x \to 0^+} y(x; \lambda)
    = \begin{pmatrix}
    0 \\ 1
    \end{pmatrix}
\end{equation*}
(the value $1$ taken as a choice of scaling),
and with $y(x; \lambda) = \Phi (x; 0, \lambda) w$ we can
identify the vector $w$ by computing 
\begin{equation*}
    w = \lim_{x \to 0^+} \Phi (x; 0, \lambda)^{-1} 
    \begin{pmatrix}
    0 \\ 1
    \end{pmatrix}.
\end{equation*}
Precisely, we fix $\lambda = \lambda_1 = -3$, and approximate 
$w$ with 
\begin{equation*}
    w \cong \Phi (\epsilon; 0, -3)^{-1} 
    \begin{pmatrix}
    0 \\ 1
    \end{pmatrix}
    = 
    \begin{pmatrix}
    .4720 \\ .8816
    \end{pmatrix}.
\end{equation*}
The boundary condition that we would like to identify will have 
the form 
\begin{equation*}
    \lim_{x \to 0^+} U^0 (x; i, -1)^* J \Phi (x; 0, -3) w 
    = 0, 
\end{equation*}
so we can approximate the required choice of $\beta$ by 
taking $\beta \in \mathbb{C}$ so that 
\begin{equation*}
  U^0 (\epsilon; i, -1)^* J \Phi (\epsilon; 0, -3) w = 0.
\end{equation*}
We can express this relation as 
\begin{equation*}
    \Big(\Phi (\epsilon; i, -1) (v_1^0 (i, -1) + \beta v_2^0 (i, -1)) \Big)^*
    J \Phi (\epsilon; 0, -3) w = 0,
\end{equation*}
allowing us to solve for 
\begin{equation*}
    \bar{\beta}
    \cong - \frac{v_1^0 (i, -1)^* \Phi (\epsilon; i, -1)^* J \Phi (\epsilon; 0, -3)w}
    {v_2^0 (i, -1)^* \Phi (\epsilon; i, -1)^* J \Phi (\epsilon; 0, -3)w}.
\end{equation*}
Proceeding with the above numerical approximations of $v_1^0 (i, -1)$ and $v_2^0 (i, -1)$,
we find $\bar{\beta} \cong -.9494 - 2.4858i$, so that 
$\beta \cong -.9494 + 2.4858i$.

At this point, we can fully specify the self-adjoint restriction of 
$\mathcal{T}_M (\lambda)$ that we will work with; namely, for each $\lambda \in [\lambda_1, \lambda_2]$
we will take $\mathcal{T} (\lambda)$ to be the restriction of $\mathcal{T}_M (\lambda)$
to the domain 
\begin{equation*}
    \mathcal{D} := 
    \{y \in \mathcal{D}_M:  \lim_{x \to 0^+} U^0 (x; i, -1)^* J  y (x) = 0 \},
\end{equation*}
where $U^0 (x; i, -1)$ is as in (\ref{U0defined}) with 
$\beta = -.9494 + 2.4858i$.
Having specified the operator under consideration, we next 
check the condition on essential spectrum, namely that 
for each $\lambda \in [\lambda_1, \lambda_2]$, 
$0 \notin \sigma_{\ess} (\mathcal{T} (\lambda))$. To this end, 
we define the corresponding second-order operator 
\begin{equation} \label{H-lambda}
    \mathbf{H} (\lambda) \psi := 
    - \psi'' + (V (x) - \lambda Q_1 (x) + \lambda^2) \psi
\end{equation}
with domain 
\begin{equation*}
    \begin{aligned}
    &\dom(\mathbf{H} (\lambda)) = \Big{\{}\psi \in L^2 ((0,\infty),\mathbb{C}): 
    \psi, \psi' \in \AC_{\loc} ((0,\infty),\mathbb{C}), \\
    & \quad \mathbf{H} (\lambda) \psi \in L^2 ((0,\infty),\mathbb{C}), \,
    \lim_{x \to 0^+} U^0 (x; i, -1)^*
    J \genfrac{(}{)}{0pt}{0}{\psi (x)}{\psi' (x)} = 0 
    \Big{\}}.
    \end{aligned}
\end{equation*}
As discussed in \cite{HS2022}, it's straightforward to check that 
for each $\lambda \in [-3, -7/12]$, the operators $\mathcal{T}_M (\lambda)$
and $\mathbf{H} (\lambda)$ have precisely the same sets of essential spectrum, and 
also the same sets of discrete eigenvalues. In addition, we know from 
\cite{Rejto1966} that any self-adjoint restriction of the operator
$-\partial_x^2 + V(x)$ has essential spectrum $[0, \infty)$, and it follows
immediately that any self-adjoint restriction of the operator 
$-\partial_x^2 + (V(x) + \lambda^2)$ has essential spectrum 
$[\lambda^2, \infty)$. The remaining term $-\lambda Q_1 (x)$ is 
a compact perturbation of $-\partial_x^2 + (V(x) + \lambda^2)$, and so 
we can conclude that for each $\lambda \in [-3, -7/12]$
the essential spectrum of $\mathbf{H} (\lambda)$
(and so of $\mathcal{T} (\lambda)$) is precisely $[\lambda^2, + \infty)$.
From these observations it's clear that for each 
$\lambda \in [\lambda_1, \lambda_2]$, we have 
$0 \notin \sigma_{\ess} (\mathcal{T} (\lambda))$, which is 
exactly the condition we require on essential spectrum.

Next, in order to fully apply Theorem \ref{singular-theorem}, we 
must check conditions (\ref{condition2}) and (\ref{condition3}). 
Beginning with (\ref{condition2}), we first observe that 
$x = 0$ is a regular singular point of (\ref{schrodinger-family}),
so we have from standard Frobenius theory that for each 
$\lambda \in [-3, -7/12]$ there exist constants 
$\{a_k (\lambda)\}_{k=1}^{\infty}$ and 
$\{b_k (\lambda)\}_{k=0}^{\infty}$ so that for all $x > 0$
sufficiently small, the functions 
\begin{equation*}
    \begin{aligned}
    \psi_1 (x; \lambda) 
    &= x + \sum_{k=1}^{\infty} a_k (\lambda) x^{k+1} \\
    \psi_2 (x; \lambda) 
    &= 1 + b_0 (\lambda) \psi_1 (x; \lambda) \ln x
    + \sum_{k=1}^{\infty} b_k (\lambda) x^k
    \end{aligned}
\end{equation*}
comprise a linearly independent pair of solutions. 
(See, e.g., \cite{BDM2017}). In addition, our boundary 
condition at $x = 0$ selects precisely the solution of 
(\ref{schrodinger-family}) that approaches $0$ as 
$x \to 0^+$, so we can take 
\begin{equation*}
    \mathbf{X}_0 (x; \lambda)
    = \begin{pmatrix}
    \psi_1 (x; \lambda) \\ 
    \psi_1' (x; \lambda)
    \end{pmatrix}.
\end{equation*}
Moreover, if $\lambda_2$ is not an eigenvalue of 
(\ref{schrodinger-family}), then we can take 
\begin{equation*}
    \mathbf{X}_{\infty} (x; \lambda_2) 
    = \begin{pmatrix}
    \psi_2 (x; \lambda_2) + \kappa_0 (\lambda_2) \psi_1 (x; \lambda_2) \\
    \psi_2' (x; \lambda_2) + \kappa_0 (\lambda_2) \psi_1' (x; \lambda_2)
    \end{pmatrix},
\end{equation*}
for some constant $\kappa_0 (\lambda_2)$, where the key point in the 
specification of $\mathbf{X}_{\infty} (x; \lambda_2)$ is that if 
$\lambda_2$ is not an eigenvalue of (\ref{schrodinger-family}), then
$\psi_2 (x; \lambda_2)$ must appear non-trivially. 

At this point, we can detect intersections between $\ell_0 (x; \lambda)$
and $\ell_{\infty} (x; \lambda_2)$ (respectively the Lagrangian subspaces
with frames $\mathbf{X}_0 (x; \lambda)$ and $\mathbf{X}_{\infty} (x; \lambda_2)$) 
by computing 
\begin{equation*}
    \det \begin{pmatrix}
     \mathbf{X}_0 (x; \lambda) & \mathbf{X}_{\infty} (x; \lambda_2) \\
    \end{pmatrix}
    = \det 
    \begin{pmatrix}
     \psi_1 (x; \lambda) & \psi_2 (x; \lambda_2) + \kappa_0 (\lambda_2) \psi_1 (x; \lambda_2) \\
     \psi_1' (x; \lambda) & \psi_2' (x; \lambda_2) + \kappa_0 (\lambda_2) \psi_1' (x; \lambda_2)
    \end{pmatrix}.
\end{equation*}
The only term that doesn't vanish to first order in $x$ is $\psi_2 (x; \lambda_2) \psi_1' (x; \lambda)$,
and this product approaches $1$ as $x \to 0^+$. We can conclude that for $x>0$ sufficiently 
small the spaces $\ell_0 (x; \lambda)$ and $\ell_{\infty} (x; \lambda_2)$ do not 
intersect for any $\lambda \in [-3, -7/12]$, giving (\ref{condition2}). 

Turning to condition (\ref{condition3}), it's convenient to express (\ref{schrodinger-family})
as 
\begin{equation} \label{wkb-form}
    \psi'' = \mathcal{V} (x; \lambda) \psi,
\end{equation}
where 
\begin{equation} \label{wkb-potential}
    \mathcal{V} (x; \lambda)
    := - \frac{\gamma}{x} 
    - \frac{\delta}{2} \frac{e^{-x/2}}{1 - .5 e^{-x/2}} \lambda 
    + \lambda^2.
\end{equation}
According to Theorem 2.1 in Chapter 6 of \cite{Olver1974}, 
for each $\lambda \in [-3, -7/12]$, we can express a linearly 
independent pair of solutions to (\ref{wkb-form}) as 
\begin{equation*}
    \begin{aligned}
    \psi_3 (x; \lambda) &= 
    \mathcal{V} (x; \lambda)^{-1/4} 
    e^{- \int_M^x \mathcal{V} (y; \lambda) dy} (1 + \epsilon_3 (x; \lambda)) \\
    \psi_4 (x; \lambda) &= 
    \mathcal{V} (x; \lambda)^{-1/4} 
    e^{\int_M^x \mathcal{V} (y; \lambda) dy} (1 + \epsilon_4 (x; \lambda)), 
    \end{aligned}
\end{equation*}
where by taking $0 \ll M$ we can ensure that for $i = 1,2$, $\epsilon_i (x; \lambda)$ and 
$\epsilon_i' (x; \lambda)$ are suitably small for all $x > M$, $i = 3, 4$.

With the above construction, we can take a frame for $\ell_{\infty} (x; \lambda)$
to be 
\begin{equation*}
    \mathbf{X}_{\infty} (x; \lambda) 
    =  \begin{pmatrix}
    \psi_3 (x; \lambda) \\ \psi_3'(x; \lambda)
    \end{pmatrix}.
\end{equation*}
In addition, since $\lambda_1 = -3$ is not an eigenvalue of $\mathcal{T} (\cdot)$,
we can take a frame for $\ell_0 (x; \lambda_1)$
to be 
\begin{equation*}
    \mathbf{X}_0 (x; \lambda_1)
    = \begin{pmatrix}
    \psi_4 (x; \lambda_1) + \kappa_{\infty} (\lambda_1) \psi_3 (x; \lambda_1) \\ 
    \psi_4' (x; \lambda_1) + \kappa_{\infty} (\lambda_1) \psi_3' (x; \lambda_1)
    \end{pmatrix},
\end{equation*}
where the key point is that $\psi_4 (x; \lambda_1)$ must appear non-trivally
in the linear combination. In order to detect intersections between 
$\ell_0 (x; \lambda_1)$ and $\ell_{\infty} (x; \lambda)$ (respectively the 
Lagrangian subspaces with frames $\mathbf{X}_0 (x; \lambda_1)$ and 
$\mathbf{X}_{\infty} (x; \lambda)$), we can compute the determinant 
\begin{equation*}
    \begin{aligned}
     \det \begin{pmatrix}
     \mathbf{X}_0 (x; \lambda_1) & \mathbf{X}_{\infty} (x; \lambda) \\
    \end{pmatrix}
    = \det 
    \begin{pmatrix}
    \psi_4 (x; \lambda_1) + \kappa_{\infty} (\lambda_1) \psi_3 (x; \lambda_1) & \psi_3 (x; \lambda) \\
    \psi_4' (x; \lambda_1) + \kappa_{\infty} (\lambda_1) \psi_3' (x; \lambda_1) & \psi_3' (x; \lambda) 
    \end{pmatrix}.
    \end{aligned}
\end{equation*}
For $M$ sufficiently large, the leading order term from this determinant is 
\begin{equation*}
    (\frac{\mathcal{V} (x; \lambda)}{\mathcal{V} (x; \lambda_1)})^{1/4}
    + (\frac{\mathcal{V} (x; \lambda_1)}{\mathcal{V} (x; \lambda)})^{1/4}
    \cong
    (\frac{\lambda}{\lambda_1})^{1/4} + (\frac{\lambda_1}{\lambda})^{1/4},
\end{equation*}
where the approximation becomes better as $x$ increases. Since this last relation
is non-zero for all $\lambda \in [-3, -7/12]$, we can conclude that for $x$ 
sufficiently large the spaces $\ell_0 (x; \lambda_1)$ and $\ell_{\infty} (x; \lambda)$
do not intersect, ensuring that Condition (\ref{condition3}) holds. 

At this point, we have (approximately) specified our frame that lies left in $(0, \infty)$,
\begin{equation*}
    \mathbf{X}_0 (x; -3) = \Phi (x; 0, -3) w,
\end{equation*}
and we next turn to identifying the solution of (\ref{hammy}) with 
(\ref{quadratic-b}) and $\lambda = \lambda_2 = - 7/12$ that lies 
right in $(0, \infty)$. For this, we begin with the eigenvalues
of 
\begin{equation*}
    \mathcal{B} (x; \lambda_2)
    = \Phi (x; 0, \lambda_2)^* J (\partial_{\lambda} \Phi) (x; 0, \lambda_2).
\end{equation*}
In order to approximate the behavior of these eigenvalues as $x \to + \infty$, we 
choose a large value $M$ and evaluate $\nu_1 (M; \lambda_2)$ and 
$\nu_2 (M; \lambda_2)$. Precisely, in this case, we take $M = 50$,
for which we find 
\begin{equation*}
    \begin{aligned}
    \nu_1 (50; \lambda_2) &= 32.7812 \\
    \nu_2 (50; \lambda_2) &= 1.6480 \times 10^{14}.
    \end{aligned}
\end{equation*}
The eigenvector associated with $\nu_1 (50; \lambda_2)$ is 
\begin{equation*}
    v_1 (50; \lambda_2)
    = \begin{pmatrix}
    -.9332 \\ .3593
    \end{pmatrix}.
\end{equation*}
This approximately specifies the solution $\mathbf{X}_{\infty} (x; \lambda_2)$
that lies right in $(0,\infty)$ as 
\begin{equation*}
   \mathbf{X}_{\infty} (x; \lambda_2)
   = \Phi (x; 0, -\frac{7}{12}) v_1 (50; \lambda_2).
\end{equation*}
With $\mathbf{X}_0 (x; \lambda_1)$ and $\mathbf{X}_{\infty} (x; \lambda_2)$ specified, we can 
now compute the Maslov index 
\begin{equation*}
    \mas (\ell_0 (\cdot; \lambda_1), \ell_{\infty} (\cdot; \lambda_2); (0,\infty))
\end{equation*}
as a rotation number through $-1$ for the complex number
\begin{equation*}
    \begin{aligned}
    \tilde{W} (x; \lambda_1, \lambda_2) 
    &= - (X_0 (x; \lambda_1) + i Y_0 (x; \lambda_1)) (X_0 (x; \lambda_1) - i Y_0 (x; \lambda_1))^{-1}  \\
    & \quad \times (X_{\infty} (x; \lambda_2) + i Y_{\infty} (x; \lambda_2)) 
    (X_{\infty} (x; \lambda_2) - i Y_{\infty} (x; \lambda_2))^{-1}.  
    \end{aligned}
\end{equation*}
By generating the frames $\mathbf{X}_0 (x; \lambda_1)$ and 
$\mathbf{X}_{\infty} (x; \lambda_2)$ numerically, we can track 
the complex value $\tilde{W} (x; \lambda_1, \lambda_2)$, and in this 
way, we observe crossings at values $x = .653$, $x = 2.137$, 
and $x = 5.489$ (with numerical increment $.001$). From Theorem 
\ref{singular-theorem}, we can conclude that there are three eigenvalues
on the interval $[-3, -7/12]$, as expected. 

In order to see more fully the dynamics associated with this count, 
we identify the {\it spectral curves} passing through  
the Maslov box $[-3, -7/12] \times [0, 10]$, where the endpoint $0$ can be 
included in a limiting sense and, due to exponential decay, $10$
is sufficient for approximating the right endstate at $+ \infty$. 
In the left-hand side of Figure \ref{eps0-fig},
the top two spectral curves (with spectral curves counted top to 
bottom as they cross the left shelf) appear to cross at about 
$(\lambda, x) = (-2.5, .68)$, and in order to better understand
the nature of this point, we depict an amplified view of it 
on the right-hand side of Figure \ref{eps0-fig}. We see that, 
in fact, the spectral curves do not cross, but rather the top
curve sharply rises and the second curve turns sharply to the 
right. 

\begin{figure}[ht]  
\begin{center}\includegraphics[%
  width=8cm,
  height=6cm]{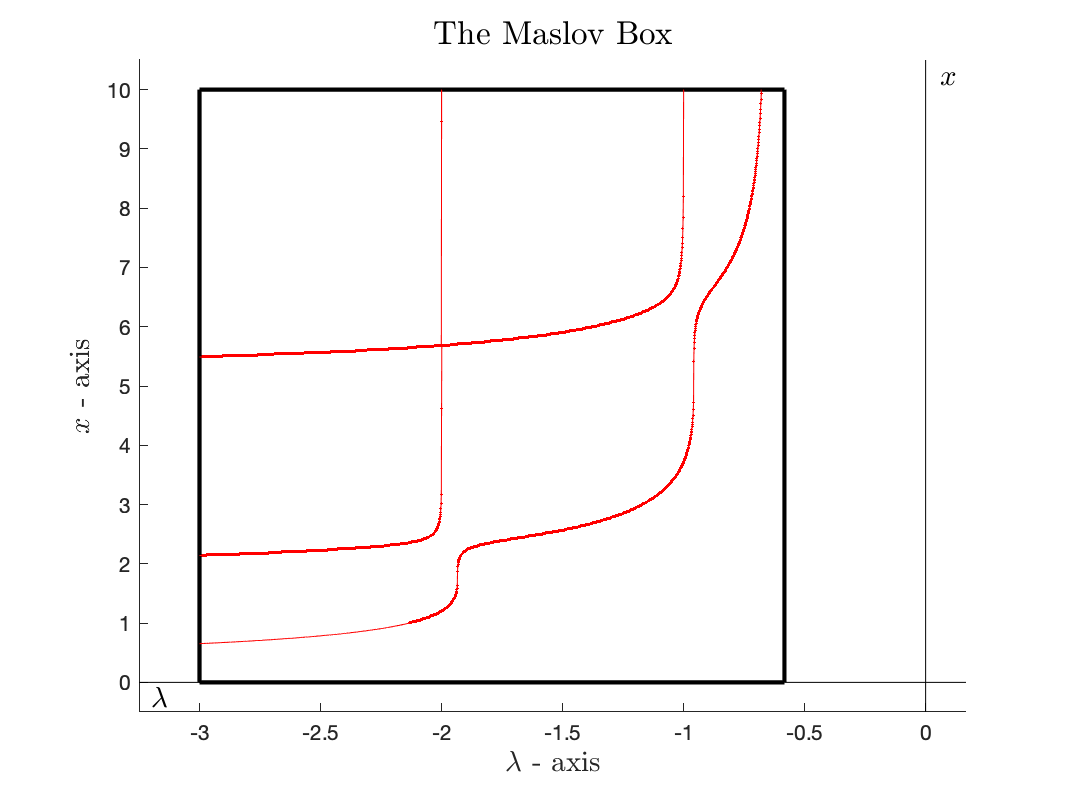}
  \includegraphics[%
  width=8cm,
  height=6cm]{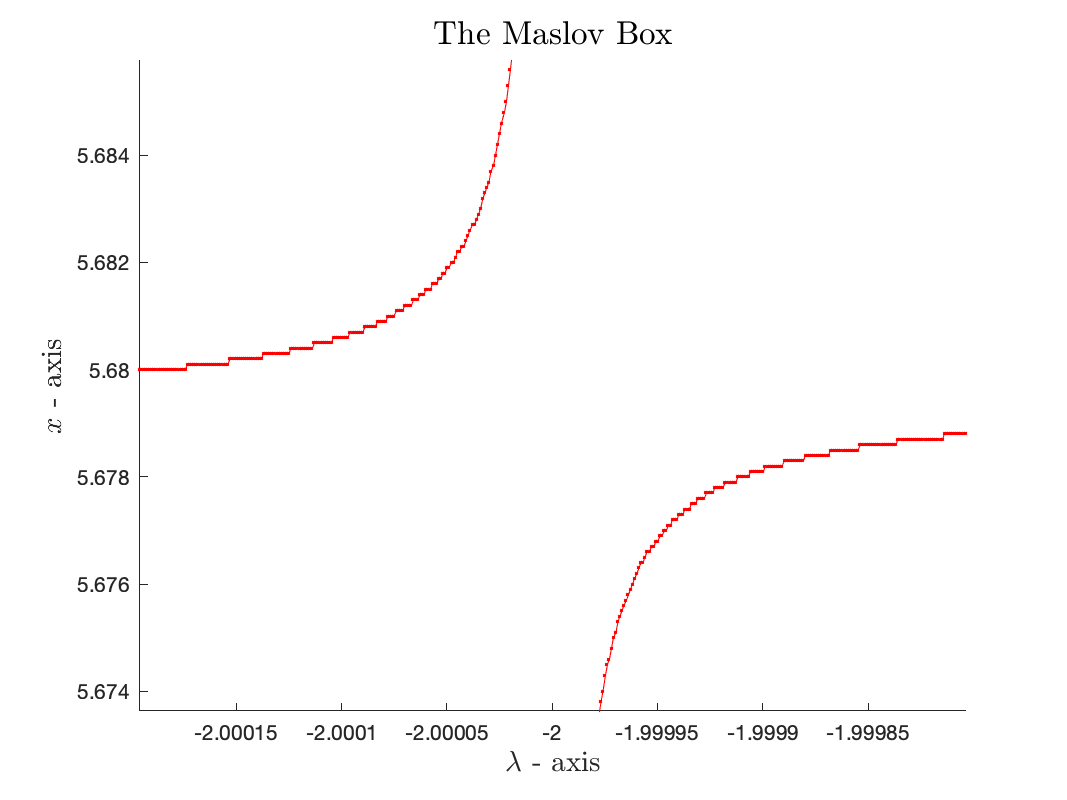}
\end{center}
\caption{The Full Maslov Box for $H$ on $[-3, -7/12] \times [0, 10]$ with $\delta = 0$. \label{eps0-fig} }
\end{figure}

\begin{figure}[ht]  
\begin{center}\includegraphics[%
  width=11cm,
  height=8cm]{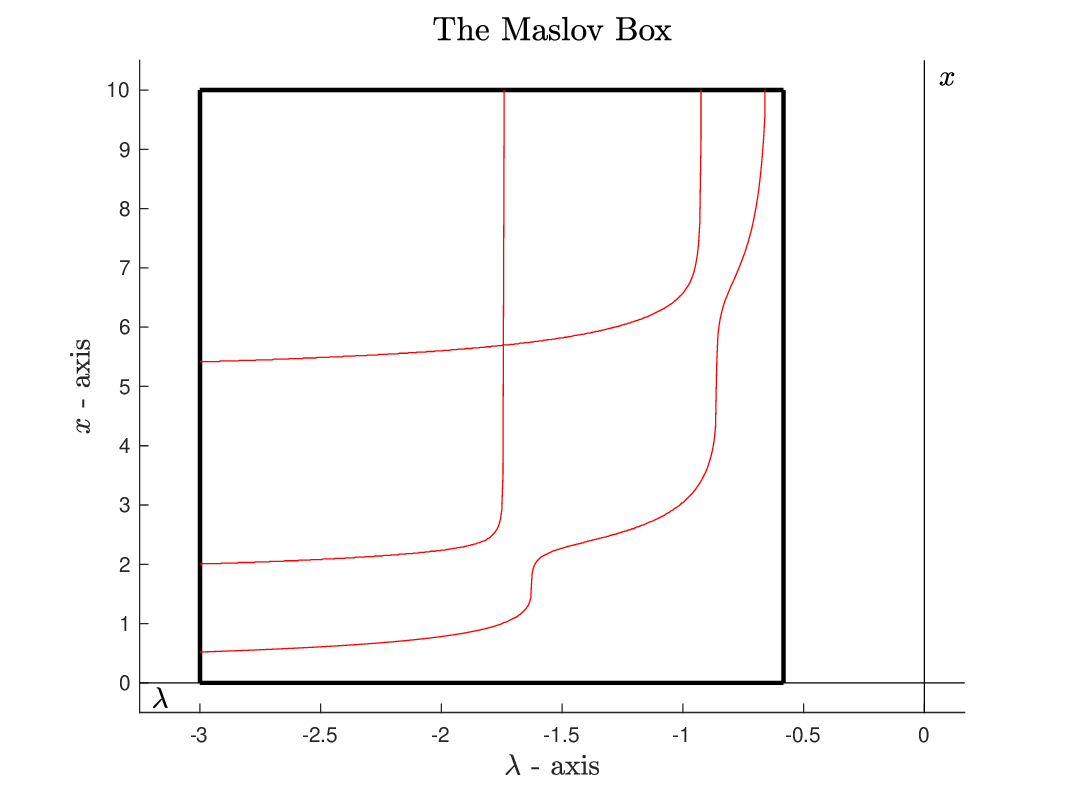}
\end{center}
\caption{The Full Maslov Box for $H$ on $[-3, -7/12] \times [0, 10]$ with $\delta = 1$. \label{eps1-fig}}
\end{figure}

\begin{figure}[ht]  
\begin{center}\includegraphics[%
  width=11cm,
  height=8cm]{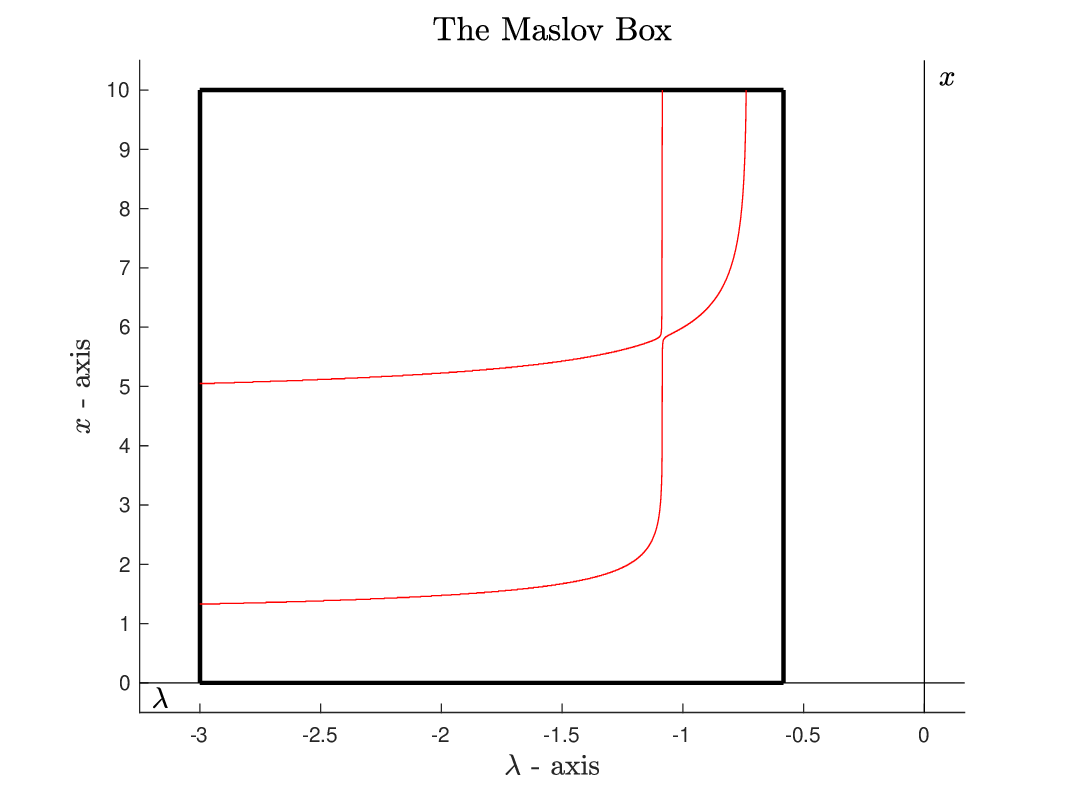}
\end{center}
\caption{The Full Maslov Box for $H$ on $[-3, -7/12] \times [0, 10]$ with $\delta = 5$. \label{eps5-fig} }
\end{figure}

Next, we begin to increase $\delta$ so that we obtain 
interaction between the terms $Q_1 (x; \delta) \lambda$ 
and $Q_2 (x) \lambda^2$. The details of these calculations 
are nearly identical to those for the case $\delta = 0$,
so we only give a summary of results. First, for $\delta = 1$,
we again count the number of eigenvalues in the interval 
$[-3, -7/12]$. In this case, we obtain crossing points along
the left shelf at approximately $x = .520$, $x = 1.999$, 
and $x = 5.410$. These values are lower than their counterparts
for the case $\delta = 0$, so it's natural to expect that
the eigenvalues have moved to the right. Again, in order to 
understand the full dynamics, we identify the spectral curves 
passing through the Maslov box $[-3, -7/12] \times [0, 10]$.
(See Figure \ref{eps1-fig}).
As expected, we find that the eigenvalues are now located 
approximately at values $\lambda = -1.740$, $\lambda = -.925$,
and $\lambda = -.660$ (working with an increment of $.005$
in the numerical calculations). As with the case $\delta = 0$,
the top two spectral curves don't cross. 

Last, we increase $\delta$ to $\delta = 5$, and again we start by 
computing the Maslov index on the left shelf. In this case, we find
only two intersections, at approximately $x = 1.323$ and $x = 5.043$.
These values are lower than $1.999$ and $5.410$ respectively, and 
the lowest crossing no longer occurs. We conclude that for $\delta = 5$
there are only two eigenvalues on the interval $[-3, - 7/12]$. Once more, 
in order to understand the full dynamics associated with this count, 
we identify the spectral curves 
passing through the Maslov box $[-3, -7/12] \times [0, 10]$.
(See Figure \ref{eps5-fig}). We see that the two eigenvalues 
reside at approximately $\lambda = -1.085$ and $\lambda = -.738$.

\subsubsection{Application to Incompressible Ideal MHD}
\label{mhd-section}

As a second application, we consider a model of 
incompressible magnetohydrodynamics (MHD), adapted 
from \cite{Freidberg1970}, with the precise form 
considered taken from \cite{GP2004}. As discussed 
in detail in \cite{Freidberg2014, GP2004}, the behavior
of a plasma can be modeled by the MHD equations 
\begin{equation} \label{mhd-system}
    \begin{aligned}
    \rho (v_t + (v \cdot \nabla)v) &= J \times B - \nabla p \\
    \rho_t + \nabla \cdot (\rho v) &= 0 \\
    B_t &= - \nabla \times E \\
    p_t + v \cdot \nabla p + \gamma p \nabla \cdot v &= 0,
    \end{aligned}
\end{equation}
along with the Maxwell relations $\nabla \times B = \mu_0 J$
and $\nabla \cdot B = 0$. (In Amp\`ere's Law, $\mu_0 \epsilon_0 E_t$
is taken to be negligible; see p. 38 in \cite{GP2004} for discussion.) 
For the current application, we will also assume 
perfect conductivity $E + v \times B = 0$ (see p. 70 
in \cite{GP2004}; also equation (5) in \cite{Freidberg1970}), 
along with incompressibility, $\nabla \cdot v = 0$.
Here, $\rho$ denotes fluid density, $v$ denotes fluid velocity,
$J$ denotes current density, $B$ denotes magnetic field 
strength, $E$ denotes electric field strength, 
and $p$ denotes fluid pressure. The constant $\mu_0$ is 
the usual permeability of free space, and the constant 
$\gamma$ is a ratio of specific heats taken in \cite{GP2004}
to be $\gamma = 5/3$. (See p. 53 of \cite{GP2004} for 
details.) 

For this application, we will focus on a plasma confined
to a cylinder with radius $b > 0$, in which case cylindrical 
coordinates $(r, \theta, z)$ become convenient. In what follows, we
will let $\hat{r}$, $\hat{\theta}$, and $\hat{z}$ denote
the usual basis elements for cylindrical coordinates, 
and we will write 
\begin{equation*}
    B = B_r \hat{r} + B_{\theta} \hat{\theta} + B_z \hat{z},
\end{equation*}
and similarly for other vectors. 
Our focus will be on the spectrum of the operator obtained 
when the system (\ref{mhd-system}) is linearized about 
a stationary solution depending only on the radial variable 
$r$, $(v_0 (r), \rho_0 (r), B_0 (r), p_0 (r))$, with 
also $J_0 (r) = \mu_0^{-1} \nabla \times B_0 (r)$. 
As in Section 6.1.2 of \cite{GP2004}, we focus on the 
{\it static equilibrium} case with $v_0$ taken to be 
identically zero. Linearization leads us to 
equation (9.47) from \cite{GP2004}, though since we're 
not aware of a reference in which 
this equation is derived in detail, we briefly include
such a derivation here. For this, we begin by observing 
that since $v_0 (r)$ is identically zero, the functions
$B_0 (r)$ and $p_0 (r)$ are seen to satisfy 
\begin{equation} \label{pressure-magnetism1}
    \nabla p_0 
    = J_0 \times B_0
    = \frac{1}{\mu_0} (\nabla \times B_0) \times B_0.
\end{equation}
The condition $\nabla \cdot B = 0$ implies that $B_r (r) = 0$,
and subsequently (\ref{pressure-magnetism1}) reduces to 
\begin{equation*} \label{pressure-magnetism2}
    (p_0 + \frac{1}{2} |B|^2)' 
    = - \frac{1}{r} B_{\theta}^2,
\end{equation*}
where $|B|$ denotes the length of $B$.
This still leaves considerable freedom in the choice of 
$p_0 (r)$, $B_{\theta} (r)$, and $B_z (r)$.  For our 
numerical calculations, we will take a particular 
triple of functions from Section 9.1.1 in \cite{GP2004},
but for our general derivation we will leave these functions
unspecified. 

We linearize (\ref{mhd-system}) about the stationary 
solution, writing our perturbations as $\rho_1 (r, \theta, z, t)$,
$v_1 (r, \theta, z, t)$, $p_1 (r, \theta, z, t)$,  
and $B_1 (r, \theta, z, t)$,
with also $J_1 (r, \theta, z, t) = (1/\mu_0) \nabla \times B_1 (r, \theta, z, t)$.  
In this way, we obtain the system of perturbations,  
\begin{equation*}
    \begin{aligned}
    \rho_0 {v_1}_t &= J_0 \times B_1 + J_1 \times B_0 - \nabla p_1 \\
    {\rho_1}_t &= 0 \\
    {B_1}_t &= \nabla \times (v_1 \times B_0) \\
    {p_1}_t &= 0.
    \end{aligned}
\end{equation*}
Following \cite{Freidberg2014, GP2004}, we introduce $\xi$ so that 
\begin{equation*}
    v_1 (r, \theta, z, t) = \xi_t (r, \theta, z, t),
\end{equation*}
allowing us to express the momentum equation as 
\begin{equation} \label{momentum-expressed-in-xi}
 \rho_0 {\xi}_{tt} = J_0 \times B_1 + J_1 \times B_0 - \nabla p_1.    
\end{equation}
Similarly, our equation for $B_1$ becomes 
\begin{equation} \label{B1-equation}
    {B_1}_t = \nabla \times (\xi_t \times B_0).
\end{equation}
We now look for solutions of the form 
\begin{equation*}
    \xi (r, \theta, z, t)
    = \tilde{\xi} (r) e^{i (m \theta + kz - \omega t)},
\end{equation*}
and similarly for $B_1 (r, \theta, z, t)$, where 
$m, k \in \mathbb{Z}$, and identifying appropriate values of 
$\omega$ will be the primary focus of our attention. With this notation,
(\ref{B1-equation}) becomes 
\begin{equation*}
    - i \omega B_1 = \nabla \times (-i \omega \xi \times B_0)
    \implies  B_1 = \nabla \times (\xi \times B_0),
\end{equation*}
and likewise $-i\omega \tilde{\rho}_1 = 0$ and 
$-i\omega \tilde{p}_1 = 0$. 
Upon substitution into the momentum equation we see that 
\begin{equation} \label{xi-F-equation}
    - \rho_0 \omega^2 \xi = \mathcal{F} (\xi) 
    := \frac{1}{\mu_0} (\nabla \times B_0) \times (\nabla \times (\xi \times B_0))
    + \frac{1}{\mu_0} (\nabla \times (\nabla \times (\xi \times B_0))).
\end{equation}

Following \cite{GP2004}, we introduce 
\begin{equation} \label{Q-defined}
    Q := \nabla \times (\xi \times B_0),
\end{equation}
so that (\ref{xi-F-equation}) can be expressed in the more 
compact form 
\begin{equation} \label{main-xi-equation}
- \rho_0 \omega^2 \xi
= \frac{1}{\mu_0} \Big{\{} (\nabla \times B_0) \times Q
+ (\nabla \times Q) \times B_0 \Big{\}}.
\end{equation}
By direct calculation we find that if 
\begin{equation*}
\tilde{\xi} (r) = \xi_r (r) \hat{r} + \xi_{\theta} (r) \hat{\theta} + \xi_z (r) \hat{r},     
\end{equation*}
then 
\begin{equation*}
    Q e^{- i (m\theta + kz - \omega t)}
    = i F \xi_r \hat{r} 
    + \Big(k |B| \eta - (B_{\theta} \xi_r)'\Big) \hat{\theta}
    - \Big(\frac{1}{r} (r B_z \xi_r)' + \frac{m |B| \eta}{r}\Big) \hat{z}.
\end{equation*}
where we've introduced (from \cite{GP2004}) the 
convenient notation
\begin{equation} \label{F-defined}
    F := \frac{m B_{\theta}}{r} + k B_z,
\end{equation}
and 
\begin{equation} \label{eta-defined}
    \eta = - \frac{B_z \xi_{\theta} - B_{\theta} \xi_x}{|B|}.
\end{equation}

The system (\ref{main-xi-equation}) comprises a matrix operator 
problem in the variables $\xi_r$, $\xi_{\theta}$, and $\xi_z$. 
In \cite{GP2004}, the authors show that this system can alternatively 
be expressed in terms $\xi_r$, $\eta$, and 
\begin{equation} \label{zeta-defined}
    \zeta := i \frac{B_{\theta} \xi_{\theta} + B_z \xi_z}{|B|},
\end{equation}
with the result taking the form 
\begin{equation} \label{m-system}
    M \begin{pmatrix} \xi_r \\ \eta \\ \zeta \end{pmatrix}
    = - \rho_0 \mu_0 \omega^2 
    \begin{pmatrix} \xi_r \\ \eta \\ \zeta \end{pmatrix},
\end{equation}
with 
\begin{equation*}
    M = \begin{pmatrix}
    \frac{d}{dr} \frac{|B|^2}{r} \frac{d}{dr} r - F^2 - r (\frac{B_{\theta}^2}{r^2})' &
    \frac{d}{dr} GB - \frac{2k B_{\theta} |B|}{r} & 0 \\
    - \frac{GB}{r} \frac{d}{dr} r - \frac{2k B_{\theta} |B|}{r} & - F^2 - G^2 & 0 \\
    0 & 0 & 0
    \end{pmatrix},
\end{equation*}
where we have introduced 
\begin{equation} \label{G-defined}
    G = \frac{m B_z}{r} - k B_{\theta}.
\end{equation}
For comparison, this is the analogue of equation (9.28) in \cite{GP2004}, appearing 
here in the incompressible case. (In \cite{GP2004}, the authors scale $\mu_0$ to 1, so it 
doesn't appear.) 

In order to reduce (\ref{m-system}) to a single second-order equation, we 
will restrict our attention to values $\omega^2$ larger than 0 
and on closed intervals disjoint from 
\begin{equation*}
    \ran \Big{(}\frac{F^2}{\mu_0 \rho_0}\Big{)} \Big|_{[0, b]},
\end{equation*}
where we recall that $b$ is the radius of the cylinder, so 
$r \in [0, b]$. We see from the third equation in (\ref{m-system}) 
that we must have $\zeta = 0$, requiring 
\begin{equation*}
    \xi_z = - \frac{B_{\theta}}{B_z} \xi_{\theta},
\end{equation*}
from which it follows immediately that 
\begin{equation} \label{eta-xi-theta-relation}
    \eta = i \frac{|B|}{B_z} \xi_{\theta}.
\end{equation}
In the incompressible case, we additionally have 
\begin{equation*}
    \nabla \cdot \xi 
    = \Big(\frac{1}{r} (r \xi_r (r))' + \frac{im}{r} \xi_{\theta (r)} + ik \xi_z (r)\Big)
    e^{i (m \theta + kz - \omega t)}
    = 0,
\end{equation*}
from which we see that 
\begin{equation} \label{xi-r-xi-theta-relation}
(r \xi_r (r))' = - i (m - k r \frac{B_{\theta}}{B_z}) \xi_{\theta} (r)    
= - i \frac{r}{B_z} G \xi_{\theta} (r). 
\end{equation}
If we combine (\ref{xi-r-xi-theta-relation}) with (\ref{eta-xi-theta-relation}),
we find that 
\begin{equation} \label{eta-xi-r-relation}
    \eta = - \frac{|B|}{r G} (r \xi_r (r))'
\end{equation}

Following \cite{GP2004}, we introduce the radial variable $\chi = r \xi_r$, for which 
the second equation in (\ref{m-system}) can be expressed as 
\begin{equation} \label{second-chi-equation}
    - \frac{G |B|}{r} \chi' - \frac{2kB_{\theta} |B|}{r^2} \chi 
    - (F^2 + G^2) \eta = - \rho_0 \mu_0 \omega^2 \eta.
\end{equation}
In order to obtain a single second-order equation for $\chi$, our 
strategy will be to use this equation to solve for $\eta$, and then 
to substitute the expression we obtain for $\eta$ into 
the first equation in (\ref{m-system}). More precisely, 
our goal is to obtain equation (9.47) from \cite{GP2004}, which 
requires the use of relations we have from incompressibility. 
To this end, we will use (\ref{eta-xi-r-relation}) for the right-hand 
side of (\ref{second-chi-equation}), and solve the resulting 
equation for $\eta$. Noting additionally the relation 
\begin{equation} \label{F-squared-plus-F-squared}
F^2 + G^2
= (\frac{m^2}{r^2} + k^2) |B|^2,
\end{equation}
we find that 
\begin{equation} \label{main-eta}
    \eta = - \frac{G^2 + \mu_0 \rho_0 \omega^2}{(\frac{m^2}{r^2} + k^2) |B| G r} \chi'
    - \frac{2 k B_{\theta}}{(\frac{m^2}{r^2} + k^2) |B| r^2} \chi.
\end{equation}
Alternatively, we can use the relation (\ref{eta-xi-r-relation}) 
in the term $- G^2 \eta$ in (\ref{second-chi-equation}) to see that 
\begin{equation} \label{second-eta-relation}
    (F^2 - \rho_0 \mu_0 \omega^2) \eta
     = - \frac{2kB_{\theta} |B|}{r^2} \chi
     \implies 
     \eta = - \frac{2kB_{\theta} |B|}{(F^2 - \rho_0 \mu_0 \omega^2)r^2} \chi.
\end{equation}
If we now combine (\ref{second-eta-relation}) with (\ref{eta-xi-r-relation})
we obtain a first-order equation for $\chi$, 
\begin{equation} \label{chi-prime-chi}
    \chi' = \frac{2kB_{\theta} G}{(F^2 - \rho_0 \mu_0 \omega^2) r} \chi.
\end{equation}

Upon substitution of (\ref{main-eta}) into the first equation in 
(\ref{m-system}), we obtain the relation 
\begin{equation} \label{penultimate-chi-equation}
\begin{aligned}
    \frac{d}{dr} &\Big[ \frac{F^2 - \rho_0 \mu_0 \omega^2}{m^2 + k^2 r^2} r \frac{d \chi}{dr} \Big]
    + \frac{2 k B_{\theta} \rho_0 \mu_0 \omega^2}{(m^2 + k^2 r^2) G} \chi' \\
    &+ \Big[\frac{\rho_0 \mu_0 \omega^2 - F^2}{r} - (\frac{B_{\theta}^2}{r^2})'
    - (\frac{2 k B_{\theta} G}{m^2 + k^2 r^2})' 
    + \frac{4 k^2 B_{\theta}^2}{(m^2 + k^2 r^2) r} \Big] \chi = 0. 
\end{aligned}
\end{equation}
In addition, if we replace $\chi'$ in (\ref{penultimate-chi-equation}) with 
(\ref{chi-prime-chi}), we obtain equation (9.47) from \cite{GP2004}, 
namely 
\begin{equation} \label{ultimate-chi-equation}
\begin{aligned}
    \frac{d}{dr} &\Big[ \frac{F^2 - \rho_0 \mu_0 \omega^2}{m^2 + k^2 r^2} r \frac{d \chi}{dr} \Big]
    + \Big[\frac{\rho_0 \mu_0 \omega^2 - F^2}{r} - (\frac{B_{\theta}^2}{r^2})' \\
    &- (\frac{2 k B_{\theta} G}{m^2 + k^2 r^2})' 
    + \frac{4 k^2 B_{\theta}^2 F^2}{(F^2 - \rho_0 \mu_0 \omega^2)(m^2 + k^2 r^2) r} \Big] \chi = 0. 
\end{aligned}    
\end{equation}
In order to ensure that 
the denominator $m^2 + k^2 r^2$ remains bounded away from 0, we will focus
on cases for which $m$ is non-zero. We will see that (\ref{ultimate-chi-equation})
is limit-point at $x = 0$, and so no boundary condition is required, while 
for the boundary condition at the regular endpoint $x = b$ we will take 
the Dirichlet condition $\chi (b) = 0$. (See Section 9.2.1 in \cite{GP2004}.)

For the calculations that follow, we assume  
$B_{\theta} (r)$, $B_z (r)$, $\rho_0 (r)$, and $p_0 (r)$
are all analytic in $r$ in an open ball containing the interval 
$[0, b]$, and also that $B_{\theta} (0) = 0$. 
For notational consistency, we begin by expressing (\ref{ultimate-chi-equation}) 
in the form used throughout the current analysis, namely by replacing $r$ with $x$ 
and $\omega^2$ with $- \lambda$ (for which we will then be interested in 
$\lambda < 0$), so that (\ref{ultimate-chi-equation}) can be 
expressed as 
\begin{equation} \label{mhd-second-order}
    - (P(x;\lambda) \phi')' + V(x; \lambda) \phi = 0,
\end{equation}
where 
\begin{equation} \label{mhd-P}
P(x; \lambda) = \frac{\mu_0 \rho_0 \lambda + F(x)^2}{m^2 + k^2 x^2} x,
\end{equation}
and 
\begin{equation} \label{mhd-V}
    \begin{aligned}
    V(x; \lambda) &= \frac{1}{x} (\mu_0 \rho_0 \lambda + F(x)^2)
    + \Big(\frac{B_{\theta}^2}{x^2}\Big)' 
    - \frac{4 k^2 B_{\theta}^2F^2}{x (m^2 + k^2x^2)(\mu_0 \rho_0 \lambda + F(x)^2)}
    + \Big( \frac{2 k B_{\theta} G}{m^2 + k^2 x^2} \Big)'.
    \end{aligned}
\end{equation}
We now express (\ref{mhd-second-order}) in our standard form (\ref{hammy})
by setting $y_1 = \phi$ and $y_2 = - P(x; \lambda) \phi'$, noting the 
sign choice on $y_2$. We obtain  
\begin{equation} \label{mhd-B}
Jy' = \mathbb{B} (x; \lambda) y,
\quad
\mathbb{B} (x; \lambda) 
= \begin{pmatrix}
V(x; \lambda) & 0 \\
0 & -P(x; \lambda)^{-1}
\end{pmatrix}.
\end{equation}
For notational convenience, we will set $Q (x; \lambda) = - P (x; \lambda)^{-1}$. 

In order to employ Theorem \ref{regular-singular-theorem} in this case, 
we need to check 
Assumptions {\bf (A)} through {\bf (F)}, along with the additional
assumptions of Theorem \ref{regular-singular-theorem}. 
Starting with Assumption {\bf (A)},
we first observe that since $F$ is continuous, the range of 
$F^2/(\mu_0 \rho_0)|_{[0,b]}$ is a 
closed set, say $[c, d]$. We will apply Theorem \ref{regular-singular-theorem} 
on a fixed interval $[\lambda_1, \lambda_2]$, $\lambda_1 < \lambda_2 < 0$ 
so that $[\lambda_1, \lambda_2] \cap [c, d] = \emptyset$. On such 
intervals, the quantity $\mu_0 \rho_0 \lambda + F(x)^2$ is 
bounded below. It follows that there exists a constant $C_0$
sufficiently large so that for all $\lambda \in [\lambda_1, \lambda_2]$
we have 
\begin{equation*}
    |B (x; \lambda)| \le b_0 (x) := \frac{C_0}{x}
    \quad \forall \, x \in (0, b).
\end{equation*}
 Next, we compute 
\begin{equation} \label{mhd-B-derivative}
    \mathbb{B}_{\lambda} (x; \lambda)
    = \begin{pmatrix}
    V_{\lambda} (x; \lambda) & 0 \\
    0 & Q_{\lambda} (x; \lambda) 
    \end{pmatrix},
\end{equation}
where 
\begin{equation} \label{V-lambda-Q-lambda}
    \begin{aligned}
    V_{\lambda} (x; \lambda)
    &= \frac{\rho_0 \mu_0}{x} 
    + \frac{4 \rho_0 \mu_0 B_{\theta} (x)^2 F(x)^2}{x (m^2 + k^2 x^2) (\rho_0 \mu_0 \lambda + F(x)^2)} \\
    Q_{\lambda} (x; \lambda)
    &= \frac{\rho_0 \mu_0 (m^2 + k^2 x^2)}{x (\rho_0 \mu_0 \lambda + F(x)^2)^2}.
    \end{aligned}
\end{equation}
We readily see that there exists a constant $C_1$
sufficiently large so that for all $\lambda \in [\lambda_1, \lambda_2]$
we have 
\begin{equation*}
    |B (x; \lambda)| \le b_1 (x) := \frac{C_1}{x}
    \quad \forall \, x \in (0, b),
\end{equation*}
completing the verification of Assumption {\bf (A)}. 

For Assumption {\bf (B)}, we observe that 
\begin{equation} \label{mhd-B-lambda-relation}
    (\mathbb{B}_{\lambda} (x; \lambda) y, y) 
    = V_{\lambda} (x; \lambda) |y_1|^2 + Q_{\lambda} (x; \lambda) |y_2|^2,
\end{equation}
and it's clear from (\ref{V-lambda-Q-lambda}) that this sum can only 
vanish on an interval $[c, d]$, $c < d$, if $y_1 (x)$ and $y_2 (x)$ both 
vanish on this interval. This establishes Assumption {\bf (B)}. 

Turning to Assumption {\bf (C)},  our starting point is to be clear
about the nature of $L^2_{\mathbb{B}_{\lambda}} ((0,b),\mathbb{C}^2)$
for this application.
By definition, this is the collection of Lebesgue measurable 
functions $f = \genfrac(){0pt}{1}{f_1}{f_2}$ for which 
\begin{equation*}
    \int_0^b (\mathbb{B}_{\lambda} (x; \lambda) f(x), f(x)) dx 
    < \infty.
\end{equation*}
Using (\ref{mhd-B-lambda-relation}), we immediately see that
$L^2_{\mathbb{B}_{\lambda}} ((0,b),\mathbb{C}^2)$ is equivalent 
to the collection of Lebesgue measurable functions $f = \genfrac(){0pt}{1}{f_1}{f_2}$
for which 
\begin{equation*}
    \int_0^b \frac{1}{x} |f_i (x)|^2 dx < \infty,
    \quad i = 1, 2.
\end{equation*}
This space, which we will denote $L^2_{1/x} ((0, b), \mathbb{C}^2)$
is clearly independent of $\lambda$, establishing the first part of 
Assumption {\bf (C)}.
For the second part of Assumption {\bf (C)}, we need to check 
that the maximal domain $\mathcal{D}_M (\lambda)$ is independent 
of $\lambda$. For (\ref{mhd-B}), we can 
characterize $\mathcal{D}_M (\lambda)$ as the collection of 
functions
\begin{equation*}
    y \in \AC_{\loc} ((0, b], \mathbb{C}^2) 
    \cap L^2_{\mathbb{B}_{\lambda}} ((0,b), \mathbb{C}^2)
\end{equation*}
for which there exists 
$f = \genfrac(){0pt}{1}{f_1}{f_2} \in L^2_{\mathbb{B}_{\lambda}} ((0,b), \mathbb{C}^2)$ 
so that 
\begin{equation} \label{mhd-y-system}
    \begin{aligned}
    - y_2' - V(x; \lambda) y_1 &= V_{\lambda} (x; \lambda) f_1 (x) \\
    y_1' - Q (x; \lambda) y_2 &= Q_{\lambda} (x; \lambda) f_2 (x),
    \end{aligned}
\end{equation}
for a.e. $x \in (0, b)$. Given that $y \in \mathcal{D}_M (\lambda)$
for some $\lambda \in [\lambda_1, \lambda_2]$, we need to verify 
that for any other $\tilde{\lambda} \in [\lambda_1, \lambda_2]$
we have $y \in \mathcal{D}_M (\tilde{\lambda})$. Specifically, we 
need to show that for any such $\tilde{\lambda} \in [\lambda_1, \lambda_2]$
there exists 
$\tilde{f} = \genfrac(){0pt}{1}{\tilde{f}_1}{\tilde{f}_2} \in L^2_{\mathbb{B}_{\lambda}} ((0,b), \mathbb{C}^2)$ 
so that 
\begin{equation} \label{mhd-y-system-tilde}
    \begin{aligned}
    - y_2' - V(x; \tilde{\lambda}) y_1 &= V_{\lambda} (x; \tilde{\lambda}) \tilde{f}_1 (x) \\
    y_1' - Q (x; \tilde{\lambda}) y_2 &= Q_{\lambda} (x; \tilde{\lambda}) \tilde{f}_2 (x).
    \end{aligned}
\end{equation}
To this end, we can write (\ref{mhd-y-system}) as 
\begin{equation*}
    \begin{aligned}
    - y_2' - V(x; \tilde{\lambda}) y_1 
    &= V_{\lambda} (x; \lambda) f_1 + (V(x; \lambda) - V (x; \tilde{\lambda})) y_1 \\
    y_1' - Q (x; \tilde{\lambda}) y_2 
    &= Q_{\lambda} (x; \lambda) f_2 + (Q (x; \lambda) - Q (x; \tilde{\lambda})) y_2,
    \end{aligned}
\end{equation*}
from which we see by inspection that in order to obtain the relations (\ref{mhd-y-system-tilde}), we need to 
have 
\begin{equation*}
    \begin{aligned}
    V_{\lambda} (x; \tilde{\lambda}) \tilde{f}_1 (x)
    &= V_{\lambda} (x; \lambda) f_1 (x) + (V(x; \lambda) - V (x; \tilde{\lambda})) y_1 \\
    Q_{\lambda} (x; \tilde{\lambda}) \tilde{f}_2 (x)
    &= Q_{\lambda} (x; \lambda) f_2 (x) + (Q (x; \lambda) - Q (x; \tilde{\lambda})) y_2,
    \end{aligned}
\end{equation*}
for a.e. $x \in (0, b)$. For the first, we can write 
\begin{equation*}
    \tilde{f}_1 (x) = V_{\lambda} (x; \tilde{\lambda})^{-1} V_{\lambda} (x; \lambda) f_1 (x)
    + V_{\lambda} (x; \tilde{\lambda})^{-1} (V(x; \lambda) - V (x; \tilde{\lambda})) y_1 (x),
\end{equation*}
where by direct calculation we find the relations 
\begin{equation} \label{V-lambda-inverse-V-lambda}
\begin{aligned}
V_{\lambda} (x; \tilde{\lambda})^{-1} V_{\lambda} (x; \lambda)
& = \Big{\{} 1 + \frac{4 k^2 B_{\theta} (x)^2 F(x)^2}{(m^2+k^2 x^2) (\mu_0 \rho_0 \tilde{\lambda} + F(x)^2)^2} \Big{\}}^{-1} \\
&\quad \times \Big{\{} 1 + \frac{4 k^2 B_{\theta} (x)^2 F(x)^2}{(m^2+k^2 x^2) (\mu_0 \rho_0 \lambda + F(x)^2)^2} \Big{\}},
\end{aligned}
\end{equation}
and 
\begin{equation*}
\begin{aligned}
 V_{\lambda} (x; \tilde{\lambda})^{-1} (V(x; \lambda) - V (x; \tilde{\lambda}))
 &= (\lambda - \tilde{\lambda}) \Big{\{} 1 + \frac{4 k^2 B_{\theta} (x)^2 F(x)^2}{(m^2+k^2 x^2) (\mu_0 \rho_0 \tilde{\lambda} + F(x)^2)^2} \Big{\}}^{-1} \\
 &\quad \times \Big{\{} 1 + \frac{4 k^2 B_{\theta} (x)^2 F(x)^2}{(m^2+k^2 x^2) (\mu_0 \rho_0 \lambda + F(x)^2) (\mu_0 \rho_0 \tilde{\lambda} + F(x)^2)} \Big{\}}.
 \end{aligned}
\end{equation*}
In each case, we see that the expression on the right-hand side is uniformly bounded 
both above and below, so $\tilde{f}_1$ inherits the integrability properties of 
$f_1$ and $y_1$, which is precisely what we need. 

For $\tilde{f}_2$, we need 
\begin{equation*}
    \tilde{f}_2 
    = Q_{\lambda} (x; \tilde{\lambda})^{-1} Q_{\lambda} (x; \lambda) f_2
    + Q_{\lambda} (x; \tilde{\lambda})^{-1} (Q(x; \lambda) - Q (x; \tilde{\lambda})) y_2,
\end{equation*}
where 
\begin{equation*}
 Q_{\lambda} (x; \tilde{\lambda})^{-1} Q_{\lambda} (x; \lambda)
 = \frac{(\rho_0 \mu_0 \tilde{\lambda} + F(x)^2)^2}{(\rho_0 \mu_0 \lambda + F(x)^2)^2},
\end{equation*}
and 
\begin{equation} \label{Q-lambda-inverse-Q-lambda}
    Q_{\lambda} (x; \tilde{\lambda})^{-1} (Q(x; \lambda) - Q (x; \tilde{\lambda}))
    = (\lambda - \tilde{\lambda}) \frac{\rho_0 \mu_0 \tilde{\lambda} + F(x)^2}{\rho_0 \mu_0 \lambda + F(x)^2}.
\end{equation}
The expressions on the right-hand sides are uniformly bounded above and 
below, so $\tilde{f}_2$ inherits the integrability properties of 
$f_2$ and $y_2$. This completes the verification of Assumption {\bf (C)}.

As with our previous applications, we set Assumption {\bf (D)} aside until
we've verified Assumption {\bf (E)}. For this latter verification, we begin 
by identifying $\mathcal{E} (x; \lambda, \lambda_*)$ in this case. Taking 
advantage of the diagonal structure of $\mathbb{B} (x; \lambda)$, we can 
choose $\mathcal{E}_{12} (x; \lambda, \lambda_*) \equiv 0$ and 
$\mathcal{E}_{21} (x; \lambda, \lambda_*) \equiv 0$, and take 
\begin{equation} \label{mhd-E11}
    \begin{aligned}
    \mathcal{E}_{11} (x; \lambda, \lambda_*)
    &= V_{\lambda} (x; \lambda_*)^{-1} (V(x; \lambda) - V(x; \lambda_*)) \\
    &= (\lambda - \lambda_*) 
    \Big{\{} 1 + \frac{4k^2B_{\theta} (x)^2 F(x)^2}{(m^2 + k^2 x^2)(\mu_0 \rho_0 \lambda + F(x)^2)^2} \Big{\}}^{-1} \\
    &\quad \times \Big{\{} 1 + \frac{4k^2B_{\theta} (x)^2 F(x)^2}
    {(m^2 + k^2 x^2)(\mu_0 \rho_0 \lambda + F(x)^2) (\mu_0 \rho_0 \lambda_* + F(x)^2)} \Big{\}},
    \end{aligned}
\end{equation}
and 
\begin{equation} \label{mhd-E22}
    \begin{aligned}
    \mathcal{E}_{22} (x; \lambda, \lambda_*)
    &= Q_{\lambda} (x; \lambda_*)^{-1} (Q (x; \lambda) - Q (x; \lambda_*)) \\
    &= (\lambda - \lambda_*) \frac{\rho_0 \mu_0 \lambda_* + F(x)^2}{\rho_0 \mu_0 \lambda + F(x)^2}.
    \end{aligned}
\end{equation}
We observe here that $\mathcal{E}_{11} (x; \lambda, \lambda_*)$ and 
$\mathcal{E}_{22} (x; \lambda, \lambda_*)$ can be expressed in the form 
\begin{equation*}
    \mathcal{E}_{ii} (x; \lambda, \lambda_*) = (\lambda - \lambda_*) \Theta_{ii} (x; \lambda, \lambda_*),
    \quad i = 1, 2,
\end{equation*}
where we see from (\ref{mhd-E11}) and (\ref{mhd-E22}) that the functions  
$\Theta_{ii} (x; \lambda, \lambda_*)$, $i = 1, 2$, are uniformly bounded 
above and below on $[\lambda_1, \lambda_2] \times [0, b]$. In addition, 
\begin{equation*}
\Theta_{ii} (x; \lambda, \lambda_*) \to 1, \quad \lambda \to \lambda_*    
\end{equation*}
uniformly for $x \in [0, b]$. 

Now we're prepared to check the four parts of Assumption {\bf (E)}. For (i), we need to 
check that $\mathcal{E} (x; \lambda, \lambda_*)$ is bounded as a multiplication 
operator mapping $L^2_{\mathbb{B}_{\lambda}} ((0, b), \mathbb{C}^2)$ to itself, and 
for (ii) we need to verify that in fact the norm of this operator is $\mathbf{o} (1)$
as $\lambda \to \lambda_*$. We check these conditions together by computing 
\begin{equation*}
\begin{aligned}
    \| \mathcal{E} (\cdot; &\lambda, \lambda_*) \|
    = \sup_{\|f\|_{\mathbb{B}_{\lambda}} = 1} \| \mathcal{E} (\cdot; \lambda, \lambda_*) f (\cdot) \|_{\mathbb{B}_{\lambda}} \\
    &= \sup_{\|f\|_{\mathbb{B}_{\lambda}} = 1} \Big( \int_0^b \mathbb{B}_{\lambda} (x; \lambda_*) \mathcal{E} (x; \lambda, \lambda_*) f(x),
    \mathcal{E} (x; \lambda, \lambda_*) f(x)) dx \Big)^{1/2} \\
    &= |\lambda - \lambda_*| \sup_{\|f\|_{\mathbb{B}_{\lambda}} = 1} \Big(\int_0^b V_{\lambda} (x; \lambda) 
    |\Theta_{11} (x; \lambda, \lambda_*) f_1 (x)|^2
    + Q_{\lambda} (x; \lambda) |\Theta_{22} (x; \lambda, \lambda_*) f_2 (x)|^2\Big)^{1/2} \\
    &\le C |\lambda - \lambda_*| \sup_{\|f\|_{\mathbb{B}_{\lambda}} = 1} \Big(\int_0^b V_{\lambda} (x; \lambda) |f_1 (x)|^2
    + Q_{\lambda} (x; \lambda) |f_2 (x)|^2\Big)^{1/2} \\
    &= C |\lambda - \lambda_*| \sup_{\|f\|_{\mathbb{B}_{\lambda}} = 1} \| f \|_{\mathbb{B}_{\lambda}}
    = C |\lambda - \lambda_*|,
\end{aligned}
\end{equation*}
where 
\begin{equation*}
    C = \max_{[\lambda_1, \lambda_2]\times [0, b]}
    \Big{\{}|\Theta_{11} (x; \lambda)| + |\Theta_{22} (x; \lambda)|\Big{\}}. 
\end{equation*}
We see that
\begin{equation*}
    \| \mathcal{E} (\cdot; \lambda, \lambda_*) \| \le C |\lambda - \lambda_*|,
    \quad \lambda \to \lambda_*,
\end{equation*}
which is more than we require for (i) and (ii). 

For (iii), we need to show that the matrix function $\mathcal{E} (x; \lambda, \lambda_*)$
is continuously differentiable in $\lambda$ for $\lambda \in I_{\lambda_*, r}$, and that 
the map $\lambda \mapsto \mathcal{E} (x; \lambda, \lambda_*)$ is continuously 
differentiable as a map from $I_{\lambda_*, r_*}$ to 
$\mathcal{B} (L^2_{\mathbb{B}_{\lambda}} ((0, b), \mathbb{C}^2))$. 

First, continuous differentiability of $\mathcal{E} (x; \lambda, \lambda_*)$
in $\lambda$ is clear from the explicit relations (\ref{mhd-E11}) and 
(\ref{mhd-E22}). Specifically, we see that 
\begin{equation} \label{mhd-mathcal-E-derivatives}
    \begin{aligned}
    \partial_{\lambda} \mathcal{E}_{11} (x; \lambda, \lambda_*)
    &= V_{\lambda} (x; \lambda_*)^{-1} V_{\lambda} (x; \lambda) \\
    \partial_{\lambda} \mathcal{E}_{22} (x; \lambda, \lambda_*)
    &= Q_{\lambda} (x; \lambda_*)^{-1} Q_{\lambda} (x; \lambda),
    \end{aligned}
\end{equation}
for which we respectively have explicit expressions (after replacing $\tilde{\lambda}$
with $\lambda_*$) from (\ref{V-lambda-inverse-V-lambda}) 
and (\ref{Q-lambda-inverse-Q-lambda}). 

In order to verify that $\mathcal{E} (\cdot; \lambda, \lambda_*)$
is differentiable as a map from $I_{\lambda_*, r} \subset [\lambda_1, \lambda_2]$
to $\mathcal{B} (L^2_{\mathbb{B}_{\lambda}} ((0, b), \mathbb{C}^2))$, we let
$\epsilon (h; \lambda)$ denote the operator so that 
\begin{equation*}
\mathcal{E} (\cdot; \lambda+h, \lambda_*)
= \mathcal{E} (\cdot; \lambda, \lambda_*) + \mathcal{E}_{\lambda} (\cdot; \lambda, \lambda_*) h
+ \epsilon (h; \lambda, \lambda_*) h,
\end{equation*}
and our goal is to check that 
\begin{equation} \label{mhd-limit-for-epsilon}
    \lim_{h \to 0} \| \epsilon (h; \lambda, \lambda_*) \| = 0.
\end{equation}
Computing directly, we find 
\begin{equation*}
    \begin{aligned}
     \epsilon_{11} (h; \lambda, \lambda_*)
     &= \frac{1}{h} \Big{\{} \mathcal{E}_{11} (x; \lambda + h, \lambda_*)
     - \mathcal{E}_{11} (x; \lambda, \lambda_*) 
     - \partial_{\lambda} \mathcal{E}_{11} (x; \lambda, \lambda_*) h \Big{\}} \\
     &= \Big{\{} 1 + \frac{4 k^2 B_{\theta} (x)^2 F(x)^2}
     {(m^2+k^2 x^2) (\mu_0 \rho_0 \lambda_* + F(x)^2)^2} \Big{\}}^{-1} 
     \frac{4 k^2 B_{\theta} (x)^2 F(x)^2}{(m^2+k^2 x^2) (\mu_0 \rho_0 \lambda + F(x)^2)} \\
 &\quad \times \Big{\{} \frac{1}{(\mu_0 \rho_0 (\lambda + h) + F(x)^2)}
 - \frac{1}{(\mu_0 \rho_0 \lambda + F(x)^2)} \Big{\}},
    \end{aligned}
\end{equation*}
where we mean here that $\epsilon_{11} (h; \lambda, \lambda_*)$ acts as a multiplication
operator mapping $L^2_{\mathbb{B}_{\lambda}} ((0, b), \mathbb{C}^2)$ to itself. We see
directly from this relation that (\ref{mhd-limit-for-epsilon}) must hold. The verification
for $\epsilon_{22} (h; \lambda, \lambda_*)$ is similar, allowing us to conclude that 
the map $\lambda \mapsto \mathcal{E} (x; \lambda, \lambda_*)$ is differentiable as a map 
from $I_{\lambda_*, r}$ to $\mathcal{B} (L^2_{\mathbb{B}_{\lambda}} ((0, b), \mathbb{C}^2))$,
with derivative the multiplication operator $\mathcal{E}_{\lambda} (\cdot; \lambda, \lambda_*)$. 

For the final part of Assumption {\bf (E)}(iii), we need to show that 
$\mathcal{E}_{\lambda} (\cdot; \lambda, \lambda_*)$ is continuous as a map
from $I_{\lambda_*, r}$ to $\mathcal{B} (L^2_{\mathbb{B}_{\lambda}} ((0, b), \mathbb{C}^2))$. 
For this, we fix $\lambda_0 \in [\lambda_1, \lambda_2]$ and for any 
$\lambda \in [\lambda_1, \lambda_2]$ consider the difference 
\begin{equation*}
\mathcal{E}_{\lambda} (\cdot; \lambda, \lambda_*)
- \mathcal{E}_{\lambda} (\cdot; \lambda_0, \lambda_*).
\end{equation*}
Focusing again on $\mathcal{E}_{11}$, we can use (\ref{mhd-mathcal-E-derivatives})
to compute 
\begin{equation*}
\begin{aligned}
\partial_{\lambda} \mathcal{E}_{11} (\cdot; \lambda, \lambda_*)
&- \partial_{\lambda} \mathcal{E}_{11} (\cdot; \lambda_0, \lambda_*)
= - (\lambda - \lambda_0)  \Big{\{} 1 + \frac{4k^2B_{\theta} (x)^2 F(x)^2}{(m^2 + k^2 x^2)(\mu_0 \rho_0 \lambda_* + F(x)^2)^2} \Big{\}}^{-1} \\
&\times \Big{\{} \frac{4k^2 B_{\theta} (x)^2 F(x)^2}{m^2 + k^2 x^2} \Big{\}}
\Big{\{} \frac{\mu_0^2 \rho_0^2 (\lambda + \lambda_0) + 2 \mu_0 \rho_0 F(x)^2}
{(\mu_0 \rho_0 \lambda + F(x)^2)^2 (\mu_0 \rho_0 \lambda_0 + F(x)^2)^2} \Big{\}}.
\end{aligned}
\end{equation*}
The key observation here is that this difference goes to 0 uniformly in $x$
as $\lambda$ approaches $\lambda_0$, and this is enough to allow us to 
conclude that $\mathcal{E}_{\lambda} (\cdot; \lambda, \lambda_*)$ is continuous as a map
from $I_{\lambda_*, r}$ to $\mathcal{B} (L^2_{\mathbb{B}_{\lambda}} ((0, b), \mathbb{C}^2))$. 
The analysis of $\partial_{\lambda} \mathcal{E}_{11} (\cdot; \lambda, \lambda_*)$ is 
similar, finalizing the verification of Assumption {\bf (E)}(iii).

This brings us to Assumption {\bf (E)}(iv). For this, we can compute 
\begin{equation*}
    |f(x)^* \mathbb{B}_{\lambda} (x; \lambda) g(x)|
    \le |f_1 (x)^* V_{\lambda} (x; \lambda) g_1 (x)|
    + |f_2 (x)^* Q_{\lambda} (x; \lambda) g_2 (x)|,
\end{equation*}
and Assumption {\bf (E)}(iv) follows from uniformity of 
$V_{\lambda} (x; \lambda)$ and $Q_{\lambda} (x; \lambda)$
in $\lambda$. This completes the verification of Assumption 
{\bf (E)}. 

Returning to Assumption {\bf (D)}, according to Lemma
\ref{assumption-d-lemma} we only need to check that 
\begin{equation*}
    \| \mathcal{E}_{\lambda} (\cdot; \lambda, \lambda_0) - I \|
    = \mathbf{o} (1), \quad \lambda \to \lambda_0.
\end{equation*}
Using (\ref{mhd-mathcal-E-derivatives}), we find 
\begin{equation*}
\begin{aligned}
\partial_{\lambda} \mathcal{E}_{11} (\cdot; \lambda, \lambda_0)
&- 1
= - (\lambda - \lambda_0)  \Big{\{} 1 + \frac{4k^2B_{\theta} (x)^2 F(x)^2}{(m^2 + k^2 x^2)(\mu_0 \rho_0 \lambda_0 + F(x)^2)^2} \Big{\}}^{-1} \\
&\times \Big{\{} \frac{4k^2 B_{\theta} (x)^2 F(x)^2}{m^2 + k^2 x^2} \Big{\}}
\Big{\{} \frac{\mu_0^2 \rho_0^2 (\lambda + \lambda_0) + 2 \mu_0 \rho_0 F(x)^2}
{(\mu_0 \rho_0 \lambda + F(x)^2)^2 (\mu_0 \rho_0 \lambda_0 + F(x)^2)^2} \Big{\}}.
\end{aligned}
\end{equation*}
We see that in fact
\begin{equation*}
    \|\mathcal{E}_{\lambda} (\cdot; \lambda, \lambda_0) - I\|
    = \mathbf{O} (|\lambda - \lambda_0|),
\end{equation*}
which is substantially stronger than the required condition.

This concludes the verification of Assumptions {\bf (A)} through 
{\bf (E)}. For Assumption {\bf (F)}, we have 
\begin{equation*}
    \mathbb{B} (x; \lambda_2) - \mathbb{B} (x; \lambda_1)
    = \begin{pmatrix}
    V(x; \lambda_2) - V(x; \lambda_1) & 0 \\
    0 & Q(x; \lambda_2) - Q(x; \lambda_1)
    \end{pmatrix},
\end{equation*}
where 
\begin{equation*}
    \begin{aligned}
     V(x; \lambda_2) - V(x; \lambda_1)
     &= \frac{\mu_0 \rho_0 (\lambda_2 - \lambda_1)}{x}
     \Big{\{} 1 + \frac{4 k^2 B_{\theta} (x)^2 F(x)^2}
     {(m^2+k^2x^2)(\mu_0 \rho_0 \lambda_2 + F(x)^2)(\mu_0 \rho_0 \lambda_1 + F(x)^2)} \Big{\}} \\
     Q(x; \lambda_2) - Q(x; \lambda_1)
     &= \frac{\mu_0 \rho_0 (\lambda_2 - \lambda_1)}{x}
     \Big{\{} \frac{m^2 + k^2 x^2}{(\mu_0 \rho_0 \lambda_2 + F(x)^2)(\mu_0 \rho_0 \lambda_1 + F(x)^2)} \Big{\}}.
    \end{aligned}
\end{equation*}
Since the quantity $\mu_0 \rho_0 \lambda + F(x)^2$ is taken to have the same 
sign for all $\lambda \in [\lambda_1, \lambda_2]$, we have that 
\begin{equation*}
(\mu_0 \rho_0 \lambda_2 + F(x)^2)(\mu_0 \rho_0 \lambda_1 + F(x)^2) > 0.    
\end{equation*}
It follows immediately that the condition from Remark \ref{assumption-F-remark} 
follows, so Assumption {\bf (F)} is seen to hold. 

We turn next to the assumption in Theorem \ref{regular-singular-theorem} 
on asymptotic intersections. 
For this application, the singular point is on the left, so the condition 
becomes existence of a value $c_0 > 0$ sufficiently small so that 
for all $0 < c < c_0$
\begin{equation*}
    \ell_0 (c; \lambda) \cap \ell_b (c; \lambda_2)
    = \{0\},\quad \forall\, \lambda \in (\lambda_1, \lambda_2],
\end{equation*}
where $\ell_0 (c; \lambda)$ denotes the one-dimensional space 
of solutions to (\ref{mhd-B}) that lie left in $(0, b)$, and 
$\ell_b (c; \lambda_2)$ denotes the one-dimensional space of 
solutions to (\ref{mhd-B}) that satisfy the Dirichlet boundary
condition at $x = b$. 

In order to verify this, we observe that for all $\lambda \in [\lambda_1, \lambda_2]$,
our equation (\ref{mhd-second-order}) has a regular singularity at 
$x = 0$. This allows us to construct linearly independent series solutions
\begin{equation} \label{mhd-regular-singular-solutions}
    \begin{aligned}
    \phi (x; \lambda) 
    &= x^{|m|} \Big(1+\sum_{j=1}^{\infty} a_j (\lambda) x^j \Big) \\
    \varphi (x; \lambda)
    &= b_0 (\lambda) \phi_1 (x; \lambda) \ln x
    + x^{-|m|} \Big(1+ \sum_{j=1}^{\infty} b_j (\lambda) x^j \Big),
    \end{aligned}
\end{equation}
for some constants $\{a_j (\lambda)\}_{j=1}^{\infty}$ 
and $\{b_j (\lambda)\}_{j=1}^{\infty}$, with each series
converging on an open interval containing $[0, b]$.    

Using (\ref{mhd-regular-singular-solutions}), we can take as our 
frame for solutions of (\ref{mhd-B}) lying left in $(0, b)$ the vector 
\begin{equation*}
    \mathbf{X}_0 (x; \lambda)
    = \begin{pmatrix}
    \phi (x; \lambda) \\
    - P(x; \lambda) \phi' (x; \lambda)
    \end{pmatrix},
\end{equation*}
noting that for $x$ near $0$ this is approximately 
\begin{equation*}
    \mathbf{X}_0 (x; \lambda)
    \cong \begin{pmatrix}
    x^{|m|} \\
    - \frac{\mu_0 \rho_0 \lambda + F(0)^2}{m} x^{|m|}
    \end{pmatrix}.
\end{equation*} 
For $\mathbf{X}_b (x; \lambda_2)$, since $\lambda_2$ is not an 
eigenvalue, we can take 
\begin{equation*}
   \mathbf{X}_b (x; \lambda_2)
    = \begin{pmatrix}
    \varphi (x; \lambda_2) \\
    -P(x; \lambda_2) \varphi' (x; \lambda_2)
    \end{pmatrix}
    +  C \begin{pmatrix}
    \phi (x; \lambda_2) \\
    - P(x; \lambda_2) \phi' (x; \lambda_2)
    \end{pmatrix},
\end{equation*}
for some constant $C$, where the key point is that if $\lambda_2$ is not
an eigenvalue then the first term on the right-hand side of this 
expression must appear non-trivially. We can detect intersections between 
$\ell_0 (x; \lambda)$ and $\ell_b (x; \lambda_2)$ by computing 
\begin{equation*}
\begin{aligned}
    \det &\begin{pmatrix}
    \phi (x; \lambda) & \varphi (x; \lambda_2) + C \phi (x; \lambda_2) \\
    - P(x; \lambda) \phi' (x; \lambda) & - P(x; \lambda_2) (\varphi' (x; \lambda_2) + C \phi' (x; \lambda_2)) 
    \end{pmatrix} \\
    & \cong 
    \begin{pmatrix}
    x^{|m|} & x^{- |m|} \\
    - \frac{\mu_0 \rho_0 \lambda + F(0)^2}{m} x^{|m|} & \frac{\mu_0 \rho_0 \lambda_2 + F(0)^2}{m} x^{-|m|} 
    \end{pmatrix} \\
    &= \frac{\mu_0 \rho_0 (\lambda + \lambda_2) + 2 F(0)^2}{m} \ne 0.
\end{aligned}
\end{equation*}

The final item we need to check is the assumption that 
\begin{equation*}
    \sigma_{\ess} (\mathcal{T}) \cap [\lambda_1, \lambda_2]
    = \emptyset.
\end{equation*}
For this, we recall that a value $\lambda \in [\lambda_1, \lambda_2]$
is in the essential spectrum of the operator pencil 
$\mathcal{T} (\cdot)$ if and only if $\mu = 0$ is in the 
essential spectrum of the operator $\mathcal{T} (\lambda)$. 
In order to understand the essential spectrum of the 
operator $\mathcal{T} (\lambda)$, we consider the eigenvalue 
problem $\mathcal{T} (\lambda) y = \mu y$, which can be expressed as 
\begin{equation} \label{mhd-mu-problem}
    J y' = \mathbb{B} (x; \lambda) y + \mu \mathbb{B}_{\lambda} (x; \lambda) y
\end{equation}
(see (\ref{linear-ev-problem}) and (\ref{linear-hammy})). 
If we set 
\begin{equation*}
    \begin{aligned}
    \mathcal{P} (x; \lambda, \mu)
    &= - (Q (x; \lambda) + \mu Q_{\lambda} (x; \lambda))^{-1} \\
    \mathcal{V} (x; \lambda, \mu) 
    &= V(x; \lambda) + \mu V_{\lambda} (x; \lambda),
    \end{aligned}
\end{equation*}
then we can express (\ref{mhd-mu-problem}) as a second-order equation 
for the component $y_1$, namely 
\begin{equation} \label{mhd-mu-second-order}
    - (\mathcal{P} (x; \lambda, \mu) y_1')' 
    + \mathcal{V} (x; \lambda, \mu) y_1 = 0.
\end{equation}
Our approach to the essential spectrum will now be as follows: 
(1) we will show that for each $\lambda \in [\lambda_1, \lambda_2]$
we can construct a solution $\phi (x; \lambda, \mu)$ to 
(\ref{mhd-mu-second-order}), analytic in $\mu$ for $\mu$ in 
a neighborhood of $0$, so that 
\begin{equation*}
    \mathbf{X}_0 (x; \lambda, \mu)
    = \begin{pmatrix}
    \phi (x; \lambda, \mu) \\ - \mathcal{P} (x; \lambda, \mu) \phi (x; \lambda, \mu) 
    \end{pmatrix}
\end{equation*}
is a solution of (\ref{mhd-mu-problem}) that lies left in $(0, b)$, and likewise
that we can construct a solution $\psi (x; \lambda, \mu)$ to 
(\ref{mhd-mu-second-order}), analytic in $\mu$ for $\mu$ in 
a neighborhood of $0$, so that 
\begin{equation*}
    \mathbf{X}_b (x; \lambda, \mu)
    = \begin{pmatrix}
    \psi (x; \lambda, \mu) \\ - \mathcal{P} (x; \lambda, \mu) \psi (x; \lambda, \mu) 
    \end{pmatrix}
\end{equation*}
is a solution of (\ref{mhd-mu-problem}) such that $\psi (b; \lambda, \mu) = 0$; 
and (2) we will show that if the Evans function 
\begin{equation} \label{mhd-evans}
    \mathcal{D} (\mu; \lambda)
    = \det \Big( \mathbf{X}_0 (x; \lambda, \mu) \,\,\, \mathbf{X}_b (x; \lambda, \mu)\Big) 
\end{equation}
(which is independent of $x$)
satisfies $D(0; \lambda) \ne 0$, then $0 \in \rho (\mathcal{T} (\lambda))$. Since 
$\mathcal{D} (\cdot; \lambda)$ is analytic in $\mu$ (due to analyticity of $ \mathbf{X}_0 (x; \lambda, \mu)$
and $\mathbf{X}_b (x; \lambda, \mu)$ in $\mu$) and not identically 0, we can conclude that if 
$\mu = 0$ is a root of $\mathcal{D} (\cdot; \lambda)$, then it must be isolated, and so must 
be an element of the point spectrum of $\mathcal{T} (\lambda)$. In summary, for 
each $\lambda \in [\lambda_1, \lambda_2]$, we will be able to conclude that 
$\mu = 0$ is either in the resolvent set of $\mathcal{T} (\lambda)$ or in the 
point spectrum of $\mathcal{T} (\lambda)$. It will follow immediately that $\mu = 0$ is not 
in the essential spectrum of $\mathcal{T} (\lambda)$ for any 
$\lambda \in [\lambda_1, \lambda_2]$.

For the first step in this approach, we observe that (\ref{mhd-mu-second-order}) has a regular
singular point at $x = 0$, allowing us to construct a series solutions of the form 
\begin{equation} \label{mhd-mu-series1}
    \phi (x; \lambda, \mu)
    = x^r \Big(1 + \sum_{j=0}^{\infty} a_j (\lambda, \mu) x^{j}\Big),
\end{equation}
where $a_0 (\lambda, \mu) = 1$ and 
\begin{equation} \label{mhd-r}
    r (\mu; \lambda) = \sqrt{\frac{(\mu_0 \lambda_0 (\lambda + \mu) + F(0)^2) (\mu_0 \lambda_0 (\lambda - \mu) + F(0)^2)}
    {(\mu_0 \rho_0 \lambda + F(0)^2)^2}} |m|.
\end{equation}
Here, we observe that $r(\cdot; \lambda)$ is analytic in $\mu$ for $\mu$ near 0, with also 
$r (0; \lambda) = |m|$. (We can also construct a second solution of 
(\ref{mhd-mu-second-order}), linearly independent of $\phi (x; \lambda, \mu)$, 
but that solution won't be needed for the current analysis.) 
In (\ref{mhd-mu-series1}), we see that $x^r$ is analytic in 
$\mu$, so we only need to verify that the sum $\sum_{j=1}^{\infty} a_j (\lambda, \mu) x^{r+j}$
is also analytic in $\mu$. To this end, we will check that each coefficient 
$a_j (\lambda, \mu)$ is analytic in $\mu$, and also that the sum converges
uniformly in $\mu$. In addition to (\ref{mhd-mu-series1}), 
this calculation will require the following series expansions: 
\begin{equation} \label{mhd-mu-series2}
\begin{aligned}
    \phi' (x; \lambda, \mu)
    &= \sum_{j=0}^{\infty} a_j (\lambda, \mu) (r+j) x^{r+j-1} \\
    \phi'' (x; \lambda, \mu)
    &= \sum_{j=0}^{\infty} a_j (\lambda, \mu) (r+j) (r+j-1) x^{r+j-2},
\end{aligned}
\end{equation}
and 
\begin{equation} \label{mhd-mu-series3}
    \begin{aligned}
    \mathcal{P} (x; \lambda, \mu) 
    &= \sum_{j=1}^{\infty} p_j (\lambda, \mu) x^j \\
    \mathcal{P}' (x; \lambda, \mu) 
    &= \sum_{j=1}^{\infty} j p_j (\lambda, \mu) x^{j-1} \\
     \mathcal{V} (x; \lambda, \mu) 
    &= \sum_{j=0}^{\infty} v_j (\lambda, \mu) x^{j-1},
    \end{aligned}
\end{equation}
where 
\begin{equation*}
    \begin{aligned}
    p_1 (\lambda, \mu)
    &= \frac{(\mu_0 \rho_0 \lambda + F(0)^2)^2}{m^2 (\mu_0 \rho_0 (\lambda - \mu) + F(0)^2)} \\
    v_0 (\mu, \lambda) &= \mu_0 \rho_0 (\lambda + \mu) + F(0)^2.    
    \end{aligned}
\end{equation*}

Upon substitution of these relations into (\ref{mhd-mu-second-order}),
we obtain the relation
\begin{equation*}
\begin{aligned}
    - \Big( &\sum_{j=1}^{\infty} p_j (\lambda, \mu) x^j \Big)
    \Big( \sum_{j=0}^{\infty} a_j (\lambda, \mu) (r+j) (r+j-1) x^{r+j-2} \Big) \\
    & - \Big( \sum_{j=1}^{\infty} j p_j (\lambda, \mu) x^{j-1} \Big)
    \Big( \sum_{j=0}^{\infty} a_j (\lambda, \mu) (r+j) x^{r+j-1} \Big) \\
    &+ x^{-1} \Big(\sum_{j=0}^{\infty} v_j (\lambda, \mu) x^{j-1} \Big)
    \Big( \sum_{j=0}^{\infty} a_j (\lambda, \mu) x^{r+j} \Big)
    = 0.
\end{aligned}
\end{equation*}
For the subsequent calculations, it will be convenient to write 
\begin{equation*}
    \begin{aligned}
    \Big( &\sum_{j=1}^{\infty} p_j (\lambda, \mu) x^j \Big)
    \Big( \sum_{j=0}^{\infty} a_j (\lambda, \mu) (r+j) (r+j-1) x^{r+j-2} \Big) \\
    &= x^{r-2} \Big( \sum_{j=1}^{\infty} p_j (\lambda, \mu) x^j \Big)
    \Big( \sum_{j=0}^{\infty} a_j (\lambda, \mu) (r+j) (r+j-1) x^{j} \Big) \\
    &= x^{r-2} \sum_{j=1}^{\infty} c_j (\lambda, \mu) x^j,
    \end{aligned}
\end{equation*}
where 
\begin{equation} \label{mhd-c-expansion}
    c_j
    = p_1 a_{j-1} (r+j-1) (r+j-2) 
    + p_2 a_{j-2} (r+j-2)(r+j-3) 
    + \dots + p_j a_0 r (r-1),
\end{equation}
for $j = 1, 2, \dots$. (Here, dependence on $\lambda$ and $\mu$ has been suppressed
to emphasize the index relations.)
Likewise, 
\begin{equation*}
    \begin{aligned}
    \Big( &\sum_{k=1}^{\infty} j p_j (\lambda, \mu) x^{j-1} \Big)
    \Big( \sum_{j=0}^{\infty} a_j (\lambda, \mu) (r+j) x^{r+j-1} \Big) \\
    &=  x^{r-2} \Big( \sum_{j=1}^{\infty} j p_j (\lambda, \mu) x^j \Big)
    \Big( \sum_{j=0}^{\infty} a_j (\lambda, \mu) (r+j) x^{r+j-1} \Big) \\
    &= x^{r-2} \sum_{j=1}^{\infty} d_j (\lambda, \mu) x^j,
    \end{aligned}
\end{equation*}
where 
\begin{equation} \label{mhd-d-expansion}
    d_j = p_1 a_{j-1} (r+j-1) + 2 p_2 a_{j-2} (r+j-2)
    + \dots + j p_{j} a_0 r,
\end{equation}
for $j = 1, 2, 3, \dots$,
and 
\begin{equation*}
    \begin{aligned}
    x^{-1} &\Big(\sum_{j=0}^{\infty} v_j (\lambda, \mu) x^j \Big)
    \Big( \sum_{j=0}^{\infty} a_j (\lambda, \mu) x^{r+j} \Big) \\
    &= x^{r-1} \Big(\sum_{j=0}^{\infty} v_j (\lambda, \mu) x^j \Big)
    \Big( \sum_{j=0}^{\infty} a_j (\lambda, \mu) x^{j} \Big) \\
    &= x^{r-1} \sum_{j=0}^{\infty} e_j (\lambda, \mu) x^j
    = x^{r-2} \sum_{j=1}^{\infty} e_{j-1} (\lambda, \mu) x^j,
    \end{aligned}
\end{equation*}
where 
\begin{equation} \label{mhd-e-expansion}
    e_j = v_0 a_j  
    + v_1 a_{j-1} 
    + \dots + v_j a_0, 
\end{equation}
for $j = 0, 1, 2, \dots$. 
Combining these expressions, we can write (\ref{mhd-mu-second-order})
as 
\begin{equation*}
- x^{r-2} \sum_{j=1}^{\infty} c_j (\lambda, \mu) x^j 
- x^{r-2} \sum_{j=1}^{\infty} d_j (\lambda, \mu) x^j
+ x^{r-2} \sum_{j=1}^{\infty} e_{j-1} (\lambda, \mu) x^{j} = 0.
\end{equation*}
Upon matching coefficients of powers of $x$,
we obtain the coefficient relations
\begin{equation} \label{mhd-relations}
    - c_j (\lambda, \mu) - d_j (\lambda, \mu) + e_{j-1} (\lambda, \mu) = 0,
    \quad j = 1, 2, 3, \dots. 
\end{equation}
For $j = 1$, we have 
\begin{equation*}
    \begin{aligned}
    c_1 &= p_1 (\lambda, \mu) a_0 r (r-1) \\
    d_1 &= p_1 (\lambda, \mu) a_0 r \\
    e_0 &= v_0 (\lambda, \mu) a_0,
    \end{aligned}
\end{equation*}
from which the relation $-c_1 - d_1 + e_0 = 0$ is seen to hold from the above 
expressions for $p_1$, $a_0$, and $v_0$. 

Next, we use the equation (\ref{mhd-relations}) to obtain an iterative
relation for $a_{j+1} (\lambda, \mu)$ in terms of the previous 
coefficients. Using (\ref{mhd-c-expansion}), (\ref{mhd-d-expansion}),
and (\ref{mhd-e-expansion}), we obtain the relation
\begin{equation*}
    A_{\mu} a_{j-1} = B_{\mu},
\end{equation*}
where 
\begin{equation*}
\begin{aligned}
    A_{\mu} &= - p_1 (r+j-1)(r+j-2) - p_1 (r+j-1) + v_0
    = - p_1 (r+j-1)^2 + v_0 \\
    &= - \frac{(\mu_0 \rho_0 \lambda + F(0)^2)^2}{m^2 (\mu_0 \rho_0 (\lambda - \mu) + F(0)^2)} (r+j-1)^2
    + \mu_0 \rho_0 (\lambda + \mu) + F(0)^2,
\end{aligned}
\end{equation*}
and 
\begin{equation*}
    \begin{aligned}
    B_{\mu} &= p_2 (r+j-2) (r+j-3) a_{j-2} + \dots + p_j (r-1) (r-2) a_0 \\
    &+ 2 p_2 (r+j-2) a_{j-2} + \dots + j p_j r a_0 
    - v_1 a_{j-2} - \dots - v_{j-1} a_0. 
    \end{aligned}
\end{equation*}
Here, since $r(0,\lambda) = |m|$,
\begin{equation*}
    A_0 = (\mu_0 \rho_0 (\lambda + \mu) + F(0)^2) (1 - \frac{(|m|+j-1)^2}{m^2}).
\end{equation*}
The quantity $(1 - (|m|+j-1)^2/(m^2))$ is strictly negative for 
$j = 2, 3, \dots$, so that we can divide by $A_{\mu}$ for all $\mu$
sufficiently close to $0$. Since $A_{\mu}$ and $B_{\mu}$ both grow  
at quadratic rate in $j$, we see that for the ratio $B_{\mu}/A_{\mu}$,
the coefficients of the values $\{a_i\}_{i = 0}^{j-2}$ can be bounded
by some constant $M$, uniform in $j$ and $\mu$ for $\mu$ in a fixed 
neighborhood of $0$. Specifically, we obtain an inequality of the form 
\begin{equation*}
    |a_{j-1}| \le M (|a_0| + |a_1| + \dots + |a_{j-2}|),
\end{equation*}
or upon shifting the index
\begin{equation} \label{mhd-a-bound}
    |a_j| \le M (|a_0| + |a_1| + \dots + |a_{j-1}|).
\end{equation}

\begin{claim} \label{mhd-a-claim} For any sequence of values $\{a_j\}_{j=0}^{\infty}$,
suppose $|a_0| \le 1$ and that inequality (\ref{mhd-a-bound}) holds for all $j \in \mathbb{N}$. 
Then for any constant $C > 1$ chosen large enough so that 
\begin{equation} \label{mhd-a-claim-inequality}
    \frac{M/C}{1 - 1/C} \le 1,
\end{equation}
we have $|a_j| \le C^j$ for all $j \in \{0, 1, 2, \dots\}$. 
\end{claim}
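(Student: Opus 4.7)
The plan is to prove the claim by strong induction on $j$. The key preliminary observation is that the hypothesis on $C$ can be rewritten in a more convenient form: multiplying numerator and denominator of $\frac{M/C}{1 - 1/C}$ by $C$ yields $\frac{M}{C-1}$, so the condition (\ref{mhd-a-claim-inequality}) is equivalent to $M \le C - 1$. This reformulation is what makes the induction close cleanly, since $C - 1$ is precisely the denominator that will appear when summing a geometric series with ratio $C$.

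For the base case $j = 0$, the bound $|a_0| \le 1 = C^0$ is immediate from the assumption $|a_0| \le 1$.

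For the inductive step, I would assume $|a_i| \le C^i$ for all $i \in \{0, 1, \dots, j-1\}$ and apply the hypothesized bound (\ref{mhd-a-bound}) together with the geometric sum identity $\sum_{i=0}^{j-1} C^i = \frac{C^j - 1}{C - 1}$ to obtain
\begin{equation*}
|a_j| \le M \sum_{i=0}^{j-1} |a_i| \le M \sum_{i=0}^{j-1} C^i = M \cdot \frac{C^j - 1}{C - 1}.
\end{equation*}
Invoking the equivalent form $M \le C - 1$ of the hypothesis then gives $|a_j| \le C^j - 1 \le C^j$, which closes the induction.

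The argument is entirely routine; there is no substantive obstacle beyond recognizing the algebraic simplification of the hypothesis on $C$, which is in fact engineered precisely to absorb the $C - 1$ denominator produced by the finite geometric series.
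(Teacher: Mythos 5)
Your proof is correct and follows essentially the same induction as the paper's; the only difference is cosmetic — you use the exact finite geometric sum $\sum_{i=0}^{j-1} C^i = \frac{C^j-1}{C-1}$ together with the equivalent form $M \le C-1$ of \eqref{mhd-a-claim-inequality}, whereas the paper factors out $C^{n+1}$ and bounds by the infinite series $\sum_{k=1}^{\infty} C^{-k}$. Both close the induction by the same mechanism.
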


\begin{proof}
We proceed inductively, first observing that by assumption 
$|a_0| \le 1 = C^0$. For the induction step, we assume that for some $n \in \mathbb{N}$,
we have $|a_j| \le C^j$ for all $j \in \{0, 1, 2, \dots, n\}$, and our goal is to 
show that this implies $|a_{n+1}| \le C^{n+1}$. Upon combining (\ref{mhd-a-bound})
with the induction hypothesis, we obtain the inequality 
\begin{equation*}
    \begin{aligned}
    |a_{n+1}| &\le M (|a_0| + |a_1| + \dots + |a_n|)
    \le M (1 + C + \dots + C^n) \\
    &= M C^{n+1} (\frac{1}{C} + \frac{1}{C^2} + \dots + \frac{1}{C^{n+1}})
    \le M C^{n+1} \sum_{j=1}^{\infty} \frac{1}{C^j} \\
    &= M C^{n+1} \frac{1/C}{1 - 1/C} \le C^{n+1},
    \end{aligned}
\end{equation*}
where in the last step we've used (\ref{mhd-a-claim-inequality}).
\end{proof}

Now, we return to (\ref{mhd-mu-series1}), and fix any value $x \le 1/(2C)$,
so that 
\begin{equation*}
    |a_j (\lambda, \mu) x^j| \le C^j (\frac{1}{2})^j \frac{1}{C^j}
    = (\frac{1}{2})^j.
\end{equation*}
We see that for fixed values $\lambda \in [\lambda_1, \lambda_2]$
and $x \le 1/(2C)$, the sum $\sum_{j=1}^{\infty} a_j (\lambda, \mu) x^j$ 
converges uniformly in $\mu$ for $\mu$ confined to a sufficiently small
neighborhood of $0$. Since the functions $\{a_j (\lambda, \mu)\}_{j=0}^{\infty}$
are analytic in $\mu$ (a property inherited from the analyticity
of $\mathcal{P} (x; \lambda, \mu)$ and $\mathcal{V} (x; \lambda, \mu)$ in 
$\mu$), the function $\phi (x; \lambda, \mu)$ in (\ref{mhd-mu-series1}) 
is analytic in $\mu$ for $0 < x < 1/(2C)$. Subsequently, we obtain 
analyticity in $\mu$ for for all $x \in (0,b)$ by analytic continuation.
Using $\phi (x; \lambda, \mu)$, we can now construct a solution 
\begin{equation} \label{mhd-left}
\mathbf{X}_0 (x; \lambda, \mu) 
= \begin{pmatrix}
\phi (x; \lambda, \mu) \\
- \mathcal{P} (x; \lambda, \mu) \phi' (x; \lambda, \mu) 
\end{pmatrix},
\end{equation}
analytic in $\mu$, that solves $\mathcal{T} (\lambda) \mathbf{X}_0 = \mu \mathbf{X}_0$ and lies 
left in $(0, b)$, and since (\ref{mhd-mu-problem}) is regular at 
$x = b$ we can likewise construct a solution $\mathbf{X}_b (x; \lambda, \mu)$,
also analytic in $\mu$, that solves $\mathcal{T} (\lambda) \mathbf{X}_b = \mu \mathbf{X}_b$,
along with the Dirichlet condition 
$\mathbf{X}_b (b; \lambda, \mu) = \genfrac(){0pt}{1}{0}{1}$. This allows us 
to analytically construct the Evans function $\mathcal{D} (\mu)$
specified in (\ref{mhd-evans}). 

\begin{claim} \label{mhd-evans-claim} For any fixed $\lambda \in [\lambda_1, \lambda_2]$,
there exists $\epsilon > 0$ sufficiently small so that a value 
$\mu \in B(0; \epsilon)$ is in the resolvent set of $\mathcal{T} (\lambda)$ if and only if
$\mathcal{D} (\mu; \lambda) \ne 0$.
\end{claim}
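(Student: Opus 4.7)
The plan is to prove the biconditional in two directions. First I will show that if $\mathcal{D}(\mu; \lambda) = 0$, then $\mu$ is an eigenvalue of $\mathcal{T}(\lambda)$, so $\mu \notin \rho(\mathcal{T}(\lambda))$. Second, for the converse, when $\mathcal{D}(\mu; \lambda) \ne 0$ I will construct $(\mathcal{T}(\lambda) - \mu I)^{-1}$ explicitly as a bounded operator on $L^2_{\mathbb{B}_{\lambda}}((0,b), \mathbb{C}^{2})$ via a Green's function built from $\mathbf{X}_0(x; \lambda, \mu)$ and $\mathbf{X}_b(x; \lambda, \mu)$, placing $\mu$ in the resolvent set. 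The constant $\epsilon$ is taken small enough that the series representation \eqref{mhd-mu-series1} is valid, that $r(\mu; \lambda)$ from \eqref{mhd-r} remains close to $|m|$, and that the frames $\mathbf{X}_0$ and $\mathbf{X}_b$ exist as analytic-in-$\mu$ solutions of $(\mathcal{T}_M(\lambda) - \mu I)y = 0$.

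For the first direction, since we are in the scalar setting ($n = 1$), the matrix $(\mathbf{X}_0(x; \lambda, \mu) \,\, \mathbf{X}_b(x; \lambda, \mu))$ is $2 \times 2$ and the vanishing of its determinant at any (and hence every) $x$ forces $\mathbf{X}_b(\cdot; \lambda, \mu) = c\, \mathbf{X}_0(\cdot; \lambda, \mu)$ for some nonzero constant $c$. The common solution $y := \mathbf{X}_0(\cdot; \lambda, \mu)$ is then a nontrivial solution of $\mathcal{T}_M(\lambda) y = \mu y$ that simultaneously lies left in $(0, b)$ (by construction of $\mathbf{X}_0$, hence is in $L^2_{\mathbb{B}_{\lambda}}$ near $0$ with the Niessen-type behavior at the singular endpoint that defines the domain of $\mathcal{T}(\lambda)$) and satisfies the Dirichlet condition $y_1(b) = 0$ (inherited from $\mathbf{X}_b$). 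Thus $y$ lies in the domain of $\mathcal{T}(\lambda)$ and is an eigenfunction, so $\mu \in \sigma_p(\mathcal{T}(\lambda))$, which excludes $\mu$ from the resolvent set.

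For the converse direction, with $\mathcal{D}(\mu; \lambda) \ne 0$ the vectors $\mathbf{X}_0$ and $\mathbf{X}_b$ form a linearly independent pair of solutions of $(\mathcal{T}_M(\lambda) - \mu I) y = 0$ at each $x \in (0, b)$. Following the pattern of Lemma~\ref{green-function-lemma}, I would define a Green's function of the schematic form
\begin{equation*}
G(x, \xi; \lambda, \mu) = \frac{1}{\mathcal{D}(\mu; \lambda)}
\begin{cases}
- \mathbf{X}_b(x; \lambda, \mu) \mathbf{X}_0(\xi; \lambda, \mu)^* J, & 0 < \xi < x < b, \\
\mathbf{X}_0(x; \lambda, \mu) \mathbf{X}_b(\xi; \lambda, \mu)^* J, & 0 < x < \xi < b,
\end{cases}
\end{equation*}
with normalization fixed so that the Wronskian jump produces the correct $\mathbb{B}_\lambda f$ forcing, and then set
\begin{equation*}
((\mathcal{T}(\lambda) - \mu I)^{-1} f)(x) := \int_0^b G(x, \xi; \lambda, \mu) \mathbb{B}_\lambda(\xi; \lambda) f(\xi)\, d\xi.
\end{equation*}
I would then verify that this map sends $L^2_{\mathbb{B}_{\lambda}}((0,b), \mathbb{C}^2)$ boundedly into the domain of $\mathcal{T}(\lambda)$. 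The endpoint conditions are structurally built in: the prefactor $\mathbf{X}_b$ on the lower branch enforces the Dirichlet condition at $x = b$, while the prefactor $\mathbf{X}_0$ on the upper branch encodes the left-lying and Niessen-type behavior at $x = 0$ via \eqref{mhd-mu-series1}.

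The main technical hurdle is proving $L^2_{\mathbb{B}_{\lambda}}$ boundedness of the integral operator together with the correct endpoint behavior at the singular point $x = 0$. The pointwise estimates I expect to need are analogous to those in Lemma~\ref{useful-estimates-lemma}, with the necessary uniform control of $\mathbf{X}_0(x; \lambda, \mu)$ provided by Claim~\ref{mhd-a-claim} combined with the series in \eqref{mhd-mu-series1}; in particular $r(\mu; \lambda)$ remains close to $|m|$, preserving integrability at the origin. Once the Green's function is shown to produce a bounded inverse landing in the domain of $\mathcal{T}(\lambda)$, $\mu \in \rho(\mathcal{T}(\lambda))$ follows, and the two directions together yield the claim.
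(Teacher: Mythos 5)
Your overall strategy matches the paper's: show $\mathcal{D}(\mu;\lambda)=0$ forces $\mu\in\sigma_p(\mathcal{T}(\lambda))$, and for $\mathcal{D}(\mu;\lambda)\neq 0$ build a Green's function from $\mathbf{X}_0$ and $\mathbf{X}_b$ and verify it produces a resolvent landing in the domain of $\mathcal{T}(\lambda)$. The forward direction is fine (and spelled out a bit more than the paper's one-liner). But the converse direction, as you wrote it, leaves the hardest step as a promissory note, and the pointer you give for filling it in is the wrong one.

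The crux is verifying $y := (\mathcal{T}(\lambda)-\mu)^{-1} f \in L^2_{\mathbb{B}_\lambda}((0,b),\mathbb{C}^2)$, i.e.\ that $\int_0^b x^{-1}|y_i(x)|^2\,dx < \infty$ given only $\int_0^b x^{-1}|f_i(x)|^2\,dx < \infty$. This is \emph{not} analogous to the estimates of Lemma~\ref{useful-estimates-lemma}: that lemma gives \emph{pointwise} bounds $|(\mathcal{R}_{\mathcal{E}}(\lambda)f)(x)|\le C(x;\lambda_*)\|\mathcal{E}\|\|f\|_{\mathbb{B}_\lambda}$ with $C(x;\lambda_*)$ only locally uniform in $x$, which is far too weak to yield weighted $L^2$ membership on the whole of $(0,b)$. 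What's actually needed — and what the paper does — is a genuine weighted-$L^2$ argument near the regular singular point: write the kernel contributions to leading order as $x^{-r}\int_0^x \xi^{r-1}f_1(\xi)\,d\xi$ and $x^{r}\int_x^b \xi^{-r-1}f_1(\xi)\,d\xi$, apply Cauchy--Schwarz to each, and integrate by parts, using crucially that $2r-1>0$ because $r(0;\lambda)=|m|\ge 1$. These are Hardy-type estimates adapted to the $1/x$ weight, not an invocation of Lemma~\ref{useful-estimates-lemma}. Your proposal identifies the right ingredient ($r$ near $|m|$ via Claim~\ref{mhd-a-claim} and \eqref{mhd-mu-series1}) but does not actually close the estimate.

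A smaller but genuine subtlety: your schematic kernel uses $\mathbf{X}_0(\xi;\lambda,\mu)^*$ and $\mathbf{X}_b(\xi;\lambda,\mu)^*$. For complex $\mu$ (which is exactly the case here, since $\mu$ ranges over a ball $B(0;\epsilon)\subset\mathbb{C}$), the variation-of-parameters step inverts $J\Phi(x;\mu)$ via the identity $\Phi(x;\bar{\mu})^*J\Phi(x;\mu)=J$, so the $\xi$-side factors must be evaluated at $\bar{\mu}$, not $\mu$. Lemma~\ref{green-function-lemma} doesn't exhibit this because there $\mu=0$; pattern-matching from it obscures the conjugation. This is easy to fix but needs to be stated correctly, since otherwise the jump condition across $x=\xi$ fails for $\mathrm{Im}\,\mu\neq 0$.
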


\begin{proof}
First, if $\mathcal{D} (\mu; \lambda) = 0$, then $\mu$ is an eigenvalue of 
$\mathcal{T} (\lambda)$, and so excluded from the resolvent set. For the other
direction, we assume $\mathcal{D} (\mu; \lambda) \ne 0$, and as in Appendix 
Section \ref{green-function-section}, we construct a Green's function 
\begin{equation*}
    G (x, \xi; \lambda, \mu)
    = \begin{cases}
    - \begin{pmatrix} 0 & \mathbf{X}_b (x; \lambda, \mu) \end{pmatrix}
    \mathbb{M} (\lambda, \mu) \begin{pmatrix} \mathbf{X}_0 (x; \lambda, \bar{\mu}) & 0\end{pmatrix}^* & 0 < \xi < x < b \\
    \begin{pmatrix} \mathbf{X}_0 (x; \lambda, \mu) & 0 \end{pmatrix}
    \mathbb{M} (\lambda, \mu) \begin{pmatrix} 0 & \mathbf{X}_b (x; \lambda, \bar{\mu})\end{pmatrix}^* & 0 < x < \xi < b, 
    \end{cases}
\end{equation*}
where 
\begin{equation*}
    \mathbb{M} (\lambda, \mu)
    = \mathbb{E} (x; \lambda, \mu)^{-1} J (\mathbb{E} (x; \lambda, \bar{\mu})^{-1})^*,
    \quad \mathbb{E} (x; \lambda, \mu) 
    = \begin{pmatrix}
    \mathbf{X}_0 (x; \lambda, \mu) & \mathbf{X}_b (x; \lambda, \mu)
    \end{pmatrix},
\end{equation*}
and as indicated by the notation $\mathbb{M}$ does not depend on $x$.
Precisely, if $\mathcal{D} (\mu; \lambda) \ne 0$, then for any 
$f \in L^2_{\mathbb{B}_{\lambda}} ((0, b), \mathbb{C}^2)$ we can solve 
the inhomogeneous problem $(\mathcal{T} (\lambda) - \mu I) y = f$ with 
the integral 
\begin{equation} \label{mhd-y-integral}
    y (x; \lambda, \mu)
    = (\mathcal{T} (\lambda) - \mu I)^{-1} f
    = \int_0^b G(x, \xi; \lambda, \mu) \mathbb{B}_{\lambda} (\xi; \lambda) f(\xi) d\xi.
\end{equation}
All that remains is to verify that $(\mathcal{T} (\lambda) - \mu)^{-1}$ 
maps $L^2_{\mathbb{B}_{\lambda}} ((0, b), \mathbb{C}^2)$ to the 
domain of $\mathcal{T} (\lambda)$, namely 
\begin{equation*}
    \mathcal{D} = \{y \in \mathcal{D}_M: y_1 (b) = 0 \}.
\end{equation*}

If we write 
\begin{equation*}
    \mathbb{M} (\lambda, \mu)
    = \begin{pmatrix}
    m_{11} (\lambda, \mu) & m_{12} (\lambda,\mu) \\
    m_{21} (\lambda,\mu) & m_{22} (\lambda,\mu) 
    \end{pmatrix},
\end{equation*}
then the first component of $y$ from (\ref{mhd-y-integral}) satisfies 
\begin{equation} \label{mhd-y1-integral}
    \begin{aligned}
    y_1 (x) 
    &= m_{21} \int_0^x \Big{\{}V_{\lambda} (\xi, \lambda) \phi (\xi) \psi (x) f_1 (\xi)
    + \phi' (\xi) Q_{\lambda} (\xi; \lambda) P (\xi; \lambda) \psi (x) f_2 (\xi) \Big{\}} d\xi \\
    &+ m_{12} \int_x^b \Big{\{}V_{\lambda} (\xi; \lambda) \phi (x) \psi (\xi) f_1 (\xi)
    + \phi (x) Q_{\lambda} (\xi; \lambda) P (\xi) \psi' (\xi) f_2 (\xi) \Big{\}} d\xi,
    \end{aligned}
\end{equation}
and $y_2 (x)$ satisfies a similar relation. Here, dependence of $m_{12}$,
$m_{21}$, $y_1$, $\phi$ 
and $\psi$ on the values $\mu$ and $\lambda$ has been suppressed for 
notational brevity. In order to verify 
that $y \in L^2_{\mathbb{B}_{\lambda}} ((0, b), \mathbb{C}^2)$, we need to 
check that 
\begin{equation} \label{mhd-weighted-integrability}
    \int_0^b \frac{1}{x} y_i (x)^2 dx < \infty,
    \quad i = 1, 2.
\end{equation}
We will show this for $i = 1$, noting that verification for the case 
$i = 2$ is almost identical. 

Recalling (\ref{V-lambda-Q-lambda}) 
and (\ref{mhd-mu-series1}), we see that
\begin{equation*}
    V_{\lambda} (\xi; \lambda) \phi (\xi; \lambda, \mu)
    = \rho_0 \mu_0 \xi^{r-1} + \mathbf{O} (\xi^{r}),
\end{equation*}
and likewise from (\ref{mhd-P}), (\ref{V-lambda-Q-lambda}) 
and (\ref{mhd-mu-series1}), 
\begin{equation*}
    \phi'(\xi; \lambda, \mu) Q_{\lambda} (\xi; \lambda) P (\xi; \lambda)
    = \frac{r \mu_0 \rho_0}{\mu_0 \rho_0 \lambda + F(0)^2} \xi^{r-1}
    + \mathbf{O} (\xi^r).
\end{equation*}
In this way, we see that to leading order the first integral in 
(\ref{mhd-y1-integral}) is 
\begin{equation*}
    I_1 (x) = x^{-r} \int_0^x \xi^{r-1} f_1 (\xi) d \xi,
\end{equation*}
where the constant multiplier is left off for notational convenience.
In order to check (\ref{mhd-weighted-integrability}), our starting point will
be 
\begin{equation} \label{mhd-integral1}
    \int_0^b \frac{1}{x} I_1 (x)^2 dx
    = \int_0^b x^{-2r-1} \Big(\int_0^x \xi^{r-1} f_1 (\xi) d \xi \Big)^2 dx.
\end{equation}
Here, using the Cauchy-Schwarz inequality, we see that 
\begin{equation*}
\begin{aligned}
\Big|\int_0^x \xi^{r-1} f_1 (\xi) d \xi \Big|
&\le \Big(\int_0^x \xi^{2(r-1)} d\xi \Big)^{1/2}
\Big(\int_0^x f_1 (\xi)^2 d\xi \Big)^{1/2} \\
&= \frac{1}{\sqrt{2r-1}} x^{r - 1/2} \Big(\int_0^x f_1 (\xi)^2 d\xi \Big)^{1/2},
\end{aligned}
\end{equation*}
where $2r-1 > 0$ for $\mu$ small since $r(0;\lambda) = |m|$, and 
we're working in the case $m \ne 0$.
The right-hand side of (\ref{mhd-integral1}) now becomes 
\begin{equation*}
    \frac{1}{2r-1} \int_0^b x^{-2} \int_0^x f_1 (\xi)^2 d\xi dx 
    = \frac{1}{2r-1} \Big[ -x^{-1} \int_0^x f_1 (\xi)^2 d\xi \Big|_{0}^b
    + \int_0^b \frac{1}{x} f_1 (x)^2 dx \Big]. 
\end{equation*}
The second summand in this last expression is finite because 
$f \in L^2_{\mathbb{B}_{\lambda}} ((0, b), \mathbb{C}^2)$. For the 
first, 
\begin{equation*}
\int_0^x f_1 (\xi)^2 d\xi 
= \int_0^x \xi \frac{1}{\xi} f_1 (\xi)^2 d\xi
\le x \int_0^x \frac{1}{\xi} f_1 (\xi)^2 d\xi,
\end{equation*}
from which we see that the evaluations at both $x = 0$
and $x = b$ are finite. 

We turn next to the integral in (\ref{mhd-y1-integral}) multiplying $m_{12}$. In 
this case, the integral to leading order is 
\begin{equation}
I_2 (x) = x^{r} \int_x^b \xi^{-r-1} f_1 (\xi) d\xi,    
\end{equation}
for which we can compute 
\begin{equation*}
\begin{aligned}
    \int_0^b \frac{1}{x} I_2 (x)^2 dx
    &= \int_0^b x^{2r-1} \Big( \int_x^b \xi^{-r-1} f_1 (\xi) d\xi \Big)^2 dx \\
    &\le \int_0^b x^{2r-1} \Big( \int_x^b \xi^{-2r} d\xi \Big) 
    \Big( \int_x^b \xi^{-2} f_1 (\xi)^2 d\xi  \Big) dx \\
    &= \int_0^b x^{2r-1} \Big( \frac{1}{-2r + 1} \xi^{-2r +1} \Big) \Big|_{\xi = x}^{\xi = b}
    \Big( \int_x^b \xi^{-2} f_1 (\xi)^2 d\xi  \Big) dx \\
    &= \frac{1}{-2r + 1} \int_0^b x^{2r-1} \Big( \frac{1}{x^{2r-1}} - \frac{1}{b^{2r-1}} \Big) 
    \Big( \int_x^b \xi^{-2} f_1 (\xi)^2 d\xi  \Big) dx.
\end{aligned}
\end{equation*}
The key integral here is 
\begin{equation*}
    \int_0^b \int_x^b \xi^{-2} f_1 (\xi)^2 d\xi dx,
\end{equation*}
and we can integrate by parts to see that this integral can be expressed as 
\begin{equation} \label{mhd-last-expression}
    \Big(x \int_x^b \xi^{-2} f_1 (\xi)^2 d\xi\Big) \Big|_{0}^b
    + \int_0^b x^{-1} f_1 (x)^2 dx.
\end{equation}
For the first summand in (\ref{mhd-last-expression}), we can write 
\begin{equation*}
 \Big(x \int_x^b \xi^{-2} f_1 (\xi)^2 d\xi\Big) \Big|_{0}^b
 \le x \int_x^b \frac{1}{x} \xi^{-1} f_1 (\xi)^2 d\xi
 = \int_x^b \xi^{-1} f_1 (\xi)^2 d\xi,
\end{equation*}
from which we see that the evaluation is bounded at each
endpoint. The second expression in (\ref{mhd-last-expression}) is 
bounded since 
$f \in L^2_{\mathbb{B}_{\lambda}} ((0, b), \mathbb{C}^2)$. 
This completes the analysis of $y_1$, and a similar analysis can
be carried out for $y_2$, allowing us to conclude 
$y \in L^2_{\mathbb{B}_{\lambda}} ((0, b), \mathbb{C}^2)$. 

Next, we see directly from (\ref{mhd-y1-integral}) that 
$y_1 (\cdot; \lambda, \mu) \in \AC_{\loc} ((0, b), \mathbb{C}^2)$, 
and also that the evaluation $y_1 (b; \lambda, \mu) = 0$ holds 
(because $\psi (b; \lambda, \mu) = 0$). 
Likewise, we can show that 
$y_2 (\cdot; \lambda, \mu) \in \AC_{\loc} ((0, b), \mathbb{C}^2)$, 
and also that $y_2 (b; \lambda, \mu)$ is bounded, allowing us to 
conclude that $y(\cdot; \lambda, \mu) \in \mathcal{D}$. 
\end{proof}

We see now that for each fixed $\lambda \in [\lambda_1, \lambda_2]$
there is a ball $B (0; \epsilon) \subset \mathbb{C}$, 
$\epsilon > 0$, so that 
the function $\mathcal{D} (\mu, \lambda)$ is analytic for 
$\mu \in B (0; \epsilon)$, and so that $\mu \in B(0;\epsilon)$ is  
in the spectrum of $\mathcal{T} (\lambda)$ if and only 
if $\mathcal{D} (\mu, \lambda) = 0$. Moreover, by monotonicity 
in the spectral parameter $\mu$, it cannot be the case
that $\mathcal{D} (\mu, \lambda) \equiv 0$ in a neighborhood
of $\mu = 0$. Since the zeros of 
an analytic function are isolated, if $\mathcal{D} (0, \lambda) = 0$
then $\mu = 0$ must be isolated as an eigenvalue of 
$\mathcal{T} (\lambda)$, and so cannot be in the essential spectrum. 
In summary, we can conclude that for each $\lambda \in [\lambda_1, \lambda_2]$,
$\mu = 0$ is either in the resolvent set of $\mathcal{T} (\lambda)$
or is an eigenvalue of $\mathcal{T} (\lambda)$, ensuring that 
$0 \notin \sigma_{\ess} (\mathcal{T} (\lambda))$. 

We have now verified that for our MHD application 
(\ref{ultimate-chi-equation}), our Assumptions 
{\bf (A)} through {\bf (F)} all hold, along with 
the full assumptions of Theorem \ref{regular-singular-theorem}, 
adjusted for the change of singular side from the right 
to the left. We turn now to numerical calculations
associated with this example. For our example
calculations, we will make the specific choices
\begin{equation} \label{mhd-B-choices}
\begin{aligned}
    B_{\theta} (x) 
    &= \frac{B_0 \kappa x}{1 + \kappa^2 x^2} \\
    B_{z} (x) 
    &= \frac{B_0}{1 + \kappa^2 x^2},
\end{aligned}
\end{equation}
along with constant values $b = .01$, $m = -1$, $k=1$,  
$\rho_0 = 1$, $\mu_0 = 1$, $B_0 = 1$, and 
\begin{equation*}
    \kappa = \frac{1/B_0 - m}{k} = .9.
\end{equation*}

\begin{remark} \label{mhd-choices-remark}
The choices (\ref{mhd-B-choices}) are taken from 
Section 9.1.1 of \cite{GP2004}. The specific 
parameter values were taken as a simple case for 
which oscillation counts reveal an accumulation 
point of eigenvalues at $\lambda = -1$.
\end{remark}

With the choices made above, we can compute  
\begin{equation*}
    F (x) = \frac{m B_{\theta} (x)}{x} + k B_z (x) \\
    = \frac{B_0 (m \kappa + k)}{1+\kappa^2 x^2}
    = \frac{1}{1 + .81 x^2},
\end{equation*}
from which we see that 
\begin{equation*}
    \ran F|_{[0, .01]} 
    =  \Big{[}\frac{1}{1+.81 (.01)^2}, 1\Big{]}
    \cong  \Big{[}.999919, 1\Big{]}.
\end{equation*}
It follows from our general theory that we can work 
on any interval $[\lambda_1, \lambda_2]$ contained 
in the union 
\begin{equation*}
    (-\infty, -1) \cup (-.999919, 0),
\end{equation*}
i.e., on intervals either to the left of $-1$ 
or contained in the interval $(-.999919, 0)$. 
Numerical computations in \cite{Kerner1987} 
suggest that eigenvalues accumulate at $-1$
from the left, so we will proceed by working 
with small intervals near $-1$, starting 
with $[\lambda_1, \lambda_2] = [-1.1, -1.03]$.

First, since our system is regular at the right 
boundary point $x = b = .01$, we can readily 
generate the frame $\mathbf{X}_b (x; \lambda)$
for all $\lambda \in [\lambda_1, \lambda_2]$
by solving the first-order ODE 
\begin{equation*}
    J {\mathbf{X}_b}' = \mathbb{B} (x; \lambda) \mathbf{X}_b,
    \quad \mathbf{X}_b (.01; \lambda) 
    = \begin{pmatrix} 0 \\ 1 \end{pmatrix}.
\end{equation*}
We know from our Frobenius theory that the system 
is limit-point at $x = 0$, so we only need to 
generate the unique solution that lies left in 
$(0, .01)$. For this, we can use the development 
outlined at the beginning of Section \ref{applications-section}, 
beginning with the eigenvalues
of 
\begin{equation*}
    \mathcal{B} (x; \lambda_1)
    = \Phi (x; 0, \lambda_1)^* J (\partial_{\lambda} \Phi) (x; 0, \lambda_1).
\end{equation*}
In order to approximate the behavior of these eigenvalues as $x \to 0^+$, we 
choose a small value $\delta$ and evaluate $\nu_1 (\delta; \lambda_1)$ and 
$\nu_2 (\delta; \lambda_1)$. Precisely, in this case, we take $\delta = 10^{-5}$,
for which we find 
\begin{equation*}
    \begin{aligned}
    \nu_1 (10^{-5}; \lambda_1) &= -9.3997 \times 10^6 \\
    \nu_2 (10^{-5}; \lambda_1) &= -2.3953.
    \end{aligned}
\end{equation*}
The eigenvector associated with $\nu_2 (10^{-5}; \lambda_1)$ is 
\begin{equation*}
    v_2 (10^{-5}; \lambda_1)
    = \begin{pmatrix}
    .99997 \\ -.00717
    \end{pmatrix}.
\end{equation*}
This approximately specifies the solution $\mathbf{X}_0 (x; \lambda_1)$
that lies left in $(0,.01)$ as 
\begin{equation*}
   \mathbf{X}_{0} (x; \lambda_1)
   = \Phi (x; 0, \lambda_1) v_2 (10^{-5}; \lambda_1).
\end{equation*}
With $\mathbf{X}_0 (x; \lambda_1)$ and $\mathbf{X}_{b} (x; \lambda_2)$ specified, we can 
now compute the Maslov index 
\begin{equation*}
    \mas (\ell_0 (\cdot; \lambda_1), \ell_{b} (\cdot; \lambda_2); (0,.01))
\end{equation*}
as a rotation number through $-1$ for the complex number
\begin{equation*}
    \begin{aligned}
    \tilde{W} (x; \lambda_1, \lambda_2) 
    &= - (X_0 (x; \lambda_1) + i Y_0 (x; \lambda_1)) (X_0 (x; \lambda_1) - i Y_0 (x; \lambda_1))^{-1}  \\
    & \quad \quad  \times (X_{b} (x; \lambda_2) + i Y_{b} (x; \lambda_2)) 
    (X_{b} (x; \lambda_2) - i Y_{b} (x; \lambda_2))^{-1}.  
    \end{aligned}
\end{equation*}
By generating the frames $\mathbf{X}_0 (x; \lambda_1)$ and 
$\mathbf{X}_{b} (x; \lambda_2)$ numerically, we can track 
the complex value $\tilde{W} (x; \lambda_1, \lambda_2)$, and in this 
way, we observe a single crossing at the value $x = .006388$
(with numerical increment $10^{-6}$). From Theorem 
\ref{regular-singular-theorem}, we can conclude that there is one 
eigenvalue on the interval $[-1.1, -1.03]$ in this case. 

\begin{figure}[h]  
\begin{center}\includegraphics[%
  width=11cm,
  height=8cm]{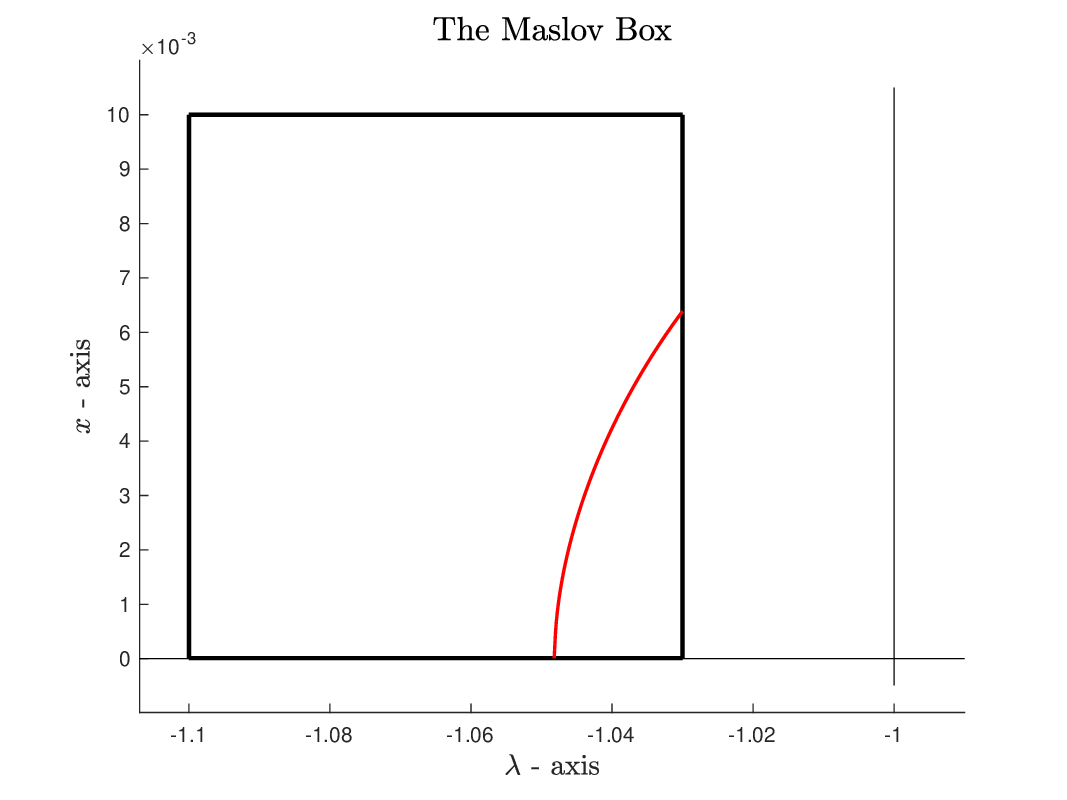}
\end{center}
\caption{The Maslov Box for (\ref{ultimate-chi-equation}) on $[-1.1, -1.03] \times [0, .01]$. \label{mhdbox1-fig}}
\end{figure}

\begin{figure}[h]  
\begin{center}\includegraphics[%
  width=11cm,
  height=8cm]{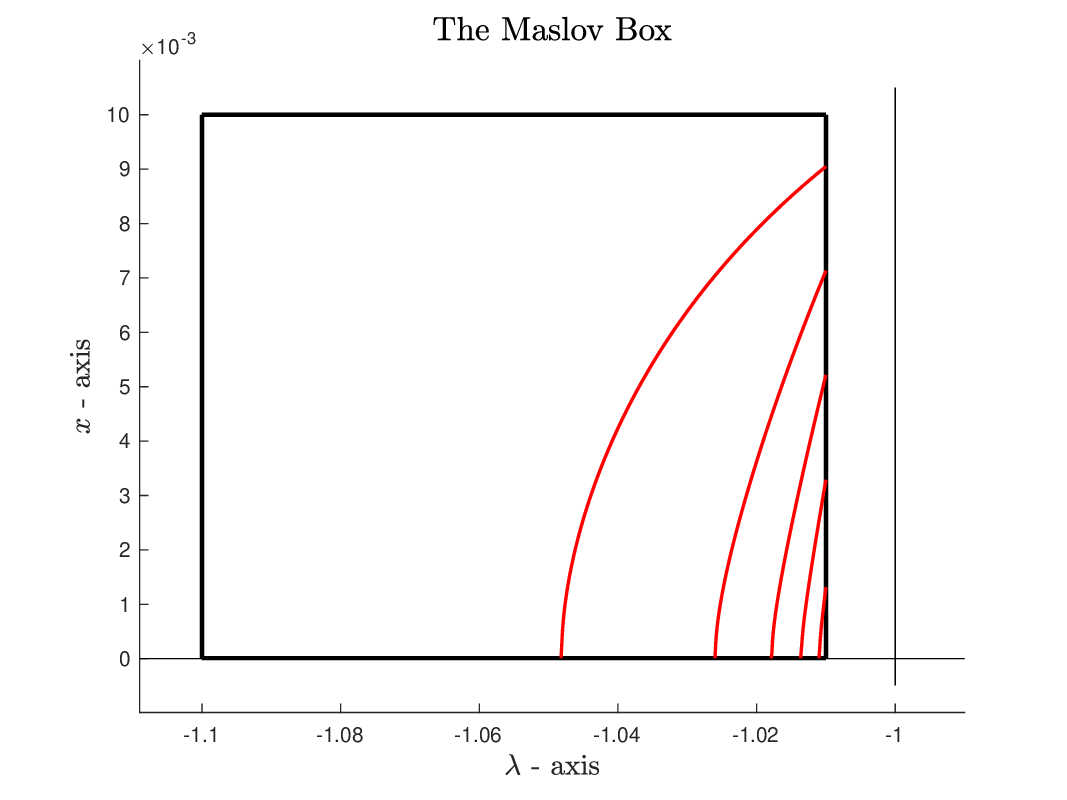}
\end{center}
\caption{The Maslov Box for (\ref{ultimate-chi-equation}) on $[-1.1, -1.01] \times [0, .01]$. \label{mhdbox3-fig}}
\end{figure}

In order to see more fully the dynamics associated with this count, 
we generate the spectral curves passing through  
the Maslov box $[-1.1, -1.03] \times [0, .01]$, where the endpoint $0$ can be 
included in a limiting sense. (See Figure \ref{mhdbox1-fig}.)
As expected, we see that there is a single monotonic spectral curve, 
which passes through the right shelf at about $x = .006$
and approaches the bottom shelf at about $\lambda = -1.0482$
(with a stepsize of $.0001$). This value of $\lambda$ is the 
eigenvalue that we expected to find on $[-1.1, -1.03]$. 

Based on numerical calculations from \cite{Kerner1987}, discussed in 
Section 9.3.2 of \cite{GP2004}, we expect that $-1$ will be 
an accumulation point, and in order to investigate this, we 
move the right endpoint of our interval closer to $-1$. Leaving 
$\lambda_1 = -1.1$, we move $\lambda_2$ to $\lambda_2 = -1.01$. 
In this case, we find five crossings along the right shelf, 
approximately located at $x = .000315$, $.003288$, $.005216$,
$.007133$, and $.009049$ (with a stepsize again of $10^{-6}$). 
From Theorem \ref{regular-singular-theorem}, we can conclude that there are five 
eigenvalue on the interval $[-1.1, -1.01]$. A Maslov box associated
with this calculation is included as Figure \ref{mhdbox3-fig}, in which we 
see that the eigenvalues are located at $\lambda = -1.4082$, 
$\lambda = -1.0260$, $\lambda = -1.0179$, $\lambda = - 1.0136$,
and $\lambda = -1.0110$ (again with a stepsize of $.0001$).

\subsubsection{Application to Hydraulic Shocks}
\label{hydraulic-shocks-section}

In \cite{SYZ2020, YZ2020}, the authors consider traveling waves (specifically 
{\it hydraulic shock profiles}) arising as solutions of the inviscid Saint-Venant
equations for inclined shallow-water flow, namely 
\begin{equation} \label{isl}
    \begin{aligned}
        h_t + q_x &= 0 \\
        q_t + \Big(\frac{q^2}{h} + \frac{h^2}{2F^2} \Big)_x &= h - \frac{|q|q}{h^2},
    \end{aligned}
\end{equation}
where $h$ denotes fluid height, $q = hu$ denotes fluid flow ($u$ is fluid velocity),
$F > 0$ denotes Froude number, and the system is posed on $\mathbb{R} \times \mathbb{R}_+$.
System \eqref{isl} is of relaxation form with associated formal equilibrium equation 
\begin{equation} \label{isl-equilibrium}
    h_t + q_*(h)_x = 0,
\end{equation}
where $q_* (h) := h^{3/2}$ is the value of $q$ for which gravity and bottom forces 
cancel. 

In \cite{YZ2020}, the authors establish existence of traveling-wave solutions 
\begin{equation*}
    \begin{aligned}
        h(x,t) &= H (x-st) \\
        q(x, t) &= Q (x - st),
    \end{aligned}
\end{equation*}
with well-defined asymptotic endstates 
\begin{equation*}
    \begin{aligned}
        \lim_{z \to - \infty} (H(z),Q(z)) &=: (H_L, Q_L) \\ 
        \lim_{z \to + \infty} (H(z),Q(z)) &=: (H_R, Q_R).
    \end{aligned}
\end{equation*}
Specifically, they prove the following proposition. 

\begin{proposition}[Proposition 1.1 from \cite{YZ2020}] \label{YZ-prop}
    Let $(H_L, H_R, c)$ be a triple for which there exists an entropy-admissible shock 
    solution of \eqref{isl-equilibrium} in the sense of Lax \cite{Lax1957} with speed 
    $c$ connecting left state $H_L$ to right state $H_R$; i.e., $H_L > H_R > 0$ and 
    $c[H] = [H^{3/2}]$, where $[\cdot]$ denotes a jump difference, e.g., 
    $[H] = H_R - H_L$. Then there exists a corresponding hydraulic shock profile
    with $Q_L = H_L^{3/2}$ and $Q_R = H_R^{3/2}$ if and only if $0 < F < 2$. The 
    profile is smooth (i.e., $C^{\infty} (\mathbb{R})$) for 
    \begin{equation*}
        H_L > H_R > \frac{2F^2}{1+2F+\sqrt{1+4F}} H_L,
    \end{equation*}
    and nondegenerate in the sense that $c$ is not a characteristic speed of 
    \eqref{isl-equilibrium} at any point along the profile. For 
    \begin{equation} \label{OurCase}
        0 < H_R < \frac{2F^2}{1+2F+\sqrt{1+4F}} H_L,
    \end{equation}
    the profile is nondegenerate and piecewise smooth with a single discontinuuity 
    consisting of an entropy-admissible shock of \eqref{isl-equilibrium}. At the 
    critical value 
    \begin{equation*}
        H_R = \frac{2F^2}{1+2F+\sqrt{1+4F}} H_L,
    \end{equation*}
    $H_R$ is characteristic, and there exists a degenerate profile that is continuous
    but not smooth, with discontinuous derivative at $H_R$. 
\end{proposition}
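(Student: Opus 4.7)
The plan is to reduce the traveling-wave problem to a single scalar ODE for the height profile $H(z)$, identify a critical value $H^{*}$ of the height at which this ODE becomes singular, and then obtain the three cases of the proposition by comparing $H_R$ to $H^{*}$. Substituting $(h,q)(x,t) = (H,Q)(x-ct)$ into \eqref{isl} and integrating the mass equation gives $Q = cH + m$ with $m := H_L^{3/2} - cH_L$; the Rankine--Hugoniot relation $c[H] = [H^{3/2}]$ then automatically delivers $Q_R = H_R^{3/2}$. Using $Q = cH + m$ in the momentum equation and simplifying, the system collapses to
\begin{equation*}
    H'(z) \;=\; \frac{N(H)}{D(H)}, \qquad
    N(H) := H - \frac{(cH+m)\,|cH+m|}{H^2}, \qquad
    D(H) := \frac{H^3 - F^2 m^2}{F^2 H^2}.
\end{equation*}
Both $H_L$ and $H_R$ are equilibria (where $N$ vanishes by $Q_{L,R} = H_{L,R}^{3/2}$), and the denominator vanishes precisely at the single positive root $H^{*} := (F^2 m^2)^{1/3}$.

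The first main computation is to identify $H^{*}$ explicitly in terms of $H_L, H_R, F$. Substituting $c = (H_L^{3/2} - H_R^{3/2})/(H_L - H_R)$ into $m = H_L^{3/2} - cH_L$, one obtains (after multiplying by the conjugate surd) $m = -H_L H_R^{1/2}\cdot(H_L^{1/2}+H_R^{1/2})^{-1}\cdot\,\ldots$, and a direct algebraic simplification identifies the threshold $H_R = H^{*}$ with the relation $H_R = \frac{2F^2}{1+2F+\sqrt{1+4F}}\,H_L$ stated in the proposition. This is the key computation and, I expect, the main technical obstacle, since it requires careful handling of the surds coming from $c$.

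Next, one classifies the profiles by the position of $H^{*}$. If $H_L > H_R > H^{*}$ then $D$ is nonzero on $[H_R,H_L]$; linearizing $N/D$ at each endpoint (using $N'(H_{L,R}) = 1 - 3H_{L,R}^{-1/2}\cdot\ldots$ obtained from $Q = H^{3/2}$) shows $H_L$ is a source and $H_R$ a sink in the $z$-direction, giving a unique monotone $C^{\infty}$ heteroclinic by the stable/unstable manifold theorem; here the assumption $0 < F < 2$ enters to guarantee that the two hyperbolic eigenvalues have the correct signs (and that the connection admits the Lax configuration $\mu(H_L) > c > \mu(H_R)$ for $\mu(H)=\tfrac{3}{2}H^{1/2}$). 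If $H_R < H^{*}$, the singularity $H^{*}$ lies strictly inside $(H_R,H_L)$, so no smooth connection exists; instead one constructs the profile by taking a smooth trajectory from $H_L$ down to some $H_+ > H^{*}$, inserting a jump to $H_- < H^{*}$ satisfying the equilibrium Rankine--Hugoniot identity $c[H] = [H^{3/2}]$ and the Lax entropy condition across the jump, and then a smooth trajectory from $H_-$ down to $H_R$, with the jump location free (translation invariance). The critical case $H_R = H^{*}$ is the borderline where the sink coincides with the singular point of the ODE, giving a continuous profile with a one-sided infinite derivative at $H_R$.

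Finally, nondegeneracy follows by computing the characteristic speeds of \eqref{isl-equilibrium} linearized at either endstate, namely $\lambda_{\pm}(H_{L,R}) = H_{L,R}^{1/2}(1 \pm F^{-1})$, and verifying (using the Lax condition and $F<2$) that neither coincides with $c$. The sufficiency part of $F<2$ is part of Step 3; the necessity — that no admissible profile exists for $F\geq 2$ — I would establish by showing the linearization at $H_L$ loses the requisite unstable direction, so no trajectory can leave $H_L$ toward smaller $H$. Throughout, one must also verify $cH+m > 0$ along the profile so that $|cH+m| = cH+m$ and the formula for $N$ is simply $H - (cH+m)^2/H^2$; this is automatic once $Q_L, Q_R > 0$ and monotonicity of $H$ are established.
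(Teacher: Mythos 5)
The paper does not prove this proposition; it is quoted verbatim from Yang--Zumbrun (Proposition 1.1 of \cite{YZ2020}) and used as a black box, so there is no ``paper's own proof'' to compare against. Your sketch is a reasonable reconstruction of the argument in \cite{YZ2020}: the reduction to a scalar ODE $H' = N(H)/D(H)$ with $Q = cH + m$, the identification of the singular height $H^{*} = (F^2 m^2)^{1/3}$, and the threshold computation are all correct. In particular, $m = -H_L H_R/(H_L^{1/2}+H_R^{1/2})$, so $H_R = H^{*}$ is equivalent to $t(1+t^{1/2})^2 = F^2$ with $t = H_R/H_L$; setting $s = t^{1/2}$ gives $s(1+s) = F$, hence $s = (-1+\sqrt{1+4F})/2$ and $t = (1+2F-\sqrt{1+4F})/2 = 2F^2/(1+2F+\sqrt{1+4F})$, matching the stated threshold. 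This is the same $H_S$ that appears in \eqref{isl-specifications}.

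There are, however, several concrete errors in your description of the discontinuous and critical cases. First, the interior jump must satisfy the Rankine--Hugoniot conditions of the \emph{full} system \eqref{isl}, not the equilibrium relation $c[H] = [H^{3/2}]$. With $Q = cH+m$ continuous the mass RH $[Q]=c[H]$ is automatic, and the momentum RH reduces to $2F^2 m^2 = H_+ H_-(H_+ + H_-)$, i.e.\ $2(H^{*})^3 = H_+H_-(H_++H_-)$; this is a genuinely different condition from the equilibrium Hugoniot and is the one underlying identity (4.7) in \cite{SYZ2020}, equivalently $H_*(H_*+H_R)(1+\sqrt{H_R})^2 = 2F^2 H_R$ (with $H_L=1$). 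Second, the jump lands directly on the right endstate: $H_+ = H_*$ and $H_- = H_R$ (cf.\ Proposition \ref{hydraulic-shocks-coefficients-proposition}(iii), $\lim_{x\to 0^+}H(x) = H_R$). There is no second smooth segment from some $H_- \in (H_R,H^{*})$ down to $H_R$. Third, in the critical case $H_R = H^{*}$ the one-sided derivative of $H$ at $H_R$ is \emph{finite}, not infinite: both $N$ and $D$ have simple zeros there, and l'H\^opital gives the one-sided slope $N'(H_R)/D'(H_R) = (1-2/F)\cdot F^2/3 = (F^2-2F)/3$, which is finite and strictly negative for $0<F<2$; the profile thus reaches $H_R$ at finite $z$ and the derivative jumps to zero, which is exactly the ``discontinuous derivative'' the proposition asserts. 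Finally, a minor point: the eigenvalues $H^{1/2}(1\pm F^{-1})$ are the characteristics of the full Saint--Venant system \eqref{isl}, not of the scalar equilibrium equation \eqref{isl-equilibrium}, whose characteristic speed is $\tfrac{3}{2}H^{1/2}$; the singular set $D(H)=0$ is where $c$ hits a full-system characteristic, and the constraint $F<2$ is precisely Whitham's subcharacteristic condition $H^{1/2}(1-1/F) < \tfrac{3}{2}H^{1/2} < H^{1/2}(1+1/F)$.
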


Our focus will be on the case characterized by \eqref{OurCase}. 
Upon linearization of \eqref{isl} about $(H (x-st), Q (x-st))$, we arrive (proceeding
as in \cite{MZ2002}) at the eigenvalue problem 
\begin{equation} \label{syz2020evp1}
    Av' = (E - \lambda I - A') v,
\end{equation}
where 
\begin{equation*}
    A(z) = 
    \begin{pmatrix}
        -c & 1 \\
        \frac{H(z)}{F^2} - \frac{Q(z)^2}{H(z)^2} & \frac{2Q(z)}{H(z)} - c 
    \end{pmatrix};
    \quad 
    E(z) = 
    \begin{pmatrix}
        0 & 0 \\
        \frac{2 Q(z)^2}{H(z)^3} + 1 & - \frac{2Q(z)}{H(z)^2} 
    \end{pmatrix},
\end{equation*}
and prime denotes differentiation with respect to the translated variable 
$z = x - ct$. Here, the matrix $A (z)$ is invertible, so we can express 
\eqref{syz2020evp1} as $v' = Bv$, $B = A^{-1} (E - \lambda I - A')$. For 
notational convenience, we'll denote the elements of $A$ and $E$ respectively
as $\{a_{ij}\}_{i,j = 1}^2$ and $\{e_{ij}\}_{i,j = 1}^2$, in which case the 
elements $\{b_{ij}\}_{i,j = 1}^2$ of $B$ can be expressed 
\begin{equation*}
\begin{aligned}
    b_{11} (z) &= \frac{\lambda a_{22} (z) + (e_{21} (z) - a_{21}' (z))}{a_{21} (z)+ca_{22} (z)};
    \quad 
    b_{12} (z) = \frac{e_{22} (z) - \lambda - a_{22}' (z)}{a_{21} (z)+ca_{22} (z)} \\
    b_{21} (z) &= \frac{c (e_{21} (z) - a_{21}' (z)) - \lambda a_{21} (z)}{a_{21} (z)+ca_{22} (z)}
    \quad
    b_{22} (z) = \frac{c (e_{22} (z) - \lambda - a_{22}' (z))}{a_{21} (z)+ca_{22} (z)}.
\end{aligned}
\end{equation*}

So far, we've been viewing \eqref{isl} and the subsequent equations as posed on 
$\mathbb{R}$, but due to the discontinuity of the wave $(H (z), Q(z))$ at $z = 0$
it's convenient to re-formulate the problem on $(-\infty, 0]$. Following an approach 
of Erpenbeck and Majda (see \cite{Erpenbeck1962a, Erpenbeck1962b, Majda1983}), the 
authors of \cite{SYZ2020} carry this out, restricting \eqref{syz2020evp1} to 
$(-\infty,0)$ and adding the following boundary condition:
\begin{equation*}
    \Big(\lambda \begin{pmatrix} H(0) \\ Q (0) \end{pmatrix}
    - R \begin{pmatrix} H(0) \\ Q (0) \end{pmatrix} \Big)^* J A (0) v(0) = 0,
    \quad J = \begin{pmatrix}
        0 & -1 \\
        1 & 0
    \end{pmatrix}
\end{equation*}
where 
\begin{equation*}
    R \begin{pmatrix} H(0) \\ Q (0) \end{pmatrix}
    := \begin{pmatrix} 0 \\ H (0) - \frac{|Q(0)| Q(0)}{H(0)^2} \end{pmatrix}.
\end{equation*}

In order to formulate \eqref{syz2020evp1} in the current setting, we make
a transformation $v = TY$, with 
\begin{equation*}
    T = e^{\frac{1}{2} \int_0^z b_{11} (\zeta) + c b_{12} (\zeta) d\zeta}
    \begin{pmatrix}
        \frac{1}{2} (b_{11} + cb_{12}) & 1 \\
        c - \lambda & c
    \end{pmatrix},
\end{equation*}
obtaining $JY' = \mathcal{B} Y$ with (replacing now $z$ with $x$)
\begin{equation*}
    \mathcal{B} (x; \lambda) = 
    \begin{pmatrix}
        v(x) + q_1 (x) \lambda + q_2 (x) \lambda^2 & 0 \\
        0 & 1
    \end{pmatrix},
\end{equation*}
where the three functions $v(x)$, $q_1 (x)$, and $q_2 (x)$ are all carefully 
analyzed in \cite{SYZ2020}, namely in terms of functions $f_1$, $f_2$, $f_3$,
and $f_4$ that we won't specify precisely here (see Appendix A of \cite{SYZ2020} for 
precise specifications). Here, we only briefly summarize the salient properties
of these functions. First, we have the following relations:
\begin{equation*}
    \begin{aligned}
    v(x) &= - \frac{1}{4} f_2 (x)^2 - \frac{1}{2} f_2'(x) \\
    q_1 (x) &= f_4 (x) - \frac{1}{2} f_1 (x) f_2 (x) - \frac{1}{2} f_1'(x) \\
    q_2 (x) &= f_3 (x) - \frac{1}{4} f_1 (x)^2.
    \end{aligned}
\end{equation*}
From \cite{SYZ2020}, we have the following properties. The functions $f_1$,
$f_2$, $f_3$, and $f_4$ are all uniformly bounded on $(-\infty, 0]$ with 
bounded derivatives, so $v, q_1, q_2 \in L^{\infty} ((-\infty,0], \mathbb{R})$.
From the proof of Lemma 3.1 in \cite{SYZ2020}, there exists a constant $\delta_1 > 0$
so that $q_1 (x) \le -\delta_1$ for all $x \in (- \infty, 0])$, and likewise from 
equation (4.13) in \cite{SYZ2020}, there exists a constant $\delta_2 > 0$ so that 
$q_2 (x) \le -\delta_2$ for all $x \in (- \infty, 0])$. 

For our purposes, it will be convenient to express our system on $[0, \infty)$, and for 
this we introduce a new variable, $y(x) := Y (-x)$, $x \ge 0$. Our system then becomes 
\eqref{hammy} with 
\begin{equation} \label{isl-B}
    \mathbb{B} (x; \lambda) = 
    \begin{pmatrix}
        - v(-x) - q_1 (-x) \lambda - q_2 (-x) \lambda^2 & 0 \\
        0 & -1
    \end{pmatrix},
\end{equation}
and the boundary condition 
\begin{equation} \label{isl-bc}
    \alpha y(0) = 0; 
    \quad \alpha = \begin{pmatrix} (c_1 + c_2 \lambda) & -1 \end{pmatrix}.
\end{equation}
where the constants $c_1$ and $c_2$ are specified respectively in (4.8) and 
(4.9) of \cite{SYZ2020}. There is a typo in (4.8) from \cite{SYZ2020}, and 
the correct specification should be 
\begin{equation} \label{c1specified}
    c_1 = \frac{1}{2} f_2 (0) 
    - F^2 \frac{H_* (H_R + \sqrt{H_R} + 1)^2 - H_R (2 F^2 + 1)}{(\sqrt{H_R} +1)^2 (H_*^3 - H_S^3)},
\end{equation}
where $H_R$ is as in Proposition \ref{YZ-prop} and 
\begin{equation} \label{isl-specifications}
    H_* = \frac{-\nu-1+\sqrt{8F^2\nu^4+\nu^2+2\nu+1}}{2(\nu+1)} H_R;
    \quad H_s = \Big(\frac{F \nu^2}{\nu+1} \Big)^{2/3} H_R; 
    \quad \nu = \sqrt{\frac{H_L}{H_R}} > 1.
\end{equation}
According to (4.9) in \cite{SYZ2020}, $c_2 < 0$.

In order to apply Lemma \ref{quadratic-sturm-liouville-lemma}, we need to establish 
that Assumptions {\bf (Q)} hold. For this, a key point is the boundedness of 
$v(x)$, $q_1 (x)$, and $q_2 (x)$, along with other properties established in 
\cite{YZ2020, SYZ2020}. For convenient reference, we summarize these in the 
following proposition.

\begin{proposition} \label{hydraulic-shocks-coefficients-proposition}
Let $0 < F < 2$, $\nu > 1$, and suppose \eqref{OurCase}
holds. If $H (x)$ denotes the profile from Proposition 
\ref{YZ-prop} connecting $H_L$ to $H_R$, shifted so that  
the unique point of discontinuity of $H(x)$ occurs at $x = 0$, then 
the following hold:

\vspace{.1in}
(i) $H' (x) < 0$ for all $x \in (-\infty, 0]$, with $H'(0)$ specified
as $x$ approaches 0 from the left;

\vspace{.1in}
(ii) The left-sided limit of $H(x)$ at the point of discontinuity exists and 
is given by 
\begin{equation*}
    \lim_{x \to 0^-} H(x) = H_*;
\end{equation*}

\vspace{.1in}
(iii) The right-sided limit of $H(x)$ at the point of discontinuity exists and 
is given by 
\begin{equation*}
    \lim_{x \to 0^+} H(x) = H_R;
\end{equation*} 

\vspace{.1in}
(iv) $H_R < H_S < H_* < H_L$;

\vspace{.1in}
(v) $v, q_1, q_2 \in L^{\infty} ((-\infty,0),\mathbb{R})$;

\vspace{.1in}
(vi) There exists some $\delta > 0$ so that for all $x \in (-\infty, 0])$,
we have $q_1 (x) \le - \delta$ and $q_2 (x) \le - \delta$. 
    
\end{proposition}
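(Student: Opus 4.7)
The proof will proceed by assembling items (i)--(vi) from results already established in \cite{YZ2020} and \cite{SYZ2020}, so it will be essentially a bookkeeping exercise rather than a new calculation. My plan is to organize the verification into two blocks: the first handles items (i)--(iv), which are purely geometric facts about the hydraulic shock profile $H$; the second handles items (v)--(vi), which concern the transformed coefficients $v, q_1, q_2$ appearing in $\mathbb{B}(x;\lambda)$.

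For items (i)--(iv), I would begin by recalling the profile ODE for $H$ derived from \eqref{isl} under the traveling-wave ansatz (with $Q = cH - \bar{Q}$ for an appropriate constant $\bar{Q}$ set by Rankine--Hugoniot at $\pm\infty$). Under the case \eqref{OurCase} described in Proposition \ref{YZ-prop}, the profile is piecewise smooth on $(-\infty, 0]$ with a single internal shock at $0$; monotonicity $H'(x) < 0$ on $(-\infty, 0]$ then follows from the sign analysis of the profile ODE carried out in \cite{YZ2020}, which gives item (i) (with $H'(0)$ interpreted as the one-sided limit). Items (ii) and (iii) are immediate consequences of the Rankine--Hugoniot jump at $x = 0$: $H_*$ is identified as the unique equilibrium value that can be joined monotonically to $H_L$ via a smooth trajectory, while $H_R$ is the admissible right Lax-shock companion of $H_*$ under the equilibrium system \eqref{isl-equilibrium}. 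Item (iv) then follows by combining $H_L > H_R$ from Proposition \ref{YZ-prop} with the explicit formula for $H_S$ in \eqref{isl-specifications}, together with the ordering of roots of the relevant cubic established in \cite{YZ2020}; the key inequality $H_* < H_L$ is monotonicity of the smooth portion, and $H_R < H_S < H_*$ is an explicit algebraic check using the formulas for $H_S$ and $H_*$.

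For items (v) and (vi), I would track the transformation $v = TY$ used to bring \eqref{syz2020evp1} into symmetric form. The functions $f_1, f_2, f_3, f_4$ defined in Appendix A of \cite{SYZ2020} are built rationally from the entries of $A(x)$ and $E(x)$ and their derivatives, so their $L^\infty$ boundedness on $(-\infty, 0]$ reduces to showing that $H(x)$ is uniformly bounded above and below on that interval and that $H'(x)$ is uniformly bounded; both follow from item (i) and the exponential rate of convergence $H(x) \to H_L$ as $x \to -\infty$ (a standard consequence of hyperbolicity at the left endstate, established in \cite{YZ2020}). The identities $v = -\tfrac14 f_2^2 - \tfrac12 f_2'$, $q_1 = f_4 - \tfrac12 f_1 f_2 - \tfrac12 f_1'$, $q_2 = f_3 - \tfrac14 f_1^2$ then immediately give item (v).

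The main obstacle, and the only nontrivial piece, will be the uniform strict negativity in item (vi), since this must be verified uniformly in $x$, including the limits $x \to -\infty$ and $x \to 0^-$. For $q_2$, the bound $q_2(x) \le -\delta_2$ is exactly the content of the analysis surrounding equation (4.13) of \cite{SYZ2020}, where it is shown (essentially by a discriminant calculation for the quadratic-in-$\lambda$ symbol) that $q_2$ stays bounded away from zero on the whole profile. For $q_1$, the bound $q_1(x) \le -\delta_1$ comes from the proof of Lemma 3.1 in \cite{SYZ2020}. I would extract $\delta := \min(\delta_1, \delta_2) > 0$. The one point requiring care is ensuring the bounds extend to $x \to -\infty$: since $f_1, \dots, f_4$ converge to their endstate values exponentially fast, and the endstate values of $q_1$ and $q_2$ are themselves strictly negative (this is a direct computation at $H = H_L$, $Q = H_L^{3/2}$, using $0 < F < 2$), the uniform $\delta$ is obtained by combining the interior bounds from \cite{SYZ2020} with the asymptotic bound near $-\infty$. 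This completes the proof.
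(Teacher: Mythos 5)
Your high-level organization is sound, and your treatment of items (i)--(iii) and (vi) matches the paper's: these are handled essentially by citation to the discussion around equation (2.4) of \cite{SYZ2020} and to the proofs of Lemmas 3.1 and 4.1 there. For item (iv), your observation that $H_* < H_L$ follows directly from strict monotonicity of the smooth portion of the profile on $(-\infty,0)$ (so that $H_* < H(x) < H_L$ for all $x$ in that interval) is cleaner than the paper's explicit algebraic verification of that particular inequality. However, for $H_R < H_S$ and $H_S < H_*$ you merely assert they are ``an explicit algebraic check'' and appeal to an ``ordering of roots of the relevant cubic'' without carrying anything out. The paper does real work here: rewriting \eqref{OurCase} as $2F^2\nu^2 - 1 - 2F > \sqrt{1+4F}$, squaring, and extracting the pivotal inequality $F\nu^2/(\nu+1) > 1$ (equation \eqref{isl-key-inequality}), which is then used repeatedly to establish both $H_R < H_S$ and $H_S < H_*$. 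A proof needs these verifications, not a gesture at them.

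The genuine gap is in item (v). You claim the $L^\infty$ boundedness of $f_1,\dots,f_4$ ``reduces to showing that $H(x)$ is uniformly bounded above and below on that interval and that $H'(x)$ is uniformly bounded.'' That reduction would be correct if these functions were polynomial in $H$ and $H'$, but they are rational in $H$, and in particular the denominator $H^3(\sqrt{H_R}+1)^2 - F^2 H_R^2$ appearing in $f_2$ (and hence in $v = -\tfrac14 f_2^2 - \tfrac12 f_2'$) could in principle vanish somewhere in the range $[H_*, H_L]$ of the profile even with $H$ and $H'$ uniformly bounded. The paper handles precisely this point: it verifies the non-degeneracy condition \eqref{to-verify}, namely that $H^3(\sqrt{H_R}+1)^2 - F^2 H_R^2 \ne 0$ for all $H \in [H_*, H_L]$, using the bound $F < H_R + \sqrt{H_R}$ from (3.8) of \cite{SYZ2020}. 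Without supplying this non-vanishing argument, your assertion that $v \in L^\infty((-\infty,0),\mathbb{R})$ is not supported.
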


\begin{proof}
    Most of these properties have been established in our references \cite{SYZ2020, YZ2020}, so 
    we primarily just indicate where the results can be found in those references. First, 
    Items \textit{(i)}, \textit{(ii)}, and \textit{(iii)} simply combine observations from \cite{SYZ2020}, p. 3 
    (discussion surrounding equation (2.4) of that reference). 

    For Item \textit{(iv)}, we start with the inequality $H_R < H_s$. Noting that 
    $\nu^2 = H_L/H_R$, we can express \eqref{OurCase} as 
    \begin{equation*}
        \nu^2 > \frac{1 + 2F + \sqrt{1+4F}}{2F^2}.
    \end{equation*}
    Rewriting this inequality as 
    \begin{equation*}
        2F^2 \nu^2 - 1 - 2F > \sqrt{1+4F},
    \end{equation*}
    we observe that if this inequality holds then both sides must be positive, and we 
    can square both sides. If we do this and cancel the quantity $1+4F$ from either 
    side of the resulting equation, we arrive at the inequality 
    \begin{equation*}
        \Big((F\nu^2 - 1) - \nu \Big) \Big((F\nu^2 - 1) + \nu \Big) > 0.
    \end{equation*}
    The second factor on the left-hand side is clearly positive (since $\nu > 1$),
    and so the first factor must be positive as well, ensuring the inequality 
    \begin{equation} \label{isl-key-inequality}
     \frac{F\nu^2}{1 + \nu}  > 1,  
    \end{equation}
    whence 
    \begin{equation*}
    H_s = \Big(\frac{F \nu^2}{\nu+1} \Big)^{2/3} H_R > H_R.    
    \end{equation*}
    
    Turning next to the inequality $H_s < H_L$, we see that our goal can be expressed 
    as 
    \begin{equation*}
    \Big(\frac{F \nu^2}{\nu+1} \Big)^{2/3} H_R < H_L,    
    \end{equation*}
    or equivalently 
    \begin{equation*}
    \Big(\frac{F \nu^2}{\nu+1} \Big)^{2/3} < \nu^2.    
    \end{equation*}
    Rearranging this last inequality, we arrive at the relation 
    \begin{equation*}
        F < \nu (\nu+1),
    \end{equation*}
    and this holds for all $0 < F < 2$ and $\nu > 1$.

    To see that $H_* < H_L$, we first observe that $H_*$ can be expressed as 
    \begin{equation*}
        \begin{aligned}
            H_* &= \frac{-\nu-1+\sqrt{8F^2\nu^4+\nu^2+2\nu+1}}{2(\nu+1)} \frac{H_L}{\nu^2} \\
            &= \Big(-\frac{1}{2} + \frac{\sqrt{8F^2 + (\frac{1}{\nu} + \frac{1}{\nu^2})^2}}{2 (\nu+1)} \Big) H_L 
        \end{aligned}
    \end{equation*}
    Since $F \nu^2 > 1 + \nu$ (as shown during the verification that $H_R < H_s$), we see 
    that $F > (1/\nu + 1/\nu^2)$, so that 
    \begin{equation*}
        H_* < \Big(-\frac{1}{2} + \frac{\sqrt{8F^2 + F^2}}{2 (\nu+1)} \Big) H_L
        = \Big(-\frac{1}{2} + \frac{3F}{2 (\nu+1)} \Big) H_L < H_L,
    \end{equation*}
    where the final inequality follows for all $0 < F < 2$ and $\nu > 1$. 

    For the inequality $H_* > H_s$, we first express it as 
    \begin{equation*}
    \frac{-\nu-1+\sqrt{8F^2\nu^4+\nu^2+2\nu+1}}{2(\nu+1)} H_R
    < \Big(\frac{F \nu^2}{\nu+1} \Big)^{2/3} H_R,
    \end{equation*}
    which is equivalent to 
    \begin{equation*}
    \frac{-\nu-1+\sqrt{8F^2\nu^4+(\nu+1)^2}}{2(\nu+1)^{1/3}}
    < (F \nu^2)^{2/3}.
    \end{equation*}
    From \eqref{isl-key-inequality}, we can conclude the inequality 
    \begin{equation*}
        \frac{1}{(\nu+1)^{1/3}} > \frac{1}{(F \nu^2)^{1/3}}, 
    \end{equation*}
    which allows us to write 
    \begin{equation*}
        \frac{-\nu-1+\sqrt{8F^2\nu^4+(\nu+1)^2}}{2(\nu+1)^{1/3}}
        > \frac{-\nu-1+\sqrt{8F^2\nu^4+(\nu+1)^2}}{2(F \nu^2)^{1/3}}.
    \end{equation*}
    In this way, we see that the sought inequality will follow if we can show that 
    \begin{equation*}
     \frac{-\nu-1+\sqrt{8F^2\nu^4+(\nu+1)^2}}{2(F \nu^2)^{1/3}}
    > (F \nu^2)^{2/3}. 
    \end{equation*}
    This is equivalent to the inequality 
    \begin{equation*}
        -\nu-1+\sqrt{8F^2\nu^4+(\nu+1)^2 > 2F \nu^2},
    \end{equation*}
    which we can rearrange as 
    \begin{equation*}
    \sqrt{8F^2\nu^4+(\nu+1)^2} > 2F \nu^2 + \nu + 1.    
    \end{equation*}
    Upon squaring both sides, we find that this reduces to (and so follows from)
    \eqref{isl-key-inequality}. This concludes the proof of Item \textit{(iv)}.
    
    For Item \textit{(v)}, since $H > H_* > H_S$,
    we have from (3.4) of \cite{SYZ2020} that $q_1 \in L^{\infty} ((-\infty,0),\mathbb{R})$,
    and likewise from (4.13) of the same reference that $q_2 \in L^{\infty} ((-\infty,0),\mathbb{R})$.
    For $v(x)$, we see from the specification of $f_2$ in the appendix of \cite{SYZ2020} that 
    $v(x)$ is bounded provided 
    \begin{equation} \label{to-verify}
        H^3 (\sqrt{H_R} + 1)^2 - F^2 H_R^2 \ne 0 \quad 
        \forall H \in [H_*, H_L]. 
    \end{equation}
    From (3.8) of \cite{SYZ2020}, we additionally have the inequality 
    $F < H_R + \sqrt{H_R}$, and this allows us to compute 
    \begin{equation*}
        \begin{aligned}
            H^3 (\sqrt{H_R}+1)^2 - F^2 H_R^2 
            &> H_R^3 (H_R + 2 \sqrt{H_R} + 1) - (H_R + \sqrt{H_R})^2 H_R^2 \\
            &= H_R^4 - 2H_R^{7/2} + H_R^3 - (H_R^2 + 2H_R^{3/2} + H_R) H_R^2 = 0,
        \end{aligned}
    \end{equation*}
    verifying \eqref{to-verify}. 

    Finally Item \textit{(vi)} follows from the proofs of Lemma 3.1 in \cite{SYZ2020}
    (the claim for $q_1$) and Lemma 4.1 in the same reference (the claim for $q_2$).
    
\end{proof}

We now verify that Assumptions {\bf (Q)} hold for \eqref{hammy} with \eqref{isl-B}
and boundary condition \eqref{isl-bc}. For this, we first observe the correspondences
$Q_1 (x) = - q_1 (-x)$, $Q_2 (x) = - q_2 (-x)$, and $V(x) = v (-x)$, and note that
the appearance of $-1$ in the lower right entry of $\mathbb{B} (x; \lambda)$ (rather
than $+1$ as in Section \ref{quadratic-schrodinger-systems-section}) has no bearing 
on the proof of Lemma \ref{quadratic-sturm-liouville-lemma}. 

The conditions $Q_1, Q_2, V \in L^1_{\loc} ((0, \infty), \mathbb{R}^{n \times n})$, and 
also $Q_1, Q_2 \in L^{\infty} ((0, \infty), \mathbb{R}^{n \times n})$, all follow from 
Item \textit{(iii)} in Proposition \ref{hydraulic-shocks-coefficients-proposition}. 
For the final statement in Assumption {\bf (Q)}, we need to verify that there exists
some $\theta > 0$ so that for all $x \in [0, \infty)$, and for all 
$\lambda \in [0, \lambda_{\infty}]$, we have the relation 
\begin{equation*}
    (- q_1 (-x) - 2 \lambda q_2 (-x)) \ge \theta.
\end{equation*}
Since $q_2 (-x) < 0$ and $\lambda \ge 0$, it is sufficiently to verify that 
$q_1 (-x) \le -\theta$ for all $x \in [0, \infty)$. But this is an immediate
consequence of Item \textit{(iv)} in Proposition \ref{hydraulic-shocks-coefficients-proposition}. 

In order to apply Theorem \ref{regular-singular-theorem}, we still need to verify 
the positivity condition associated with the boundary, that $\lambda = 0$ isn't an eigenvalue,
and that we can choose $\lambda_{\infty}$ sufficiently large so that there are no eigenvalues 
greater than or equal to $\lambda_{\infty}$. 
For this latter condition, we observe that if we set $w (x) = y_1 (-x)$, the $w$ solves 
the second-order ODE 
\begin{equation} \label{isl-second-order}
\begin{aligned}
    w'' &= - (v(x) + q_1 (x) \lambda + q_2 (x) \lambda^2) w; \quad x \in (-\infty, 0] \\
    w' (0) &= (c_1 + c_2 \lambda) w(0),
\end{aligned}
\end{equation}
which is precisely (2.14)--(4.6) in \cite{SYZ2020}. In Lemma 3.3 of \cite{SYZ2020} the 
authors show that $\lambda = 0$ isn't an eigenvalue of \eqref{isl-second-order}, so we only
need to verify the existence of $\lambda_{\infty}$. In \cite{SYZ2020}, the authors additionally 
show that the eigenvalues of \eqref{isl-second-order} are all real-valued, so we can focus on 
real values of $\lambda$. We fix $\lambda \in (0, \infty)$ and let $w(x; \lambda)$ denote 
an associated eigenfunction. Taking an $L^2 ((-\infty,0),\mathbb{C})$ inner product of $w^*$ with 
\eqref{isl-second-order}, we obtain the relation 
\begin{equation} \label{isl-energy-argument}
    \int_{-\infty}^0 w^* w '' dx
    = - \int_{-\infty}^0 (v(x) + q_1 (x) \lambda + q_2 (x) \lambda^2) |w|^2 dx,
\end{equation}
where for the left-hand side we have 
\begin{equation} \label{isl-boundary-calculation}
    \int_{-\infty}^0 w^* w '' dx
    = - \int_{-\infty}^0 |w'|^2 dx + w^* (0) w' (0)
    = - \int_{-\infty}^0 |w'|^2 dx + (c_1 + c_2 \lambda) |w (0)|^2. 
\end{equation}
If $w(0) = 0$, then $w' (0) = 0$, and consequently we must have $w \equiv 0$ on $(-\infty, 0]$,
so that $w$ isn't an eigenfunction. In this way, we see that we must have $w (0) \ne 0$,
and since $c_2 < 0$, this implies that $\lambda$ can be chosen sufficiently large so 
that the right-hand side of \eqref{isl-boundary-calculation} is negative. We can conclude
from \eqref{isl-energy-argument} that for $\lambda$ sufficiently large 
\begin{equation*}
    \int (v(x) + q_1 (x) \lambda + q_2 (x) \lambda^2) |w|^2 dx > 0,
\end{equation*}
but since $q_2 (x) < 0$ this is a contradiction (for $\lambda$ sufficiently large). 

Having verified the assumptions of Theorem \ref{regular-singular-theorem}, we are now
in a position to consider the appropriate Maslov indices. According to Theorem 
\ref{regular-singular-theorem}, 
\begin{equation} \label{isl-indices}
    \mathcal{N}^{\alpha} ((0, \lambda_{\infty}))
    = \mas (\ell_{\alpha} (\cdot; \lambda_{\infty}), \ell_{\infty} (\cdot; 0); [0, \infty)])
    - \mas (\ell_{\alpha} (0; \cdot), \ell_{\infty} (0; 0); [0, \lambda_{\infty}]),
\end{equation}
where $\lambda_{\infty}$ has been chosen sufficiently large (as discussed above) so 
that our system \eqref{hammy} with \eqref{isl-B} and boundary conditions \eqref{isl-bc}
has no eigenvalues greater than or equal to $\lambda_{\infty}$. 

Our goal now is to show that each of the Maslov indices in \eqref{isl-indices} is 0. For
both of these calculations, a key point will be that we can identify the space $\ell_{\infty} (x; 0)$
explicitly, so we begin by clarifying that observation. First, for $\lambda = 0$, 
we have the system 
\begin{equation*}
    J \mathbf{X}_{\infty}' (x; 0) = \mathbb{B} (x; 0) \mathbf{X}_{\infty} (x; 0);
    \quad \lim_{x \to \infty} \mathbf{X}_{\infty} (x;0) = 0.
 \end{equation*}
If we write $\mathbf{X}_{\infty} = \genfrac{(}{)}{0pt}{1}{\zeta_1}{\zeta_2}$, this system becomes 
\begin{equation*}
    \begin{aligned}
    \zeta_1' &= - \zeta_2 \\
    \zeta_2' &= v (-x) \zeta_1,
    \end{aligned}
\end{equation*}
which is easily solved with (for $x > 0$)
\begin{equation*}
    \begin{aligned}
        \zeta_1 (x) &= e^{\frac{1}{2}\int_0^{-x} f_2 (y) dy} H (-x) \\
        \zeta_2 (x) &= e^{\frac{1}{2}\int_0^{-x} f_2 (y) dy} \Big(H' (-x) + \frac{1}{2} f_2 (-x) H (-x) \Big).
    \end{aligned}
\end{equation*}
In this way, we identify a frame for $\ell_{\infty} (0;0)$ as 
\begin{equation*}
    \mathbf{X}_{\infty} (0;0)
    = \begin{pmatrix} \zeta_1 (0) \\ \zeta_2 (0) \end{pmatrix} 
    = \begin{pmatrix} H (0) \\ H' (0) + \frac{1}{2}f_2 (0)H(0) \end{pmatrix}. 
\end{equation*}
A frame for $\ell_{\alpha} (0;\lambda)$ is readily seen to be 
\begin{equation*}
    \mathbf{X}_{\alpha} (0;\lambda) = J \alpha (\lambda)^*
    = \begin{pmatrix} 1 \\ c_1 + c_2 \lambda \end{pmatrix},
\end{equation*}
so intersections in the calculation of the second Maslov index on the 
right-hand side of \eqref{isl-indices} correspond precisely with zeros
of the determinant 
\begin{equation} \label{isl-determinant}
    \det \begin{pmatrix}
    1 & H(0) \\
    c_1 + c_2 \lambda & H'(0) + \frac{1}{2} f_2 (0) H(0) 
    \end{pmatrix}
    = H'(0) + \Big(\frac{1}{2} f_2 (0) - (c_1 + c_2 \lambda) \Big) H(0).
\end{equation} 
Here, $H(0) = H_*$, and from \cite{YZ2020} we have 
\begin{equation*}
    H' (x) = \frac{H(x)^3 - Q(x)^2}{\frac{H(x)^3}{F^2} - q_0^2};
    \quad q_0 = Q (x) - cH (x). 
\end{equation*}
(In \cite{YZ2020}, this is the first unnumbered equation after (2.7); we emphasize
that $Q(x) - cH(x)$ is independent of $x$.)  Upon
setting $x = 0$, we obtain the relation 
\begin{equation*}
    H' (0) = \frac{H_*^3 - Q_*^2}{\frac{H_*^3}{F^2} - q_0^2}, 
    \quad Q_* = q_0 + cH_*.
\end{equation*}
From (2.9) in \cite{YZ2020}, we have $H_S = q_0^2 F^2$, allowing us to 
write 
\begin{equation} \label{Hprime}
    H' (0) = F^2 \frac{H_*^3 - Q_*^2}{H_*^3 - H_S^3}.
\end{equation}
Focusing now on the numerator $H_*^3 - Q_*^2$, we can write 
\begin{equation*}
    H_*^3 - Q_*^2
    = H_*^3 - (cH_* - q_0)^2
    = H_*^3 - c^2 H_*^2 + 2c H_* q_0 - q_0^2,
\end{equation*}
which can be expressed as 
\begin{equation} \label{HstarminusQstar}
    H_*^3 - Q_*^2
    = H_*^3 - (cH_* - q_0)^2
    = H_*^3 - c^2 H_*^2 + 2c H_* \frac{H_S^{3/2}}{F} - \frac{H_S^3}{F^2},
\end{equation}
(using again $H_S = q_0^2 F^2$). For $c$, we can combine the relations 
$Q_R = cH_R - q_0$ and $Q_R = H_R^{3/2}$ to see that 
\begin{equation*}
    c = \frac{H_S^{3/2}}{F H_R} + \sqrt{H_R}. 
\end{equation*}
If we substitute this last expression for $c$ into \eqref{HstarminusQstar}, we find 
\begin{equation*}
    \begin{aligned}
     H_*^3 - Q_*^2 &= H_*^3 - (\sqrt{H_R} + \frac{H_S^{3/2}}{F H_R})^2 H_*^2 
     + 2 (\sqrt{H_R} + \frac{H_S^{3/2}}{F H_R}) H_* \frac{H_S^{3/2}}{F} - \frac{H_S^3}{F^2} \\
     &= \frac{H_* - H_R}{F^2 H_R^2} \Big{\{} F^2 H_R^2 H_*^2
     - 2F H_R^{3/2} H_S^{3/2} H_* - H_S^3 (H_* - H_R) \Big{\}}.
    \end{aligned}
\end{equation*}
Rearranging, we find the relation 
\begin{equation} \label{HstarQstarRatio}
    \frac{H_*^3 - Q_*^2}{H_R - H_*}
    = - H_*^2 + \frac{2 H_S^{3/2} H_*}{F \sqrt{H_R}} 
    + \frac{H_S^3 (H_* - H_R)}{F^2 H_R^2}. 
\end{equation}
From (4.7) in \cite{SYZ2020}, we know 
\begin{equation*}
    H_*^2 (\sqrt{H_R} +1)^2 + H_* H_R (\sqrt{H_R} +1)^2 - 2F^2 H_R = 0,
\end{equation*}
which we can rearrange as 
\begin{equation*}
    H_*^2 = - H_* H_R + \frac{2F^2 H_R}{(\sqrt{H_R}+1)^2}.
\end{equation*}
Upon substitution of this last expression into \eqref{HstarQstarRatio},
we obtain the relation 
\begin{equation*} 
    \frac{H_*^3 - Q_*^2}{H_R - H_*}
    = H_* H_R - \frac{2F^2 H_R}{(\sqrt{H_R}+1)^2}
    + \frac{2 H_S^{3/2}H_*}{F \sqrt{H_R}} + \frac{H_S^3 (H_* - H_R)}{F^2 H_R^2}.
\end{equation*}
In order to make a final simplification, we observe the relation 
\begin{equation*}
    H_S^3 = \frac{F^2 H_R^2}{(1+\sqrt{H_R})^2}, 
\end{equation*}
which is easily seen from \eqref{isl-specifications}. Namely, we 
can write 
\begin{equation*}
    H_S^3 = \frac{F^2 \nu^4}{(\nu+1)^2} H_R^3
    = \frac{F^2 \frac{H_L^2}{H_R^2}}{(\sqrt{\frac{H_L}{H_R}} + 1)^2} H_R^3
    = \frac{F^2 H_L H_R^2}{(1 + \sqrt{\frac{H_R}{H_L}})^2},
\end{equation*}
and by a choice of scaling we take $H_L = 1$. This allows us to write 
\begin{equation*} 
    \frac{H_*^3 - Q_*^2}{H_R - H_*}
    = \Big(\frac{H_R (1+\sqrt{H_R})^2 + 2 \sqrt{H_R}{(1+\sqrt{H_R})+ 1}}{(1+\sqrt{H_R})^2} \Big) H_*
    - \frac{(1+2F^2) H_R}{(1+\sqrt{H_R})^2}.
\end{equation*}
Noting the identity 
\begin{equation*}
     H_R (1+\sqrt{H_R})^2 + 2\sqrt{H_R} (1+\sqrt{H_R}) +1
    = (H_R + \sqrt{H_R}+1)^2,
\end{equation*}
we arrive at the relation 
\begin{equation*}
    \frac{H_*^3 - Q_*^2}{H_R - H_*}
    = \frac{(H_R + \sqrt{H_R}+1)^2 H_* - (1+2F^2)H_R}{(\sqrt{H_R} + 1)^2}.
\end{equation*}
By incorporating this last relation into \eqref{c1specified}, we arrive at the 
relations
\begin{equation} \label{TheRest}
    \frac{1}{2} f_2 (0) - c_1
    = F^2 \frac{H_*^3 - Q_*^2}{H_R - H_*} \cdot \frac{1}{H_*^3 - H_S^3}. 
\end{equation}

We are now in a position to better understand the right-hand side of 
\eqref{isl-determinant}. Using \eqref{Hprime} and \eqref{TheRest}, we can write 
\begin{equation} \label{AllTogether}
\begin{aligned}
    H' (0) &+ (\frac{1}{2} f_2 (0) - c_1) H (0) 
    = F^2 \frac{H_*^3 - Q_*^2}{H_*^3 - H_S^3}
    + F^2 \frac{H_*^3 - Q_*^2}{(H_R - H_*) (H_*^3 - H_S^3)} H_* \\
    & = F^2 \frac{H_*^3 - Q_*^2}{H_*^3 - H_S^3} \cdot \frac{H_R}{H_R - H_*}
\end{aligned}
\end{equation}
From Proposition \ref{hydraulic-shocks-coefficients-proposition}, we have the inequalities 
$H_*^3 - H_S^3 > 0$ and $H_R - H_* < 0$, and since $H'(0) < 0$ we also see (from \eqref{Hprime}) that 
$H_*^3 - Q_*^2 < 0$. Combining these observations with \eqref{AllTogether}, 
we see at last that 
\begin{equation*}
    H' (0) + (\frac{1}{2} f_2 (0) - c_1) H (0) > 0. 
\end{equation*}
Since $c_2 < 0$, we can conclude that the right-hand side of \eqref{isl-determinant}
is positive for all $\lambda \in [0, \lambda_{\infty}]$. In this way, we conclude 
that 
\begin{equation*}
    \mas (\ell_{\alpha} (0; \cdot), \ell_{\infty} (0; 0); [0, \lambda_{\infty}])
    = 0.
\end{equation*}

For the first Maslov index in \eqref{isl-indices}, we note from our construction above
that 
\begin{equation*}
    \mathbf{X}_{\infty} (x;0)
    = \begin{pmatrix} \zeta_1 (x) \\ \zeta_2 (x) \end{pmatrix} 
    = \begin{pmatrix} e^{\frac{1}{2} \int_0^{-x} f_2 (y) dy} H(-x) \\ 
    e^{\frac{1}{2} \int_0^{-x} f_2 (y) dy} (H' (-x) + \frac{1}{2}f_2 (-x)H(-x)) \end{pmatrix}. 
\end{equation*}
This allows us to identify intersections arising in the Maslov index under 
consideration with solutions to the eigenvalue problem \eqref{isl-second-order}. In 
particular, the question becomes, for $\lambda = \lambda_{\infty}$, are there values
of $x \in (-\infty, 0]$ for which $\genfrac{(}{)}{0pt}{1}{w (x; \lambda_{\infty})}{ w' (x; \lambda_{\infty})}$ 
intersects 
\begin{equation*}
    \begin{pmatrix} W (x) \\ W'(x) \end{pmatrix}
    := \begin{pmatrix} e^{\frac{1}{2} \int_0^{x} f_2 (y) dy} H(x) \\ 
    e^{\frac{1}{2} \int_0^{x} f_2 (y) dy} (H' (x) + \frac{1}{2}f_2 (x)H(x)) \end{pmatrix}.
\end{equation*}
An intersection will occur if there exists $s \le 0$ so that 
\begin{equation*}
    \det \begin{pmatrix}
        w (s; \lambda_{\infty}) & W (s) \\
        w' (s; \lambda_{\infty}) & W' (s)
    \end{pmatrix}
    = 0.
\end{equation*}
I.e.,  the condition for an intersection is 
\begin{equation*}
    W(s) w' (s; \lambda_{\infty}) - W'(s) w(s; \lambda_{\infty}) = 0,
\end{equation*}
and since the exponential factor is irrelevant, this can be expressed as 
\begin{equation*}
    H(s) w' (s; \lambda_{\infty}) 
    - (H' (s) + \frac{1}{2} f_2 (s) H(s)) w(s; \lambda_{\infty}) = 0.
\end{equation*}
We will have an intersection in the calculation of the first Maslov index 
in \eqref{isl-indices} if and only if there exists a function 
$w (\cdot; \lambda_{\infty}) \in H^2 (s, 0)$ satisfying the boundary 
value problem 
\begin{equation} \label{isl-bvp}
\begin{aligned}
    w'' &= - (v(x) + q_1 (x) \lambda + q_2 (x) \lambda^2) w; \quad x \in (s, 0] \\
    w' (0) &= (c_1 + c_2 \lambda) w(0), \\
    w' (s; \lambda) &= \Big(\frac{H'(s)}{H(s)} + \frac{1}{2} f_2 (s) \Big) w (s; \lambda).
\end{aligned}
\end{equation}
Suppose that some $\lambda > 0$ is an eigenvalue of this problem, and let $w (x; \lambda)$
denote its associated eigenfunction. As noted in \eqref{isl-energy-argument}, if we 
multiply this ODE by $w^* (x; \lambda)$ and integrate on $[s, 0]$, the right-hand side becomes 
\begin{equation*}
   -  \int_s^0 (v(x) + q_1 (x) \lambda + q_2 (x) \lambda^2) |w (x; \lambda)|^2 dx. 
\end{equation*}
From Proposition \ref{hydraulic-shocks-coefficients-proposition}, we have that 
$v \in L^{\infty} (s,0)$ and that for some $\delta > 0$ we have $q_1 (x) \le - \delta$
and $q_2 (x) \le - \delta$ for all $x \in [s,0]$. It follows that we can choose 
$\lambda_{\infty}$ sufficiently large so that 
\begin{equation} \label{delta-inequality}
   -  \int_s^0 (v(x) + q_1 (x) \lambda + q_2 (x) \lambda^2) |w (x; \lambda_{\infty})|^2 dx
   \ge \frac{\lambda_{\infty}^2}{2} \delta \|w\|_{L^2 (s,0)}.
\end{equation}
On the left-hand side, we obtain the relation 
\begin{equation} \label{parts-inequality}
\begin{aligned}
    \int_s^0 w'' w^* dx &= - \int_s^0 |w_x (x; \lambda_{\infty})|^2 dx
    + w_x (0; \lambda_{\infty}) w^* (0; \lambda_{\infty})
    - w_x (s; \lambda_{\infty}) w^* (s; \lambda_{\infty}) \\
    &= - \int_s^0 |w_x (x; \lambda_{\infty})|^2 dx
    + (c_1 + c_2 \lambda_{\infty}) |w(0; \lambda_{\infty})|^2
    - (\frac{H'(s)}{H(s)} + \frac{1}{2} f_2 (s)) |w(s; \lambda_{\infty})|^2.
\end{aligned}
\end{equation}
Here, $H'(s)$ is uniformly bounded on $(-\infty, 0)$, and $H(s)$ is 
bounded away from 0 on this same interval, so the ratio $H' (s)/ H(s)$ is uniformly 
bounded on $(-\infty, 0)$. Likewise, $f_2 (s)$ is uniformly bounded on $(-\infty, 0)$,
so the quantity $\frac{H'(s)}{H(s)} + \frac{1}{2} f_2 (s)$ is uniformly bounded on 
this interval. 

Precisely in the spirit of Lemma 1.3.8 from \cite{BK2013}, we find that for any 
$\epsilon > 0$, we have the inequality 
\begin{equation} \label{BK2013inequality}
    |w (s; \lambda_{\infty})|^2 
    \le |w (0; \lambda_{\infty})|^2 + \frac{1}{\epsilon} \|w (\cdot; \lambda_{\infty})\|_{L^2 (s, 0)}^2
    + \epsilon \|w_x (\cdot; \lambda_{\infty})\|_{L^2 (s, 0)}^2. 
\end{equation}
This inequality, combined with \eqref{delta-inequality} and \eqref{parts-inequality},   
allows us to write 
\begin{equation*}
\begin{aligned}
\frac{\lambda_{\infty}^2}{2} \delta \|w\|_{L^2 (s,0)} &+ \|w_x (\cdot; \lambda_{\infty})\|_{L^2 (s, 0)}^2
- (c_1 + c_2 \lambda_{\infty}) w(0; \lambda_{\infty})^2  
\le - (\frac{H'(s)}{H(s)} + \frac{1}{2} f_2 (s)) w(s; \lambda_{\infty})^2 \\
& \le \Big| \frac{H'(s)}{H(s)} + \frac{1}{2} f_2 (s) \Big| 
\Big{\{} |w(0; \lambda_{\infty})|^2 + \frac{1}{\epsilon} \|w (\cdot; \lambda_{\infty})\|_{L^2 (s, 0)}^2
    + \epsilon \|w_x (\cdot; \lambda_{\infty})\|_{L^2 (s, 0)}^2 \Big{\}}.
\end{aligned}
\end{equation*}
Since $c_2 < 0$ and $\frac{H'(s)}{H(s)} + \frac{1}{2} f_2 (s)$ is bounded, we can choose $\lambda_{\infty}$
large enough to ensure a contradiction. 

\appendix

\section{Appendix}

In this appendix, we include the following: (1) a full derivation of our Green's function 
$G^{\alpha} (x, \xi; \lambda)$ associated with the operator $\mathcal{T}^{\alpha} (\lambda)$
(i.e., a full proof of Lemma \ref{green-function-lemma}); (2) 
details on the monotonicity (in $\lambda$) arguments 
from the proofs of Theorems \ref{regular-singular-theorem} and 
\ref{singular-theorem}; and (3) a proof of Lemma \ref{assumption-d-lemma}.

\subsection{Derivation of the Green's Function $G^{\gamma}_{c, b} (x, \xi; \lambda)$}
\label{green-function-section}

In this section, we prove Lemma \ref{green-function-lemma}, establishing a representation
for the Green's function associated with (\ref{inhomogeneous-problem}). 
We begin by letting $\Phi (x; \lambda)$ denote a 
fundamental matrix for (\ref{extension-hammy}) satisfying
\begin{equation*}
    J \Phi' = \mathbb{B} (x; \lambda) \Phi, \quad \Phi (c; \lambda) = I_{2n}.
\end{equation*}
Proceeding by variation of parameters, we look for solutions of 
(\ref{inhomogeneous-problem})
of the form $y (x; \lambda) = \Phi (x; \lambda) v(x; \lambda)$, where
the vector function $v(x;\lambda)$ is to be determined. This leads
immediately to the relation $J \Phi (x; \lambda) v'(x; \lambda) = \mathbb{B}_{\lambda} (x; \lambda) f$.
Using the relation 
\begin{equation*}
    \Phi (x; \lambda)^* J \Phi (x; \lambda) = J,
\end{equation*}
we can solve for 
\begin{equation*}
    v'(x; \lambda) = - J \Phi (x; \lambda)^* \mathbb{B}_{\lambda} (x; \lambda) f.
\end{equation*}
Integrating, we obtain the relation
\begin{equation*}
    v(x; \lambda) = - \int_c^x J \Phi (\xi; \lambda)^* \mathbb{B}_{\lambda} (\xi; \lambda) f (\xi) d\xi
    + K(\lambda),
\end{equation*}
where $K (\lambda)$ is a (vector) constant of integration to be determined, and consequently 
we can write 
\begin{equation} \label{green-y1}
    y(x; \lambda)
    = - \Phi (x; \lambda) \int_c^x J \Phi (\xi; \lambda)^* \mathbb{B}_{\lambda} (\xi; \lambda) f (\xi) d\xi
    + \Phi (x; \lambda) K (\lambda).
\end{equation}
In order to identify $K(\lambda)$, we employ the boundary conditions associated with 
$\mathcal{T}_{c, b}^{\gamma} (\lambda)$, namely (\ref{boundary-c}) and (\ref{boundary-b}).
For the former, since $y (x; \lambda) = \Phi (x; \lambda) v (x; \lambda)$ and 
$\Phi (c; \lambda) = I_{2n}$, we see
that $y (c; \lambda) = v (c; \lambda) = K (\lambda)$, so that the condition is
$\gamma K (\lambda)  = 0$, which is equivalent to 
\begin{equation*}
    (J \gamma^*)^* J K (\lambda) = 0.
\end{equation*}
For the latter, in Lemma \ref{lemma2-9prime}, we constructed a basis $\{u_j^b (x; \lambda)\}_{j=1}^n$
for the space of solutions to (\ref{extension-hammy}) that lie
right in $(c, b)$ and satisfy (\ref{boundary-b}). We let $U^b (x; \lambda)$
denote the matrix comprising the vector functions $\{u_j^b (x; \lambda)\}_{j=1}^n$
as its columns, noting that we then have  
\begin{equation} \label{green-bc}
    \lim_{x \to b^-} U^b (x; \mu_0, \lambda_0)^* J U^b (x; \lambda) = 0. 
\end{equation}
If we alternatively impose the boundary condition
\begin{equation} \label{boundary-b-equation-lambda}
    \lim_{x \to b^-} U^b (x; \lambda)^* J y(x) = 0, 
\end{equation}
then by the Lagrangian property we are effectively 
looking for a Green's function that can be expressed 
in terms of $U^b (x; \lambda)$ for $a < \xi < x < b$.
It follows from (\ref{green-bc}) that $G^{\gamma}_{c,b} (x, \xi; \lambda)$
will then satisfy the required boundary condition
(\ref{boundary-b}) (which can be checked 
directly with our final form of the Green's function). 

We now act with $U^b (x; \lambda)^* J$ on (\ref{green-y1})
to obtain the relation 
\begin{equation} \label{green-y2}
\begin{aligned}
    U^b (x; \lambda)^* J y(x; \lambda)
    &= - U^b (x; \lambda)^* J \Phi (x; \lambda) \int_c^x J \Phi (\xi; \lambda)^* \mathbb{B}_{\lambda} (\xi; \lambda) f (\xi) d\xi \\
    & + U^b (x; \lambda)^* J \Phi (x; \lambda) K (\lambda).
\end{aligned}
\end{equation}
Since the columns of $U^b (x; \lambda)$ solve (\ref{extension-hammy}) 
and are linearly independent, there must exist a rank-$n$ $2n \times n$ matrix $\mathbf{R}^b (\lambda)$
so that $U^b (x; \lambda) = \Phi (x; \lambda) \mathbf{R}^b (\lambda)$ (i.e., $\mathbf{R}^b (\lambda) = U^b (c; \lambda)$).
This allows us to express (\ref{green-y2}) as 
\begin{equation*}
    U^b (x; \lambda)^* J y(x; \lambda)
    = \int_c^x \mathbf{R}^b (\lambda)^* \Phi (\xi; \lambda)^* \mathbb{B}_{\lambda} (\xi; \lambda) f (\xi) d\xi
    + \mathbf{R}^b (\lambda)^* J K (\lambda).
\end{equation*}
By assumption, $\Phi (\cdot; \lambda) \mathbf{R}^b (\lambda) \in L^2_{\mathbb{B}_{\lambda}} ((c, b), \mathbb{C}^{2n})$,
and we are taking $f \in L^2_{\mathbb{B}_{\lambda}} ((c, b), \mathbb{C}^{2n})$, so we are justified in taking
$x \to b^-$ on the right-hand side. Since this limit is 0 on the left, we obtain 
the relation 
\begin{equation*}
    0
    = \int_c^b \mathbf{R}^b (\lambda)^* \Phi (\xi; \lambda)^* \mathbb{B}_{\lambda} (\xi; \lambda) f (\xi) d\xi
    + \mathbf{R}^b (\lambda)^* J K (\lambda).
\end{equation*}
Combining this last relation with $(J \gamma^*)^* J K (\lambda) = 0$, we can write 
\begin{equation*}
    \begin{pmatrix}
    (J \gamma^*)^* \\ \mathbf{R}^b (\lambda)^*
    \end{pmatrix}
    J K (\lambda)
    = \begin{pmatrix}
    0 \\ - \int_c^b \mathbf{R}^b (\lambda)^* \Phi (\xi; \lambda)^* \mathbb{B}_{\lambda} (\xi; \lambda) f (\xi) d\xi
    \end{pmatrix}.
\end{equation*}

At this point, we set 
\begin{equation*}
    \mathbb{E} (\lambda) 
    :=  \begin{pmatrix}
    J \gamma^* & \mathbf{R}^b (\lambda)
    \end{pmatrix}, 
\end{equation*}
and claim that since $0 \notin \sigma (\mathcal{T}_{c, b}^{\gamma} (\lambda))$, 
it must be the case that $\mathbb{E} (\lambda)$ is 
non-singular. To see this, we observe that solutions of 
(\ref{extension-hammy}) satisfying the 
boundary condition at $x = c$ are linear combinations of the columns 
of $\Phi (x; \lambda) J \gamma^*$, while solutions satisfying 
(\ref{boundary-b-equation-lambda}) are linear combinations of the 
columns of $\Phi (x; \lambda) \mathbf{R}^b (\lambda)$. It follows
that the matrix $(\Phi (x; \lambda) J \gamma^*)^* J \Phi (x; \lambda) \mathbf{R}^b (\lambda)$
is singular if and only if $0 \in \sigma_{\pt} (\mathcal{T}_{c, b}^{\gamma} (\lambda))$. 
But 
\begin{equation*}
    (\Phi (x; \lambda) J \gamma^*)^* J \Phi (x; \lambda) \mathbf{R}^b (\lambda)
    = (J \gamma^*)^* J \mathbf{R}_b (\lambda),
\end{equation*}
and we know from Lemma 2.2 in \cite{HS2} that
$(J \gamma^*)^* J \mathbf{R}_b (\lambda)$ is non-singular if and only if 
$\mathbb{E} (\lambda)$ is non-singular. In particular, 
if $0 \notin \sigma (\mathcal{T}_{c, b}^{\gamma} (\lambda))$ then
$\mathbb{E} (\lambda)$ must be non-singular. This observation allows 
us to solve for 
\begin{equation*}
\begin{aligned}
    K(\lambda) 
    &= -J (\mathbb{E} (\lambda)^*)^{-1}
    \begin{pmatrix}
    0 \\ - \int_c^b \mathbf{R}^b (\lambda)^* \Phi (\xi; \lambda)^* \mathbb{B}_{\lambda} (\xi; \lambda) f (\xi) d\xi
    \end{pmatrix} \\
    &= J (\mathbb{E} (\lambda)^*)^{-1} 
    \int_c^b \begin{pmatrix} 0 & \mathbf{R}^b (\lambda)^* \end{pmatrix}^* 
    \Phi (\xi; \lambda)^* \mathbb{B}_{\lambda} (\xi; \lambda) f (\xi) d\xi.
\end{aligned}
\end{equation*}

Upon substitution of $K (\lambda)$ into (\ref{green-y1}), we obtain the relation
\begin{equation*}
\begin{aligned}
y(x; \lambda) &= - \Phi (x; \lambda) \int_c^x J \Phi (\xi; \lambda)^* \mathbb{B}_{\lambda} (\xi; \lambda) f(\xi) d\xi \\
&+ \Phi (x; \lambda) J (\mathbb{E} (\lambda)^*)^{-1}
    \int_c^b \begin{pmatrix}
    0 & \mathbf{R}^b (\lambda)
    \end{pmatrix}^* \Phi (\xi; \lambda)^* \mathbb{B}_{\lambda} (\xi; \lambda) f(\xi) d\xi \\
&= - \Phi (x; \lambda) J (\mathbb{E} (\lambda)^*)^{-1} \mathbb{E} (\lambda)^* 
\int_c^x \Phi (\xi; \lambda)^* \mathbb{B}_{\lambda} (\xi; \lambda) f(\xi) d\xi \\
&+ \Phi (x; \lambda) J (\mathbb{E} (\lambda)^*)^{-1}
    \int_c^b \begin{pmatrix}
    0 & \mathbf{R}^b (\lambda)
    \end{pmatrix}^* \Phi (\xi; \lambda)^* \mathbb{B}_{\lambda} (\xi; \lambda) f(\xi) d\xi.
\end{aligned}
\end{equation*} 
Continuing with this calculation, we next see that 
\begin{equation*}
    \begin{aligned}
    y(x; \lambda) &= - \Phi (x; \lambda) J (\mathbb{E} (\lambda)^*)^{-1} 
    \begin{pmatrix}
    J \gamma^* & 0
    \end{pmatrix}^*
    \int_c^x \Phi (\xi; \lambda)^* \mathbb{B}_{\lambda} (\xi; \lambda) f(\xi) d\xi \\
    &- \Phi (x; \lambda) J (\mathbb{E} (\lambda)^*)^{-1} 
    \begin{pmatrix}
    0 & \mathbf{R}^b (\lambda)
    \end{pmatrix}^*
    \int_c^x \Phi (\xi; \lambda)^* \mathbb{B}_{\lambda} (\xi; \lambda) f(\xi) d\xi \\
    &+ \Phi (x; \lambda) J (\mathbb{E} (\lambda)^*)^{-1}
    \begin{pmatrix}
    0 & \mathbf{R}^b (\lambda)
    \end{pmatrix}^*
    \int_c^b \Phi (\xi; \lambda)^* \mathbb{B}_{\lambda} (\xi; \lambda) f(\xi) d\xi \\
    &= - \Phi (x; \lambda) J (\mathbb{E} (\lambda)^*)^{-1} 
    \begin{pmatrix}
    J \gamma^* & 0
    \end{pmatrix}^*
    \int_c^x \Phi (\xi; \lambda)^* \mathbb{B}_{\lambda} (\xi; \lambda) f(\xi) d\xi \\
    &+ \Phi (x; \lambda) J (\mathbb{E} (\lambda)^*)^{-1}
    \begin{pmatrix}
    0 & \mathbf{R}^b (\lambda)
    \end{pmatrix}^*
    \int_x^b \Phi (\xi; \lambda)^* \mathbb{B}_{\lambda} (\xi; \lambda) f(\xi) d\xi. 
    \end{aligned}
\end{equation*}
It follows, by inspection, that 
\begin{equation*}
    G^{\gamma}_{c, b} (x, \xi; \lambda)
    = \begin{cases}
    - \Phi (x; \lambda) J (\mathbb{E} (\lambda)^*)^{-1} 
    \begin{pmatrix}
    J \gamma^* & 0
    \end{pmatrix}^*
    \Phi (\xi; \lambda)^* & c < \xi < x < b \\
    \Phi (x; \lambda) J (\mathbb{E} (\lambda)^*)^{-1}
    \begin{pmatrix}
    0 & \mathbf{R}^b (\lambda)
    \end{pmatrix}^*
    \Phi (\xi; \lambda)^* & c < x < \xi < b.
    \end{cases}
\end{equation*}

Last, it's convenient to note that we can express 
$G^{\gamma}_{c, b} (x, \xi; \lambda)$ in a more symmetric form. 
To see this, we first observe that 
\begin{equation*}
    \begin{aligned}
    \mathbb{E} (\lambda)^* J \mathbb{E} (\lambda)
    &= \begin{pmatrix}
    - \gamma J \\ \mathbf{R}^b (\lambda)^*
    \end{pmatrix} J
    \begin{pmatrix}
    J \gamma^* & \mathbf{R}^b (\lambda)
    \end{pmatrix} \\
    &= \begin{pmatrix}
    \gamma J \gamma^* & \gamma \mathbf{R}^b (\lambda) \\
    - \mathbf{R}^b (\lambda)^* \gamma^* & \mathbf{R}^b (\lambda)^* J \mathbf{R}^b (\lambda)
    \end{pmatrix}
    = \begin{pmatrix}
    0 & \gamma \mathbf{R}^b (\lambda) \\
    - (\gamma \mathbf{R}^b (\lambda))^*  & 0
    \end{pmatrix},
    \end{aligned}
\end{equation*}
where we've used the observations that $J \gamma^*$ and 
$\mathbf{R}^b (\lambda)$ are frames for Lagrangian subspaces of 
$\mathbb{C}^{2n}$. Here, $\gamma \mathbf{R}^b (\lambda) = (J \gamma^*)^* J \mathbf{R}^b (\lambda)$,
and we've already seen that this matrix is non-singular so long
as $0 \notin \sigma (\mathcal{T}^{\gamma}_{c, b} (\lambda))$. This allows 
us to write 
\begin{equation} \label{inverse-matrix}
   (\mathbb{E} (\lambda)^* J \mathbb{E} (\lambda))^{-1}
   = \begin{pmatrix}
    0 & - ((\gamma \mathbf{R}^b (\lambda))^*)^{-1} \\
    (\gamma \mathbf{R}^b (\lambda))^{-1}  & 0
    \end{pmatrix}.
\end{equation}
It follows that 
\begin{equation} \label{cf1}
    \begin{aligned}
    - &\begin{pmatrix}
    J \gamma^* & 0
    \end{pmatrix}
    \mathbb{E} (\lambda)^{-1} J (\mathbb{E} (\lambda)^*)^{-1} 
    \begin{pmatrix}
    0 & \mathbf{R}^b (\lambda)
    \end{pmatrix}^* \\
    & = \begin{pmatrix}
    J \gamma^* & 0
    \end{pmatrix}
    \begin{pmatrix}
    0 & - ((\gamma \mathbf{R}^b (\lambda))^*)^{-1} \\
    (\gamma \mathbf{R}^b (\lambda))^{-1}  & 0
    \end{pmatrix}
    \begin{pmatrix}
    0 \\ \mathbf{R}^b (\lambda)^*
    \end{pmatrix} \\
    &= - \begin{pmatrix}
    J \gamma^* & 0
    \end{pmatrix}
    \begin{pmatrix}
    ((\gamma \mathbf{R}^b (\lambda))^*)^{-1} \mathbf{R}^b (\lambda)^* \\ 0
    \end{pmatrix}
    = - (J \gamma^*) (\gamma \mathbf{R}^b (\lambda)^*)^{-1} \mathbf{R}^b (\lambda)^*.
    \end{aligned}
\end{equation}

On the other hand, (\ref{inverse-matrix}) also 
allows us to write 
\begin{equation*}
    (\mathbb{E} (\lambda)^*)^{-1}
    = J \mathbb{E} (\lambda)
    \begin{pmatrix}
    0 & - ((\gamma \mathbf{R}^b (\lambda))^*)^{-1} \\
    (\gamma \mathbf{R}^b (\lambda))^{-1}  & 0
    \end{pmatrix},
\end{equation*}
from which we see that 
\begin{equation} \label{cf2}
\begin{aligned}
(\mathbb{E} (\lambda)^*)^{-1}  
    &\begin{pmatrix}
    0 & \mathbf{R}^b (\lambda)
    \end{pmatrix}^* 
= J \mathbb{E} (\lambda)
    \begin{pmatrix}
    0 & - ((\gamma \mathbf{R}^b (\lambda))^*)^{-1} \\
    (\gamma \mathbf{R}^b (\lambda))^{-1}  & 0
    \end{pmatrix}
    \begin{pmatrix}
    0 \\ \mathbf{R}^b (\lambda)^*
    \end{pmatrix} \\
    &= J 
    \begin{pmatrix}
    J \gamma^* & \mathbf{R}^b (\lambda)
    \end{pmatrix}
    \begin{pmatrix}
    - ((\gamma \mathbf{R}^b (\lambda))^*)^{-1} \mathbf{R}^b (\lambda)^* \\ 0
    \end{pmatrix}
    = \gamma^* ((\gamma \mathbf{R}^b (\lambda))^*)^{-1} \mathbf{R}^b (\lambda)^*.
\end{aligned}
\end{equation}
Comparing (\ref{cf1}) and (\ref{cf2}), we see that 
\begin{equation*}
    J (\mathbb{E} (\lambda)^*)^{-1}  
    \begin{pmatrix}
    0 & \mathbf{R}^b (\lambda)
    \end{pmatrix}^* 
    =  \begin{pmatrix}
    J \gamma^* & 0
    \end{pmatrix} 
    \mathbb{E} (\lambda)^{-1} 
    J (\mathbb{E} (\lambda)^*)^{-1}  
    \begin{pmatrix}
    0 & \mathbf{R}^b (\lambda)
    \end{pmatrix}^*.
\end{equation*}
We will set 
\begin{equation*}
    \mathbb{M} (\lambda) := \mathbb{E} (\lambda)^{-1} J (\mathbb{E} (\lambda)^*)^{-1}, 
\end{equation*}
from which we observe that 
\begin{equation*}
    \mathbb{M} (\lambda)^* = - \mathbb{M} (\lambda).
\end{equation*}
For $c < x < \xi < b$, we will re-write 
$G^{\gamma}_{c, b} (x, \xi; \lambda)$ by using the relation 
\begin{equation*}
    J (\mathbb{E} (\lambda)^*)^{-1}  
    \begin{pmatrix}
    0 & \mathbf{R}^b (\lambda)
    \end{pmatrix}^* 
    = \begin{pmatrix}
    J \gamma^* & 0
    \end{pmatrix} 
    \mathbb{M} (\lambda)
    \begin{pmatrix}
    0 & \mathbf{R}^b (\lambda)
    \end{pmatrix}^*.
\end{equation*}
and similarly we will re-write 
$G^{\gamma}_{c, b} (x, \xi; \lambda)$
for $c < \xi < x < b$, by using 
the relation
\begin{equation*}
    J (\mathbb{E} (\lambda)^*)^{-1}  
    \begin{pmatrix}
    J \gamma^* & 0
    \end{pmatrix}^* 
    = \begin{pmatrix}
    0 & \mathbf{R}^b (\lambda)
    \end{pmatrix} 
    \mathbb{M} (\lambda)
    \begin{pmatrix}
    J \gamma^* & 0
    \end{pmatrix}^*.
\end{equation*}
These relations allow us to express $G^{\gamma}_{c, b} (x, \xi; \lambda)$
in precisely the claimed form.
\hfill $\square$

\subsection{Monotonicity as $\lambda$ Varies}
\label{monotonicity-lambda-section}

In this section, we verify that the Maslov index specified 
on the right-hand side of (\ref{full-count-alpha}) is a monotonic
count of crossing points, each negatively directed. From 
Lemma \ref{monotonicity1}, we know that the signs of the associated crossing points are determined 
by the matrices 
\begin{equation} \label{former}
    -\mathbf{X}_{\alpha} (c; \lambda)^* J \partial_{\lambda} \mathbf{X}_{\alpha} (c; \lambda)
\end{equation}
and 
\begin{equation} \label{latter}
    \mathbf{X}_{b} (c; \lambda)^* J \partial_{\lambda} \mathbf{X}_{b} (c; \lambda).
\end{equation}
We've already seen from our analysis of the top shelf that (\ref{former})
is negative definite for all $c \in (a, b)$, so we focus here on making 
a similar conclusion about (\ref{latter}). For this, we recall that 
the columns of $\mathbf{X}_b (x; \lambda)$ comprise the basis elements
for $\ell_b (x; \lambda)$ described in Lemma \ref{continuation-lemma}
and extended in Lemma \ref{lemma2-11prime}. 
By construction, these basis elements are differentiable in $\lambda$
on the intervals $(\lambda_1, \lambda_*^{1, 2})$, $(\lambda_*^{1, 2}, \lambda_*^{2, 3})$,
..., $(\lambda_*^{N-2, N-1}, \lambda_*^{N-1, N})$, $(\lambda_*^{N-1, N}, \lambda_2)$;
more precisely, on $(\lambda_1, \lambda_*^{1, 2})$ the columns of 
$\mathbf{X}_b (x; \lambda)$ are differentiable extensions of the basis 
elements $\{u_j^b (x; \lambda_*^1)\}$, on $(\lambda_*^{1, 2}, \lambda_*^{2, 3})$
the columns of $\mathbf{X}_b (x; \lambda)$ are differentiable extensions of the basis 
elements $\{u_j^b (x; \lambda_*^2)\}$, and so on, with the values 
$\{\lambda_*^j\}_{j=1}^N$ as specified in the proof of Lemma \ref{lemma2-11prime}.
Here, we recall that $\lambda_*^1 = \lambda_1$, $\lambda_*^N = \lambda_2$,
and $\lambda_*^j \in (\lambda_*^{j-1, j}, \lambda_*^{j, j+1})$ for all 
$j \in \{2, \dots, N-1\}$. (The precise location of $\lambda_*^j$ isn't 
required for the argument.)  In addition, we know from Lemma 
\ref{continuation-lemma}, that with this construction we have the relation
\begin{equation} \label{diff-limit}
    \lim_{x \to b^-} \mathbf{X}_b (x; \lambda_*^j)^* J (\partial_{\lambda} \mathbf{X}_b) (x; \lambda_*^j) = 0 
\end{equation}
for all $j \in \{1, 2, \dots, n\}$. 

In order to understand rotation as $\lambda$ varies near $\lambda_*^j$, we first use 
(\ref{diff1}) (from Lemma \ref{continuation-lemma}) to compute (precisely as with the corresponding calculation for 
$\mathbf{X}_{\alpha} (x; \lambda)$ in our analysis of the top shelf in the proof
of Theorem \ref{regular-singular-theorem})
\begin{equation}
    \frac{\partial}{\partial x} \mathbf{X}_b (x; \lambda_*^j)^* J (\partial_{\lambda} \mathbf{X}_b) (x; \lambda_*^j)
    = \mathbf{X}_b (x; \lambda_*^j)^* \mathbb{B}_{\lambda} (x; \lambda_*^j) \mathbf{X}_b (x; \lambda_*^j).
\end{equation}
Integrating on $(c, x)$, we can write 
\begin{equation*}
    \mathbf{X}_b (x; \lambda_*^j)^* J (\partial_{\lambda} \mathbf{X}_b) (x; \lambda_*^j)
    = \mathbf{X}_b (c; \lambda_*^j)^* J (\partial_{\lambda} \mathbf{X}_b) (c; \lambda_*^j)
    + \int_c^x \mathbf{X}_b (\xi; \lambda_*^j)^* \mathbb{B}_{\lambda} (\xi; \lambda_*^j) \mathbf{X}_b (\xi; \lambda_*^j) d \xi.
\end{equation*}
Upon taking the limit as $x \to b^-$ and using (\ref{diff-limit}), we see that 
\begin{equation} \label{appendix-matrix}
    \mathbf{X}_b (c; \lambda_*^j)^* J (\partial_{\lambda} \mathbf{X}_b) (c; \lambda_*^j)
    = - \int_c^b \mathbf{X}_b (\xi; \lambda_*^j)^* \mathbb{B}_{\lambda} (\xi; \lambda_*^j) \mathbf{X}_b (\xi; \lambda_*^j) d \xi,
\end{equation}
allowing us to conclude, similarly as we did with 
$\mathbf{X}_\alpha (c; \lambda)^* J \partial_{\lambda} \mathbf{X}_{\alpha} (c; \lambda)$
in the proof of Theorem \ref{regular-singular-theorem}, that the matrix on the 
left-hand side of (\ref{appendix-matrix}) is negative definite for all 
$c \in (a, b)$, and by continuity in $\lambda$ that 
$\mathbf{X}_b (c; \lambda)^* J \partial_{\lambda} \mathbf{X}_b (c; \lambda)$ is 
negative definite for all $\lambda$ sufficiently close to $\lambda_*^j$. Possibly by taking a finer 
partition of $[\lambda_1, \lambda_2]$ in the proof of Lemma \ref{lemma2-11prime}
(i.e., by taking $N$ larger and the associated radii smaller), we can ensure 
in this way that $\mathbf{X}_b (c; \lambda)^* J \partial_{\lambda} \mathbf{X}_b (c; \lambda)$
is negative definite on each interval in our partition, 
$(\lambda_1, \lambda_*^{1, 2})$, $(\lambda_*^{1, 2}, \lambda_*^{2, 3})$,
..., $(\lambda_*^{N-2, N-1}, \lambda_*^{N-1, N})$, $(\lambda_*^{N-1, N}, \lambda_2)$. 
In this way, we find that the direction of crossings on each of these intervals 
is negative, and since these intervals partition $[\lambda_1, \lambda_2]$,
that the direction of all crossings on $[\lambda_1, \lambda_2]$ is negative
(as $\lambda$ increases).

\subsection{Proof of Lemma \ref{assumption-d-lemma}}
\label{assumption-D-section}

We follow the proof of Theorem V.2.2 from 
\cite{Krall2002}. We will proceed for the case $m_b (\mu, \lambda)$, noting
that the case $m_a (\mu, \lambda)$ is similar. 

First, we fix any $\lambda \in I$, and for the ODE 
\begin{equation*}
    J y' - \mathbb{B} (x; \lambda) y - \mu \mathbb{B}_{\lambda} (x; \lambda) y
    = \mathbb{B}_{\lambda} (x; \lambda) f,
\end{equation*}
with $f \in L^2_{\mathbb{B}_{\lambda}} ((a, b), \mathbb{C}^{2n})$, we 
construct a Green's function $G (x, \xi; \mu, \lambda)$ so that 
\begin{equation*}
    y(x; \mu, \lambda) = 
    \int_c^b G (x, \xi; \mu, \lambda) \mathbb{B}_{\lambda} (\xi; \lambda) f(\xi) d\xi,
\end{equation*}
where $y(x; \mu, \lambda)$ satisfies (\ref{boundary-b}) on the right (for a 
selection of Niessen elements) and a (regular) self-adjoint boundary condition at 
$x = c$. Here, $G (x, \xi; \mu, \lambda)$ can be constructed similarly
as in our development in Section \ref{green-function-section}, or as in Section VI.5
in \cite{Krall2002}. For any fixed $\mu \in \mathbb{C} \backslash \mathbb{R}$,
we set 
\begin{equation*}
    s := \min_{\lambda \in I} \{m_b (\mu; \lambda)\}. 
\end{equation*}
Since $s \ge n$, this minimum is well defined, and since $m_b (\mu; \lambda)$ takes
discrete values, it must occur at some $\lambda_0 \in I$ (possibly at multiple values 
in $I$). 

Let $\{y_i (x; \mu, \lambda_0)\}_{i=1}^s$ denote a linearly independent collection of solutions 
of (\ref{linear-hammy}) (with $\lambda = \lambda_0$) that lie right in $(a, b)$.  
For $\lambda$ near $\lambda_0$, let $z (x; \mu, \lambda)$ solve (\ref{linear-hammy}),
and notice that we can write 
\begin{equation} \label{first-z-equation}
    J z' - \mathbb{B} (x; \lambda_0) z - \mu \mathbb{B}_{\lambda} (x; \lambda_0) z
    = (\mathbb{B} (x; \lambda) - \mathbb{B} (x; \lambda_0)) z
    + \mu (\mathbb{B}_{\lambda} (x; \lambda) - \mathbb{B}_{\lambda} (x; \lambda_0)) z.
\end{equation}
According to Assumption {\bf (E)}, we can write
\begin{equation*}
\mathbb{B} (x; \lambda) - \mathbb{B} (x; \lambda_0)
= \mathbb{B}_{\lambda} (x; \lambda_0) \mathcal{E} (x; \lambda, \lambda_0),
\end{equation*}
and upon differentiating in $\lambda$ 
\begin{equation*}
\mathbb{B}_{\lambda} (x; \lambda)
= \mathbb{B}_{\lambda} (x; \lambda_0) \mathcal{E}_{\lambda} (x; \lambda, \lambda_0),  
\end{equation*}
so that 
\begin{equation*}
    \mathbb{B}_{\lambda} (x; \lambda) - \mathbb{B}_{\lambda} (x; \lambda_0)
    =  \mathbb{B}_{\lambda} (x; \lambda_0) (\mathcal{E}_{\lambda} (x; \lambda, \lambda_0) - I).
\end{equation*}
Combining these observations, we can express (\ref{first-z-equation}) as 
\begin{equation} \label{second-z-equation}
    J z' - \mathbb{B} (x; \lambda_0) z - \mu \mathbb{B}_{\lambda} (x; \lambda_0) z
    = \mathbb{B}_{\lambda} (x; \lambda_0) (\mathcal{E} (x; \lambda, \lambda_0) 
    + \mu (\mathcal{E}_{\lambda} (x; \lambda, \lambda_0) - I)) z.
\end{equation}

Under our assumptions on $\mathcal{E} (x; \lambda, \lambda_0)$
and $\mathcal{E}_{\lambda} (x; \lambda, \lambda_0)$, if we set
\begin{equation*}
    \mathbb{E} (x; \mu, \lambda, \lambda_0)
    := \mathcal{E} (x; \lambda, \lambda_0) 
    + \mu (\mathcal{E}_{\lambda} (x; \lambda, \lambda_0) - I),
\end{equation*}
then the following hold immediately: for each $\lambda$ sufficiently close enough to $\lambda_0$ 
$\mathbb{E} (\cdot; \mu, \lambda; \lambda_0) \in \mathcal{B} (L^2_{\mathbb{B}_{\lambda}} ((a, b), \mathbb{C}^{2n}))$
(i.e., when viewed as a multiplication operator, the matrix function $\mathbb{E} (\cdot; \mu, \lambda; \lambda_0)$
is a bounded linear operator taking $L^2_{\mathbb{B}_{\lambda}} ((a, b), \mathbb{C}^{2n})$
to itself), and the map $\lambda \mapsto \mathbb{E} (\cdot; \mu, \lambda; \lambda_0)$ is continuous 
as a map from a neighborhood of $\lambda_0$ to $\mathcal{B} (L^2_{\mathbb{B}_{\lambda}} ((a, b), \mathbb{C}^{2n}))$,
with additionally 
$\|\mathbb{E} (\cdot; \mu, \lambda, \lambda_0)\| = \mathbf{o} (1)$, $\lambda \to \lambda_0$. 
These observations allow us to conclude that for any $\phi \in L^2_{\mathbb{B}_{\lambda}} ((a, b), \mathbb{C}^{2n})$ 
we have the inequality 
\begin{equation*}
    \|\mathbb{E} (\cdot; \mu, \lambda, \lambda_0) \phi (\cdot)\|_{\mathbb{B}_{\lambda}}
    \le \| \mathbb{E} (\cdot; \mu, \lambda, \lambda_0) \| \| \phi (\cdot) \|_{\mathbb{B}_{\lambda}},
\end{equation*}
and consequently we can take $\lambda$ sufficiently close to $\lambda_0$ so that 
\begin{equation} \label{E-inequality}
    \|\mathbb{E} (\cdot; \mu, \lambda, \lambda_0) \phi (\cdot)\|_{\mathbb{B}_{\lambda}}^2
    \le \frac{|\textrm{Im}\,\mu|}{2}  \| \phi (\cdot) \|_{\mathbb{B}_{\lambda}}^2,
\end{equation}
for all $\phi \in L^2_{\mathbb{B}_{\lambda}} ((a, b), \mathbb{C}^{2n})$. 

At this point, we take $\lambda$ close enough to $\lambda_0$ so that (\ref{E-inequality})
holds, set $S := m_b (\mu; \lambda)$, and let 
$\{z_j (x; \mu, \lambda, \lambda_0)\}_{j = 1}^S$ denote a collection of 
linearly independent solutions to (\ref{first-z-equation}) that lie right
in $(a, b)$. For each 
$j \in \{1, 2, \dots, S\}$, we {\it define} 
\begin{equation*}
\tilde{z}_j (x; \mu, \lambda, \lambda_0)    
:= \int_c^b G (x, \xi; \mu, \lambda_0) \mathbb{B}_{\lambda} (\xi; \lambda_0)
\mathbb{E} (\xi; \mu, \lambda, \lambda_0) z_j (\xi; \mu, \lambda, \lambda_0) d\xi,
\end{equation*}
noting particularly that $\tilde{z}_j (x; \mu, \lambda, \lambda_0)$ solves the 
ODE 
\begin{equation} \label{inhomogeneous-z}
    J \tilde{z}_j' - \mathbb{B} (x; \lambda_0) \tilde{z}_j - \mu \mathbb{B}_{\lambda} (x; \lambda_0) \tilde{z}_j
    = \mathbb{B}_{\lambda} (x; \lambda_0) \mathbb{E} (x; \mu, \lambda, \lambda_0) z_j.
\end{equation}
I.e., $\tilde{z}_j (x; \mu, \lambda, \lambda_0)$ is a particular solution for this 
inhomogeneous equation. The space of homogeneous solutions to (\ref{inhomogeneous-z})
that lie right in $(a, b)$ is spanned by the collection 
$\{y_i (x; \mu, \lambda_0)\}_{i=1}^s$. Since each element in the collection
$\{z_j (x; \mu, \lambda, \lambda_0)\}_{j = 1}^S$ is a solution to 
(\ref{inhomogeneous-z}) that lies right in $(a, b)$, there must exist constants
$\{c_{j i} (\mu, \lambda, \lambda_0)\}_{j, i = 1}^{S, s}$ so that 
\begin{equation*}
z_j (x; \mu, \lambda, \lambda_0)
= \sum_{i=1}^s c_{j i} (\mu, \lambda, \lambda_0) y_i (x; \mu, \lambda_0)
+ \tilde{z}_j (x; \mu, \lambda, \lambda_0), 
\quad j = 1, 2, \dots, S.
\end{equation*}

Suppose now that $S > s$, in which case we can associate the coefficients 
$(c_{j i})$ with an $S \times s$ matrix $C$. It follows that there must 
exist a non-trivial row vector $\beta = (\beta_1, \beta_2, \dots, \beta_S)$,
depending on $\mu$, $\lambda$ and $\lambda_0$, so that 
$\beta C = 0$. We now set 
\begin{equation} \label{phi-defined}
    \phi (x; \mu, \lambda, \lambda_0) 
    := \sum_{j=1}^S \beta_j (\mu, \lambda, \lambda_0) z_j (x; \mu, \lambda, \lambda_0).
\end{equation}
Then by linear independence of the collection 
$\{z_j (x; \mu, \lambda, \lambda_0)\}_{j = 1}^S$, we see that 
$\phi (x; \mu, \lambda, \lambda_0)$ cannot be identically 0. In addition, 
we can now write 
\begin{equation*}
    \begin{aligned}
    \phi (x; \mu, \lambda, \lambda_0) 
    &= \sum_{j=1}^S \beta_j (\mu, \lambda, \lambda_0) z_j (x; \mu, \lambda, \lambda_0) \\
    &= \sum_{j=1}^S \beta_j (\mu, \lambda, \lambda_0) 
    \Big( \sum_{i=1}^s c_{j i} (\mu, \lambda, \lambda_0) y_i (x; \mu, \lambda_0)
+ \tilde{z}_j (x; \mu, \lambda, \lambda_0) \Big) \\
&=  \sum_{i=1}^s \Big( \sum_{j=1}^S \beta_j (\mu, \lambda, \lambda_0) c_{j i} (\mu, \lambda, \lambda_0) \Big) 
y_i (x; \mu, \lambda_0) + \sum_{j=1}^S \beta_j (\mu, \lambda, \lambda_0) \tilde{z}_j (x; \mu, \lambda, \lambda_0).
    \end{aligned}
\end{equation*}
By the selection of $\beta$, we have $\sum_{j=1}^S \beta_j (\mu, \lambda, \lambda_0) c_{j i} (\mu, \lambda, \lambda_0) = 0$,
and this allows us to write 
\begin{equation*}
    \begin{aligned}
    \phi (x; \mu, \lambda, \lambda_0) 
    &= \sum_{j=1}^S \beta_j (\mu, \lambda, \lambda_0) \tilde{z}_j (x; \mu, \lambda, \lambda_0) \\
    &= \sum_{j=1}^S \beta_j (\mu, \lambda, \lambda_0) 
    \int_c^b G (x, \xi; \mu, \lambda_0) \mathbb{B}_{\lambda} (\xi; \lambda_0)
    \mathbb{E} (\xi; \mu, \lambda, \lambda_0) z_j (\xi; \mu, \lambda, \lambda_0) d\xi \\
    &= \int_c^b G (x, \xi; \mu, \lambda_0) \mathbb{B}_{\lambda} (\xi; \lambda_0)
    \mathbb{E} (\xi; \mu, \lambda, \lambda_0) 
    \sum_{j=1}^S \beta_j (\mu, \lambda, \lambda_0) z_j (\xi; \mu, \lambda, \lambda_0) d\xi \\
    &= \int_c^b G (x, \xi; \mu, \lambda_0) \mathbb{B}_{\lambda} (\xi; \lambda_0)
    \mathbb{E} (\xi; \mu, \lambda, \lambda_0) 
    \phi (\xi; \mu, \lambda, \lambda_0) d\xi.
    \end{aligned}
\end{equation*}
In particular, $\phi$ satisfies the equation
\begin{equation*}
     J \phi' - \mathbb{B} (x; \lambda_0) \phi - \mu \mathbb{B}_{\lambda} (x; \lambda_0) \phi
    = \mathbb{B}_{\lambda} (x; \lambda_0) \mathbb{E} (x; \mu, \lambda, \lambda_0) \phi.
\end{equation*}

Precisely as in the proof of Theorem V.2.2 in \cite{Krall2002}, we now make use of the 
following relation (see Theorem VI.6.2 in \cite{Krall2002}): if 
$\phi, f \in L^2_{\mathbb{B}_{\lambda}} ((c, b), \mathbb{C}^{2n})$ and 
$\phi$ solves the equation 
\begin{equation*}
     J \phi' - \mathbb{B} (x; \lambda_0) \phi - \mu \mathbb{B}_{\lambda} (x; \lambda_0) \phi
    = \mathbb{B}_{\lambda} (x; \lambda_0) f,
\end{equation*}
then 
\begin{equation*}
    \int_c^b \phi (x)^* \mathbb{B}_{\lambda} (x; \lambda_0) \phi (x) dx
    \le \frac{1}{|\textrm{Im}\,\mu|} \int_c^b f (x)^* \mathbb{B}_{\lambda} (x; \lambda_0) f (x) dx.
\end{equation*}
For $\phi$ as in (\ref{phi-defined}), this becomes
\begin{equation*}
    \int_c^b \phi (x)^* \mathbb{B}_{\lambda} (x; \lambda_0) \phi (x) dx
    \le \frac{1}{|\textrm{Im}\,\mu|} \int_c^b (\mathbb{E} (x) 
    \phi (x))^* \mathbb{B}_{\lambda} (x; \lambda_0) (\mathbb{E} (x) 
    \phi (x)) dx,
\end{equation*}
where most dependence on the spectral parameters has been suppressed for notational 
brevity. Expressed in terms of the norm $\|\cdot\|_{\mathbb{B}_{\lambda}}$ (on 
$(c, b)$), this becomes
\begin{equation*}
    \|\phi\|_{\mathbb{B}_{\lambda}}^2 
    \le \frac{1}{|\textrm{Im}\,\mu|} 
    \| \mathbb{E} \phi\|_{\mathbb{B}_{\lambda}}^2
    \le \frac{1}{|\textrm{Im}\,\mu|} \|\mathbb{E}\|^2  \|\phi\|_{\mathbb{B}_{\lambda}}^2
    < \frac{1}{2} \|\phi\|_{\mathbb{B}_{\lambda}}^2,
\end{equation*}
where in obtaining this final inequality we've used 
(\ref{E-inequality}). In this way, we arrive at a contradiction to the assumption that 
$S > s$, so it must be the case that $m_b (\mu; \lambda) = s$ for all 
$\lambda$ sufficiently close to $\lambda_0$. 

Finally, using compactness of the interval $[\lambda_1, \lambda_2] \subset I$, we 
can conclude that we must have $m_b (\mu; \lambda) = s$ for all 
$\lambda \in [\lambda_1, \lambda_2]$. Since $m_b (\mu; \lambda)$ is constant
in $\mu$ for each fixed $\lambda$, and constant in $\lambda$ for each 
fixed $\mu$, it must be constant for all 
$(\mu, \lambda) \in (\mathbb{C} \backslash \mathbb{R}) \times [\lambda_1, \lambda_2]$.

\bigskip
{\it Acknowledgments.} A.S. acknowledges support from the 
National Science Foundation under grant DMS-1910820.


\begin{thebibliography}{99}

\bibitem{ALMS1994} F. V. Atkinson, H. Langer, R. Mennicken, and 
A.A. Shkalikov, {\it The essential spectrum of some matrix operators},
Math. Nachr. {\bf 167} (1994) 5 -- 20.

\bibitem{BDM2017} W. Boyce, R. Diprima, and D Meade, 
{\it Elementary differential equations and boundary value problems},
11th Ed., Wiley 2017.

\bibitem{BK2013} G. Berkolaiko and P. Kuchment, {\it Introduction to quantum graphs}, 
Mathematical Surveys and Monographs {\bf 186}, AMS 2013. 

\bibitem{Elyseeva2021} J. Elyseeva, {\it Relative oscillation theory for linear Hamiltonian systems 
with nonlinear dependence on the spectral parameter}, to appear in Math. Nachr.,
doi: 10.1002/mana.202000434.

\bibitem{Erpenbeck1962a} J. J. Erpenbeck, {\it Stability of steady-state equilibrium 
detonations}, Phys. Fluids {\bf 5} (1962) 604 -- 614. 

\bibitem{Erpenbeck1962b} J. J. Erpenbeck, {\it Stability of step shocks}, 
Phys. Fluids {\bf 5} (1962) 1181 -- 1187. 

\bibitem{ESH2021} J. Elyseeva, P. \v{S}epitka, and R. S. Hilscher, {\it Oscillation numbers for 
continuous Lagrangian paths and Maslov index}, J. Dynamics and Differential
Equations (2022), https://doi.org/10.1007/s10884-022-10140-7.

\bibitem{F} K.\ Furutani, {\em Fredholm-Lagrangian-Grassmannian and the Maslov index,} 
Journal of Geometry and Physics {\bf 51} (2004) 269 -- 331.

\bibitem{Freidberg1970} J. Freidberg, {\it Magnetohydrodyamic stability of 
a diffuse screw pinch}, The Physics of Fluids {\bf 14} (1970) 1812--1818. 

\bibitem{Freidberg2014} J. Freidberg, {\it Ideal MHD},
Cambridge University Press 2014.

\bibitem{Ga1974} S. Gasiorowicz, {\it Quantum physics}, 
John Wiley \& Sons 1974.

\bibitem{GP2004} J. P. Goedbloed and S. Poedts, 
{\it Principles of magnetohydrodynamics: with applications to 
laboratory and astrophysical plasmas}, 
Cambridge University Press 2004.

\bibitem{GST1996} F. Gesztesy, B. Simon, and G. Teschl, {\it Zeros of the Wronskian and renormalized
oscillation theory}, American J. Math. {\bf 118} (1996) 571--594.

\bibitem{GZ2017} F. Gesztesy and M. Zinchenko, {\it Renormalized oscillation theory
for Hamiltonian systems}, Adv. Math. {\bf 311} (2017) 569--597.

\bibitem{HJK2018} P. Howard, S. Jung, and B. Kwon,
{\it The Maslov index and spectral counts for Hamiltonian systems 
on [0,1]}, J. Dynamics and Differential Equations {\bf 30} (2018)
1703--1729.

\bibitem{Howard2022} P. Howard, {\it Renormalized oscillation theory for regular linear
non-Hamiltonian systems}, Communications on Pure and Applied Analysis {\bf 21} (2022) 
4311--4345.

\bibitem{HLS2017} P. Howard, Y. Latushkin, and A. Sukhtayev, 
{\it The Maslov index for Lagrangian pairs on $\mathbb{R}^{2n}$}, 
J. Math. Anal. Appl. {\bf 451} (2017) 794-821. 

\bibitem{HS2} P. Howard and A. Sukhtayev, {\it Renormalized oscillation theory
for linear Hamiltonian systems on $[0, 1]$ via the Maslov index},
to appear in J. Dynamics and Differential Equations.

\bibitem{HS2020} P. Howard and A. Sukhtayev, {\it The Maslov and Morse Indices 
for Sturm-Liouville Systems on the Half-Line}, 
Discrete and Continuous Dynamical Systems A {\bf 40(2)} (2020)
983--1012. 

\bibitem{HS2022} P. Howard and A. Sukhtayev, {\it Renormalized oscillation theory
for singular linear Hamiltonian systems}, J. Functional Analysis {\bf 283} (2022). 

\bibitem{JJ1972} M. Jaulent and C. Jean, {\it The inverse $s$-wave scattering 
problem for a class of potentials depending on energy}, Commun. math. phys.
{\bf 28} (1972) 177-220.

\bibitem{Kato} T. Kato, {\it Perturbation Theory for Linear Operators},
Springer 1980.

\bibitem{Kerner1987} W. Kerner, {\it Large-scale complex 
eigenvalue problems}, J. Comp. Physics {\bf 85} (1987)
1--85.

\bibitem{Krall2002} A. M. Krall, {\it Hilbert space, boundary value problems and
orthogonal polynomials}, Birkh\"auser Verlag 2002. 

\bibitem{Lax1957} P. D. Lax, {\it Hyperbolic systems of conservation laws, II}, 
Comm. Pure Appl. Math. {\bf 10} (1957) 537 -- 566. 

\bibitem{LS2012} Y. Latushkin and A. Sukhtayev, {\it The Evans function and the
Weyl-Titchmarsh function}, Discrete and Continuous Dyn. Sys. Series S {\bf 5}
(2012) 939 - 970.

\bibitem{Majda1983} A. Majda, {\it The stability of multidimensional shock fronts},
in: Memoirs of the AMS {\bf 275}, AMS Providence, 1983. 

\bibitem{MZ2002} C. Mascia and K. Zumbrun, 
{\it Pointwise Green's function bounds and stability of relaxation shocks},
Indiana Univ. Math. J. {\bf 51} (4) (2002) 773 -- 904.

\bibitem{Niessen70} H. D. Niessen, 
{\it Singul\"are S-hermitesche Rand-Eigenwert Probleme},
Manuscripta Math. {\bf 3} (1970) 35--68.

\bibitem{Niessen71} H. D. Niessen, 
{\it Zum verallgemeinerten zweiten Weylschen Satz}, Archiv der Math. {\bf 22}
(1971) 648--656.

\bibitem{Niessen72} H. D. Niessen, 
{\it Greensche Matrix and die Formel von Titchmarch-Kodaira f\"ur singul\"are 
S-hermitesche Eigenwert Probleme}, J. reine arg. Math. {\bf 261}
(1972) 164--193.

\bibitem{Olver1974} F. W. J. Olver, 
{\it Introduction to asymptotics and special functions},
Academic Press 1974. 

\bibitem{P96} J.\ Phillips, Selfadjoint Fredholm operators and spectral flow, 
{\em Canad.\ Math.\ Bull.} {\bf 39} (1996), 460--467.

\bibitem{Pearson1988} D. B. Pearson, {\it Quantum scattering 
and spectral theory}, Academic Press 1988.

\bibitem{Rejto1966} P. A. Rejto, {\it On the essential spectrum of the hydrogen 
energy and related operators}, Pacific J. Math. {\bf 19} (1966) 109--140.

\bibitem{Simon2005} B. Simon, {\it Sturm oscillation and comparison theorems}, 
in Sturm-Liouville Theory: Past and Present,
Birkh\"auser Verlag 2005, W. O. Amrein, A. M. Hinz, and D. B. Pearson,
Eds.

\bibitem{Sturm} C. Sturm, {\it M\'emoire sur les \'equations diff\'erentielles lin\'eaires du second ordre},
J. math. pures appl. {\bf 1} (1836) 106--186.

\bibitem{SunShi2010} H. Sun and Y. Shi, {\it Self-adjoint extensions for linear
Hamiltonian systems with two singular endpoints}, J. Functional Analysis
{\bf 259} (2010) 2003 -- 2027.

\bibitem{SYZ2020} A. Sukhtayev, Z. Yang, and K. Zumbrun, {\it Spectral stability of hydraulic
shock profiles}, Physica D {\bf 405} (2020) 132360.

\bibitem{Teschl1996} G. Teschl, {\it Oscillation theory and renormalized 
oscillation theory for Jacobi operators},
J. Differential Equations {\bf 129} (1996) 532--558.

\bibitem{Teschl1998} G. Teschl, {\it Renormalized oscillation theory for Dirac operators},
Proceedings of the AMS {\bf 126} (1998) 1685--1695.

\bibitem{Weidmann1980} J. Weidmann, {\it Linear Operators in Hilbert spaces},
Springer--Verlag 1980.

\bibitem{Weidmann1987} J. Weidmann, {\it Spectral theory of ordinary differential 
operators}, Springer-Verlag 1987.

\bibitem{YZ2020} Z. Yang and K. Zumbrun, {\it Stability of hydraulic shock profiles},
Arch. Rational Mech. Anal. {\bf 235} (2020) 195 -- 285.

\end{thebibliography}
\end{document}